\newtheorem{thm}{Theorem}
\newtheorem{cor}[thm]{Corollary} 
\newtheorem{lem}[thm]{Lemma} 
\theoremstyle{definition} 
\newtheorem{defn}[thm]{Definition}
\theoremstyle{definition} 
\theoremstyle{definition} 
\newtheorem{remark}[thm]{Remark}
\theoremstyle{definition} 
\theoremstyle{definition} 
\theoremstyle{definition} 
\newtheorem{example}[thm]{Example}
\numberwithin{thm}{subsection}
\newcommand{\R}{\ensuremath{\mathbb{R}}} 
\newcommand{\N}{\ensuremath{\mathbb{N}}} 
\newcommand{\C}{\ensuremath{\mathbb{C}}} 
\newcommand{\F}{\ensuremath{\mathbb{F}}}
\newcommand{\<}{\langle} 
\renewcommand{\>}{\rangle} 
\def\p{\partial} 
\def\k{\kappa} 
\def\i{\infty}
\def\a{\alpha}
\def\e{\epsilon}
\def\supp{\it{supp}}
\def\b{\beta} 
\def\d{\delta}  
\def\l{\lambda} 
\def\L{\Lambda} 
\def\o{\omega} 
\def\O{\Omega}
\def\D{\Delta}    
\def\G{\Gamma}
\def\B{\mathcal{B}}
\def\hB{\hat{\mathcal{B}}}
\def\P{\mathcal{P}}
\def\lip{\mathrm{Lip}}
\def\Xint#1{\mathchoice
{\XXint\displaystyle\textstyle{#1}}%
{\XXint\textstyle\scriptstyle{#1}}%
{\XXint\scriptstyle\scriptscriptstyle{#1}}%
{\XXint\scriptscriptstyle\scriptscriptstyle{#1}}%
\!\int}
\def\XXint#1#2#3{{\setbox0=\hbox{$#1{#2#3}{\int}$}
\vcenter{\hbox{$#2#3$}}\kern-.5\wd0}}
\def\cint{\Xint \smallsetminus}
\begin{document}

	\begin{center}
\thispagestyle{empty}
	\textsc{}\\[3cm]
	\huge Applications of Differential Chains to Complex Analysis and Dynamics\\[1.5cm]

	\begin{minipage}{0.4\textwidth}
	\begin{flushleft} \large
	\emph{Author:}\\
	Harrison Pugh
	\end{flushleft}
	\end{minipage}
	\begin{minipage}{0.4\textwidth}
	\begin{flushright} \large
	\emph{Advisor:} \\
	Sarah Koch
	\end{flushright}
	\end{minipage}
	\vfill
	{\large November 30, 2009}
	\end{center}

\newpage

	\addcontentsline{toc}{section}{Acknowledgements}
				\begin{center}
			\Large{Acknowledgements}
		\end{center}
		
			First and foremost, I would like to thank my advisor Sarah Koch for being so generous with her time and energy.  She tirelessly read draft after draft, and always had helpful suggestions on how to make my exposition clearer.  Without her, this undertaking would have fallen flat (or perhaps sharp, as it were.)  I am deeply grateful for her help.  I would also like to thank my mother Jenny Harrison, on whose work this thesis is partially based.  Her patience in explaining mathematics to me has been unwavering over the years, especially during these recent months during which my questions have been ceaseless.  I cannot begin to thank her enough for encouraging me to be creative and for inspiring me to pursue a career in mathematics.  Finally, I would like to thank my mentor Moe Hirsch, who generously opened his home to me this summer and showed me a beautiful world of math and music.  I learned a great deal from our conversations, and I am honored and humbled to have studied under him.

	\newpage

	\tableofcontents
	
	\newpage

\section{Introduction}
\subsection{Overview}
This thesis is divided into three parts.  In the first part, we give an introduction to J. Harrison's theory of \emph{differential chains}\footnote{In the past, these objects were called ``chainlets.''  We no longer use the term, since its definition and meaning were ambiguous.  Though, to be clear, a ``differential chain'' is not necessarily differentiable.  Rather, the term ``differential chain'' is used in reference to the cochains in the theory, which are none other than differential forms.}.  In the second part, we apply these tools to generalize the Cauchy theorems in complex analysis.  Instead of requiring a piecewise smooth path over which to integrate, we can now do so over non-rectifiable curves and divergence-free vector fields supported away from the singularities of the holomorphic function in question.  In the third part, we focus on applications to dynamics, in particular, flows on compact Riemannian manifolds.  We prove that the \emph{asymptotic cycles} are differential chains, and that for an ergodic measure, they are equal as differential chains to the differential chain associated to the vector field and the ergodic measure.  The first part is expository, but the second and third parts contain new results.

\subsection{Background}
Differential chains are best thought of as domains of integration that behave well with calculus.  That is, given some differential form $\o$, we want to know the answer to  the following question: what kind of domains $A$ can we integrate $\o$ over to get an integral $\int_A \o$ such that the theorems of calculus, in particular the Divergence and Stokes' theorems, hold?  Classically, we can integrate only over smooth orientable (sub)manifolds.  However, it is possible to do much better.  

If we are given a topological space of differential forms, the dual pairing $\phi(\o)$ where $\phi$ is an element of the continuous dual space, called \emph{currents}\footnote{Strictly speaking, ``currents'' refers only to the continuous dual space of $C^\i$ forms with compact support when given the topology of uniform convergence in all directional derivatives of all orders.  We use the term more broadly here, to mean the continuous dual space to \emph{any} topological space of differential forms.  We write ``de Rham currents'' to mean currents in the strict sense.}, yields a tautological integral, $\int_\phi \o:=\phi(\o)$.  One then defines ``boundary'' on these domains to be the dual operator to exterior derivative, in which case Stokes' theorem holds by definition.  This was the approach taken by de Rham \cite{derham1,derham2}.  However, it is not clear which topological space of forms we should use, since each yields different spaces of currents.  Moreover, currents tend to have bad topological properties.  If the topological space of forms is not given by a norm, then the strong topology on currents is hard to work with.  This is the case with Schwartz distributions, since they constitute the continuous dual space of smooth functions with compact support, which is not a Banach space.

Even more problematic, however, is the question of regularity.  ``Nice'' geometric objects like piecewise smooth orientable submanifolds should be currents, but what happens when we take a Cauchy sequence of such elements?  What does the resulting object ``look'' like?  Do currents have any geometric meaning beyond their purely topological definition?  What is required is a \emph{representation theorem}\footnote{That is, in the sense of the Riesz representation theorem: in the case of a Hilbert space $X$, one is able to define the continuous dual space as the set of all operators $\{\<\cdot, x\> : x\in X\}$.  We would like to do the same thing in a more general setting.}.  We would like to be able to define the domains separately as geometric objects, and then to give a natural isomorphism from a space of forms to the continuous dual space of the domains.  Such a representation theorem would yield actual ``theorems'' (such as that of Stokes) rather than simply consequences from duality as is the case with de Rham currents.

Attempts have been made to solve these problems by looking at certain subspaces of currents.  In particular, the modern field of geometric measure theory (GMT) was built around the \emph{normal} and \emph{integral} currents of Federer and Fleming \cite{federerfleming,federer,morgan}.  Unfortunately, normal and integral currents fail to incorporate many interesting examples.  

The first example is the Dirac delta distribution.  Given a point $p$, one may define a current $\d_p$, the Dirac delta distribution, which when paired with a function $f$ yields $\int_{\d_p}f:=\<\d_p,f\>=f(p)$.  Analogously, the distributional derivative of $\d_p$, call it $\d_p'$, takes the value $\int_{\d_p'}f:=\<\d_p',f\>=(\p f/\p x)(p)$ when $f$ is smooth.  Neither $\d_p$ nor $\d_p'$ are normal or integral currents however, as one can check (see \cite{federerfleming} for the definition of normal and integral currents).  Essentially, the current $\d_p'$ fails to have finite \emph{mass}.  The currents $\d_p$ and $\d_p'$ are singularly supported, but there are many examples of non-pathological currents that are not, yet still have infinite mass.  Take, for example, the current $\hat{S}$ that takes the value $\hat{S}(\o)=\int_{S} \mathcal{L}_{\p /\p e_1}\o$, where $S$ is the unit circle in $\R^2$ and $\mathcal{L}$ is Lie derivative.  We call these domains ``dipoles'' since one may define them as limits of differences of currents with finite mass.  For example, one may define $\hat{S}$ as $\lim_{t\rightarrow 0} \frac{1}{t}(S_{(te_1,0)}-S_0)$, where $S_v$ denotes the unit circle centered at $v\in \R^2$.  

This leads us to the second example on which normal and integral currents fail to be inclusive, and that is soap films.  Federer and Fleming used normal and integral currents to solve in \cite{federerfleming} a version of Plateau's problem, the general problem being: given a closed wire (a $1$-dimensional submanifold of $\R^3$, modified to allow branching), is there a spanning surface (a \emph{soap film}, possibly non-orientable and with branchings) with minimal area bounded by the wire?  Federer proved a regularity theorem in \cite[Chapter 5.3.17]{federer} which states loosely that the \emph{rectifiable} current of minimal area is a smooth embedded manifold.  However, this precludes possible spanning surfaces with self-intersection, as well as those that are non-orientable. As per the definition of rectifiable currents in \cite{federerfleming}, such currents are required to have finite mass. It turns out that non-rectifiable domains, specifically \emph{dipole} domains play a large role in Harrison's existence theorem, which guarantees a minimizer in this more broad context.  

That said, we can also describe a great many \emph{rough} domains if we drop the condition of finite mass.  The condition eliminates many examples supported on fractals, such as the Mandelbrot set and the Weierstrass nowhere differentiable function, but also topological examples, such as the topologist's sine circle.  Thus, normal and integral currents have limitations at both extremes of smoothness, soap films being objects which are in a sense ``as smooth as possible.''  

Finally, normal and integral currents do not satisfy our goal of being defined independently from differential forms, as one can check by looking at the definition in \cite{federerfleming}.  They do not have a representation theorem as above.

Another approach to the general problem of finding a nice space of domains was tried by Whitney \cite{whitney}.  Whitney's idea was to start with a topological space whose continuous dual is a space of forms.  The integral is thus instead defined as $\int_A \o:=\o(A)$.  He considered two norms on \emph{polyhedral chains} (essentially formal sums of simplices in $\R^n$), called the sharp and the flat norms.  This approach solves the representation problem, since Whitney defines his chains independently from forms, and gave an isomorphism from a space of forms to the dual of polyhedral chains.  However, Whitney's theory has several severe limitations.  Whitney could not prove the divergence theorem in either the flat or sharp topologies.  In particular, problems arise with his norms since the \emph{boundary operator} is not continuous in the sharp norm and \emph{geometric Hodge star} and \emph{linear change of variables} are not continuous in the flat norm \cite{harrison8}.

Harrison's \emph{differential chains} \cite{harrison6,harrison8}\footnote{\cite{harrison6,harrison8} are the most up-to-date references.  For a general history of the subject, see \cite{harrison1,harrison2,harrison3,harrison4,harrison5,harrison7,harrison9}.} solve these problems.  Her approach bypasses currents, by focusing as Whitney did on a space whose dual was a space of differential forms.  However instead of a pair of norms, a whole family of norms is used, under which all the operators of calculus are well-defined and continuous.  The space is constructed as an inductive limit of completions of ``pointed chains'' with respect to these norms, pointed chains being infinitesimal versions of polyhedral chains.  These have the benefit of being singularly supported and as such are much easier to work with.  In particular, the structure of pointed chains allows us to easily transition to an ambient abstract manifold.  Furthermore, differential chains are not required to have finite mass, and this allows us to work with dipoles (see Example \ref{der2}) and non-rectifiable curves.  

Lastly, it is possible to axiomatize the space of differential chains and its topology, since in general the continuity of the fundamental operators (see section \ref{operators}) is what is important, not the topology itself.  However, since we will be working with examples, the constructive version is more pertinent to this discussion.  In what follows, we will give a definition of the space of differential $k$-chains $\hB_k^\i$ on $\R^n$ and its topology, after which we will define the operators used in the application sections.  We will then generalize the theory to manifolds, and finally, we will establish correspondences between classical domains of integration and differential chains.

\section{Preliminaries}
Our goal in this section is to provide a relatively quick, non-exhaustive, yet self-contained introduction to the theory of differential chains.  We will use tools from many parts of the theory in the two applications sections, and as such, it will be necessary to give a somewhat broad introduction.  

\subsection{The Space of Differential Chains $\hB_k^\i$ and its Topology}

\begin{defn}
	Let $\P_k(\R^n)$ denote the space of finitely supported sections of $\L^k T \R^n$.  We call $\P_k(\R^n)$ the space of \textbf{pointed $k$-chains} on $\R^n$.  The set $\P_k(\R^n)$ has a natural vector space structure induced by the vector space structure on sections of $\L^k T \R^n$, denoted $\G(\L^k T \R^n)$.  Let $A\in \P_k(\R^n)$.  We write $A$ in formal sum notation,
	\[
	A=\sum_{i=1}^N (p_i;\a_i),
	\]
	where $\a_i\in \L^k T_{p_i} \R^n$ is the value of $A$ at $p_i$.  If $A=(p;\a)$ and $\a$ is a simple (decomposable) $k$-vector, then we say $A$ is a \textbf{simple $k$-element}, or that $A$ is \textbf{simple}.  Note that strictly speaking, each $p_i$ is allowed to appear in the formal sum only once.  We relax this condition for the sake of notation and allow the same point to appear in the sum $\sum (p_i;\a_i)$ any number of times.  We also write $\P_k$ in place of $\P_k(\R^n)$ if the specific underlying space is not important.
\end{defn}
\begin{figure}[htbp]
	\centering
		\includegraphics[height=3in]{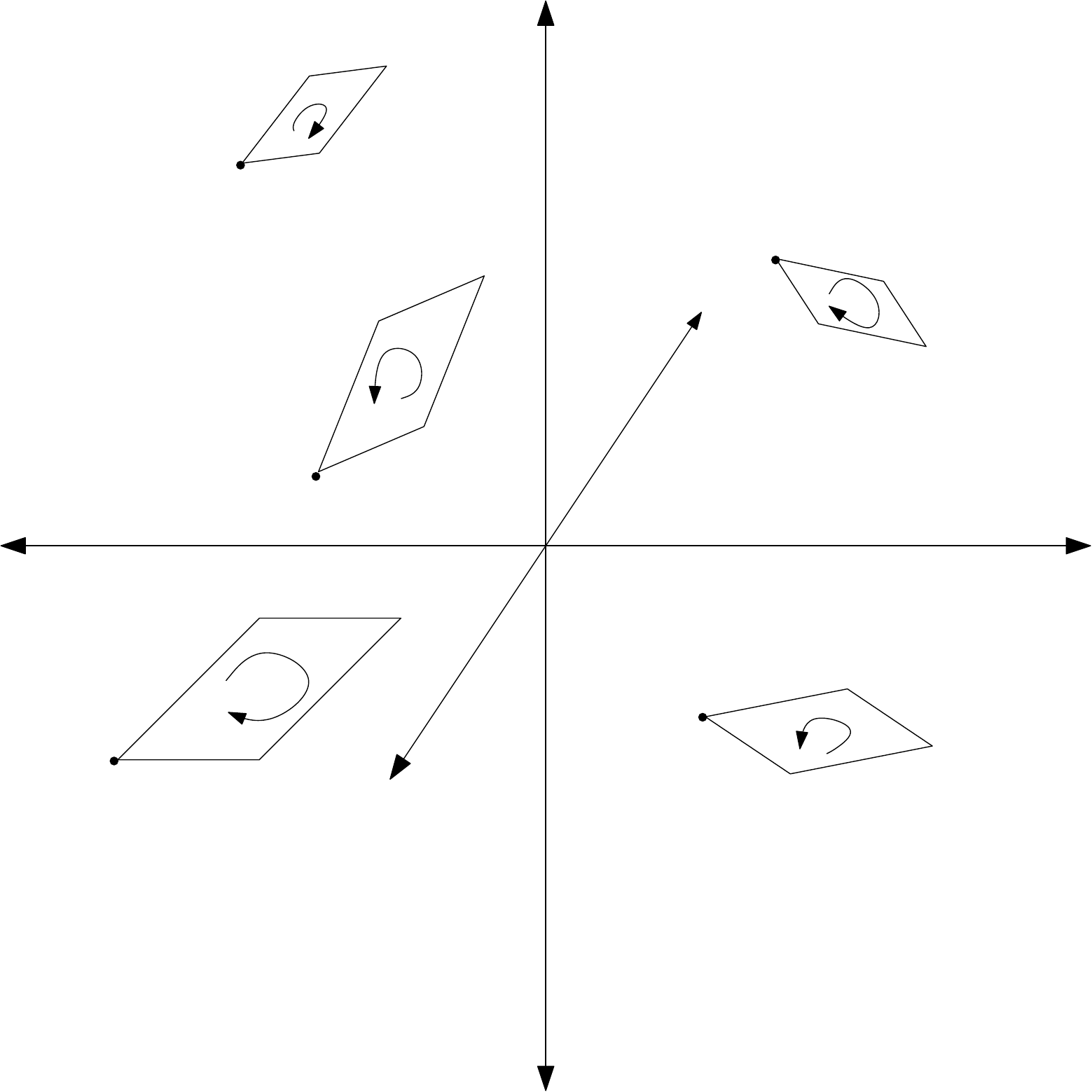}
	\caption{A Pointed Chain in $\P_2(\R^3)$, supported at $5$ points.  The oriented parallelograms here represent $2$-vectors.}
	\label{fig:pointedchain}
\end{figure}

The space $\P_k$ of pointed chains is the foundation upon which the rest of the theory rests.  We will define a sequence of norms $\|\cdot\|_{B^r}$ on $\P_k$, each smaller than the next.  Upon completion, we will get a sequence of Banach spaces, denoted $\hB_k^r$, the space of \emph{differential $k$-chains on $\R^n$ of order $r$}.\footnote{The reason for this notation is that the continuous dual space of $\hB_k^r$ is very closely related to the space $\B^r$, consisting of bounded $C^r$ functions with bounds on the derivatives up to order $r$.  This space was studied by Schwartz as one whose dual was that of ``summable distributions'' in \cite{schwartz1,schwartz2,schwartz3}.  We use similar notation here for the sake of familiarity.}  This will yield an inductive limit of topological vector spaces, which when given the final topology, will be called $\hB_k^\i$, the space of \emph{differential $k$-chains on $\R^n$}.

Note that it is possible to start with pointed chains in a manifold instead of in Euclidian space, however operators like translation cease to be commutative and the task becomes more difficult.  Instead, we first work in Euclidian space, and then move to manifolds using analogous definitions of the norms.

The idea behind pointed chains is vaguely reminiscent of a consequence of the Banach-Alaoglu theorem \cite[\S 3.15, p.68]{rudin} and the Krein-Milman theorem \cite{kreinmilman}, or \cite[p.179]{royden}, which imply in particular that convex combinations of Dirac measures are dense in the set of probability measures given the vague topology \cite{federer}.  Pointed chains are a generalization of convex combinations of Dirac measures, and we will explicitly define a topology in which they are dense.

\begin{lem}\label{pointed gen}
	The set of simple $k$-chains generates $\P_k$.
\end{lem}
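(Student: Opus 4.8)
The plan is to reduce an arbitrary pointed chain to a sum of simple $k$-chains by exploiting that every $k$-vector is a finite sum of decomposable $k$-vectors, and that the formal-sum structure on $\P_k$ is additive in the fiber. First I would observe that by the definition of $\P_k(\R^n)$ as finitely supported sections of $\L^k T\R^n$, any $A \in \P_k$ can be written $A = \sum_{i=1}^N (p_i; \a_i)$ with $\a_i \in \L^k T_{p_i}\R^n$, and that in formal-sum notation $(p;\a+\b) = (p;\a) + (p;\b)$, since both sides are the section taking the value $\a+\b$ at $p$ and $0$ elsewhere. So it suffices to handle a single summand $(p_i;\a_i)$.

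Next I would invoke the standard fact from multilinear algebra that the decomposable (simple) $k$-vectors span $\L^k T_{p_i}\R^n$: fixing a basis $e_1,\dots,e_n$ of $T_{p_i}\R^n$, the wedge products $e_{j_1}\wedge\cdots\wedge e_{j_k}$ over increasing multi-indices form a basis of $\L^k T_{p_i}\R^n$, and each such basis element is by definition decomposable. Hence $\a_i = \sum_{j} c_{ij}\, \beta_{ij}$ for finitely many scalars $c_{ij}$ and decomposable $\beta_{ij}$; absorbing the scalar into the $k$-vector (decomposability is preserved under scalar multiplication, as $c(\beta_{ij})$ is still a wedge of $k$ vectors, e.g. $(c v_1)\wedge v_2 \wedge \cdots \wedge v_k$), we get $(p_i;\a_i) = \sum_j (p_i; c_{ij}\beta_{ij})$, a finite sum of simple $k$-elements. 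Summing over $i$ expresses $A$ as a finite sum of simple $k$-chains, which is exactly the assertion that the simple $k$-chains generate $\P_k$ as a vector space.

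The argument is essentially routine; the only point requiring any care is the bookkeeping that the formal-sum notation genuinely is additive over the fiber and respects scalar multiplication, so that decomposing within a fixed tangent space $\L^k T_{p_i}\R^n$ and then re-collecting the pieces as separate pointed-chain summands is legitimate. There is no real obstacle here — this lemma is a foundational observation whose role is to let later arguments reduce statements about general pointed chains (and, after completion, about differential chains) to the case of simple $k$-elements, for which the norms $\|\cdot\|_{B^r}$ will be most directly computable.
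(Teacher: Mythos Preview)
Your proposal is correct and follows essentially the same approach as the paper: decompose each $\a_i$ as a finite sum of simple $k$-vectors and use additivity in the fiber to split $(p_i;\a_i)$ into simple $k$-elements. The paper's proof is slightly terser (it does not spell out the basis argument or the scalar absorption), but the underlying idea is identical.
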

\begin{proof}
	This follows from the fact that each $\a_i$ is the sum of simple $k$-vectors.  Indeed, let $A=\sum_{i=1}^N (p_i;\a_i)\in \P_k$ be a sum of simple $k$-chains and let $\a_i=\sum_{j=1}^{N_i}\a_i^j$ where each $\a_i^j$ is simple.  It follows that
	\[
	A=\sum_{i=1}^N  \left(p_i; \sum_{j=1}^{N_i}\a_i^j\right)=\sum_{i=1}^N \sum_{j=1}^{N_i} (p_i; \a_i^j).
	\]
	Since each $(p_i;\a_i^j)$ is a simple $k$-chain, we are done.
\end{proof}

\begin{defn}
	Fix an inner product\footnote{One can show that the resulting norms yield equivalent topologies under different inner products.} $\<\cdot,\cdot\>$ on $\R^n$.  The \textbf{mass norm} or \textbf{$0$-norm} $\|\cdot \|_{B^0}$ on $\P_k$ is given by
	\[
	 \| A\|_{B^0}=\inf\left\{\sum_{i=1}^N \|\a_i\| : A=\sum_{i=1}^N(p_i;\a_i)\right\},
	\]
	where the infimum is taken over all possible ways to write $A=\sum_{i=1}^N(p_i;\a_i)$ and $\|\a_i\|$ is the mass norm\footnote{Throughout, the notation $\|\a_i\|$ will refer to the mass norm.} of $\a_i\in\L^k T_{p_i} \R^n$ induced by $\<\cdot,\cdot\>$.  Recall the mass of a $k$-vector $\a$ is given by 
	\[
	\|\a\|:=\inf \{\sum|\a_i| : \a=\sum \a_i\},
	\]
	where the $\a_i$ are simple, and $|\a_i|$ denotes the $k$-volume of the parallelepiped associated to $\a_i$.  Note that the infimum in the definition of $\|\cdot\|_{B^0}$ is almost a tautology - by the triangle inequality on the mass norm on $k$-vectors, the infimum is achieved when each $p_i$ appears only once in the sum.  Also note that we could have required the $\a_i$'s in the definition of $\|\cdot\|_{B^0}$ to be simple.  The two definitions are equivalent.  
\end{defn}

\begin{lem}
	The function $\|\cdot\|_{B^0}: \P_k\rightarrow \R$ is indeed a norm on $\P_k$.
\end{lem}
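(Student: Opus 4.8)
The plan is to verify the four defining properties of a norm: nonnegativity, absolute homogeneity, the triangle inequality, and definiteness. Nonnegativity is immediate, since $\|A\|_{B^0}$ is an infimum of finite sums $\sum_{i=1}^N \|\a_i\|$ whose terms are nonnegative because $\|\cdot\|$ is the mass norm on $k$-vectors. For absolute homogeneity, note that $\|0\|_{B^0}=0$ via the representation $(p;0)$, so the case $c=0$ is settled; for $c\neq 0$, every representation $A=\sum_i(p_i;\a_i)$ yields a representation $cA=\sum_i(p_i;c\a_i)$ and conversely, and since $\|c\a_i\|=|c|\,\|\a_i\|$ on $k$-vectors, the two collections of candidate sums are in bijection via multiplication by $|c|$, giving $\|cA\|_{B^0}=|c|\,\|A\|_{B^0}$. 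For the triangle inequality, given $\e>0$ choose representations $A=\sum_i(p_i;\a_i)$ and $B=\sum_j(q_j;\b_j)$ with $\sum_i\|\a_i\|<\|A\|_{B^0}+\e$ and $\sum_j\|\b_j\|<\|B\|_{B^0}+\e$; concatenating them gives a representation of $A+B$, so $\|A+B\|_{B^0}\le\sum_i\|\a_i\|+\sum_j\|\b_j\|<\|A\|_{B^0}+\|B\|_{B^0}+2\e$, and letting $\e\to0$ finishes this axiom.

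The one property requiring a genuine argument is definiteness: that $\|A\|_{B^0}=0$ implies $A=0$. Here I would make precise the remark recorded in the definition, that the infimum is attained by the ``reduced'' representation in which each point of $\R^n$ occurs at most once. Given any representation $A=\sum_i(p_i;\a_i)$, group the terms sharing a common base point: for each $p\in\R^n$ set $\a(p):=\sum_{i:\,p_i=p}\a_i\in\L^k T_p\R^n$, which is nonzero for only finitely many $p$ and equals the value of $A$ at $p$, since $\P_k$ is the direct sum of the fibers $\L^k T_p\R^n$. The triangle inequality for the mass norm on $k$-vectors gives $\|\a(p)\|\le\sum_{i:\,p_i=p}\|\a_i\|$, and summing over $p$ yields $\sum_p\|\a(p)\|\le\sum_i\|\a_i\|$. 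Taking the infimum over all representations then shows $\|A\|_{B^0}=\sum_{p}\|\a(p)\|$, the sum running over the finite support of $A$.

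With this formula in hand definiteness is immediate: if $\|A\|_{B^0}=\sum_p\|\a(p)\|=0$ then $\|\a(p)\|=0$ for every $p$, and since $\|\cdot\|$ is a genuine norm on each $\L^k T_p\R^n$ this forces $\a(p)=0$ for all $p$, i.e.\ $A=0$; conversely $\|0\|_{B^0}=0$ as already noted. The only real subtlety — the place where a naive infimum over unrestricted representations could misbehave — is the possibility of cancellation lowering the total mass, and this is ruled out precisely because terms based at distinct points lie in distinct direct summands of $\P_k$, so regrouping never drops the total below $\sum_p\|\a(p)\|$. Everything else is routine bookkeeping with the mass norm on $k$-vectors, which we take as known.
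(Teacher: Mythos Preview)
Your proof is correct and follows essentially the same approach as the paper: both arguments hinge on the observation that the infimum in the definition of $\|\cdot\|_{B^0}$ is attained by the reduced representation in which each base point appears only once, and then use positive definiteness of the mass norm on each fiber $\L^k T_p\R^n$. The paper simply invokes this observation (stated as a remark in the definition) without proof, whereas you supply the short justification via the triangle inequality on $k$-vectors; otherwise the two proofs are the same.
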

\begin{proof}
	We first check positive definiteness.  If $A=0$, then clearly $\| A\|_{B^0}=0$.  Now, suppose $\|A\|_{B^0}=0$ and $A=\sum_{i=1}^N (p_i;\a_i)$ where each $p_i$ appears only once in the sum.  Then, $\|A\|_{B^0}=\sum_{i=1}^N\|\a_i\|=0$, whereby $\|\a_i\|=0$ for each $1\leq i\leq N$.  This implies that $\a_i=0$ for each $1\leq i \leq N$, hence $A=0$.
	
	Now, let $A\in \P_k$.  Write $A=\sum_{i=1}^N(p_i;\a_i)$ where each $p_i$ appears only once.  Thus, $\|A\|_{B^0}=\sum_{i=1}^N \|\a_i\|$.  If $\l\in \R$, then $\l A=\sum_{i=1}^N(p_i;\l \a_i)$ and $\|\l A\|_{B^0}=\sum_{i=1}^N \|\l \a_i\|=\sum_{i=1}^N |\l|\|\a_i\|=|\l|\|A\|_{B^0}$.
	
	Finally, if $A, B\in \P_k$, where $A=\sum_{i=1}^N(p_i;\a_i)$ and $B=\sum_{j=1}^M(q_j; \b_j)$ where each $p_i$ and $q_j$ appear once in their respective sums, then,
	
\begin{align}
	\|A+B\|_{B^0}&=\inf\left\{\sum_{h=1}^L \|\gamma_h\| : A+B=\sum_{h=1}^L(r_h;\gamma_h)\right\}\\
	&\leq \sum_{i=1}^N \|\a_i\|+\sum_{j=1}^M \|\b_j\|= \|A\|_{B^0}+\|B\|_{B^0}.
\end{align}

\end{proof}

The $0$-norm is useful in some situations, but we will need some additional structure to define the higher order norms, as motivated by the following example.

\begin{example}\label{der}
	For each $h>0$, let $A_h=\frac{1}{h}((h e_1 ; e_2)-(0; e_2))=\left(h e_1 ; \frac{e_2}{h}\right)-\left(0;\frac{e_2}{h}\right)\in \P_1(\R^2)$, where $e_1$ and $e_2$ are the unit coordinate vectors.  Let $\o\in \O^1(\R^2)$.  Then we can evaluate $\o$ on $A_h$ in the following manner,
	\[
	\o(A_h)=\o_{h e_1}\left(\frac{e_2}{h}\right)-\o_0 \left(\frac{e_2}{h}\right)=\frac{1}{h}(\o_{h e_1}(e_2)-\o_0 (e_2)),
	\]
	where $\o_p(v)$ means evaluate $\o$ at $p$ on $v\in T_p \R^2$.
	If we write $\o=df$, then this quantity is given by
	\[
	\frac{1}{h}\left(\frac{\p f}{\p e_2}(h e_1)-\frac{\p f}{\p e_2}(0)\right).
	\]
	So, as $h\rightarrow 0$, it follows that $\o(A_h)$ converges to $\frac{\p^2 f}{\p e_1 \p e_2}(0)$.  As such, we would like $\{A_{1/m}\}_{m\in \N}$ to be a Cauchy sequence in our space of differential chains.  However, this sequence diverges to $+\i$ in the $0$-norm, as one can easily verify.    
	\begin{figure}[htbp]
		\centering
			\includegraphics[height=1.5in]{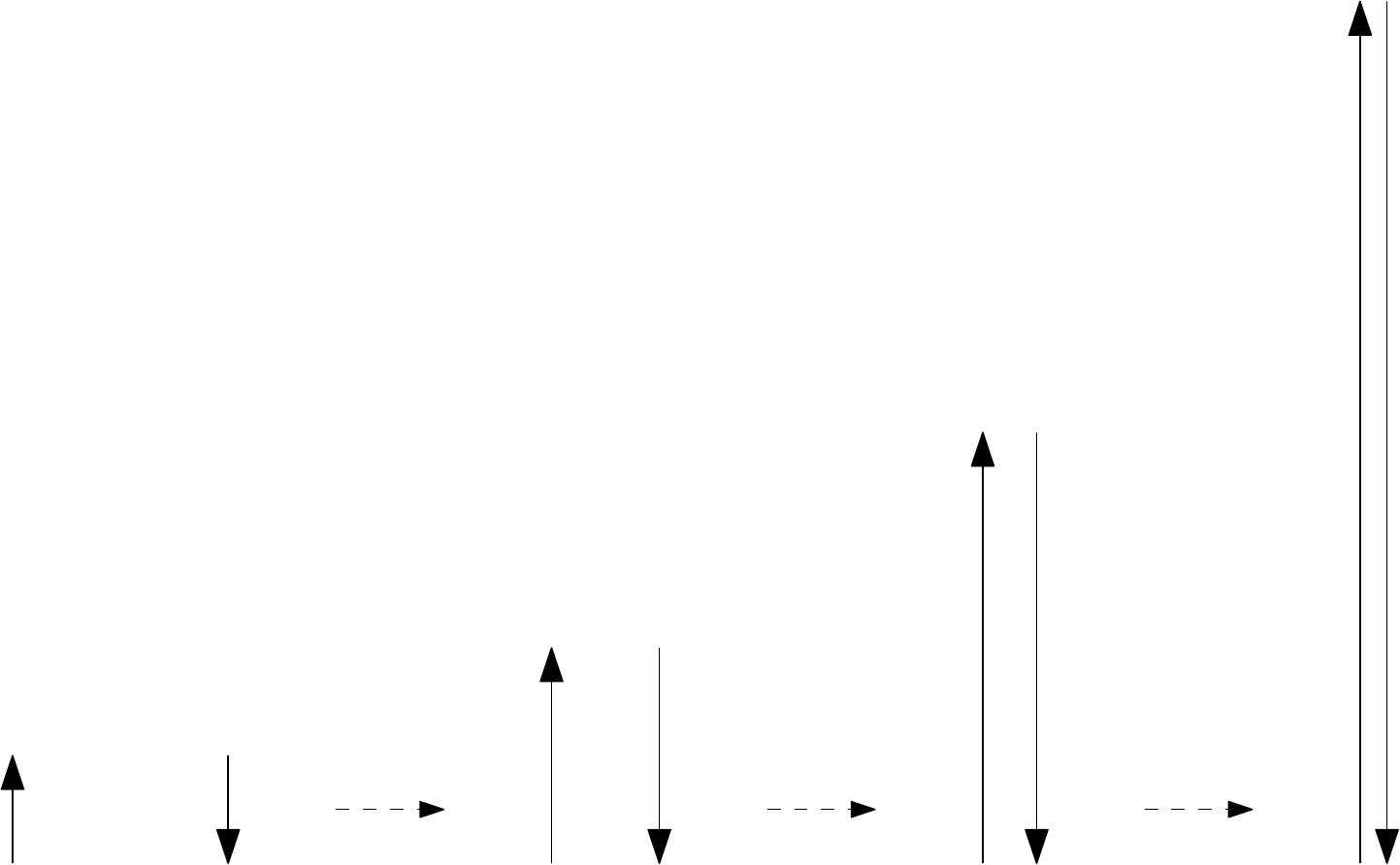}
		\caption{The sequence $\{A_h\}_h$ diverges in the $0$-norm, but limits to a ``dipole.''}
		\label{fig:dipole}
	\end{figure}
\end{example}

\begin{defn}\label{transl}
	Let $u\in \R^n$ and let $T_u: \P_k\rightarrow \P_k$ be the operator given by
	\[
	T_u \sum_{i=1}^N (p_i;\a_i)=\sum_{i=1}^N (p_i+u;\a_i).
	\]
	Likewise, let $\D_u: \P_k\rightarrow \P_k$ be the operator given by
	\[
	\D_u \sum_{i=1}^N (p_i;\a_i)=(T_u-Id)\sum_{i=1}^N (p_i;\a_i)=\sum_{i=1}^N (p_i+u;\a_i)-\sum_{i=1}^N (p_i;\a_i),
	\]
	where $Id$ denotes the identity map.  We call $T_u$ the \textbf{translation operator} and $\D_u$ the \textbf{difference operator}.
\end{defn}

\begin{lem}
	The operators $T_u$ and $\D_u$ are linear and satisfy
	\begin{align}
		T_{u_1}\circ T_{u_2}&=T_{u_2}\circ T_{u_1}=T_{u_1+u_2},& \D_{u_1}\circ \D_{u_2}&=\D_{u_2}\circ \D_{u_1}.
	\end{align}
\end{lem}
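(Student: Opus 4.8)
The plan is to verify each claimed identity directly from the formula in Definition~\ref{transl}, reducing everything to the corresponding statement about points in $\R^n$. Linearity of $T_u$ is immediate: given $A=\sum_i (p_i;\a_i)$ and $B=\sum_j(q_j;\b_j)$ and scalars $\l,\mu$, the operator $T_u$ shifts every base point by $u$ without touching the coefficients, so $T_u(\l A+\mu B)$ has base points $p_i+u$ and $q_j+u$ with the same coefficients $\l\a_i$ and $\mu\b_j$ as $\l T_uA+\mu T_uB$; since these agree termwise as pointed chains, they are equal. Linearity of $\D_u$ then follows for free, because $\D_u=T_u-Id$ is a difference of two linear operators, and the difference of linear maps is linear.

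For the composition identities I would just chase base points. Applying $T_{u_2}$ to $\sum_i(p_i;\a_i)$ gives $\sum_i(p_i+u_2;\a_i)$, and then $T_{u_1}$ gives $\sum_i((p_i+u_2)+u_1;\a_i)=\sum_i(p_i+(u_1+u_2);\a_i)=T_{u_1+u_2}A$. Associativity and commutativity of vector addition in $\R^n$ give $T_{u_1}\circ T_{u_2}=T_{u_1+u_2}=T_{u_2+u_1}=T_{u_2}\circ T_{u_1}$ in one stroke. For $\D_u$, I would expand $\D_{u_1}\circ\D_{u_2}=(T_{u_1}-Id)(T_{u_2}-Id)=T_{u_1}T_{u_2}-T_{u_1}-T_{u_2}+Id$; the first identity just proven shows $T_{u_1}T_{u_2}=T_{u_1+u_2}$ is symmetric in $u_1,u_2$, and the remaining three terms $-T_{u_1}-T_{u_2}+Id$ are manifestly symmetric, so the whole expression is unchanged under swapping $u_1\leftrightarrow u_2$, giving $\D_{u_1}\circ\D_{u_2}=\D_{u_2}\circ\D_{u_1}$.

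There is no real obstacle here --- the statement is essentially bookkeeping, and the only mild subtlety is the convention (noted in the first definition) that a base point may be listed more than once in the formal sum, so one should be slightly careful that ``equal as pointed chains'' means equal as sections of $\L^k T\R^n$, i.e.\ after collecting coefficients at each point. Since both $T_u$ and $\D_u$ are defined without reference to any particular representative formal sum (they act on the underlying section), this is automatic, and the termwise comparisons above are legitimate. I would present the argument as: (i) note $T_u$ and $\D_u$ are well-defined on sections; (ii) linearity of $T_u$, hence of $\D_u$; (iii) the point-chasing computation for $T_{u_1}\circ T_{u_2}$; (iv) the algebraic expansion deducing the $\D$-identity from the $T$-identity.
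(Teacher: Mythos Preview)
Your proof is correct and essentially the same as the paper's: the paper also declares linearity and the $T$-identity immediate, then expands $\D_{u_1}\D_{u_2}$ applied to a generic pointed chain into four terms and observes symmetry in $u_1,u_2$. The only cosmetic difference is that you factor at the operator level as $(T_{u_1}-Id)(T_{u_2}-Id)$ before expanding, whereas the paper expands directly on $\sum_i(p_i;\a_i)$; the resulting four terms are identical.
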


\begin{proof}
	Linearity is immediate, as is the commutativity of of $T_u$.  We prove the last equality.
	\begin{align}
		\D_{u_1}\D_{u_2}\sum_{i=1}^N (p_i;\a_i)&=	\D_{u_1}\sum_{i=1}^N (p_i+u_2;\a_i)-\D_{u_1}\sum_{i=1}^N (p_i;\a_i)\\
		&=\sum_{i=1}^N (p_i+u_1+u_2;\a_i)-\sum_{i=1}^N (p_i+u_2;\a_i)-\sum_{i=1}^N (p_i+u_1;\a_i)+\sum_{i=1}^N (p_i;\a_i),
	\end{align}
	whereby we are done, since this value is symmetric in $u_1$ and $u_2$.
\end{proof}

Since $\D_u$ is commutative, we will write $\D_U^j$ to mean $\D_{u_1}\circ\cdots\circ\D_{u_j}$ where $U=\{u_1,\dots,u_j\}$ is a $j$-length index set of vectors, possibly repeating, in $\R^n$.  If $U$ is empty, let $\D_U^0$ denote the identity map on $\P_k$.  Geometrically speaking, the operator $\D_U^j$ turns a simple pointed chain $(p;\a)$ into a possibly degenerate parallelepiped with oriented copies of $\a$ at each vertex.  
\begin{figure}[htbp]
	\centering
		\includegraphics[height=3in]{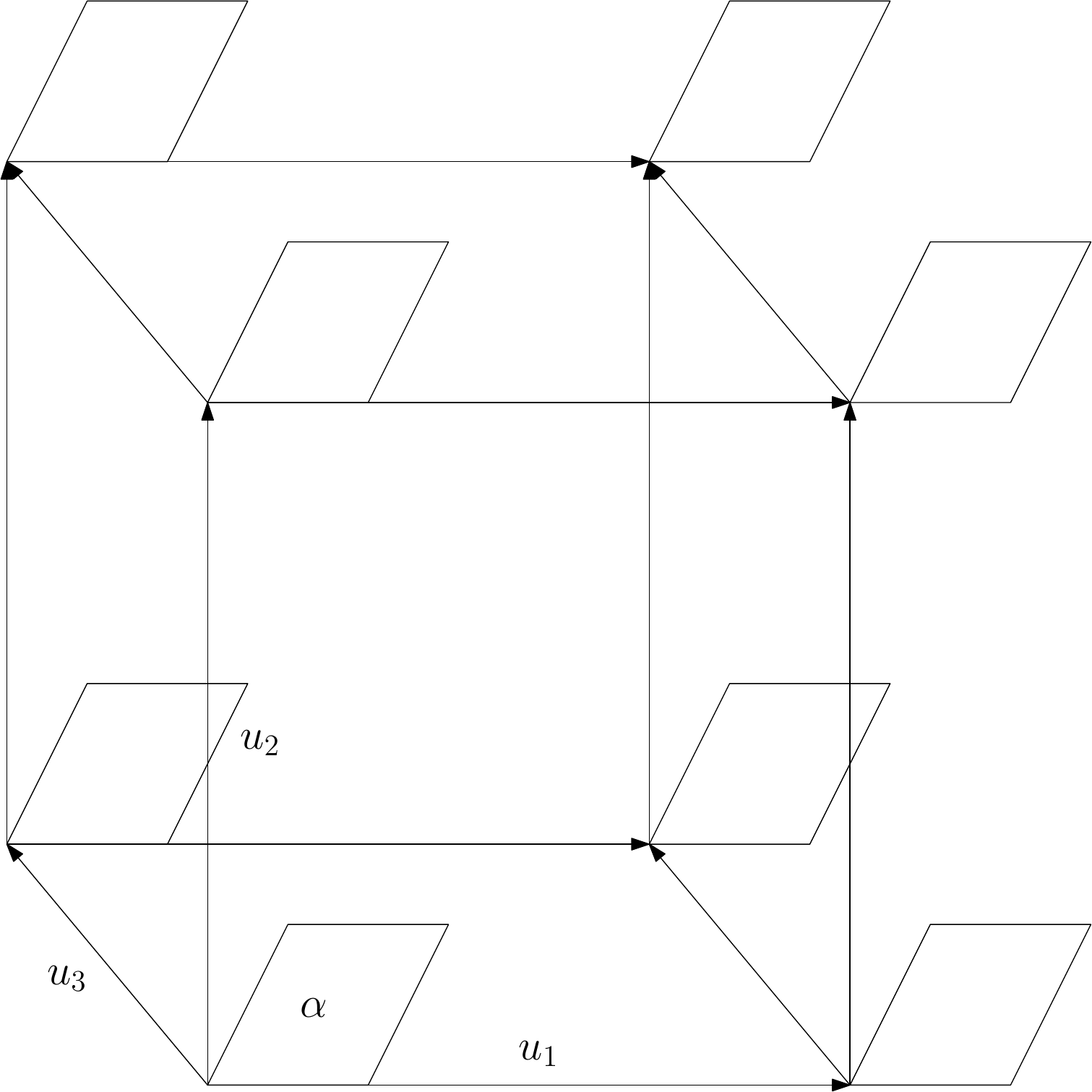}
	\caption{A Difference Chain $\D_{\{u_1,u_2,u_3\}}^3(p;\a)$}
	\label{fig:differencechain}
\end{figure}

\begin{defn}\label{seminormdef}
	If $U$ is empty, define $|\D_U^0(p;\a)|_0:=\|\a\|$.  If $j\geq 1$, and $U=\{u_1,\dots,u_j\}$, define
	\[
	|\D_U^j(p;\a)|_j:=\|u_1\|\cdots \|u_j\|\|\a\|.
	\]
\end{defn}

We are now ready to define the higher order norms on $\P_k$.  	

\begin{defn}\label{normdef}
	For each $r\geq 1$, the \textbf{$r$-norm}\footnote{The idea for these norms first appears in \cite{harrison1} and is further developed in \cite{harrison2}.  The modern presentation using \emph{pointed chains} can be found in \cite{harrison8}.} $\|\cdot\|_{B^r}$ on $\P_k$ is given by
	\[
	\|A\|_{B^r}:=\inf\left\{\sum_{i=1}^N |\D_{U_i}^{j_i}(p_i;\a_i)|_{j_i} : A=\sum_{i=1}^N \D_{U_i}^{j_i}(p_i;\a_i)\right\},
	\]
	where $0\leq j_i\leq r$, $U_i$ is of size $j_i$, and the infimum is taken over all possible ways of writing $A$ in such a way.  Note that if in fact we set $r=0$, this definition also gives the mass norm.  Also note that we do not require the $(p_i;\a_i)$'s to be simple, though to do so would yield an equivalent definition, by the definition of the mass norm on $k$-vectors.
\end{defn}

\begin{example}If $A=\sum_{i=1}^4 (-1)^{i+1}(p_i;\a)$ and the $p_i$ are the vertices of the parallelogram $(p_1, p_1+u, p_1+u+v, p_1+v)$, then $\|A\|_{B^2}$ is bounded above by $\|u\|\|v\|\|\a\|$, as well as $2\|u\|\|\a\|$, $2\|v\|\a\|$, and $4\|\a\|$.  
\end{example}

\begin{lem}
	For each $r\geq 1$, the function $\|\cdot\|_{B^r}:\P_k\rightarrow \R$ is a semi-norm on $\P_k$.
\end{lem}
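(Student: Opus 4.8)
The plan is to verify the three semi-norm axioms directly from Definition \ref{normdef}: non-negativity, absolute homogeneity, and the triangle inequality. Non-negativity is immediate, since every term $|\D_{U_i}^{j_i}(p_i;\a_i)|_{j_i}$ appearing in the infimum is a product of mass norms of vectors and hence non-negative, so the infimum of sums of such quantities is non-negative. (We do not claim positive definiteness — indeed for $r\geq 1$ a difference chain like $\D_u^1(p;\a)$ with small $\|u\|$ shows the norm can be made arbitrarily small on nonzero chains, and in fact $\|0\|_{B^r}=0$ is all we need on the zero side.)

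For homogeneity, fix $A\in\P_k$ and $\l\in\R$. First I would dispose of $\l=0$ separately (both sides are $0$). For $\l\neq 0$, observe that $\D_U^j$ is linear, so any decomposition $A=\sum_{i=1}^N \D_{U_i}^{j_i}(p_i;\a_i)$ yields a decomposition $\l A=\sum_{i=1}^N \D_{U_i}^{j_i}(p_i;\l\a_i)$ with the \emph{same} index sets $U_i$ and same orders $j_i$, and conversely every decomposition of $\l A$ arises this way from one of $A$ (divide the $k$-vectors by $\l$). Since $|\D_{U_i}^{j_i}(p_i;\l\a_i)|_{j_i}=\|u_1\|\cdots\|u_{j_i}\|\,\|\l\a_i\|=|\l|\,|\D_{U_i}^{j_i}(p_i;\a_i)|_{j_i}$ by Definition \ref{seminormdef} and homogeneity of the mass norm on $k$-vectors, the two infima differ by exactly the factor $|\l|$, giving $\|\l A\|_{B^r}=|\l|\,\|A\|_{B^r}$.

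For the triangle inequality, let $A,B\in\P_k$. Given any $\e>0$, choose decompositions $A=\sum_{i=1}^N \D_{U_i}^{j_i}(p_i;\a_i)$ and $B=\sum_{h=1}^M \D_{V_h}^{l_h}(q_h;\b_h)$ realizing their respective infima to within $\e$. Concatenating these lists produces a single admissible decomposition of $A+B$ (each summand still has order between $0$ and $r$), so $\|A+B\|_{B^r}\leq \sum_i |\D_{U_i}^{j_i}(p_i;\a_i)|_{j_i}+\sum_h |\D_{V_h}^{l_h}(q_h;\b_h)|_{l_h}\leq \|A\|_{B^r}+\|B\|_{B^r}+2\e$. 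Letting $\e\to0$ finishes it. I do not anticipate a genuine obstacle here; the only point requiring a moment's care is the bookkeeping in the homogeneity step — checking that the correspondence between decompositions of $A$ and of $\l A$ is a bijection that scales each term's weight by $|\l|$ — but this is routine given linearity of $\D_U^j$ and Definition \ref{seminormdef}.
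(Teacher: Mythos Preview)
Your proof is correct and follows exactly the approach the paper indicates: the paper's own proof is the one-line remark ``positive homogeneity follows from substitution, subadditivity follows from taking infimums,'' and you have simply written out those two steps in detail. One minor quibble: your parenthetical aside that ``the norm can be made arbitrarily small on nonzero chains'' is misleading, since $\D_u^1(p;\a)$ is a \emph{different} chain for each $u$, and in fact the paper later proves (Theorem~\ref{norm}) that $\|\cdot\|_{B^r}$ is positive definite; but this does not affect the semi-norm argument itself.
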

\begin{proof}
	This is not hard to see: positive homogeneity follows from substitution, subadditivity follows from taking infimums.  
\end{proof}

\begin{lem}\label{decreasing}
	If $r\leq s$, then $\|A\|_{B^s}\leq \|A\|_{B^r}$ for all $A\in \P_k$. 
\end{lem}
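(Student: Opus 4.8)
The plan is to observe that the infimum defining $\|A\|_{B^s}$ ranges over a (weakly) larger collection of admissible decompositions than the one defining $\|A\|_{B^r}$, while the quantity being summed is intrinsic to the decomposition and does not depend on the order of the norm. Concretely, I would fix $A\in\P_k$ and take an arbitrary decomposition $A=\sum_{i=1}^N \D_{U_i}^{j_i}(p_i;\a_i)$ that is admissible for the $r$-norm, meaning $0\leq j_i\leq r$ and $|U_i|=j_i$ for each $i$. Since $r\leq s$, we automatically have $0\leq j_i\leq s$, so the very same expression is an admissible decomposition of $A$ for the $s$-norm.

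The second point to record is that the summand $|\D_{U_i}^{j_i}(p_i;\a_i)|_{j_i}$ is, by Definition \ref{seminormdef}, equal to $\|u_1\|\cdots\|u_{j_i}\|\|\a_i\|$ (or to $\|\a_i\|$ when $j_i=0$), and this value depends only on $U_i$, $p_i$, and $\a_i$ — in particular it is the same whether we are computing the $r$-norm or the $s$-norm. Hence every real number $\sum_{i=1}^N |\D_{U_i}^{j_i}(p_i;\a_i)|_{j_i}$ that is attained by an $r$-admissible decomposition of $A$ is also attained by an $s$-admissible decomposition of $A$. Taking infima, the infimum on the $s$ side is over a superset of the values appearing on the $r$ side, so $\|A\|_{B^s}\leq\|A\|_{B^r}$, as desired.

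There is essentially no genuine obstacle; the only thing worth checking explicitly is the boundary case $r=0$, where one should confirm that the convention $|\D_U^0(p;\a)|_0:=\|\a\|$ from Definition \ref{seminormdef} and the remark in Definition \ref{normdef} identifying $\|\cdot\|_{B^0}$ with the $r=0$ instance of the general formula make the statement meaningful and correct there too — indeed a mass-norm decomposition $A=\sum_{i=1}^N(p_i;\a_i)$ is precisely a decomposition with all $j_i=0$, so the same monotonicity argument yields $\|A\|_{B^s}\leq\|A\|_{B^0}$ for every $s\geq 0$.
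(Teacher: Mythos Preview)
Your proposal is correct and follows the same approach as the paper: every $r$-admissible decomposition of $A$ is automatically $s$-admissible because $r\leq s$, and since the summands $|\D_{U_i}^{j_i}(p_i;\a_i)|_{j_i}$ are unchanged, the infimum defining $\|A\|_{B^s}$ is taken over a superset and hence is no larger. The paper's proof is the same one-line observation, without your added (but harmless) remarks about the $r=0$ case.
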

\begin{proof}
	That $r\leq s$ implies that if $A=\sum_{i=1}^N \D_{U_i}^{j_i}(p_i;\a_i)$ where $0\leq j_i\leq r$, it also holds that $A=\sum_{i=1}^N \D_{U_i}^{j_i}(p_i;\a_i)$ where $0\leq j_i\leq s$.  It follows that $\|A\|_{B^s}\leq \|A\|_{B^r}$.
\end{proof}

\begin{thm}\label{norm}
	For each $r\geq 1$, the $r$-semi-norm $\|\cdot\|_{B^r}$ is indeed a norm.
\end{thm}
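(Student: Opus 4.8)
Since the previous lemma already gives that $\|\cdot\|_{B^r}$ is a semi-norm, the only thing left to establish is positive definiteness: $\|A\|_{B^r}=0$ must force $A=0$. A direct attack is awkward, because $\|A\|_{B^r}$ is an infimum over all ways of writing $A$ as a sum of difference chains of orders $\le r$, and such infima are hard to bound below by hand. The plan is instead to argue by duality with test forms: for any $A\ne 0$ I will exhibit a differential $k$-form $\o$ which pairs nontrivially with $A$ and for which $|\o(A)|\le M(\o)\,\|A\|_{B^r}$ with $M(\o)<\i$; this immediately yields $\|A\|_{B^r}>0$.

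The heart of the argument is this \emph{fundamental estimate}: for every $A\in\P_k$ and every $\o\in\Omega^k(\R^n)$ of class $C^r$ whose partial derivatives up to order $r$ are bounded, one has $|\o(A)|\le M(\o)\,\|A\|_{B^r}$, where $M(\o)$ is the supremum of the comass $\|(\p_{v_1}\cdots\p_{v_l}\o)_q\|$ over $q\in\R^n$, over $0\le l\le r$, and over unit vectors $v_1,\dots,v_l$ (with $l=0$ meaning $\o$ itself; here $\|\cdot\|$ on $k$-covectors is the norm dual to the mass norm, so $|\eta(\a)|\le\|\eta\|\,\|\a\|$). Because both sides are subadditive over decompositions $A=\sum_i\D_{U_i}^{j_i}(p_i;\a_i)$ and one then takes the infimum as in Definition \ref{normdef}, it suffices to prove the estimate on a single difference chain $\D_U^j(p;\a)$ with $U=\{u_1,\dots,u_j\}$ and $j\le r$. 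Setting $g(q):=\o_q(\a)$ (evaluating $\o$ on the constant $k$-vector $\a$), it follows directly from Definition \ref{transl} that $\o(\D_U^j(p;\a))$ equals the iterated finite difference $(\D_{u_1}\cdots\D_{u_j}g)(p)=\sum_{S\subseteq\{1,\dots,j\}}(-1)^{j-|S|}g(p+\sum_{i\in S}u_i)$, where now $\D_u$ acts on functions by $\D_u g(q)=g(q+u)-g(q)$. An induction on $j$, applying the fundamental theorem of calculus to each difference and using that $\p_v$ commutes with every $\D_u$, gives $|(\D_{u_1}\cdots\D_{u_j}g)(p)|\le\|u_1\|\cdots\|u_j\|\sup|\p_{v_1}\cdots\p_{v_j}g|$, the supremum over unit $v_i$ and over the parallelepiped spanned by $U$ at $p$ (the case where some $u_i=0$ being trivial). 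Since $\p_{v_1}\cdots\p_{v_j}g(q)=(\p_{v_1}\cdots\p_{v_j}\o)_q(\a)$ and hence is bounded by $\|(\p_{v_1}\cdots\p_{v_j}\o)_q\|\,\|\a\|$, the whole expression is at most $\|u_1\|\cdots\|u_j\|\,\|\a\|\,M(\o)$, which is $|\D_U^j(p;\a)|_j\,M(\o)$ by Definition \ref{seminormdef}, as required.

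Granting the estimate, the conclusion is routine. Suppose $\|A\|_{B^r}=0$ and write $A=\sum_{i=1}^N(p_i;\a_i)$ with the $p_i$ distinct. Every compactly supported smooth $k$-form $\o$ has $M(\o)<\i$, so the estimate gives $\o(A)=0$ for all such $\o$. If $A\ne 0$, pick $i_0$ with $\a_{i_0}\ne 0$, choose a coordinate $k$-covector $\eta$ with $\eta(\a_{i_0})\ne 0$, and (using that the finitely many $p_i$ are distinct) choose $\rho\in C_c^\i(\R^n)$ equal to $1$ near $p_{i_0}$ and vanishing near each $p_i$ with $i\ne i_0$. Then $\o$ defined by $\o_q:=\rho(q)\,\eta$ is a compactly supported smooth $k$-form with $\o(A)=\eta(\a_{i_0})\ne 0$, a contradiction; hence $A=0$. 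The main obstacle is the fundamental estimate, and inside it the iterated mean-value-theorem bound on $\D_{u_1}\cdots\D_{u_j}g$; one should also take a moment to verify carefully the combinatorial identity identifying $\o(\D_U^j(p;\a))$ with that iterated difference. Everything else is bookkeeping.
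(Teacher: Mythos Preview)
Your proposal is correct and follows essentially the same approach as the paper: both argue by duality, proving a fundamental estimate $|\o(A)|\le M(\o)\|A\|_{B^r}$ via an iterated fundamental-theorem-of-calculus bound on difference chains (the paper's Lemma~\ref{generalstuff} combined with Lemma~\ref{voodoo}), and then constructing a smooth bump-times-coordinate-covector test form that separates any nonzero $A$. Your presentation folds the paper's two lemmas into a single direct estimate, but the mathematical content is the same.
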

\begin{proof}
	The proof is somewhat involved.  See the appendix, section \ref{normproof}.
\end{proof}

\begin{example}\label{der2}
	In Example \ref{der}, the sequence $A_{1/m}$ is unbounded in the $0$-norm, however, it is easy to see that it is bounded in the $1$-norm.  In fact, as we will show in Lemma \ref{der3}, the sequence is Cauchy in the $2$-norm.	
\end{example}

\begin{lem}\label{der3}
	Let $(p;\a)\in \P_k$ and let $v\in \R^n$.  The sequence $Q_t:=\D_{v/t}(p;t\a)$ is Cauchy in the $2$-norm.
\end{lem}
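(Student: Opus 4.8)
The plan is to reduce the statement to one clean estimate and then telescope. First I would put the sequence in a more symmetric form: writing $s=1/t$,
\[
Q_t=\D_{v/t}(p;t\a)=\D_{sv}(p;\tfrac1s\a)=(p+sv;\tfrac1s\a)-(p;\tfrac1s\a)=\tfrac1s\big((p+sv;\a)-(p;\a)\big)=\tfrac1s\D_{sv}(p;\a),
\]
so $Q_t=R(s)$ with $R(s):=\tfrac1s\D_{sv}(p;\a)\in\P_k$. Letting $t$ range over $\N$ as in Example \ref{der2}, it then suffices to show $\{R(1/m)\}_{m\in\N}$ is Cauchy in $\|\cdot\|_{B^2}$.

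The single estimate I would establish is: \emph{if $s=ns'$ for some $n\in\N$, then $\|R(s)-R(s')\|_{B^2}\le\tfrac12\,s\,\|v\|^2\|\a\|$.} Its proof telescopes the translation operator. From $\D_{ns'v}(p;\a)=(p+ns'v;\a)-(p;\a)=\sum_{k=0}^{n-1}\D_{s'v}(p+ks'v;\a)$, dividing by $ns'$ gives $R(s)=\tfrac1n\sum_{k=0}^{n-1}R^{(k)}$, where $R^{(k)}:=\tfrac1{s'}\D_{s'v}(p+ks'v;\a)$ and $R^{(0)}=R(s')$. Hence $R(s)-R(s')=\tfrac1n\sum_{k=1}^{n-1}\big(R^{(k)}-R^{(0)}\big)$, and the key point is that each summand is an honest \emph{second} difference:
\[
R^{(k)}-R^{(0)}=\tfrac1{s'}\big[(p+(k+1)s'v;\a)-(p+ks'v;\a)-(p+s'v;\a)+(p;\a)\big]=\tfrac1{s'}\D_{ks'v}\D_{s'v}(p;\a).
\]
Since $\D_{ks'v}\D_{s'v}(p;\a)$ is itself one of the chains allowed in the infimum defining $\|\cdot\|_{B^2}$, Definitions \ref{seminormdef} and \ref{normdef} (together with positive homogeneity) give $\|R^{(k)}-R^{(0)}\|_{B^2}\le\tfrac1{s'}\,\|ks'v\|\,\|s'v\|\,\|\a\|=ks'\|v\|^2\|\a\|$; summing and using that $\|\cdot\|_{B^2}$ is a semi-norm,
\[
\|R(s)-R(s')\|_{B^2}\le\tfrac1n\sum_{k=1}^{n-1}ks'\|v\|^2\|\a\|=\tfrac{(n-1)s'}{2}\,\|v\|^2\|\a\|\le\tfrac12\,s\,\|v\|^2\|\a\|.
\]

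Granting the estimate, the conclusion is immediate: for $m,n\in\N$ put $\sigma:=1/(mn)$, so that $1/m=n\sigma$ and $1/n=m\sigma$ are integer multiples of $\sigma$, and then by the triangle inequality
\[
\|Q_m-Q_n\|_{B^2}=\|R(\tfrac1m)-R(\tfrac1n)\|_{B^2}\le\|R(\tfrac1m)-R(\sigma)\|_{B^2}+\|R(\sigma)-R(\tfrac1n)\|_{B^2}\le\tfrac12\big(\tfrac1m+\tfrac1n\big)\|v\|^2\|\a\|,
\]
which tends to $0$ as $m,n\to\infty$.

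The main obstacle is exactly the observation in the second paragraph that the comparison chains $R^{(k)}-R^{(0)}$ are precisely the second differences $\tfrac1{s'}\D_{ks'v}\D_{s'v}(p;\a)$: this is what lets the estimate be paid for in the $2$-norm rather than in the (divergent) $0$- and $1$-norms, and it is the infinitesimal shadow of the elementary fact that a finitely supported mass distribution on a line with vanishing total mass and vanishing first moment is a combination of second differences. As a remark, the argument as written yields the Cauchy property for the sequence indexed by $\N$ (and, by the same common-refinement trick, for any sequence of parameters that are pairwise rationally commensurable); a completely general net $t\to\infty$ can be handled either through the duality between $\hB_k^2$ and a space of $C^2$ differential forms together with Taylor's theorem, or by iterating the above estimate along a continued-fraction-type subdivision, but neither refinement is needed here.
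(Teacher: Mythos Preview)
Your proof is correct and follows the same strategy as the paper's: telescope the difference $Q_t-Q_{t'}$ into a sum of second differences $\D_{u_1}\D_{u_2}(p;\a)$ and bound each term by its $|\cdot|_2$ value. The paper carries this out only along the dyadic subsequence $t=2^i$ (obtaining $\|Q_{2^i}-Q_{2^{i+j}}\|_{B^2}\le 2^{-i}\|v\|^2\|\a\|$), whereas your version proves the cleaner estimate $\|R(ns')-R(s')\|_{B^2}\le\tfrac12 ns'\|v\|^2\|\a\|$ for arbitrary integer ratios and then uses the common refinement $\sigma=1/(mn)$ to conclude Cauchy for the full sequence $\{Q_m\}_{m\in\N}$; this is a modest but genuine improvement in completeness.
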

\begin{proof}
	Using a telescoping sequence, we may write
	\begin{align}
		&\,\,\,\,\,\,\,\left\|\left[(p+2^{-i}v;2^i\a)-(p;2^i\a)\right]-\left[(p+2^{-(i+j)}v;2^{i+j}\a)-(p;2^{i+j}\a)\right]\right\|_{B^2}\\
		&=\left\|\sum_{m=1}^{2^j}\left[(p+m2^{-(i+j)}v;2^i\a)-(p+(m-1)2^{-(i+j)}v;2^i\a)-(p+2^{-(i+j)}v;2^i\a)+(p;2^i\a)\right]\right\|_{B^2}\\
		&=\left\|\sum_{m=1}^{2^j} \D_{(m-1)2^{-(i+j)}v}\D_{2^{-(i+j)}v}(p;2^i\a)\right\|_{B^2}\\
		&\leq \sum_{m=1}^{2^j}\left\|\D_{(m-1)2^{-(i+j)}v}\D_{2^{-(i+j)}v}(p;2^i\a)\right\|_{B^2}\\
		&\leq \sum_{m=1}^{2^j}\left|\D_{(m-1)2^{-(i+j)}v}\D_{2^{-(i+j)}v}(p;2^i\a)\right|_2\\
		&=\sum_{m=1}^{2^j} (m-1)2^{-(i+j)}\|v\|\cdot 2^{-(i+j)}\|v\| \cdot 2^i \cdot \|\a\|\\
		&=2^{-i-2j}\|v\|^2\|\a\|\sum_{m=1}^{2^j-1} m\\
		&\leq 2^{-i}\|v\|^2\|\a\|.
	\end{align}
\end{proof}

\begin{defn}
	Let $\hB_k^r$ denote the metric space completion of $\P_k$ with respect to the $r$-norm for each $r\geq 0$.  It follows that $\hB_k^r$ is a Banach space for every $r\geq 0$.  Elements of $\hB_k^r$ are called \textbf{differential $k$-chains of order $r$}.  As with differential forms, we sometimes drop the term ``differential,'' and simply write ``$k$-chain'' to mean ``differential $k$-chain.''
\end{defn}

\begin{lem}\label{dirlimit}
	The identity map on $\P_k$ extends uniquely to a continuous linear map $\phi_{r,s}: \hB_k^r\rightarrow \hB_k^s$ whenever $r\leq s$ that satisfies
	\begin{enumerate}
		\item[(1)] $\phi_{r,r}=Id$,
		\item[(2)] $\phi_{s,t}\circ\phi_{r,s}=\phi_{r,t}$ for all $r\leq s\leq t$.
	\end{enumerate}
\end{lem}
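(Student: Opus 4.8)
The statement to prove is Lemma \ref{dirlimit}: for $r\leq s$, the identity on $\P_k$ extends uniquely to a continuous linear map $\phi_{r,s}\colon\hB_k^r\to\hB_k^s$ satisfying $\phi_{r,r}=Id$ and $\phi_{s,t}\circ\phi_{r,s}=\phi_{r,t}$. The plan is to invoke the standard universal property of metric-space (Banach-space) completions: a uniformly continuous — here, bounded linear — map from a dense subspace into a complete space extends uniquely to the whole completion. So the real content is just checking the boundedness hypothesis, and the compatibility relations will then follow by a density argument.

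First I would observe that $\P_k$ sits as a dense subspace in both $\hB_k^r$ and $\hB_k^s$ by the very definition of completion, and that by Lemma \ref{decreasing} the identity map $\iota\colon(\P_k,\|\cdot\|_{B^r})\to(\P_k,\|\cdot\|_{B^s})$ satisfies $\|\iota A\|_{B^s}=\|A\|_{B^s}\leq\|A\|_{B^r}$ for every $A\in\P_k$; hence $\iota$ is linear with operator norm at most $1$, in particular bounded, hence uniformly continuous. Composing with the (dense, isometric) inclusion $\P_k\hookrightarrow\hB_k^s$ gives a bounded linear map $\P_k\to\hB_k^s$ defined on a dense subspace of $\hB_k^r$ and landing in the complete space $\hB_k^s$. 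The universal property of completion then yields a unique continuous linear extension $\phi_{r,s}\colon\hB_k^r\to\hB_k^s$, with $\|\phi_{r,s}\|\leq 1$.

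Next I would verify properties (1) and (2). For (1), when $r=s$ the map $\iota$ is literally the identity on $\P_k$, so its unique continuous extension $\phi_{r,r}$ is the identity on $\hB_k^r$ — uniqueness of the extension forces this. For (2), both $\phi_{s,t}\circ\phi_{r,s}$ and $\phi_{r,t}$ are continuous linear maps $\hB_k^r\to\hB_k^t$ that agree on the dense subspace $\P_k$ (on $\P_k$ each is just the identity-inclusion $\iota$), so by continuity and density they agree everywhere. This is the usual ``two continuous maps agreeing on a dense set are equal'' argument, valid because $\hB_k^t$ is Hausdorff.

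The main — indeed only — obstacle worth flagging is purely a matter of care rather than difficulty: one must make sure the extension lands in $\hB_k^s$ rather than merely in some abstract completion, i.e., one should phrase the universal property with target a given complete space and a prescribed dense isometric embedding of $\P_k$ therein, so that ``the'' extension $\phi_{r,s}$ is well-defined. There is also the mild subtlety that a Cauchy sequence in the $r$-norm is automatically Cauchy in the $s$-norm (again by Lemma \ref{decreasing}), which is what makes the set-theoretic description of $\phi_{r,s}$ — send the $r$-limit of $(A_n)\subset\P_k$ to its $s$-limit — coherent; this is worth stating explicitly even though it is immediate. No genuinely hard estimate is needed; all the analytic work was already done in establishing the norm inequality of Lemma \ref{decreasing}.
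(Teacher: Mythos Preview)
Your proposal is correct and follows exactly the same approach as the paper, which simply writes ``This follows immediately from Lemma \ref{decreasing}.'' You have merely spelled out the standard completion/density argument that the paper leaves implicit.
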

\begin{proof}
	This follows immediately from Lemma \ref{decreasing}.
\end{proof}

Therefore, it follows that the spaces $\hB_k^r$ along with the maps $\phi_{r,s}$ form a directed system of topological vector spaces, and as such, we get an inductive limit in the category of topological vector spaces \cite[II.29]{bourbaki}.

\begin{defn}
Let $\hB_k^\i$ denote the inductive limit of $\hB_k^r$.  That is,
\[
\hB_k^\i=\lim_{\rightarrow} \hB_k^r.
\]
We give $\hB_k^\i$ the \emph{final topology}, making $\hB_k^\i$ a locally convex topological vector space \cite[\S 19.3]{kothe}.  The final topology is by definition the finest locally convex topology on $\hB_k^\i$ such that the canonical mappings $\psi_r: \hB_k^r\rightarrow \hB_k^\i$ are continuous.  We call $\hB_k^\i$ the space of \textbf{differential $k$-chains on $\R^n$}.  As the inductive limit of Banach spaces, it follows immediately that $\hB_k^\i$ is a \emph{Mackey space} \cite[Ch.V \S 2 Prop. 7]{robertson}.  One can also show that $\hB_k^\i$ is Hausdorff.
\end{defn}

One immediate and useful consequence of the definition of $\hB_k^\i$ is the following:

\begin{lem}
	The space of pointed chains $\P_k$ (or rather strictly speaking, their equivalence classes in the definition of the inductive limit) is dense in $\hB_k^\i$. 
\end{lem}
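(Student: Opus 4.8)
The plan is to show that $\P_k$ is dense in each Banach space $\hB_k^r$ by construction, and then to pull this density up through the inductive limit. By definition, $\hB_k^r$ is the metric space completion of $\P_k$ with respect to the $r$-norm, so (the equivalence classes of) $\P_k$ form a dense subset of $\hB_k^r$ for every $r \geq 0$. In particular, this holds for any fixed $r$, say $r=0$, but we will want it for all $r$ simultaneously since the inductive limit sees all the $\hB_k^r$ at once.

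Next I would recall how the topology on $\hB_k^\i$ interacts with the canonical maps $\psi_r : \hB_k^r \to \hB_k^\i$. Since $\hB_k^\i$ carries the final (locally convex inductive limit) topology with respect to the family $\{\psi_r\}$, every $\psi_r$ is continuous, and moreover $\hB_k^\i = \bigcup_r \psi_r(\hB_k^r)$ as a set (using the directed-system maps $\phi_{r,s}$ of Lemma \ref{dirlimit}). So let $J \in \hB_k^\i$ and let $W$ be an open neighborhood of $J$ in $\hB_k^\i$. Pick $r$ and $J_r \in \hB_k^r$ with $\psi_r(J_r) = J$. Then $\psi_r^{-1}(W)$ is an open neighborhood of $J_r$ in $\hB_k^r$; by density of $\P_k$ in $\hB_k^r$, there is a pointed chain $A \in \P_k$ whose class in $\hB_k^r$ lies in $\psi_r^{-1}(W)$. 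Then $\psi_r(A) \in W$, and $\psi_r(A)$ is precisely the image of the pointed chain $A$ under the canonical map into $\hB_k^\i$. Since $W$ was arbitrary, $\P_k$ is dense in $\hB_k^\i$.

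The only subtlety — and the step I would be most careful about — is the bookkeeping of equivalence classes: a pointed chain $A$ gives an element of $\hB_k^r$ for every $r$, and the parenthetical remark in the statement is exactly acknowledging that what is really dense is the image of $\P_k$ in $\hB_k^\i$ (i.e., the common image of these classes under the $\psi_r$, which agree because the $\phi_{r,s}$ extend the identity on $\P_k$). One should note that this image does not depend on which $r$ we used to represent $J$, since $\phi_{r,s}$ restricts to the identity on $\P_k$ and $\psi_s \circ \phi_{r,s} = \psi_r$. No genuine obstacle arises here; the whole argument is a formal consequence of ``completion is dense'' together with the definition of the inductive limit topology, and I would present it in essentially the two short paragraphs above.
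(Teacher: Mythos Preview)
Your proof is correct and follows essentially the same approach as the paper: both use that $\P_k$ is dense in $\hB_k^r$ by construction of the completion, together with continuity of the canonical map $\psi_r$, to transfer density to the inductive limit. The paper phrases this sequentially (if $[a]\in\hB_k^\i$, pick $r$ with $a\in\hB_k^r$, approximate by pointed chains $p_n\to a$, and conclude $[p_n]\to[a]$ by continuity of $\psi_r$), whereas you use open neighborhoods, but the content is identical.
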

  
\begin{proof}
If $[a]\in\hB_k^\i$, then $a\in\hB_k^r$ for some $r\geq 0$.  We can find a pointed chain approximation $\{p_n\}_n$ to $a$ in $\hB_k^r$.  Since $p_n\rightarrow a$ and $\psi_r$ is continuous, it follows that $[p_n]\rightarrow [a]$.  
\end{proof}

These norms may seem somewhat ad-hoc.  However, as we will see, the dual space $(\hB_k^r)'$ consists of differential forms whose differentiability class increases as $r$ increases, and the operator norm is what we would expect.  One may also recognize the $1$-norm as Whitney's sharp norm \cite{whitney}. They are indeed the same on polyhedral chains.  However, Whitney did not define higher order norms, which are necessary since the boundary operator (see section \ref{operators}) on a chain with finite $r$-norm is only guaranteed to have a finite $r+1$-norm.

\subsection{The Dual Spaces $(\hB_k^r)'$}\label{dual}
The structure of pointed chains allows us to easily move to a dual space of forms, since any exterior $k$-form $\o$ can be naturally evaluated on any pointed $k$-chain $A=\sum_{i=1}^N (p_i;\a_i)$ by setting $\o(A)=\sum_{i=1}^N \o_{p_i}(\a_i)$.  As such, we make the following definition:

\begin{defn}\label{topequiv0}
	If $\o$ is a bounded $k$-form and $j\geq 0$, define
	\[
	|\o|_{B^j}:=\sup \{|\o (\D_U^j(p;\a))| : |\D_U^j (p;\a)|_j=1\}.
	\]
 	Here, we may either require $(p;\a)$ to be simple, or not.  The two definitions are equivalent, as one can check.  For $r\geq 0$, define
	\[
	\|\o\|_{B^r}:=\max\{|\o|_{B^0},\dots,|\o|_{B^r}\}.
	\]
	Let $\B_k^r$ be the subspace of all bounded $k$-forms with $\|\o\|_{B^r}<\i$.  
\end{defn}

\begin{thm} \label{junk}The following hold:
	\begin{enumerate}
		\item For $r\geq 0$, the function $\|\cdot\|_{B^r}: \B_k^r \rightarrow \R_+$ is a norm, and turns $\B_k^r$ into a Banach space.
		\item For $s\geq r\geq 0$, we have natural and continuous inclusions $\iota_{s,r}: \B_k^{s}\rightarrow \B_k^{r}$.\label{inc}
		\item The space $\B_k^0$ is equal to the space of bounded $k$-forms.
		\item The space $\B_k^1\subset \B_k^0$ is the subspace of such forms that are Lipschitz continuous. 
		\item For $r> 1$, it holds that $\o\in\B_k^r$ if and only if $\o\in C^{r-1}$, its $j$-th directional derivatives are bounded for $0\leq j\leq r-1$, and its $(r-1)$-th directional derivatives are Lipschitz continuous.
		\item For $r\geq 0$, the space $\B_k^r$ is naturally isomorphic to $(\hB_k^r)'$, the continuous dual of $\hB_k^r$.  Furthermore, the injection map $\iota_{s,r}$ is the transpose of $\phi_{r,s}$ (in the sense of Lemma \ref{dirlimit}). \label{trans}
		\item \label{topequiv1} For $r\geq 0$ and for all $\o\in \B_k^r$, the norm $\|\o\|_{B^r}$ is equal to $\|\o\|_{\mathrm{Op}}^r$, where $\|\cdot\|_{\mathrm{Op}}^r$ is the operator norm on $\B_k^r$ considered as the dual space to $\hB_k^r$.
		\item The projective limit $\lim_{\leftarrow}\B_k^r$ via the inclusion mappings in \#\ref{inc} is naturally isomorphic (as a vector space) to $(\hB_k^\i)'$.  The space $\lim_{\leftarrow}\B_k^r$ is a Fr\'{e}chet space.
	\end{enumerate}   
\end{thm}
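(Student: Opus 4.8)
The plan is to dispatch the eight items in the order listed, since each one feeds the next, with essentially all the analytic content concentrated in item (5).

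For items (1)--(4) I would argue directly from Definition \ref{topequiv0}. Item (3) is a tautology: for $j=0$ the only difference operator is the identity, so $|\o|_{B^0}=\sup\{|\o_p(\a)|:\|\a\|=1\}$ is the comass (sup) norm and $\|\o\|_{B^0}=|\o|_{B^0}$, whence $\B_k^0$ is exactly the space of bounded $k$-forms. For item (4), unwinding $\D_u(p;\a)=(p+u;\a)-(p;\a)$ gives $|\o|_{B^1}=\sup_{p,u}\|\o_{p+u}-\o_p\|/\|u\|$, which is precisely the Lipschitz constant of the coefficient map $p\mapsto\o_p$; hence $\|\o\|_{B^1}<\i$ iff $\o$ is bounded and Lipschitz. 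Item (1): positive definiteness of $\|\cdot\|_{B^r}$ follows because it dominates $|\o|_{B^0}$, which is already a norm by item (3); homogeneity and the triangle inequality pass through the sup and the $\max$; for completeness, a $\|\cdot\|_{B^r}$-Cauchy sequence $\o_n$ is in particular uniformly Cauchy (via $|\cdot|_{B^0}$), so it has a uniform limit $\o$, and the inequality $|\o|_{B^j}\le\liminf_n|\o_n|_{B^j}$ for each $j\le r$ shows $\o\in\B_k^r$ with $\o_n\to\o$ in the $r$-norm. Item (2) is immediate from $\|\o\|_{B^r}=\max\{|\o|_{B^0},\dots,|\o|_{B^r}\}\le\|\o\|_{B^s}$ when $r\le s$, which also shows $\iota_{s,r}$ has norm $\le1$.

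Item (5) is the heart of the matter. In one direction, if $\o\in C^{r-1}$ with bounded derivatives up to order $r-1$ and Lipschitz $(r-1)$-st derivatives, then iterated Taylor expansion of the coefficient functions of $\o$ along the vectors of $U$ yields $|\o(\D_U^j(p;\a))|\le C\,\|u_1\|\cdots\|u_j\|\,\|\a\|$ for each $0\le j\le r$, so $\|\o\|_{B^r}<\i$. The converse is the real-analysis statement that a continuous $k$-form whose iterated mixed difference operators satisfy $|\o(\D_U^j(p;\a))|\le C\,\|u_1\|\cdots\|u_j\|\,\|\a\|$ for all $j\le r$ is necessarily of class $C^{r-1}$ with Lipschitz $(r-1)$-st derivatives. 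I would prove this by induction on $r$: the base case $r=1$ is item (4). For the inductive step, the bound at level $2$ forces, for each fixed direction $v$, the difference quotients $\tfrac{1}{t}\,\o(\D_{tv}(p;\a))$ to be Cauchy as $t\to0$ --- this is exactly the telescoping estimate of Lemma \ref{der3} read against a form bounded in the $2$-norm --- so the directional derivative $\partial_v\o$ exists; one then checks $\partial_v\o$ satisfies difference bounds of one lower order and applies the inductive hypothesis. I expect this converse, in particular handling \emph{mixed} rather than pure differences cleanly, to be the main obstacle; it can be managed either by reducing mixed to pure differences via the operator identity $\D_{u+v}=\D_u+\D_v+\D_u\D_v$, or by a direct induction that peels off one direction at a time.

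With (1)--(5) in hand, items (6) and (7) are duality bookkeeping built on the fact that $\hB_k^r$ is by construction the completion of $\P_k$ in $\|\cdot\|_{B^r}$. Given $\o\in\B_k^r$, for any representation $A=\sum_i\D_{U_i}^{j_i}(p_i;\a_i)$ one has $|\o(A)|\le\sum_i|\o|_{B^{j_i}}\,|\D_{U_i}^{j_i}(p_i;\a_i)|_{j_i}\le\|\o\|_{B^r}\sum_i|\D_{U_i}^{j_i}(p_i;\a_i)|_{j_i}$, and taking the infimum over representations gives $|\o(A)|\le\|\o\|_{B^r}\|A\|_{B^r}$; thus $\o$ extends uniquely to a continuous functional on $\hB_k^r$ with $\|\o\|_{\mathrm{Op}}^r\le\|\o\|_{B^r}$. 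Conversely, any $F\in(\hB_k^r)'$ restricts to $\P_k$, and $\o_p(\a):=F((p;\a))$ defines a $k$-form (linearity of $\o_p$ on $\L^k T_p\R^n$ comes from linearity of $F$ with $(p;\a+\b)=(p;\a)+(p;\b)$ and $(p;\l\a)=\l(p;\a)$ in $\P_k$); since $\|\D_U^j(p;\a)\|_{B^r}\le|\D_U^j(p;\a)|_j$ (one admissible representation, using Lemma \ref{decreasing} when $j<r$), evaluating $F$ on such chains gives $|\o|_{B^j}\le\|F\|_{\mathrm{Op}}^r$ for every $j\le r$, hence $\|\o\|_{B^r}\le\|F\|_{\mathrm{Op}}^r<\i$. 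These assignments are mutually inverse (check on pointed chains), proving the isomorphism in (6) and the isometry in (7); that $\iota_{s,r}$ is the transpose of $\phi_{r,s}$ is then verified by evaluating both sides on a pointed chain. Finally, for item (8): by the universal property of the final topology, $F\colon\hB_k^\i\to\R$ is continuous iff $F\circ\psi_r\in(\hB_k^r)'$ for every $r$, and the compatibility $\psi_s\circ\phi_{r,s}=\psi_r$ of the inductive system translates, under the identifications of (6) and the transpose relation just noted, into $\iota_{s,r}(\o_s)=\o_r$ for the corresponding forms; this exhibits $(\hB_k^\i)'$ as $\lim_{\leftarrow}\B_k^r$ as a vector space. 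Since this projective limit is a closed subspace of the countable product $\prod_r\B_k^r$ of Banach spaces, it is metrizable, complete and locally convex, i.e. a Fr\'echet space.
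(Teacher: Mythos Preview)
Your proposal is correct and close in spirit to the paper's proof, but the organization differs in two instructive ways. First, the paper \emph{decouples} item (5) from the rest: it explicitly notes that (5) is not used anywhere in proving (1)--(4), (6)--(8), and defers (5) entirely to the separate Theorem~\ref{topequiv2}, whose proof (via Lemmas~\ref{generalstuff2}, \ref{lem1}--\ref{lem3}, \ref{lastbit}) is essentially your inductive sketch with the telescoping argument of Lemma~\ref{der3}. So your instinct that (5) is ``the heart of the matter'' is right analytically, but your claim that (6)--(7) need (1)--(5) ``in hand'' is stronger than necessary---they only need Lemma~\ref{voodoo}, which is exactly the inequality $|\o(A)|\le\|\o\|_{B^r}\|A\|_{B^r}$ you wrote down. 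Second, for completeness in item (1) the paper does \emph{not} give your direct uniform-limit-plus-$\liminf$ argument; instead it postpones completeness until item (7) is established and then invokes the general fact that the continuous dual of a normed space is complete in the operator norm. Your direct route is perfectly valid and arguably more elementary; the paper's route is shorter once (7) is available. For item (8) you argue via the universal property of the final topology and the closed-subspace-of-a-product description; the paper instead proves two general lemmas about duals of inductive systems (Lemmas~\ref{general} and~\ref{general2}) and cites an external reference for the Fr\'echet structure---same content, packaged more abstractly.
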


\begin{proof}
	See section \ref{junkproof} in the appendix for a proof to these claims. 
\end{proof}
\begin{defn}
	We denote the space $\lim_{\leftarrow}\B_k^r$ by $\B_k^\i$.  
\end{defn}

Thus, we have a characterization of the dual space of $\hB_k^r$ as a space of differential forms.  Note that the two equivalent norms above, $\|\cdot\|_{B^r}$ and $\|\cdot\|_{\mathrm{Op}}^r$ are defined in relation to differential chains.  There is a third equivalent norm on $\B_k^r$, given independently from differential chains: 
\begin{defn}\label{topequiv2def}
	Fix $r\geq 1$.  For all bounded forms $\o$ and $0\leq j\leq r-1$ define
	\begin{align*}
	|\o|_{C^j}&:=\sup |D^j \o(p;\a)|,\\
	|\o|_{L^r}&:=\sup \lip(D^{r-1} \o),
	\end{align*}
	where the supremums are taken over all points $p\in \R^n$, all $\a\in \L^k T_p \R^n$ with $\|\a\|=1$, and all directional derivatives $D^j$.  If $\o$ is not $j$-times differentiable, set $|D^j \o(p;\a)|=\i$.  The operator $D^0$ is just taken to be the identity.  The quantity $\lip(D^{r-1}\o)$ is the Lipschitz constant of $D^{r-1}\o$.  Recall that the Lipschitz constant of a form $\o$ is equal to the smallest $C>0$ such that 
	\[
	|\o(p+u;\a)-\o(p;\a)|\leq C |u|
	\]
	for all $p,u\in \R^n$ and $\|\a\|=1$.  
	
	Define $\|\o\|_{C^0}=\sup \{|\o(p;\a)| : p\in \R^n, \|\a\|=1\}$.  Also define
	\[
	\|\o\|_{C^{r-1+\lip}}:=\max \{|\o|_{C^0},\dots,|\o|_{C^{r-1}},|\o|_{L^r}\}.
	\]

\end{defn}

\begin{thm}\label{topequiv2}
	The quantities $\|\o\|_{C^0}$ and $\|\o\|_{B^0}$ are equal for all $\o\in \B_k^0$. For $r\geq 1$, the quantities $\|\o\|_{C^{r-1+\lip}}$ and $\|\o\|_{B^r}$ are equal for all $\o\in \B_k^r$.
\end{thm}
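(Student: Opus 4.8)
The plan is to prove the stated norm equalities by showing that the seminorms making up the two sides agree \emph{term by term}: $|\o|_{B^0}=|\o|_{C^0}=\|\o\|_{C^0}$, and for $r\geq 1$, $|\o|_{B^j}=|\o|_{C^j}$ for $0\leq j\leq r-1$ together with $|\o|_{B^r}=|\o|_{L^r}$. The $j=0$ identity (hence the entire $r=0$ case) is immediate on unwinding definitions, since $\D_U^0(p;\a)=(p;\a)$, $|\D_U^0(p;\a)|_0=\|\a\|$, and $D^0$ is the identity operator. For each remaining equality I prove two inequalities: one that expresses a derivative as a limit of normalized finite-difference evaluations, and one that expresses a finite-difference evaluation as an integral of derivatives.

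The first (easy) inequality is $\|\o\|_{C^{r-1+\lip}}\leq\|\o\|_{B^r}$, obtained seminorm by seminorm. Fixing a simple $\a$ and writing $F(q):=\o_q(\a)$, expansion of the difference operators gives the identity $\o(\D_{u_1}\cdots\D_{u_j}(p;\a))=\D_{u_1}\cdots\D_{u_j}F(p)$, where on the right $\D_u$ is the ordinary finite difference of the function $F$. For unit vectors $v_1,\dots,v_j$ and small $t>0$, dividing by $t^j$ and letting $t\to 0$ yields $(D_{v_1}\cdots D_{v_j}\o)_p(\a)=\lim_{t\to0}t^{-j}\o(\D_{tv_1}\cdots\D_{tv_j}(p;\a))$; since $|\D_{tv_1}\cdots\D_{tv_j}(p;\a)|_j=t^j\|\a\|$ and $|\o(\D_U^j(p;\a))|\leq|\o|_{B^j}\,|\D_U^j(p;\a)|_j$ by the definition of $|\o|_{B^j}$, passing to the limit gives $|\o|_{C^j}\leq|\o|_{B^j}$ for $j\leq r-1$. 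For the Lipschitz term one peels off one extra difference: $(D_{v_1}\cdots D_{v_{r-1}}\o)_{p+u}(\a)-(D_{v_1}\cdots D_{v_{r-1}}\o)_p(\a)=\lim_{t\to0}t^{-(r-1)}\o(\D_{tv_1}\cdots\D_{tv_{r-1}}\D_u(p;\a))$, whose absolute value is at most $|\o|_{B^r}\|u\|\|\a\|$, so $|\o|_{L^r}\leq|\o|_{B^r}$. Taking maxima gives $\|\o\|_{C^{r-1+\lip}}\leq\|\o\|_{B^r}$; since $\o\in\B_k^r$ makes the right side finite, this forces (an infinite left-hand seminorm being impossible) that $\o$ is $(r-1)$-times differentiable, with bounded derivatives up to order $r-1$ and Lipschitz $(r-1)$-st derivative — exactly the regularity needed for the reverse inequality.

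For the reverse inequality $\|\o\|_{B^r}\leq\|\o\|_{C^{r-1+\lip}}$ I use the iterated fundamental theorem of calculus. If $1\leq j\leq r-1$, then $\o\in C^j$ by the previous paragraph, and
\[
\o(\D_{u_1}\cdots\D_{u_j}(p;\a))=\int_{[0,1]^j} (D_{u_1}\cdots D_{u_j}\o)_{\,p+t_1u_1+\cdots+t_ju_j}(\a)\,dt_1\cdots dt_j.
\]
By multilinearity of the directional derivative in its directions, the integrand has absolute value at most $|\o|_{C^j}\|u_1\|\cdots\|u_j\|\|\a\|$, so $|\o(\D_U^j(p;\a))|\leq|\o|_{C^j}\,|\D_U^j(p;\a)|_j$, hence $|\o|_{B^j}\leq|\o|_{C^j}$. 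For $j=r$ only $C^{r-1}$ regularity is available, so I peel off one difference: with $G:=\D_{u_2}\cdots\D_{u_r}F$, apply the integral identity above to the $r-1$ differences defining $G$ (valid since $\o\in C^{r-1}$), whence $\o(\D_{u_1}\cdots\D_{u_r}(p;\a))=\D_{u_1}G(p)=G(p+u_1)-G(p)$ equals an integral over $[0,1]^{r-1}$ whose integrand is the increment of the $(r-1)$-st derivative $D_{u_2}\cdots D_{u_r}\o$ between the points $p+u_1+\sum t_iu_i$ and $p+\sum t_iu_i$; bounding this increment by $|\o|_{L^r}\|u_1\|\cdots\|u_r\|\|\a\|$ and integrating over a set of measure one gives $|\o|_{B^r}\leq|\o|_{L^r}$. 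Combining term by term and taking maxima proves $\|\o\|_{B^r}=\max_{0\leq j\leq r}|\o|_{B^j}=\|\o\|_{C^{r-1+\lip}}$.

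The main obstacle is exactly this last ($j=r$) case: the forms in $\B_k^r$ are in general only $C^{r-1}$ with a Lipschitz top-order derivative, so one cannot differentiate $r$ times and must instead apply the fundamental theorem of calculus at order $r-1$ and absorb the final difference into the Lipschitz constant $|\o|_{L^r}$. A secondary, purely bookkeeping nuisance is matching the two families of definitions: $|\o|_{B^j}$ normalizes by $\|u_1\|\cdots\|u_j\|\|\a\|$ whereas $|\o|_{C^j}$ and $|\o|_{L^r}$ are defined via unit directions and $\|\a\|=1$, so one repeatedly rescales the $u_i$ to unit vectors and uses homogeneity of $\o_p(\cdot)$ in $\a$; one also uses freely (as the definitions permit) that it suffices to test on simple $(p;\a)$, which keeps the mean-value manipulations transparent.
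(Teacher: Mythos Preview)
Your overall strategy---proving the seminorm equalities term by term via difference quotients in one direction and the iterated fundamental theorem of calculus in the other, with the top order handled by peeling off one difference and absorbing it into the Lipschitz constant---matches the paper's proof. The reverse inequality $\|\o\|_{B^r}\leq\|\o\|_{C^{r-1+\lip}}$ is handled correctly and essentially as in the paper.

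There is, however, a genuine gap in your ``easy'' direction $\|\o\|_{C^{r-1+\lip}}\leq\|\o\|_{B^r}$. You write the identity $(D_{v_1}\cdots D_{v_j}\o)_p(\a)=\lim_{t\to0}t^{-j}\o(\D_{tv_1}\cdots\D_{tv_j}(p;\a))$, bound the right side by $|\o|_{B^j}\|\a\|$, and conclude $|\o|_{C^j}\leq|\o|_{B^j}$. But this identity \emph{presupposes} that the limit exists, i.e., that $\o$ is $j$-times differentiable; a bound on the difference quotients does not by itself force convergence. You then say that finiteness of the right side ``forces\ldots that $\o$ is $(r-1)$-times differentiable,'' which is circular: under the paper's convention $|\o|_{C^j}=\infty$ when $\o$ is not $j$-times differentiable, so the inequality $|\o|_{C^j}\leq|\o|_{B^j}$ is exactly what is at stake, not something you may invoke.

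The paper isolates precisely this point (``First, however, we need to know that $\|\o\|_{B^r}<\infty$ implies $\o$ is $(r-1)$-times differentiable'') and supplies the missing argument on the chain side: by Lemma~\ref{der3} the sequence $t^{-1}\D_{tv}(p;\a)$ is Cauchy in $\hB_k^2$, so for $\o\in\B_k^2=(\hB_k^2)'$ the limit $\lim_{t\to0}t^{-1}\o(\D_{tv}(p;\a))$ exists by continuity; this gives differentiability (Lemma~\ref{lem1}), and an induction (Lemmas~\ref{lem2}--\ref{lem3}) shows $\mathcal{L}_v\o\in\B_k^{r-1}$, yielding $(r-1)$-fold differentiability. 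Once that is in hand, your limit computation and the rest of your argument go through.
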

\begin{proof}
	See the appendix, section \ref{topequiv2proof}.
\end{proof}

\begin{remark}
	Note that our space $\B_k^r$ would more therefore more accurately be described as $\B_k^{r-1+\lip}$ (i.e., the space of bounded $r-1$ times differentiable $k$-forms, with bounds on the derivatives, whose $(r-1)$-th derivatives are Lipschitz.  Or, more succinctly but less precisely, the space of bounded $r$-times Lipschitz differentiable $k$-forms.)  The classical definition of $\B^r$ is the space of bounded $C^r$ functions with bounds on the derivatives.  We weaken this requirement, needing only that the $(r-1)$-th derivatives be Lipschitz continuous.  Indeed, to be precise, one should systematically replace $\B_k^r$ with $\B_k^{r-1+\lip}$ throughout.  However, for the sake of the reader's eyesight, we instead opt for a slight abuse of notation.  
\end{remark}

We now have three equivalent ways of describing the topology on $\B_k^r=(\hB_k^r)'$.  We can use the operator norm $\|\o\|_{\mathrm{Op}}^r$, we can compute $\|\o\|_{B^r}$ as a supremum over $\D_U^j(p;\a)$'s (definition \ref{topequiv0}), or we can compute $\|\o\|_{C^{r-1+\lip}}$ as a  supremum over directional derivatives and Lipschitz constants (Definition \ref{topequiv2def}).  All three norms are the same by Theorems \ref{junk}.\ref{topequiv1} and \ref{topequiv2}, so from this point onward, we will simply label this norm as $\|\cdot\|_{B^r}$.  The reason for establishing these equivalences is that now, as of Theorem \ref{topequiv2}, we have defined $\hB_k^r$ and its dual space $\B_k^r$ \emph{completely separately}, including their topologies.

As such, we have accomplished our goal of defining a space of domains ($\hB_k^r$) and a space of forms ($\B_k^r$) such that the space of forms is the continuous dual to the space of domains.  Moreover, we have defined topologies on the domains and forms separately, and have shown that they agree in the sense that the topology on the forms is the strong topology with respect to the topology on the domains.  We can now define an integral pairing between the two.

\begin{defn}
	Let $A\in \hB_k^r$ and let $\o\in \B_k^r$.  Define
	\[
	\cint_A \o :=\o(A).
	\]
	We call this integral the \textbf{Harrison integral}.  By construction, it follows that if $\o_i\rightarrow \o$ in the strong topology on $\B_k^r$ (or even the weak-$*$ topology), then $\cint_A \o_i\rightarrow \cint_A \o$.  Likewise, if $A_i\rightarrow A$ in $\hB_k^r$, then $\cint_{A_i}\o\rightarrow \cint_A\o$.  Thus, the Harrison integral is continuous and linear in both the domain and integrand, and jointly continuous under the strong topology on $\B_k^r$.  Note that this integral is actually an integral: when $A$ represents a classical domain $\hat{A}$ (see section \ref{classical},) then the Harrison integral of $\o$ over $A$ is equal to the Riemann integral of $\o$ over $\hat{A}$.
\end{defn}

\begin{thm}\label{equivnorm}
	The $r$-norm $\|\cdot\|_{B^r}$ on $\hB_k^r$ equal to the norm $\|\cdot\|_{B^r}'$ given by,
	\[
	\|A\|_{{B^r}}':=\sup\left\{\left|\cint_A \o\right| : \o\in \B_k^r,\mathrm{ } \|\o\|_{B^r}\leq 1 \right\}.
	\]
\end{thm}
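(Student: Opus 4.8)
The plan is to recognize this as the assertion that the canonical embedding of the Banach space $\hat{\mathcal{B}}_k^r$ into its bidual is isometric, and to deduce it from the Hahn--Banach theorem together with the duality results already established in Theorem \ref{junk}. One inequality is essentially immediate from the definitions; the other is exactly the existence of a norming functional.

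First I would dispatch the easy inequality $\|A\|_{B^r}' \le \|A\|_{B^r}$. For any $\o\in\B_k^r$ with $\|\o\|_{B^r}\le 1$, Theorem \ref{junk}.\ref{trans} identifies $\o$ with a continuous linear functional on $\hB_k^r$, and Theorem \ref{junk}.\ref{topequiv1} tells us that its operator norm equals $\|\o\|_{B^r}\le 1$. Hence $\left|\cint_A \o\right| = |\o(A)| \le \|A\|_{B^r}$ for every such $\o$, and taking the supremum over admissible $\o$ yields $\|A\|_{B^r}' \le \|A\|_{B^r}$.

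For the reverse inequality, fix $A\in\hB_k^r$ with $A\ne 0$ (the case $A=0$ being trivial). Since $\hB_k^r$ is a Banach space, the Hahn--Banach theorem in its norming-functional form --- extend the functional $\l A \mapsto \l\|A\|_{B^r}$ from the line $\mathrm{span}(A)$ --- produces a continuous linear functional $\ell$ on $\hB_k^r$ with operator norm $1$ and $\ell(A)=\|A\|_{B^r}$. By Theorem \ref{junk}.\ref{trans} there is a unique $\o\in\B_k^r$ representing $\ell$, and by Theorem \ref{junk}.\ref{topequiv1} we get $\|\o\|_{B^r}=\|\ell\|_{\mathrm{Op}}^r=1$. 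Thus $\o$ is admissible in the supremum defining $\|A\|_{B^r}'$, giving $\|A\|_{B^r}' \ge \left|\cint_A \o\right| = \ell(A) = \|A\|_{B^r}$. Combining the two bounds proves the theorem.

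I do not expect a serious obstacle: the real content has been absorbed into Theorem \ref{junk}, which identifies $\B_k^r$ with the \emph{full} continuous dual of $\hB_k^r$ and matches the norms, so Hahn--Banach applies directly. The only point needing a little care is that one must invoke Hahn--Banach on the completed space $\hB_k^r$ rather than on the dense subspace $\P_k$ --- on $\P_k$ one would only obtain the supremum up to $\e$, though by continuity of the norm this would suffice as well. One could alternatively, for $A$ a pointed chain, try to exhibit an explicit norming form by expanding the infimum in Definition \ref{normdef} and choosing a form tailored to an almost-optimal decomposition, but the Hahn--Banach argument is cleaner and covers arbitrary elements of $\hB_k^r$ at once.
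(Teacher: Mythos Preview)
Your proof is correct and is essentially the same as the paper's: both invoke Hahn--Banach to get the isometric embedding $\hB_k^r\hookrightarrow(\hB_k^r)''$ and then appeal to Theorem~\ref{junk}.\ref{topequiv1} to identify the dual norm with $\|\cdot\|_{B^r}$ on $\B_k^r$. The paper's version is simply more terse, citing the standard bidual-embedding fact directly rather than spelling out the two inequalities.
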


\begin{proof}
	By Hahn-Banach, there is a canonical isometric injection $\hB_k^r\hookrightarrow (\hB_k^r)''$, given by $A\mapsto \phi_A$, where $\phi_A(\o):=\o(A)$ \cite[p.95]{rudin}.  The result follows from Theorem \ref{junk}.\ref{topequiv1}.
\end{proof}

The space $\B_k^\i$ is slightly more subtle.  It is the space of bounded $C^\i$ forms with bounds on the $j$-th derivatives, for $j\geq 1$.  Note that these bounds are \emph{not} uniform over $j$.  There are several topologies on $\B_k^\i$: we can give $\B_k^\i$ the strong topology considered as the continuous dual to $\hB_k^\i$, and we can give $\B_k^\i$ the initial topology as the projective limit $\lim_{\i\leftarrow r} \B_k^r$.  The strong and initial topologies are actually the same, by \cite[Ch.V, \S4, Prop. 15]{robertson}.  Moreover, as a countable projective limit of Banach spaces, the space $\B_k^\i$ is naturally a Fr\'{e}chet space, with (semi)-norms $\|\cdot\|_{B^r}, r\geq 0$.  However, we can describe another topology on $\B_k^\i$, and that is the weak-* topology: we say that $\o_i\rightarrow \o$ in the weak-* topology if $\cint_J \o_i\rightarrow \cint_J  \o$ for all $J\in \hB_k^\i$.

\begin{example}
	The function $\sin(x)\in \B_0^r(\R)$ for all $r\geq 0$, and hence $\sin(x)\in \B_0^\i(\R)$.  The function $\sin(x^2)\notin \B_0^r(\R)$ for $r\geq 1$, since $\frac{\p}{\p x}  \sin(x^2)$ is not bounded.  The function $x$ is not in $\B_0^0(\R)$, since it is not bounded.  Smooth partition of unity functions are, however, elements of $\B_0^\i(\R)$, as is any smooth function with compact support.  
\end{example}

\subsection{Operators on $\hB_k^r$ and Their Dual Operators on $\B_k^r$}\label{operators}
We now define four fundamental continuous linear operators on $\P_k(\R^n)$.  These will extend by continuity to continuous linear operators on $\hB_k^r(\R^n)$. Then, we will use these fundamental operators to build other, more complicated (and interesting) ones.

\subsubsection{Multiplication by a function}

\begin{defn}
	Let $A=\sum(p_i;\a_i)\in \P_k(\R^n)$, $f\in \B_0^r(\R^n)$, $r\geq 0$.  Define
	\[
		m_f A=\sum(p_i; f(p_i)\a_i).
	\]	
	The linear map $m_f: \P_k(\R^n)\rightarrow \P_k(\R^n)$ is called \textbf{multiplication by $f$}.
\end{defn}

\begin{lem}
	The map $m_f$ is continuous in the $r$-norm topology and thus extends to a map $m_f: \hB_k^r(\R^n)\rightarrow \hB_k^r(\R^n)$.  The map $m_f$ is also continuous and linear in $f$: if $f_i\rightarrow f$ in the norm topology on $\B_0^r(\R^n)$, then $m_{f_i} J\rightarrow m_f J$ for all $J\in \hB_k^r(\R^n)$.  As such, $m_{\cdot}$ defines a jointly continuous map on $\B_0^r(\R^n)\times \hB_k^r(\R^n)$.
\end{lem}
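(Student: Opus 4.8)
The plan is to establish three things in turn: that $m_f$ is bounded (hence continuous) in the $r$-norm for fixed $f$, that $m_f$ depends continuously and linearly on $f$, and finally to assemble these into joint continuity. The key estimate is a bound of the form $\|m_f A\|_{B^r} \le C_r \|f\|_{B^r} \|A\|_{B^r}$ for all pointed chains $A$, with $C_r$ depending only on $r$ (and perhaps $n$, $k$); once this is in hand, continuity on $\hB_k^r$ follows by density of $\P_k$ (from the density lemma) and the standard extension-by-uniform-continuity argument, and the extended operator is still linear since linearity is a closed condition.

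The heart of the matter is the boundedness estimate, and the natural way to get it is a Leibniz-type expansion for the difference operator $\D_U^j$ applied to $m_f(p;\a)$. Concretely, I would first check the single-difference case: $\D_u (m_f(p;\a)) = (p+u; f(p+u)\a) - (p; f(p)\a)$, which I want to rewrite as $m_f(\D_u(p;\a)) + (\text{correction})$, where the correction involves $f(p+u) - f(p)$ and is controlled by $\lip(f)\|u\|\|\a\|$, i.e.\ by $|\o|_{B^1}$-type quantities using the equivalence in Theorem \ref{topequiv2}. Iterating this gives, for a $j$-fold difference $\D_U^j(p;\a)$ with $U = \{u_1,\dots,u_j\}$, a discrete Leibniz formula expressing $\D_U^j(m_f(p;\a))$ as a sum over subsets $S \subseteq U$ of terms of the shape $\D_S(\text{a divided difference of } f) \cdot \D_{U\setminus S}(p;\a)$ — the standard discrete analogue of $\partial^j(fg) = \sum \binom{j}{i}\partial^i f\,\partial^{j-i}g$. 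Each term is then estimated using the $C^{r-1+\lip}$ description of $\|f\|_{B^r}$ (Theorem \ref{topequiv2}): an $i$-fold divided difference of $f$ has sup-norm at most (a constant times) $|f|_{C^i}$ or $|f|_{L^r}$, contributing the factor $\|u\|$ for each difference taken, which is exactly what $|\D_{U}^{j}(p;\a)|_j$ budgets for. Summing over the $2^j \le 2^r$ subsets and taking the infimum over representations of $A$ yields $\|m_f A\|_{B^r} \le C_r \|f\|_{B^r}\|A\|_{B^r}$.

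For the dependence on $f$: since $m_f$ is linear in $f$ and $m_{f} - m_{g} = m_{f-g}$, the same estimate gives $\|m_f J - m_g J\|_{B^r} = \|m_{f-g} J\|_{B^r} \le C_r \|f-g\|_{B^r}\|J\|_{B^r}$ for $J \in \hB_k^r$ (first on pointed chains, then by density/continuity), so $f_i \to f$ in $\B_0^r$ forces $m_{f_i}J \to m_f J$. Joint continuity of $(f,J) \mapsto m_f J$ on $\B_0^r \times \hB_k^r$ then follows from the bilinear bound $\|m_f J\|_{B^r} \le C_r\|f\|_{B^r}\|J\|_{B^r}$ by the usual argument for bounded bilinear maps on normed spaces (write $m_{f}J - m_{f_0}J_0 = m_{f-f_0}J + m_{f_0}(J-J_0)$ and estimate each piece).

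The main obstacle I anticipate is bookkeeping the discrete Leibniz expansion cleanly — in particular making sure the ``divided difference of $f$'' factors are paired with the correct collection of direction vectors so that the weights $\|u_1\|\cdots\|u_j\|$ are distributed correctly between the two factors, and invoking the $C^{r-1+\lip}$ equivalence precisely at the top order $j = r$ (where one needs the Lipschitz bound on the $(r-1)$-st derivative rather than a genuine $r$-th derivative). Everything else — extension by density, linearity, the bilinear-map argument for joint continuity — is routine once the key estimate $\|m_f A\|_{B^r} \le C_r\|f\|_{B^r}\|A\|_{B^r}$ is secured on $\P_k$.
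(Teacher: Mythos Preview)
Your approach is correct and will go through, but it differs from the paper's route. One small slip to flag first: what you call $\D_u(m_f(p;\a))$ and $\D_U^j(m_f(p;\a))$ is actually $m_f(\D_u(p;\a))$ and $m_f(\D_U^j(p;\a))$ --- note that $\D_u$ applied to the simple element $(p;f(p)\a)$ just translates it, keeping the scalar $f(p)$ fixed, whereas the expression you wrote down, $(p+u;f(p+u)\a)-(p;f(p)\a)$, is $m_f$ applied to the difference chain. Fortunately the latter is exactly what you need, so the computations are right even though the labels are swapped.

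The paper takes a dual route instead: it invokes the identification $\|J\|_{B^r}=\sup_{\|\o\|_{B^r}=1}|\cint_J\o|$ (Theorem \ref{equivnorm}), observes that $\cint_{m_fA}\o=\cint_A f\o$ on pointed chains, and then appeals to a product rule on the \emph{form} side to bound $\|f\o\|_{B^r}$ by a constant times $\|f\|_{B^r}\|\o\|_{B^r}$. This is slicker once duality is in hand, and it hides the Leibniz bookkeeping inside the phrase ``by the product rule on $f\o$.'' Your primal approach does that bookkeeping explicitly on the chain side via the discrete Leibniz expansion of $m_f(\D_U^j(p;\a))$; it is longer but more self-contained, since it works straight from Definition \ref{normdef} and the $C^{r-1+\lip}$ description of $\|f\|_{B^r}$ without passing through the dual characterization of the chain norm. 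Either way the key content is the same bilinear estimate $\|m_fJ\|_{B^r}\le C_r\|f\|_{B^r}\|J\|_{B^r}$, from which separate and joint continuity follow exactly as you indicate.
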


\begin{proof}
	Linearity in both variables is immediate from the definition.  We show continuity.  If $f_i\rightarrow 0$ in $\B_0^r(\R^n)$, then 
	\[
	\|m_{f_i}A\|_{B^r}=\sup_{0\neq \o\in \B_k^r(\R^n)}\frac{\cint_A f_i\o}{\|\o\|_{B^r}}\leq \|A\|_{B^r}\|f_i\o\|_{B^r}\rightarrow 0
	\]
	by the product rule on $f_i\o$.
	
	Likewise, 
	\[
	\|m_fA_i\|_{B^r}=\sup_{0\neq \o\in \B_k^r(\R^n)}\frac{\cint_A f \o}{\|\o\|_{B^r}}\leq \|A_i\|_{B^r}\sup \frac{\|f\o\|_{B^r}}{\|\o\|_{B^r}}.
	\]
	By the product rule, there exists some $K_f>0$ such that $\|f\o\|_{B^r}/\|\o\|_{B^r}<K_f$ for all $0\neq \o\in\B_k^r(\R^n)$.  If $\|A_i\|_{B^r}\rightarrow 0$, it follows that $\|m_fA_i\|_{B^r}\rightarrow 0$ as well, and hence $m_f$ is continuous.   
\end{proof}

\begin{lem}
	The dual operator to multiplication by $f$ is \emph{multiplication by $f$}.  That is,
	\[
	\cint_{m_f J}\o=\cint_J f\o
	\]
	for all $J\in \hB_k^r(\R^n)$ and all $\o\in \B_k^r(\R^n)$.  
\end{lem}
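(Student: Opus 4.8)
The plan is to verify the identity first on pointed chains, where both sides are literally finite sums, and then extend to all of $\hB_k^r(\R^n)$ by the density of $\P_k$ and the continuity of all the maps involved. So first I would take $A = \sum_{i=1}^N (p_i;\a_i) \in \P_k(\R^n)$ and $\o \in \B_k^r(\R^n)$, and simply unwind both sides using the definition of $m_f$ and the definition of the evaluation pairing $\o(A) = \sum_i \o_{p_i}(\a_i)$. On the left, $\cint_{m_f A}\o = \o(m_f A) = \o\bigl(\sum_i (p_i; f(p_i)\a_i)\bigr) = \sum_i \o_{p_i}(f(p_i)\a_i)$. On the right, $\cint_A f\o = (f\o)(A) = \sum_i (f\o)_{p_i}(\a_i) = \sum_i f(p_i)\,\o_{p_i}(\a_i)$. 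By linearity of $\o_{p_i}$ in its vector argument these two finite sums agree termwise, so the identity holds on $\P_k$.

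Next I would upgrade to general $J \in \hB_k^r(\R^n)$. Fix $\o \in \B_k^r(\R^n)$ and consider the two linear functionals $J \mapsto \cint_{m_f J}\o$ and $J \mapsto \cint_J f\o$ on $\hB_k^r(\R^n)$. The second is continuous because $f\o \in \B_k^r(\R^n)$ (by the product rule, which also underlies the previous lemma) and the Harrison integral is continuous in the domain variable. The first is continuous because $m_f : \hB_k^r(\R^n) \to \hB_k^r(\R^n)$ is continuous (the previous lemma) and again the Harrison integral is continuous in the domain. Two continuous linear functionals that agree on the dense subspace $\P_k$ must agree everywhere, which gives the claimed equality for all $J \in \hB_k^r(\R^n)$ and all $\o \in \B_k^r(\R^n)$.

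There is essentially no obstacle here: the content is entirely the pointed-chain computation, which is immediate from the definitions, plus a standard density-and-continuity argument for which every ingredient (density of $\P_k$ in $\hB_k^\i$ hence in each $\hB_k^r$ up to the canonical maps, continuity of $m_f$, continuity of the pairing, and $f\o \in \B_k^r$ via the product rule) has already been established above. The only point worth stating carefully is that one should fix $\o$ and argue in the $J$ variable — the $\o$ variable needs no separate limiting argument since the identity is being asserted for each individual $\o \in \B_k^r(\R^n)$.
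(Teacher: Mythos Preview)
Your argument is correct. The paper in fact states this lemma without proof, but your density-and-continuity argument is exactly the template the paper uses for the analogous dual-operator lemma for extrusion (verify on $\P_k$, then extend by continuity of the integral and the fact that $f\o\in\B_k^r$), so your proposal matches the paper's approach.
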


\subsubsection{Extrusion}
\begin{defn}
Let $v\in \R^n$ and let $\check{v}$ be the constant vector field on $\R^n$ in the direction of $v$.  Let $A=\sum(p_i;\a_i)\in \P_k(\R^n)$.  Define
\[
E_v(A)=\sum(p_i;\check{v}(p_i)\wedge \a_i).
\]
This map is called \textbf{extrusion through $v$}\footnote{The \emph{extrusion} operator first appears in \cite{harrison4}.}.
\end{defn}

\begin{lem}
	The linear operator $E_v: \P_k(\R^n)\rightarrow \P_{k+1}(\R^n)$ is continuous in the $r$-norm topology for all $r\geq 0$.  It is also linear and continuous with respect to $v$: if $v_i\rightarrow v$ in $\R^n$, then $E_{v_i} J\rightarrow E_v J$ for all $J\in \hB_k^r(\R^n)$.
\end{lem}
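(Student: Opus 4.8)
The plan is to show that $E_v$ has operator norm at most $\|v\|$ in every $r$-norm, by exploiting the fact that extrusion only alters the ``vector part'' of a pointed chain and therefore commutes with the difference operator. First I would record the two elementary observations that make everything work: (i) $E_v$ is linear in $A$ and linear in $v$ (so in particular $E_{v_1+v_2}=E_{v_1}+E_{v_2}$), both immediate from the bilinearity of $(v,\a)\mapsto \check v(p)\wedge\a$; and (ii) $E_v\circ\D_u=\D_u\circ E_v$ on $\P_k(\R^n)$. Claim (ii) holds because translating the base point leaves the constant vector field $\check v$ unchanged: $E_v\D_u(p;\a)=(p+u;v\wedge\a)-(p;v\wedge\a)=\D_u E_v(p;\a)$, and the general case follows by linearity and iteration, giving $E_v\D_U^j=\D_U^j E_v$ for every index set $U$ of length $j$.

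Next I would use submultiplicativity of the mass norm under left wedge multiplication by a vector: $\|v\wedge\a\|\le\|v\|\,\|\a\|$ for all $v\in\R^n$ and $\a\in\L^k T_p\R^n$. This reduces to the case of simple $\a=w_1\wedge\cdots\wedge w_k$, where $|v\wedge\a|$ is the $(k+1)$-volume of the parallelepiped on $v,w_1,\dots,w_k$, hence equals $|v_\perp|\,|\a|\le|v|\,|\a|$ with $v_\perp$ the component of $v$ orthogonal to $\mathrm{span}(w_1,\dots,w_k)$; decomposing a general $\a$ into simple summands and passing to the infimum over such decompositions gives the inequality. Combining this with (ii): if $A=\sum_{i=1}^N\D_{U_i}^{j_i}(p_i;\a_i)$ is any representation of $A\in\P_k$ with $0\le j_i\le r$, then $E_vA=\sum_{i=1}^N\D_{U_i}^{j_i}(p_i;v\wedge\a_i)$, and by Definition \ref{seminormdef} each term satisfies $|\D_{U_i}^{j_i}(p_i;v\wedge\a_i)|_{j_i}\le\|v\|\,|\D_{U_i}^{j_i}(p_i;\a_i)|_{j_i}$. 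Taking the infimum over all representations of $A$ yields $\|E_vA\|_{B^r}\le\|v\|\,\|A\|_{B^r}$ for every $r\ge 0$ (the $r=0$ case being the subcase $j_i=0$). Hence $E_v$ is bounded on $(\P_k,\|\cdot\|_{B^r})$ and extends uniquely to a continuous linear map $E_v:\hB_k^r(\R^n)\to\hB_{k+1}^r(\R^n)$ with operator norm at most $\|v\|$, which also passes to the inductive limit $\hB_k^\i$.

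Finally, continuity in $v$ is a formal consequence of this operator bound together with additivity (i): for fixed $J\in\hB_k^r(\R^n)$ and $v_i\to v$ in $\R^n$ we have $E_{v_i}J-E_vJ=E_{v_i-v}J$, so $\|E_{v_i}J-E_vJ\|_{B^r}\le\|v_i-v\|\,\|J\|_{B^r}\to0$. I do not expect a serious obstacle; the only points demanding care are the commutation identity $E_v\D_U^j=\D_U^j E_v$, which must be checked for iterated differences and not just a single $\D_u$, and the elementary geometric estimate $|v\wedge\a|\le|v|\,|\a|$ for simple $\a$ that underlies the mass-norm bound. Everything else is the same ``pass to the infimum, then extend by density'' pattern already used for $m_f$.
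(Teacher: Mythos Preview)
Your proof is correct and follows essentially the same route as the paper: establish the commutation $E_v\D_U^j=\D_U^j E_v$, use the mass-norm bound $\|v\wedge\a\|\le\|v\|\,\|\a\|$ to get $|\D_U^j(p;v\wedge\a)|_j\le\|v\|\,|\D_U^j(p;\a)|_j$, and then pass to the infimum to obtain $\|E_vA\|_{B^r}\le\|v\|\,\|A\|_{B^r}$. You are in fact slightly more explicit than the paper in justifying the mass-norm inequality and in deducing continuity in $v$ from linearity plus the operator bound, both of which the paper leaves implicit.
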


\begin{proof}
	It is enough to show that $\|E_v(A)\|_{B^r}\leq \|v\|\|A\|_{B^r}$ for all $A\in \P_k(\R^n)$.  First, note that $E_v(\D_U^j(p;\a))=\D_U^j(p;\check{v}(p)\wedge\a)$.  It follows that for all $r\geq j$,
	\[
	\|E_v(\D_U^j(p;\a))\|_{B^r}\leq |\D_U^j(p;\check{v}(p)\wedge\a)|_j\leq \|v\| |\D_U^j(p;\a)|_j.
	\]
	Let $A\in \P_k(\R^n)$, $r\geq 0$, and $\e>0$.  Then we can write $A=\sum_{i=1}^N \D_U^{j_i}(p_i;\a_i)$ such that $\|A\|_{B^r}>\sum_{i=1}^N |\D_U^{j_i}(p_i;\a_i)|_{j_i}-\e$.  Thus,
	\begin{align}
		\|E_v(A)\|_{B^r}&\leq \sum_{i=1}^N \|E_v(\D_U^{j_i}(p_i;\a_i))\|_{B^r}\\
		&\leq \|v\|\sum_{i=1}^N |\D_U^{j_i}(p_i;\a_i)|_{j_i}\\
		&\leq \|v\| (\|A\|_{B^r}+\e).
	\end{align}
	Since the inequality holds for all $\e>0$, we conclude that $\|E_v(A)\|_{B^r}\leq \|v\|\|A\|_{B^r}$.
\end{proof}

Thus, for each $r\geq 0$, we get a continuous linear operator $E_v: \hB_k^r(\R^n)\rightarrow \hB_{k+1}^r(\R^n)$.  Since $\hB_k^r(\R^n)$ is a Banach space, it follows that $E_{\cdot}$ defines a jointly continuous map on $\R^n\cdot \hB_k^r(\R^n)$.

\begin{lem}
	The dual operator to extrusion is \emph{interior product}.  That is,
	\[
	\cint_{E_v J}\o=\cint_J \iota_{\check{v}} \o,
	\]
	for all $r\geq 0$, all $J\in \hB_k^r(\R^n)$ and all $\o\in \B_{k+1}^r(\R^n)$.  
\end{lem}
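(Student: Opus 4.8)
The plan is to verify the duality identity on pointed chains, where everything is a finite sum and the computation is transparent, and then extend to all of $\hB_k^r(\R^n)$ by continuity and density. First I would take $A=\sum_{i=1}^N(p_i;\a_i)\in\P_k(\R^n)$ and $\o\in\B_{k+1}^r(\R^n)$, and simply unwind both sides of the claimed equation using the definitions. On the left, $E_v(A)=\sum_i(p_i;\check v(p_i)\wedge\a_i)$, so $\cint_{E_v A}\o=\o(E_v A)=\sum_i \o_{p_i}(\check v(p_i)\wedge\a_i)$. On the right, $\iota_{\check v}\o$ is the interior product of $\o$ with the constant vector field $\check v$, so $\cint_A \iota_{\check v}\o=\sum_i(\iota_{\check v}\o)_{p_i}(\a_i)=\sum_i \o_{p_i}(\check v(p_i)\wedge\a_i)$, by the very definition of interior product of a $(k+1)$-form with a vector. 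Thus the two expressions agree term by term for pointed chains.

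Next I would promote this to arbitrary differential chains. Fix $r\ge 0$. The operator $E_v:\hB_k^r(\R^n)\to\hB_{k+1}^r(\R^n)$ is continuous (established in the preceding lemma), and $\o\in\B_{k+1}^r(\R^n)=(\hB_{k+1}^r)'$ is continuous, so $J\mapsto\cint_{E_v J}\o$ is a continuous linear functional on $\hB_k^r(\R^n)$. On the other side, $\iota_{\check v}\o$ is a bounded $k$-form; I would note that it in fact lies in $\B_k^r(\R^n)$ — its directional derivatives up to order $r-1$ are bounded and its $(r-1)$-st derivatives are Lipschitz, because $\iota_{\check v}$ merely contracts $\o$ against a constant vector field and so does not worsen the differentiability class (this is immediate from the $\|\cdot\|_{C^{r-1+\lip}}$ description of the norm in Definition \ref{topequiv2def}). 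Hence $J\mapsto\cint_J\iota_{\check v}\o$ is also a continuous linear functional on $\hB_k^r(\R^n)$. These two continuous functionals agree on the dense subspace $\P_k(\R^n)$ by the computation above, so they agree on all of $\hB_k^r(\R^n)$, which is the assertion.

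The only genuine point requiring care — the rest being bookkeeping — is the claim that $\iota_{\check v}\o\in\B_k^r(\R^n)$, i.e.\ that contraction against a constant vector field does not take us out of the relevant Banach space; this is what makes the pairing $\cint_J\iota_{\check v}\o$ meaningful for $J\in\hB_k^r$ in the first place. I expect this to be a short estimate: in coordinates $\iota_{\check v}\o$ has components that are constant-coefficient linear combinations of the components of $\o$, so $\|\iota_{\check v}\o\|_{B^r}\le C\|v\|\,\|\o\|_{B^r}$ for a dimensional constant $C$, and in particular it is finite. With that in hand the density-and-continuity argument closes the proof.
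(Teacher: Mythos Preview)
Your argument is correct and follows the same overall structure as the paper's proof: verify the identity on pointed chains, check that $\iota_{\check v}\o\in\B_k^r(\R^n)$, and extend by density and continuity. The one minor difference is in how the membership $\iota_{\check v}\o\in\B_k^r(\R^n)$ is justified: you invoke the $\|\cdot\|_{C^{r-1+\lip}}$ description and estimate regularity directly, whereas the paper observes that since $\B_k^r(\R^n)=(\P_k(\R^n),\|\cdot\|_{B^r})'$ and the functional $A\mapsto\o(E_vA)$ is continuous on $(\P_k,\|\cdot\|_{B^r})$ (being the composition of the continuous $E_v$ with $\o\in\B_{k+1}^r$), this functional---which agrees with $\iota_{\check v}\o$ on pointed chains---automatically lies in $\B_k^r$. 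The paper's route is slightly slicker in that it avoids any direct regularity estimate and does not appeal to Theorem~\ref{topequiv2}, but both are valid.
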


\begin{proof}
	This is certainly the case for $J\in \P_k(\R^n)$.  Since $\B_k^r(\R^n)=(\P_k(\R^n), \|\cdot\|_{B^r})'$, it follows that $\iota_{\check{v}}\o\in \B_k^r(\R^n)$.  Thus,  the equality holds by continuity of the integral for all $J\in \hB_k^r(\R^n)$.
\end{proof}

\subsubsection{Retraction}
\begin{defn}
	Let $v, \check{v}$ be as before, and let $A=(p;v_1\wedge\cdots\wedge v_k)\in \P_k(\R^n)$ be a simple $k$-element, $k\geq 1$.  Define
	\[
	E_v^\dagger(A)=\sum_{i=1}^k (-1)^{i+1}\<v,v_i\>(p;v_1\wedge\cdots\wedge \hat{v_i}\wedge\cdots v_k),
	\]
	where $\hat{v_i}$ signifies that $v_i$ is removed from the wedge product.  This extends to a linear map on $\P_k(\R^n)$, called \textbf{retraction}\footnote{The \emph{retraction} operator first appears in \cite{harrison11}.}.
\end{defn}

\begin{lem}
	The operator $E_v^\dagger: \P_k(\R^n)\rightarrow \P_{k-1}(\R^n)$ is well-defined and continuous in the $r$-norm for all $r\geq 0$.  It is also linear and continuous with respect to $v$: if $v_i\rightarrow v$ in $\R^n$, then $E_{v_i}^\dagger J\rightarrow E_v^\dagger J$ for all $J\in \hB_k^r(\R^n)$.  
\end{lem}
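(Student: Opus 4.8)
The plan is to follow the extrusion lemma almost verbatim, the one genuinely new ingredient being an algebraic identification of $E_v^\dagger$ with a fibrewise interior product. First I would observe that on a simple $k$-element the defining formula is
\[
E_v^\dagger(p;v_1\wedge\cdots\wedge v_k)=\bigl(p;\iota_{v^\flat}(v_1\wedge\cdots\wedge v_k)\bigr),
\]
where $v^\flat=\langle v,\cdot\rangle$ and $\iota$ is interior product, since $(-1)^{i+1}=(-1)^{i-1}$ makes the sign bookkeeping agree. Because interior product is a well-defined linear map $\L^k T_p\R^n\to\L^{k-1}T_p\R^n$ depending only on the $k$-vector and not on any chosen decomposition into a wedge of $1$-vectors, this simultaneously shows $E_v^\dagger$ is well-defined on simple $k$-elements and that it extends (uniquely and consistently) to the linear map $\sum_i(p_i;\a_i)\mapsto\sum_i(p_i;\iota_{v^\flat}\a_i)$ on all of $\P_k(\R^n)$. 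It also shows $v\mapsto E_v^\dagger$ is linear, since $v\mapsto v^\flat$ and $\xi\mapsto\iota_\xi$ are.

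Next I would establish the mass-norm estimate $\|E_v^\dagger\a\|\le\|v\|\|\a\|$ for every $k$-vector $\a$. A simple $\a=v_1\wedge\cdots\wedge v_k$ is supported on a $k$-dimensional subspace $W$, so $\iota_{v^\flat}\a\in\L^{k-1}W$; as $\dim W=k$, every element of $\L^{k-1}W$ is simple, so $E_v^\dagger$ sends simple $k$-vectors to simple $(k-1)$-vectors. On simple vectors the mass norm agrees with the Euclidean norm induced by $\langle\cdot,\cdot\rangle$, and with respect to that norm $\iota_{v^\flat}$ is the adjoint of $w\mapsto v\wedge w$, hence satisfies $\|\iota_{v^\flat}\b\|\le\|v\|\|\b\|$. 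Thus $\|E_v^\dagger\a\|\le\|v\|\|\a\|$ for simple $\a$; for a general $\a=\sum\a_i$ with $\a_i$ simple, subadditivity of the mass norm gives $\|E_v^\dagger\a\|\le\sum\|E_v^\dagger\a_i\|\le\|v\|\sum\|\a_i\|$, and taking the infimum over decompositions yields the claim.

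Finally, since $E_v^\dagger$ acts only on the vector part it commutes with translation and hence with each difference operator, so $E_v^\dagger(\D_U^j(p;\a))=\D_U^j(p;\iota_{v^\flat}\a)$, whence $\|E_v^\dagger(\D_U^j(p;\a))\|_{B^r}\le|\D_U^j(p;\iota_{v^\flat}\a)|_j\le\|v\|\,|\D_U^j(p;\a)|_j$ for all $r\ge j$. Then the argument of the extrusion lemma transfers word for word: given $A\in\P_k(\R^n)$ and $\e>0$, pick $A=\sum_i\D_{U_i}^{j_i}(p_i;\a_i)$ with $\sum_i|\D_{U_i}^{j_i}(p_i;\a_i)|_{j_i}<\|A\|_{B^r}+\e$, and subadditivity of $\|\cdot\|_{B^r}$ gives $\|E_v^\dagger A\|_{B^r}\le\|v\|(\|A\|_{B^r}+\e)$, hence $\|E_v^\dagger A\|_{B^r}\le\|v\|\|A\|_{B^r}$; thus $E_v^\dagger$ extends to a continuous linear map $\hB_k^r(\R^n)\to\hB_{k-1}^r(\R^n)$. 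Continuity in $v$ follows from linearity in $v$: $E_{v_i}^\dagger J-E_v^\dagger J=E_{v_i-v}^\dagger J$ and $\|E_{v_i-v}^\dagger J\|_{B^r}\le\|v_i-v\|\,\|J\|_{B^r}\to0$, first for $J\in\P_k(\R^n)$ and then for all $J\in\hB_k^r(\R^n)$ by density together with the uniform operator bound; joint continuity then follows as in the extrusion case because $\hB_k^r(\R^n)$ is Banach. The only real obstacle is the second step — recognizing the retraction formula as interior product and checking that it preserves simplicity with no loss in mass norm; once that is in hand, everything else is a transcription of the proof already given for $E_v$.
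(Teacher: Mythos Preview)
Your proof is correct and shares the paper's overall architecture: identify $E_v^\dagger$ as a fibrewise operation commuting with $\D_U^j$, bound it in mass norm on $k$-vectors, and then run the standard $\e$-decomposition argument from the extrusion lemma. The genuine difference is in the mass estimate. The paper picks an orthonormal basis $e_1,\dots,e_k$ for the span of a simple $\a=\l\,e_1\wedge\cdots\wedge e_k$, expands $E_v^\dagger\a=\l\sum_i(-1)^{i+1}\langle v,e_i\rangle\,e_1\wedge\cdots\wedge\hat e_i\wedge\cdots\wedge e_k$, and applies the triangle inequality termwise to obtain $\|E_v^\dagger(p;\a)\|_{B^0}\le k\|v\|\|\a\|$, hence the operator bound $k\|v\|$. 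Your route---recognising $E_v^\dagger$ as $\iota_{v^\flat}$, observing that $\iota_{v^\flat}\a\in\L^{k-1}W$ with $\dim W=k$ is automatically simple so its mass equals its Euclidean norm, and then using that $\iota_{v^\flat}$ is the Euclidean adjoint of $v\wedge\cdot$ with operator norm $\|v\|$---gives the sharper bound $\|E_v^\dagger\a\|\le\|v\|\|\a\|$ with no factor of $k$. This is a strictly better constant and a cleaner conceptual argument; what the paper's version buys in exchange is that it is entirely elementary, requiring no facts about adjoints or about decomposability in top-minus-one degree.
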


\begin{proof}
	It is well known that this type of contraction on a $k$-vector is well-defined.  We show continuity.  It is enough to show that
	\[
	\|E^\dagger_v(A)\|_{B^r}\leq k\|v\|\|A\|_{B^r}.
	\]
	For each $r\geq j$, we know that 
	\[\|E^\dagger_v(\D_U^j(p;\a))\|_{B^r}=\|\D_U^j(E^\dagger_v(p;\a))\|_{B^r}\leq |\D_U^j(E^\dagger_v(p;\a))|_j\leq \|u_1\|\cdots\|u_j\|\|E^\dagger_v(p;\a)\|_{B^0}.\]
	
	We show that $\|E^\dagger_v(p;\a)\|_{B^0}\leq k\|v\|\|\a\|$ when $\a$ is simple.  Let $\{e_1,\dots,e_k\}$ be an orthonormal basis of the $k$-dimensional subspace of $\R^n$ spanned by $\a$.  Then $\a=\l e_1\wedge\cdots\wedge e_k$ for some $\l\in \R$.  Thus, $E^\dagger_v(p;\l e_1\wedge\cdots\wedge e_k)=(p;\l \sum_{i=1}^k (-1)^{i+1}\<v,e_i\>(e_1\wedge\cdots\wedge\hat{e_i}\wedge\cdots\wedge e_k))$.  It follows that $\|E^\dagger_v(p;\a)\|_{B^0}\leq k\|v\| \|\a\|$.  
	
	We conclude that $\|E^\dagger_v(\D_U^j(p;\a))\|_{B^r}\leq k\|v\|\|u_1\|\cdots\|u_j\|\|\a\|=k\|v\||\D_U^j(p;\a)|_j$.  
	
	Let $A\in \P_k(\R^n)$, $r\geq 0$ and $\e>0$.  We write $A=\sum_{i=1}^N \D_U^{j_i}(p_i;\a_i)$ such that $\|A\|_{B^r}> \sum_{i=1}^N |\D_U^{j_i}(p_i;\a_i)|^j-\e$.  It follows that
	\begin{align*}
		\|E^\dagger_v(A)\|_{B^r}&\leq \sum_{i=1}^N \|E^\dagger_v(\D_U^{j_i}(p_i;\a_i))\|_{B^r}\\
		&\leq k\|v\|\sum_{i=1}^N |\D_U^{j_i}(p_i;\a_i)|_{j_i}\\
		&\leq k\|v\|(\|A\|_{B^r}+\e).
	\end{align*}
	The result follows.
\end{proof}

Thus, we get a jointly continuous map $E^\dagger_{\cdot}: \R^n\times \hB_k^r(\R^n)\rightarrow \hB_{k-1}^r(\R^n)$ for all $r\geq 0$, $k\geq 1$.

\begin{lem}
	The dual operator to retraction is \emph{wedge product}.  That is, if $J\in \hB_k^r(\R^n)$, $k\geq 1$, then
	\[
	\cint_{E_v^\dagger J}\o=\cint_J \check{v}^\flat\wedge \o,
	\]
	for all $\o\in \B_{k-1}^r(\R^n)$, where $\check{v}^\flat$ denotes the $1$-form $(p,w)\rightarrow \<\check{v}(p),w\>$.  
\end{lem}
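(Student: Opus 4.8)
The plan is to mirror the proof of the extrusion/interior-product duality: establish the identity pointwise on simple $k$-elements, extend by linearity to all of $\P_k(\R^n)$, and then pass to the limit using density of pointed chains together with continuity of the Harrison integral. The first step is purely algebraic. Fix $p\in\R^n$, a simple $k$-vector $\alpha=v_1\wedge\cdots\wedge v_k\in\L^k T_p\R^n$, and a $(k-1)$-covector $\mu$. By definition $E_v^\dagger(p;\alpha)=(p;\iota_\xi\alpha)$, where $\iota_\xi$ is the classical contraction of $\alpha$ by the covector $\xi:=\langle v,\cdot\rangle=\check{v}^\flat(p)$, since $\iota_\xi(v_1\wedge\cdots\wedge v_k)=\sum_{i=1}^k(-1)^{i+1}\xi(v_i)\,v_1\wedge\cdots\widehat{v_i}\cdots\wedge v_k$. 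It is a standard fact of exterior algebra that contraction by $\xi$ is the adjoint of left exterior multiplication by $\xi$, i.e. $\mu(\iota_\xi\alpha)=(\xi\wedge\mu)(\alpha)$, which one verifies by evaluating both sides on basis monomials. Applying this with $\mu=\o_p$ for $\o\in\B_{k-1}^r(\R^n)$ yields $\o(E_v^\dagger(p;\alpha))=(\check{v}^\flat\wedge\o)(p;\alpha)$, that is, $\cint_{E_v^\dagger(p;\alpha)}\o=\cint_{(p;\alpha)}\check{v}^\flat\wedge\o$.

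Since the simple $k$-elements generate $\P_k(\R^n)$ (Lemma \ref{pointed gen}) and both $A\mapsto\cint_{E_v^\dagger A}\o$ and $A\mapsto\cint_A\check{v}^\flat\wedge\o$ are linear in $A$, the identity extends to every pointed $k$-chain $A\in\P_k(\R^n)$.

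Next I would check that $\check{v}^\flat\wedge\o\in\B_k^r(\R^n)$ whenever $\o\in\B_{k-1}^r(\R^n)$, so that the right-hand side is meaningful on all of $\hB_k^r(\R^n)$. This is the point at which one must be slightly careful, but it is easy: $\check{v}$ is a constant vector field, so $\check{v}^\flat$ is a constant-coefficient $1$-form, all of whose positive-order directional derivatives vanish, while pointwise $\check{v}^\flat\wedge\o$ is controlled by $\|v\|$ times the comass of $\o$. Using the Leibniz rule together with the $C^{r-1+\lip}$ description of $\B_k^r$ from Theorems \ref{junk} and \ref{topequiv2}, one obtains an estimate $\|\check{v}^\flat\wedge\o\|_{B^r}\le C\|v\|\,\|\o\|_{B^r}$ with $C$ depending only on $n$; in particular $\check{v}^\flat\wedge\o\in\B_k^r(\R^n)$.

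Finally, for fixed $\o$ the map $J\mapsto\cint_{E_v^\dagger J}\o$ is continuous on $\hB_k^r(\R^n)$, being the composite of the continuous operator $E_v^\dagger\colon\hB_k^r(\R^n)\to\hB_{k-1}^r(\R^n)$ from the preceding lemma with the continuous functional $\cint_{(\cdot)}\o$, and $J\mapsto\cint_J\check{v}^\flat\wedge\o$ is continuous, being the pairing against the fixed element $\check{v}^\flat\wedge\o\in\B_k^r(\R^n)$ produced above. These two continuous linear functionals agree on the dense subspace $\P_k(\R^n)$, hence everywhere on $\hB_k^r(\R^n)$, which is the assertion. The only genuine content is the exterior-algebra adjointness of the first step, which is classical; I do not anticipate any real obstacle.
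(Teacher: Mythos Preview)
Your proof is correct. The paper actually states this lemma without proof, but your argument precisely parallels the paper's brief proof of the analogous extrusion/interior-product duality: verify the identity on pointed chains via the classical exterior-algebra adjointness $(\check{v}^\flat\wedge\o)(\alpha)=\o(\iota_{\check{v}^\flat}\alpha)$, observe that this forces $\check{v}^\flat\wedge\o\in\B_k^r(\R^n)$ since $\B_k^r(\R^n)=(\P_k(\R^n),\|\cdot\|_{B^r})'$, and then pass to the completion by density and continuity of the integral. Your explicit check that $\check{v}^\flat\wedge\o\in\B_k^r$ via the Leibniz rule is more detailed than strictly necessary---the paper's extrusion proof infers membership in $\B_k^r$ directly from the fact that the functional $A\mapsto\cint_{E_v^\dagger A}\o$ is bounded on $(\P_k,\|\cdot\|_{B^r})$---but your version is certainly correct and arguably clearer.
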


\subsubsection{Prederivative}
\begin{defn}\label{pre}
	Let $v\in \R^n$ and let $(p;\a)\in \P_k(\R^n)$, $k\geq 0$.  Define
	\[
	P_v(p;\a)=\lim_{t\rightarrow 0}\D_{tv}(p;\a/t)=\lim_{t\rightarrow 0}(p+tv; \a/t)-(p;\a/t),
	\]
	and extend linearly to all of $\P_k(\R^n)$.
\end{defn}

\begin{lem}
	The map $P_v$ extends continuously to a map $P_v: \hB_k^r(\R^n)\rightarrow \hB_k^{r+1}(\R^n)$ for all $r\geq 1$.  We call this operator \textbf{prederivative}\footnote{The \emph{prederivative} operator first appears in \cite{harrison4}}.  The map $P_v$ depends linearly and continuously on $v$.  Hence, $P_{\cdot}$ defines a jointly continuous map on $\R^n\times \hB_k^r(\R^n)$.
\end{lem}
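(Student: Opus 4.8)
The plan is to mirror the proofs already given for extrusion and retraction: prove the estimate $\|P_v A\|_{B^{r+1}}\le \|v\|\,\|A\|_{B^r}$ on a single difference chain, sum it over a near-optimal decomposition of an arbitrary pointed chain, and then extend by density and completeness. The genuinely new point is that $P_v$ is itself defined by a limit on $\P_k(\R^n)$, so the first thing to settle is that this limit actually exists in $\hB_k^{r+1}(\R^n)$.

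I would first show the defining net converges. For $(p;\a)\in\P_k(\R^n)$ and $\o\in\B_k^{r+1}(\R^n)$ one has $\o(\D_{hv}(p;\a/h))=\frac1h\int_0^h (D_v\o)_{p+uv}(\a)\,du$; since for $r\ge1$ the forms of $\B_k^{r+1}(\R^n)$ have bounded, Lipschitz directional derivatives (Theorems~\ref{junk} and~\ref{topequiv2}), the integrand $u\mapsto (D_v\o)_{p+uv}(\a)$ is Lipschitz with constant at most $\|v\|^2\|\a\|\,\|\o\|_{B^{r+1}}$. Comparing the averages over $[0,s]$ and $[0,t]$ for $0<s<t$ then gives $\bigl|\o\bigl(\D_{tv}(p;\a/t)-\D_{sv}(p;\a/s)\bigr)\bigr|\le \|v\|^2\|\a\|\,t\,\|\o\|_{B^{r+1}}$, so by Theorem~\ref{equivnorm} $\bigl\|\D_{tv}(p;\a/t)-\D_{sv}(p;\a/s)\bigr\|_{B^{r+1}}\le \|v\|^2\|\a\|\,t\to 0$ (this is Lemma~\ref{der3}, upgraded from a dyadic sequence to a continuous limit). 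Hence $P_v(p;\a):=\lim_{t\to0}\D_{tv}(p;\a/t)$ is a well-defined element of $\hB_k^{r+1}(\R^n)$, and linear extension gives $P_v\colon\P_k(\R^n)\to\hB_k^{r+1}(\R^n)$.

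Next I would record that each $\D_U^j$ is a bounded operator on $\hB_k^{r+1}(\R^n)$: $T_u$ is an isometry in every $r$-norm since the seminorm $|\D_U^j(p;\a)|_j$ is independent of the basepoint, and $\D_u=T_u-\mathrm{Id}$. Using continuity of $\D_U^j$ together with the commutativity of $\D$ and $T$, one gets $P_v(\D_U^j(p;\a))=\D_U^j(P_v(p;\a))=\lim_{t\to 0}\D^{j+1}_{U\cup\{tv\}}(p;\a/t)$, whence, for $j\le r$,
\[
\bigl\|P_v(\D_U^j(p;\a))\bigr\|_{B^{r+1}}\le \lim_{t\to 0}\bigl|\D^{j+1}_{U\cup\{tv\}}(p;\a/t)\bigr|_{j+1}=\|v\|\,\|u_1\|\cdots\|u_j\|\,\|\a\|=\|v\|\,\bigl|\D_U^j(p;\a)\bigr|_j .
\]
Given $A\in\P_k(\R^n)$ and $\e>0$, choosing $A=\sum_{i=1}^N\D^{j_i}_{U_i}(p_i;\a_i)$ with each $j_i\le r$ and $\sum_i|\D^{j_i}_{U_i}(p_i;\a_i)|_{j_i}<\|A\|_{B^r}+\e$, the triangle inequality yields $\|P_v A\|_{B^{r+1}}\le\|v\|(\|A\|_{B^r}+\e)$, so $\|P_v A\|_{B^{r+1}}\le\|v\|\,\|A\|_{B^r}$. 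Since $\P_k(\R^n)$ is dense in $\hB_k^r(\R^n)$ and $\hB_k^{r+1}(\R^n)$ is complete, $P_v$ extends uniquely to a continuous linear map $P_v\colon\hB_k^r(\R^n)\to\hB_k^{r+1}(\R^n)$ of operator norm at most $\|v\|$.

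Finally, for the dependence on $v$ I would use the pointed-chain identities $\D_{t(v+w)}(p;\a/t)-\D_{tv}(p;\a/t)-\D_{tw}(p;\a/t)=\D_{tv}\D_{tw}(p;\a/t)$ and $\D_{t(cv)}(p;\a/t)=c\,\D_{(ct)v}(p;\a/(ct))$. The first has $(r+1)$-norm at most $t\,\|v\|\,\|w\|\,\|\a\|$, which tends to $0$ (here $r\ge1$ is used, so that the $2$-seminorm is available), so letting $t\to0$ gives $P_{v+w}=P_v+P_w$ and $P_{cv}=cP_v$ on $\P_k(\R^n)$, hence on $\hB_k^r(\R^n)$ by continuity. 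Then $\|P_v A-P_w A\|_{B^{r+1}}=\|P_{v-w}A\|_{B^{r+1}}\le\|v-w\|\,\|A\|_{B^r}$ gives continuity in $v$, and as $P_{\cdot}$ is thereby bilinear with $\|P_v A\|_{B^{r+1}}\le\|v\|\,\|A\|_{B^r}$, it is jointly continuous on $\R^n\times\hB_k^r(\R^n)$. I expect the main obstacle to be the very first step — establishing convergence of the continuous limit $t\to0$ so that $P_v$ is genuinely well-defined on $\P_k(\R^n)$ (and keeping track that $r\ge1$ is exactly what makes the $2$-seminorm, and hence this limit, available); once past that, everything reduces to the by-now familiar "estimate on a difference chain, then sum" routine.
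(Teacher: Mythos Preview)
Your proof is correct and follows essentially the same structure as the paper's: establish the estimate $\|P_v(\D_U^j(p;\a))\|_{B^{r+1}}\le\|v\|\,|\D_U^j(p;\a)|_j$, sum it over a near-optimal decomposition of $A$, and handle additivity in $v$ via the identity $\D_{t(v+w)}-\D_{tv}-\D_{tw}=\D_{tv}\D_{tw}$. The one difference is in the first step: you prove the defining limit exists via duality (differentiability of forms in $\B_k^{r+1}$ together with Theorem~\ref{equivnorm}), whereas the paper simply invokes the direct chain-side telescoping of Lemma~\ref{der3}; both are valid, and your route has the minor bonus of yielding the full continuous limit $t\to0$ rather than just the dyadic one.
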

\begin{proof}
	By way of Lemma \ref{der3}, one can see that $P_v(p;\a)\in \hB_k^2(\R^n)$ and so $P_v: \P_k(\R^n)\rightarrow \hB_k^2(\R^n)$ is a well-defined linear map.  We show that $P_v$ is linear in $v$: homogeneity follows immediately since $\l(p;\a)=(p;\l\a)$, and additivity reduces to showing
	\[
	\lim_{t\rightarrow 0}(p; t(v_1+v_2);\a/t)-(p+tv_1;\a/t)-(p+tv_2;\a/t)+(p;\a/t)=0
	\]
	in $\hB_k^2(\R^n)$.  This holds since
	\begin{align*}
	\|(p; t(v_1+v_2);\a/t)-(p+tv_1;\a/t)-(p+tv_2;\a/t)+(p;\a/t)\|_{B^2}&=\|\D_{\{tv_1, tv_2\}}^2(p;\a/t)\|_{B^2}\\
	&\leq |\D_{\{tv_1, tv_2\}}^2(p;\a/t)|_{2}\\
	&= t\|v_1\|\|v_2\|\|\a\|.
	\end{align*}
	
	To show that $P_v$ extends to a continuous linear map $P_v: \hB_k^r(\R^n)\rightarrow \hB_k^{r+1}(\R^n)$ and that $P_v$ is continuous in $v$, it is enough to show the inequality
	\[
	\|P_v A \|_{B^{r+1}}\leq \|v\| \|A\|_{B^r}
	\]
	 for all $v\in \R^n$ and all $A\in \P_k(\R^n)$. For all $j\leq r$, we may write 
	\begin{align}\label{lastline}
		\|P_v(\D_U^j(p;\a))\|_{B^{r+1}}=\lim_{t\rightarrow 0}\|\D_{tv}\D_U^j(p;\a/t)\|_{B^{r+1}}\leq\lim_{t\rightarrow 0}|\D_{tv}\D_U^j(p;\a/t)|_{j+1}=\|v\||\D_U^j(p;\a)|_j.
	\end{align}
	So, let $A\in \P_k(\R^n)$ and let $\e>0$.  By Definition \ref{normdef}, we may write $A=\sum_{i=1}^N\D_{U_i}^{j_i}(p_i;\a_i)$ such that
	\[
	\|A\|_{B^r}>\sum_{i=1}^N|\D_{U_i}^{j_i}(p_i;\a_i)|_{j_i}-\e.
	\]
	By the triangle inequality and (\ref{lastline}), it follows that
	\begin{align}
		\|P_v A\|_{B^{r+1}}\leq \sum_{i=1}^N \|P_v\D_{U_i}^{j_i}(p_i;\a_i)\|_{B^{r+1}}\leq \|v\|\sum_{i=1}^N \|\D_{U_i}^{j_i}(p_i;\a_i)|_{j_i}\leq \|v\| (\|A\|_{B^r}+\e).
	\end{align}
\end{proof}

\begin{lem}
	The dual operator to prederivative is Lie derivative.  That is, if $J\in \hB_k^r(\R^n)$ and $r\geq 1$, then
	\[
	\cint_{P_vJ}\o=\cint_J \mathcal{L}_{v} \o,
	\]
	for all $\o\in \B_k^{r+1}(\R^n)$.
\end{lem}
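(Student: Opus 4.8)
The plan is to follow the same two-step template used above for the dual operators to multiplication, extrusion, and retraction: first verify the identity directly on pointed chains, then extend to all of $\hB_k^r(\R^n)$ by density of $\P_k(\R^n)$ together with continuity of both sides. Throughout, $\mathcal{L}_v$ denotes the Lie derivative with respect to the constant vector field $\check v$.

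First I would compute on a one-point pointed chain $(p;\a)\in\P_k(\R^n)$, with $\o\in\B_k^{r+1}(\R^n)$. By Definition \ref{pre}, $P_v(p;\a)=\lim_{t\to 0}\D_{tv}(p;\a/t)$, where the limit exists in $\hB_k^{r+1}(\R^n)$ by Lemma \ref{der3} and the prederivative lemma. Since $\o\in\B_k^{r+1}(\R^n)=(\hB_k^{r+1}(\R^n))'$, the Harrison integral $\cint_{(\cdot)}\o$ is continuous on $\hB_k^{r+1}(\R^n)$, so
\[
\cint_{P_v(p;\a)}\o=\lim_{t\to 0}\cint_{\D_{tv}(p;\a/t)}\o=\lim_{t\to 0}\frac{1}{t}\bigl(\o_{p+tv}(\a)-\o_p(\a)\bigr).
\]
Because $r+1\geq 2$, the form $\o$ is $C^1$ by Theorem \ref{junk}(5), so $q\mapsto\o_q(\a)$ is a $C^1$ scalar function and the last limit is its directional derivative at $p$ in the direction $v$. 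For the constant vector field $\check v$, whose flow $q\mapsto q+tv$ has trivial differential, this directional derivative is exactly $(\mathcal{L}_v\o)_p(\a)$. Hence $\cint_{P_v(p;\a)}\o=(\mathcal{L}_v\o)_p(\a)=\cint_{(p;\a)}\mathcal{L}_v\o$, and summing finitely many such terms gives the identity for every $J\in\P_k(\R^n)$.

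It then remains to see that both sides are continuous linear functionals of $J\in\hB_k^r(\R^n)$. The left side $J\mapsto\cint_{P_vJ}\o=\o(P_vJ)$ is continuous because $P_v\colon\hB_k^r(\R^n)\to\hB_k^{r+1}(\R^n)$ is continuous and $\o\in(\hB_k^{r+1}(\R^n))'$. For the right side it suffices to check $\mathcal{L}_v\o\in\B_k^r(\R^n)$; using Cartan's formula $\mathcal{L}_v\o=d\,\iota_{\check v}\o+\iota_{\check v}\,d\o$, the operator $\iota_{\check v}$ preserves the differentiability class and each exterior derivative drops exactly one order, so by Theorem \ref{junk}(5) (and Theorem \ref{junk}(4) when $r=1$) $\mathcal{L}_v\o$ lies in $\B_k^r(\R^n)=(\hB_k^r(\R^n))'$, whence $J\mapsto\cint_J\mathcal{L}_v\o$ is continuous. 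Since the two continuous functionals agree on the dense subspace $\P_k(\R^n)$, they agree on all of $\hB_k^r(\R^n)$. The one point requiring genuine care — the only "obstacle," and a mild one — is the interchange of the limit defining $P_v(p;\a)$ with the integral in the first step; this is legitimate precisely because $P_v(p;\a)$ is, by construction, a limit in $\hB_k^{r+1}(\R^n)$ against which $\o$ pairs continuously, and the resulting scalar difference quotient $\tfrac1t(\o_{p+tv}(\a)-\o_p(\a))$ really does converge since $\o\in\B_k^{r+1}(\R^n)\subseteq\B_k^2(\R^n)$ is $C^1$.
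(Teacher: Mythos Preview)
The paper states this lemma without proof, so there is no paper argument to compare against directly; your proof is correct and follows exactly the template the paper establishes earlier for the dual of extrusion (verify on pointed chains, then extend by density and continuity of both sides). One small remark: your use of Cartan's formula to show $\mathcal{L}_v\o\in\B_k^r(\R^n)$ is fine, but note that the paper later proves this directly as Lemma~\ref{lem2} in the appendix, giving the sharper bound $\|\mathcal{L}_v\o\|_{B^r}\le\|v\|\|\o\|_{B^{r+1}}$; either route suffices here.
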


We now use the above operators to generate an \emph{operator algebra}, $\mathcal{A}(\hB_k^\i(\R^n))$.  Some of the more interesting operators in this algebra are listed below.

\subsubsection{Boundary}
\begin{defn}
	For $r\geq 1$ define the map $\p: \hB_k^r(\R^n)\rightarrow \hB_{k-1}^{r+1}(\R^n)$ by setting
	\[
	\p:=\sum_{i=1}^n P_{v_i}\circ E_{v_i}^\dagger,
	\]
	where $\{v_1,\dots,v_n\}$ is a basis of unit vectors.  We call this operator \textbf{boundary}\footnote{The \emph{boundary} operator first appears in \cite{harrison3}.}.  If $\p J=0$, we say that $J$ is \textbf{closed}.
\end{defn}

\begin{lem}\label{stokes}
	The dual operator to boundary is \emph{exterior derivative}.  That is,
	\[
	\cint_{\p J}\o=\cint_J d\o,
	\]
	for all $J\in \hB_k^r(\R^n)$ and all $\o\in \hB_{k-1}^{r+1}(\R^n)$.
\end{lem}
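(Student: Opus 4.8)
The plan is to run the two duality statements already established for \emph{prederivative} and \emph{retraction} through the definition $\p=\sum_{i=1}^{n}P_{v_i}\circ E_{v_i}^{\dagger}$, where $\{v_1,\dots,v_n\}$ is the chosen orthonormal basis of $\R^n$, thereby reducing the lemma to a single pointwise identity between differential forms. Since both dual-operator lemmas are already stated for all differential chains of the relevant degree and order, nothing beyond linearity of the Harrison integral is required: for $J\in\hB_k^r(\R^n)$ with $r\geq 1$ and $\o\in\B_{k-1}^{r+1}(\R^n)$,
\begin{align*}
\cint_{\p J}\o&=\sum_{i=1}^{n}\cint_{P_{v_i}E_{v_i}^{\dagger}J}\o=\sum_{i=1}^{n}\cint_{E_{v_i}^{\dagger}J}\mathcal{L}_{v_i}\o\\
&=\sum_{i=1}^{n}\cint_{J}\check{v_i}^{\flat}\wedge\mathcal{L}_{v_i}\o=\cint_{J}\Bigl(\sum_{i=1}^{n}\check{v_i}^{\flat}\wedge\mathcal{L}_{v_i}\o\Bigr),
\end{align*}
where the second equality invokes the prederivative/Lie-derivative duality for $E_{v_i}^{\dagger}J\in\hB_{k-1}^{r}(\R^n)$ (valid since $\o\in\B_{k-1}^{r+1}$, precisely our hypothesis) and the third invokes the retraction/wedge-product duality for $J\in\hB_k^r(\R^n)$ (valid since applying $\mathcal{L}_{v_i}$ costs one order, so $\mathcal{L}_{v_i}\o\in\B_{k-1}^{r}$). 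It therefore suffices to prove the identity of forms $\sum_{i=1}^{n}\check{v_i}^{\flat}\wedge\mathcal{L}_{v_i}\o=d\o$.

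To prove this identity I would apply Cartan's magic formula $\mathcal{L}_{v_i}\o=\iota_{v_i}d\o+d\iota_{v_i}\o$ and split the sum in two. The first piece is handled by the \emph{number operator} identity $\sum_{i=1}^{n}\check{v_i}^{\flat}\wedge\iota_{v_i}\eta=(\deg\eta)\,\eta$, a pointwise algebraic fact relating a frame to its dual coframe; applied to the $k$-form $\eta=d\o$ it yields $k\,d\o$. For the second piece, each $v_i$ is a constant vector field, so $\check{v_i}^{\flat}$ has constant coefficients and is closed; hence $d(\check{v_i}^{\flat}\wedge\iota_{v_i}\o)=-\check{v_i}^{\flat}\wedge d\iota_{v_i}\o$, and summing while commuting $d$ past the finite sum gives $\sum_{i=1}^{n}\check{v_i}^{\flat}\wedge d\iota_{v_i}\o=-d\Bigl(\sum_{i=1}^{n}\check{v_i}^{\flat}\wedge\iota_{v_i}\o\Bigr)=-(k-1)\,d\o$, again by the number operator identity, now applied to the $(k-1)$-form $\o$. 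Adding the two pieces, $k\,d\o-(k-1)\,d\o=d\o$, as desired.

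Once this identity is in hand the rest is bookkeeping, so there is no serious obstacle; the only genuine point is the closedness of the constant-coefficient $1$-forms $\check{v_i}^{\flat}$, which is exactly what collapses the $d\iota_{v_i}\o$ terms into $-d$ of a number operator. Two remarks are worth recording. First, since $d\o$ is manifestly independent of the chosen basis, the computation incidentally shows that $\p$ does not depend on the orthonormal basis used in its definition. Second, the regularity hypothesis on $\o$ is essential, not cosmetic: Cartan's formula and the manipulation $d(\check{v_i}^{\flat}\wedge\iota_{v_i}\o)=-\check{v_i}^{\flat}\wedge d\iota_{v_i}\o$ require $\o\in C^{1}$, which holds because $r\geq 1$ forces $\o\in\B_{k-1}^{r+1}$ to be continuously differentiable---and this is the same threshold that places $d\o$ in $\B_k^r=(\hB_k^r)'$, so that $\cint_J d\o$ is meaningful.
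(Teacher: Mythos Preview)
Your proof is correct and follows the same route as the paper: expand $\p=\sum_i P_{v_i}\circ E_{v_i}^{\dagger}$, apply the two dualities (prederivative $\leftrightarrow$ Lie derivative, retraction $\leftrightarrow$ wedge), and reduce to the form identity $\sum_i \check{v_i}^{\flat}\wedge\mathcal{L}_{v_i}\o=d\o$. The paper simply asserts this last identity in one line, whereas you supply a clean verification via Cartan's formula and the number-operator identity; your added remarks on basis independence and the regularity threshold are accurate and match what the paper records immediately afterward.
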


\begin{proof}
	Expanding out, we get
	\[
	\cint_{\p J}\o=\cint_J \sum_{i=1}^n dv_i\wedge \mathcal{L}_{v_i}\o=\cint_J d\o.
	\]
\end{proof}

\begin{lem}
	The map $\p$ is well-defined.  That is, it does not depend on which unit basis we choose.
\end{lem}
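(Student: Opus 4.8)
The plan is to deduce basis-independence from the duality relation of Lemma \ref{stokes} together with the fact that exterior derivative is intrinsically defined, independent of any choice of frame. Fix $r\geq 1$, $k\geq 1$, and let $\{v_1,\dots,v_n\}$ and $\{w_1,\dots,w_n\}$ be two orthonormal bases of $\R^n$. Write $\p^{(v)}:=\sum_{i=1}^n P_{v_i}\circ E_{v_i}^\dagger$ and $\p^{(w)}:=\sum_{i=1}^n P_{w_i}\circ E_{w_i}^\dagger$; by the lemmas on $E_v^\dagger$ and $P_v$ these are both continuous linear maps $\hB_k^r(\R^n)\to\hB_{k-1}^{r+1}(\R^n)$, and the goal is to show $\p^{(v)}=\p^{(w)}$.

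The key observation is that the computation in the proof of Lemma \ref{stokes} is valid for \emph{any} orthonormal basis, not just the one used to define $\p$: combining the identification of the dual of $E_v^\dagger$ with $\check{v}^\flat\wedge(\cdot)$ and the dual of $P_v$ with $\mathcal{L}_v$, one obtains, for every $J\in\hB_k^r(\R^n)$ and every $\o\in\B_{k-1}^{r+1}(\R^n)$,
\[
\cint_{\p^{(v)}J}\o=\cint_J\sum_{i=1}^n\check{v_i}^\flat\wedge\mathcal{L}_{v_i}\o,\qquad\text{and likewise}\qquad\cint_{\p^{(w)}J}\o=\cint_J\sum_{i=1}^n\check{w_i}^\flat\wedge\mathcal{L}_{w_i}\o.
\]
Now $\sum_{i=1}^n\check{v_i}^\flat\wedge\mathcal{L}_{v_i}\o=d\o=\sum_{i=1}^n\check{w_i}^\flat\wedge\mathcal{L}_{w_i}\o$, since for any constant orthonormal frame on $\R^n$ with dual coframe $\{\check{v_i}^\flat\}$ the operator $\sum_i\check{v_i}^\flat\wedge\mathcal{L}_{v_i}$ is exactly the exterior derivative: this is the standard coframe expression for $d$ (checked on $f\,dx_J$ it reads $\sum_i(\mathcal{L}_{v_i}f)\,\check{v_i}^\flat\wedge dx_J=df\wedge dx_J$), and its left side, $d$, makes no reference to the frame at all. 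Hence $\cint_{\p^{(v)}J}\o=\cint_J d\o=\cint_{\p^{(w)}J}\o$ for all $\o\in\B_{k-1}^{r+1}(\R^n)$.

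Finally, conclude by duality. Since $\B_{k-1}^{r+1}(\R^n)=(\hB_{k-1}^{r+1}(\R^n))'$ by Theorem \ref{junk}.\ref{trans}, and $\hB_{k-1}^{r+1}(\R^n)$ injects isometrically into its bidual via $A\mapsto(\o\mapsto\cint_A\o)$ by Hahn--Banach (as in the proof of Theorem \ref{equivnorm}), an element of $\hB_{k-1}^{r+1}(\R^n)$ that annihilates every $\o\in\B_{k-1}^{r+1}(\R^n)$ is zero. Applying this to $\p^{(v)}J-\p^{(w)}J$, whose pairing with every $\o$ vanishes by the previous paragraph, gives $\p^{(v)}J=\p^{(w)}J$ for all $J$, so $\p$ is well-defined.

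The only genuine mathematical content here is the classical identity $\sum_i\check{v_i}^\flat\wedge\mathcal{L}_{v_i}=d$ for an arbitrary constant orthonormal frame on $\R^n$; everything else is a formal consequence of the duality between $\hB_{k-1}^{r+1}$ and $\B_{k-1}^{r+1}$ already established. One could instead argue directly at the level of pointed chains, computing how $E_v^\dagger$ and $P_v$ transform under an orthogonal change of basis and checking the sum is invariant, but that reduces to the same linear-algebra identity while being far more cumbersome, so I would not pursue it.
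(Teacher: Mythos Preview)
Your proof is correct and follows exactly the paper's approach: the paper's one-line argument (``This follows since the exterior derivative $d$ does not depend on the basis'') is precisely the duality argument you spell out, using that the dual operator computed in Lemma \ref{stokes} is $d$ regardless of which frame was used, and then separating points with $\B_{k-1}^{r+1}$. You have simply made explicit the Hahn--Banach step that the paper leaves implicit.
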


\begin{proof}
	This follows since the exterior derivative $d$ does not depend on the basis.
\end{proof}

Once we show that classical domains are represented in $\hB_k^r(\R^n)$, Lemma \ref{stokes} will imply the classical Stokes' theorem.  Lemma \ref{stokes} also shows that $\p\circ\p\equiv 0$.  From this, we may calculate homology classes.  We call this homology theory \emph{differential homology}.  Harrison has shown \cite{harrison10} that differential homology satisfies a slightly modified version of the Eilenberg-Steenrod axioms \cite{eilenbergsteenrod}.

\subsubsection{Perpendicular Complement}
\begin{defn}
	For $r\geq 0$ define the map $\perp: \hB_k^r(\R^n)\rightarrow \hB_{n-k}^r(\R^n)$ by setting
	\[
	\perp := \prod_{i=1}^n (E_{v_i}+ E_{v_i}^\dagger),
	\]
	where $\{v_1,\dots,v_n\}$ is an orthonormal basis of $\R^n$.  This operator is called \textbf{perpendicular complement}, or just \textbf{perp}\footnote{The \emph{perp} operator first appears in \cite{harrison9}, and can also be found in \cite{harrison6}.}.
\end{defn}

\begin{lem}
	The dual operator to perp is \emph{Hodge star}.  That is, for all $r\geq 0$, $J\in \hB_k^r(\R^n)$, we have
	\[
	\cint_{\perp J}\o = \cint_J * \o,
	\]
	 for all $\o\in \B_{n-k}^r(\R^n)$.
\end{lem}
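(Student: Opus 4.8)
The plan is to reduce the statement about perp and Hodge star to the already-established duality lemmas for extrusion and retraction, exactly as the proof of the boundary lemma reduces to the duality for prederivative and retraction. First I would recall that the dual operator to $E_v$ is interior product $\iota_{\check v}$ and the dual operator to $E_v^\dagger$ is wedge product with $\check v^\flat$. Since $\perp = \prod_{i=1}^n (E_{v_i} + E_{v_i}^\dagger)$ is a finite composition and sum of these fundamental operators, its dual operator is the composition (in reversed order, but the factors commute appropriately up to the sign bookkeeping) of the dual operators, namely $\prod_{i=1}^n (\iota_{\check v_i} + \check v_i^\flat \wedge \cdot)$. So the crux is the purely algebraic identity
\[
\prod_{i=1}^n \left(\iota_{\check v_i} + \check v_i^\flat \wedge \cdot\right) \o = *\o
\]
for all forms $\o$, where $\{v_1,\dots,v_n\}$ is an orthonormal basis, together with the verification that passing to duals of a product of operators really does reverse-compose the duals.

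Concretely, the key steps in order: (1) Establish that for $A \in \P_k(\R^n)$ and any form $\o$, $\cint_{(E_{v_i}+E_{v_i}^\dagger)A}\,\o = \cint_A (\iota_{\check v_i} + \check v_i^\flat\wedge\cdot)\,\o$; this is just the two duality lemmas added together, valid on pointed chains hence on all of $\hB_k^r$ by continuity (noting that $\iota_{\check v_i}$ and $\check v_i^\flat\wedge\cdot$ preserve the relevant $\B^r$ spaces, as already used in those lemmas). (2) Iterate this $n$ times to get $\cint_{\perp A}\,\o = \cint_A \left(\prod_{i=1}^n (\iota_{\check v_i} + \check v_i^\flat\wedge\cdot)\right)\o$, being careful that each application is the dual of the corresponding factor and that the order of composition is consistent. (3) Prove the algebraic identity $\prod_{i=1}^n (\iota_{\check v_i} + \check v_i^\flat\wedge\cdot) = *$ on $\L^\bullet (\R^n)^*$. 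For this I would work in the orthonormal coordinates dual to $\{v_1,\dots,v_n\}$, evaluate the operator on a basis monomial $dx_S = dx_{s_1}\wedge\cdots\wedge dx_{s_k}$, and observe that the $i$-th factor either contracts out $dx_i$ (if $i \in S$) or wedges in $dx_i$ (if $i \notin S$), so that after all $n$ factors every basis covector has been "flipped in or out," producing $\pm dx_{S^c}$; then track the sign and match it to the Hodge star sign convention. (4) Extend from pointed chains to $\hB_k^r(\R^n)$ by density and continuity of the Harrison integral in both arguments, and note $\iota_{\check v_i}+\check v_i^\flat\wedge\cdot$ maps $\B_{n-k}^r$ into $\B_k^r$ so the pairing is legitimate.

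I expect the main obstacle to be step (3), the sign bookkeeping in the algebraic identity — making sure that the product of "insert-or-delete" operators, applied in the fixed order $i = 1, \dots, n$, reproduces exactly the Hodge star sign $\varepsilon(S, S^c)$ and not its negative or some permutation-dependent variant. The subtlety is that wedging $dx_i$ onto the \emph{left} versus contracting from the left interact differently with the ordering of the remaining covectors, so one must either carefully commute each new factor past the already-assembled partial product (picking up signs $(-1)^{|S \cap \{1,\dots,i-1\}|}$ or similar) or set up a clean inductive claim on $n$. A secondary, more bureaucratic point is step (2): verifying that "the dual of a composite is the composite of the duals in the appropriate order" is legitimate here — this is immediate for the pairing on pointed chains since each operator is defined there outright, so I would simply compute on $\P_k(\R^n)$ and invoke continuity, avoiding any abstract functional-analytic transpose argument.
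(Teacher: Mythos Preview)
Your approach is correct but differs from the paper's. The paper does not dualize factor by factor; instead it computes $\perp$ \emph{directly on simple pointed chains}: applying the expanded product $\prod_{i=1}^n(E_{v_i}+E_{v_i}^\dagger)$ to $(p;\,a\,v_1\wedge\cdots\wedge v_k)$, only one term survives (the one where $E_{v_i}^\dagger$ acts for $i\leq k$ and $E_{v_i}$ acts for $i>k$), yielding $(p;\,a\,v_{k+1}\wedge\cdots\wedge v_n)$ up to the Hodge-star sign. The duality with $*$ on forms is then immediate from the pairing, with no need to reverse a composite or prove your step (3) as a separate identity on $\Lambda^\bullet(\R^n)^*$.

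Your route and the paper's are essentially mirror images of the same algebraic fact --- the ``insert-or-delete'' observation you describe in step (3) is exactly what the paper exploits, but on $k$-vectors rather than on $k$-covectors. The paper's version is slightly leaner: by working on the chain side it sidesteps both the order-reversal in dualizing a composite and the need to verify that each dual factor lands in the right $\B^r$ space, since it only needs the final equality on pointed chains and then density. Your version is more modular (it reuses the two duality lemmas as black boxes) and makes explicit that $*=\prod_i(\iota_{v_i}+v_i^\flat\wedge\cdot)$, a Clifford-algebra-flavored identity that is of independent interest. The sign bookkeeping you flag as the main obstacle is real in either approach; the paper simply declares it handled ``up to the appropriate sign change necessary for Hodge-$*$.''
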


\begin{proof}
	Expanding the product $\perp = \prod_{i=1}^n (E_{v_i}-E_{v_i}^\dagger)$ to a simple $k$-element $(p; a v_1\wedge\cdots\wedge v_k)$, we get $\perp (p; av_1\wedge\cdots\wedge v_k)=(p; a v_{k+1}\wedge\cdots\wedge v_n)$.  In general, the individual terms of the product expansion applied to a simple $k$-element $(p;\a)$ will be identically zero unless the extrusion operators are not in the $k$-direction of $\alpha$.  If so, the retraction operators \emph{will} be in the $k$-direction of $\alpha$, yielding the perpendicular complement up to the appropriate sign change necessary for Hodge-$*$.  
\end{proof}

\begin{lem}
	The operator $\perp$ is well-defined.  That is, it is independent of our choice of orthonormal basis.
\end{lem}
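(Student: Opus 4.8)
The statement to prove is that the perpendicular complement operator $\perp$ is independent of the choice of orthonormal basis.

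The plan is to deduce this from the already-established fact that the dual operator to $\perp$ is the Hodge star $*$, which is a basis-independent operator on forms. First I would recall that by the previous lemma, for any orthonormal basis $\{v_1,\dots,v_n\}$ used to define $\perp$, we have $\cint_{\perp J}\o = \cint_J *\o$ for all $J\in\hB_k^r(\R^n)$ and all $\o\in\B_{n-k}^r(\R^n)$, where $*$ is the Hodge star operator — which depends only on the inner product and orientation of $\R^n$, not on the choice of orthonormal basis. Hence if $\perp$ and $\perp'$ are the two operators built from two different orthonormal bases, then $\cint_{\perp J}\o = \cint_J *\o = \cint_{\perp' J}\o$ for every $\o\in\B_{n-k}^r(\R^n)$.

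Next I would invoke the fact that $\B_{n-k}^r(\R^n)$ is precisely the continuous dual of $\hB_{n-k}^r(\R^n)$ (Theorem \ref{junk}.\ref{trans}), so the pairing $\cint_\cdot\o$ separates points of $\hB_{n-k}^r(\R^n)$: if two differential chains have the same Harrison integral against every form in $\B_{n-k}^r(\R^n)$, they are equal. Applying this with the chains $\perp J$ and $\perp' J$ gives $\perp J = \perp' J$ in $\hB_{n-k}^r(\R^n)$ for every $J\in\hB_k^r(\R^n)$, and since this holds for all $r\geq 0$ it holds on $\hB_k^\i(\R^n)$ as well. Therefore $\perp = \perp'$ as operators.

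The only subtlety — and the one step worth a sentence of care rather than a genuine obstacle — is making sure the basis-independence of the Hodge star itself is legitimately available. This is the classical linear-algebra fact that $*:\L^k(\R^n)^*\to\L^{n-k}(\R^n)^*$ is determined by the condition $\alpha\wedge *\beta = \langle\alpha,\beta\rangle\, \mathrm{vol}$ for the inner product and the fixed orientation, with no reference to a basis; it transfers pointwise to forms. Granting that, the argument is a short duality/separation-of-points argument with no computation, and the same strategy (identify the dual operator with a manifestly basis-free operator on forms, then use that forms separate chains) is exactly what was used implicitly for the well-definedness of $\p$.
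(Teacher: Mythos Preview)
Your proposal is correct and takes essentially the same approach as the paper: the paper's proof is the single line ``This follows since $*$ is independent of our choice of orthonormal basis,'' and you have simply spelled out the implicit separation-of-points step (forms in $\B_{n-k}^r$ separate chains in $\hB_{n-k}^r$) that makes this deduction rigorous.
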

\begin{proof}
	This follows since $*$ is independent of our choice of orthonormal basis.
\end{proof}

\subsubsection{The Generalized Divergence and Curl Theorems}
We may combine boundary and perp to immediately get general versions of the Divergence and Curl theorems:

\begin{thm}[Divergence Theorem]\label{divergence}
	Let $r\geq 1$ and let $J\in \hB_k^r(\R^n)$.  If $\o\in \B_{n-k+1}^{r+1}(\R^n)$, then
	\[
	\cint_{\perp\p J}\o=\cint_J d*\o.
	\]
\end{thm}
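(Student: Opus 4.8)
The plan is to chain together the two dual-operator lemmas already established: the duality between $\perp$ and Hodge star, and the duality between $\p$ and exterior derivative (Lemma \ref{stokes}). Both are stated for arbitrary chains in the appropriate $\hB_k^r(\R^n)$, so no new approximation argument is needed; the only care required is to track the degrees and orders so that each pairing is legal.

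First I would fix $r\geq 1$ and $J\in \hB_k^r(\R^n)$, and take $\o\in \B_{n-k+1}^{r+1}(\R^n)$. The boundary operator sends $J$ to $\p J\in \hB_{k-1}^{r+1}(\R^n)$, and then perp sends that to $\perp\p J\in \hB_{n-(k-1)}^{r+1}(\R^n)=\hB_{n-k+1}^{r+1}(\R^n)$. So $\perp\p J$ pairs with forms in $\B_{n-k+1}^{r+1}(\R^n)$, and $\o$ is exactly such a form. Applying the perp--Hodge-star duality lemma with the chain $\p J\in\hB_{k-1}^{r+1}(\R^n)$ and the form $\o\in\B_{n-k+1}^{r+1}(\R^n)$ (note $n-k+1 = n-(k-1)$, so the degrees match the lemma's hypotheses), I get
\[
\cint_{\perp\p J}\o=\cint_{\p J} *\o.
\]
Here $*\o$ is a form of degree $n-(n-k+1)=k-1$, and since Hodge star is an isometry on the relevant norms (or at worst bounded), $*\o\in\B_{k-1}^{r+1}(\R^n)$, which is precisely the space that pairs with $\p J\in\hB_{k-1}^{r+1}(\R^n)$. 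Now applying Lemma \ref{stokes} (duality of $\p$ and $d$) with the chain $J\in\hB_k^r(\R^n)$ and the form $*\o\in\B_{k-1}^{r+1}(\R^n)$ gives
\[
\cint_{\p J}*\o=\cint_J d(*\o)=\cint_J d*\o.
\]
Combining the two displayed equalities yields $\cint_{\perp\p J}\o=\cint_J d*\o$, which is the claim.

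The only genuine thing to verify — and what I'd expect to be the main (though minor) obstacle — is the bookkeeping that every pairing above is between a chain in $\hB_m^s$ and a form in $\B_m^s$ for matching $m$ and $s$: in particular that $*\o$ really lands in $\B_{k-1}^{r+1}$ (using that Hodge star preserves the $B^{r+1}$ norm, which follows from its pointwise-isometry property together with Theorem \ref{topequiv2}), and that the degree arithmetic $n-(k-1)=n-k+1$ is consistent throughout. Once those are checked, the result is immediate from the two duality lemmas and needs no separate density or continuity argument, since it is an identity of Harrison integrals valid on all of $\hB_k^r(\R^n)$.
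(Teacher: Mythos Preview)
Your proof is correct and is exactly the approach the paper takes: the theorem is stated there as an immediate consequence of combining the duality of $\perp$ with Hodge star and the duality of $\p$ with $d$ (Lemma \ref{stokes}), and your argument simply makes explicit the two-step composition $\cint_{\perp\p J}\o=\cint_{\p J}*\o=\cint_J d*\o$ together with the degree/order bookkeeping.
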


\begin{thm}[Curl Theorem]\label{curl}
	Let $r$ and $J$ be as in Theorem \ref{divergence} and let $\o\in \B_{n-k-1}^{r+1}(\R^n)$.  Then
	\[
	\cint_{\p\perp J}\o=\cint_J * d \o.
	\]
\end{thm}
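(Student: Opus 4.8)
The plan is to obtain Theorem~\ref{curl} as a purely formal consequence of two dual-operator identities already in hand, in exact parallel with the Divergence Theorem: the identity $\cint_{\p K}\o = \cint_K d\o$ (Lemma~\ref{stokes}) and the identity $\cint_{\perp K}\eta = \cint_K *\eta$ (the lemma asserting that perp is dual to Hodge star). Fix $r\geq 1$ and $J\in\hB_k^r(\R^n)$. Then $\perp$ carries $J$ to $\perp J\in\hB_{n-k}^r(\R^n)$, and $\p$ carries $\perp J$ to $\p\perp J\in\hB_{n-k-1}^{r+1}(\R^n)$, so that pairing $\p\perp J$ with a form $\o\in\B_{n-k-1}^{r+1}(\R^n)$ is legitimate; this accounts for the degree and order hypotheses placed on $\o$ in the statement.

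First I would apply Lemma~\ref{stokes} with $\perp J$ in the role of the chain: taking the chain to be $\perp J\in\hB_{n-k}^r(\R^n)$ and the form to be $\o\in\B_{n-k-1}^{r+1}(\R^n)$ yields
\[
\cint_{\p\perp J}\o = \cint_{\perp J} d\o .
\]
Here one must track the bookkeeping: exterior differentiation lowers the regularity order by one, so $d\o\in\B_{n-k}^r(\R^n)$, which by Theorem~\ref{junk}.\ref{inc} still pairs continuously with $\perp J\in\hB_{n-k}^r(\R^n)$, the pairing being compatible across orders by the transpose relationship of Theorem~\ref{junk}.\ref{trans}; thus the right-hand side is well-defined.

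Next I would apply the perp--Hodge-star duality with the chain $J\in\hB_k^r(\R^n)$ and the form $\eta=d\o\in\B_{n-k}^r(\R^n)$, obtaining $\cint_{\perp J} d\o = \cint_J * d\o$, where $*d\o\in\B_k^r(\R^n)$ pairs with $J$. Concatenating the two displays gives $\cint_{\p\perp J}\o = \cint_J * d\o$, which is the claim. I do not expect a genuine obstacle here: the only point demanding care is the index arithmetic — keeping straight that $\perp$ preserves the order, that $\p$ and $d$ shift degree and order in opposite directions, and that the inclusions $\iota_{s,r}$ and maps $\phi_{r,s}$ of Theorem~\ref{junk} make each occurrence of $\cint$ above a pairing between genuinely dual spaces. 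Everything else follows immediately from the already-established dual-operator identities.
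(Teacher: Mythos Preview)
Your proof is correct and is exactly the approach the paper intends: the theorem is stated there without explicit proof, as an immediate consequence of composing the dual-operator identities for $\p$ (Lemma~\ref{stokes}) and $\perp$ (Hodge star), which is precisely what you have written out.
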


In fact, we can compose \emph{any} of the above operators to yield similar integral relations.  For example, set $\Diamond=\perp\p\perp$.  Then the dual operator to $\Diamond$ is $\d=*d*$, and we have for all $J\in \hB_k^r(\R^n)$, $r\geq 1$, and $\o\in \B_{k+1}^{r+1}(\R^n)$ the integral relation
\[
\cint_{\Diamond J}\o=\cint_J \d \o.
\]
Likewise, if we define $\Box=\Diamond \p+\p\Diamond$, then the dual operator is $\D=\d d+d\d$, and we have for all $J\in \hB_k^r(\R^n)$, $r\geq 1$, and $\o\in \B_k^{r+2}$ the integral relation
\[
\cint_{\Box J}\o=\cint_J \D \o.
\]

\subsubsection{Operators with respect to a Vector Field}
More generally, we may define the extrusion, retraction, and prederivative operators with respect to a vector field.  Let $V_\B^r(\R^n)$ be the space of vector fields on $\R^n$ whose coordinate functions are elements of $\B_0^r(\R^n)$.  Clearly, the definition of $V_\B^r(\R^n)$ is independent of a choice of basis. 

\begin{defn}[Extrusion with respect to a Vector Field]
	For $X\in V_\B^r(\R^n)$, define
	\begin{align*}
		E_X: \P_k(\R^n)&\rightarrow P_{k+1}(\R^n)\\
		\sum(p_i;\a_i)&\mapsto \sum(p_i; X(p_i)\wedge\a_i).
	\end{align*}
\end{defn}

\begin{thm}\label{before}
	For each $X\in V_\B^r(\R^n)$, $r\geq 0$, the operator $E_X$ is continuous in the $r$-norm topology, and thus extends to a continuous homomorphism $E_X: \hB_k^r(\R^n)\rightarrow \hB_{k+1}^r(\R^n)$.  The dual operator to $E_X$ is $\iota_X$.  That is, for all $J\in \hB_k^r(\R^n)$, $\o\in \B_{k+1}^r(\R^n)$, $X\in V_\B^r(\R^n)$, $r\geq 0$, we have
	\[
	\cint_{E_X J}\o = \cint_J \iota_X \o.
	\]
\end{thm}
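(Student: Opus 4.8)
The plan is to reduce $E_X$ to operators already under control, namely multiplication by a function and extrusion through a constant vector. Fix an orthonormal basis $\{v_1,\dots,v_n\}$ of $\R^n$ and write $X=\sum_{j=1}^n f_j\check{v_j}$ where $f_j:=\<X,\check{v_j}\>\in\B_0^r(\R^n)$ by hypothesis. Then on $\P_k(\R^n)$ one has the identity $E_X=\sum_{j=1}^n m_{f_j}\circ E_{v_j}$: applying $E_{v_j}$ to $(p;\a)$ produces $(p;v_j\wedge\a)$, and then $m_{f_j}$ rescales the $k$-vector at $p$ by $f_j(p)$, so $m_{f_j}E_{v_j}(p;\a)=(p;f_j(p)\,v_j\wedge\a)$; summing over $j$ and using $X(p)=\sum_j f_j(p)v_j$ gives $E_X(p;\a)=(p;X(p)\wedge\a)$, and both sides are linear in the pointed chain.

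First I would use this decomposition to obtain continuity. Each $E_{v_j}$ is a continuous linear map $\hB_k^r(\R^n)\to\hB_{k+1}^r(\R^n)$ and each $m_{f_j}$ is a continuous linear map $\hB_{k+1}^r(\R^n)\to\hB_{k+1}^r(\R^n)$, by the lemmas already established; hence $m_{f_j}\circ E_{v_j}$ is continuous, and a finite sum of continuous linear maps is continuous. In fact one gets the explicit bound $\|E_X A\|_{B^r}\le\sum_{j=1}^n K_{f_j}\|v_j\|\,\|A\|_{B^r}$ for $A\in\P_k(\R^n)$, where $K_{f_j}$ is the product-rule constant from the continuity lemma for $m_{f_j}$. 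Therefore $E_X$ extends uniquely to a continuous linear operator $\hB_k^r(\R^n)\to\hB_{k+1}^r(\R^n)$. Note that this finesses the fact that $E_X$ does \emph{not} interact simply with the difference operators $\D_U^j$ (because $X$ is non-constant), which is why a direct $r$-norm estimate on $E_X(\D_U^j(p;\a))$ is awkward; routing through $m_{f_j}$ absorbs exactly this non-constancy.

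Next I would compute the dual operator by dualizing the decomposition term by term. Let $J\in\hB_k^r(\R^n)$ and $\o\in\B_{k+1}^r(\R^n)$. By the dual formula for multiplication, $\cint_{m_{f_j}E_{v_j}J}\o=\cint_{E_{v_j}J}f_j\o$, which is legitimate since $f_j\o\in\B_{k+1}^r(\R^n)$ by the product rule; then by the dual formula for extrusion, $\cint_{E_{v_j}J}f_j\o=\cint_J\iota_{\check{v_j}}(f_j\o)$, legitimate since $\iota_{\check{v_j}}(f_j\o)\in\B_k^r(\R^n)$ as in the extrusion lemma. Summing over $j$ and using that the interior product is pointwise $\B_0^r$-linear in its vector argument, $\sum_{j=1}^n\iota_{\check{v_j}}(f_j\o)=\sum_{j=1}^n f_j\,\iota_{\check{v_j}}\o=\iota_{\sum_j f_j\check{v_j}}\o=\iota_X\o$, so $\cint_{E_X J}\o=\cint_J\iota_X\o$ first for $J\in\P_k(\R^n)$ and then, by continuity of both sides and density of $\P_k(\R^n)$, for all $J\in\hB_k^r(\R^n)$. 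This identity also shows after the fact that neither $E_X$ nor its continuity depends on the chosen basis, since $\iota_X$ does not.

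The main obstacle is conceptual rather than computational: recognizing that one should not estimate $E_X$ directly against the defining expression of the $r$-norm, but should instead split $X$ along a basis so that the non-constant part is carried by $m_{f_j}$ (whose continuity already encodes a product-rule bound) and the directional part by the constant-coefficient operator $E_{v_j}$. Once the identity $E_X=\sum_j m_{f_j}\circ E_{v_j}$ on $\P_k(\R^n)$ is in hand, everything else is bookkeeping with the previously proved continuity and duality lemmas, together with the elementary pointwise algebra of interior products.
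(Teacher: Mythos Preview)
Your proposal is correct and follows essentially the same approach as the paper: decompose $X=\sum_j f_j\check{v_j}$ along a basis, observe $E_X=\sum_j m_{f_j}\circ E_{v_j}$ on pointed chains, and conclude continuity from that of $m_{f_j}$ and $E_{v_j}$. Your treatment of the dual operator is more detailed than the paper's (which simply remarks that the duality follows from the definition on $\P_k$), but the argument is the same.
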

\begin{proof}
	We begin by noting that for $f\in \B_0^r(\R^n)$, we have $E_{f\check{v}}(p;\a)=(p;f(p)v\wedge \a)=m_f(p;v\wedge \a)=m_f E_{\check{v}}(p;\a)$.  So, choosing a basis $\{e_i\}_i$ of $\R^n$, we have $X=\sum f_i e_i$, where $f_i\in \B_0^r(\R^n)$.  It follows that $E_X=\sum E_{f_i\check{e_i}}=\sum m_{f_i}E_{\check{e_i}}$.  Since $m_{f_i}$ and $E_{e_i}$ are continuous, it follows that $E_X$ is also continuous.  The fact that $\iota_X$ is the dual operator to $E_X$ follows from the definition of $E_X$ on $\P_k(\R^n)$.  
\end{proof}

\begin{defn}[Retraction with respect to a Vector Field]
	For $X\in V_\B^r(\R^n)$, define
	\begin{align*}
		E_X^\dagger: \P_{k+1}(\R^n)&\rightarrow \P_k(\R^n)\\
		(p; v_1\wedge\cdots\wedge v_{k+1})&\mapsto \sum_{i=1}^{k+1}(-1)^{i+1}\<X(p), v_i\>(p;v_1\wedge\cdots\wedge\hat{v_i}\wedge\cdots\wedge v_{k+1}).
	\end{align*}
\end{defn}
\begin{thm}
	For each $X\in V_\B^r(\R^n)$, $r\geq 0$, the operator $E_X^\dagger$ is continuous in the $r$-norm topology, and thus extends to a continuous homomorphism $E_X^\dagger: \hB_{k+1}^r(\R^n)\rightarrow \hB_k^r(\R^n)$.  The dual operator to $E_X^\dagger$ is $X^\flat\wedge \cdot$.  That is, for all $J\in \hB_{k+1}^r(\R^n)$, $\o\in \B_{k}^r(\R^n)$, $X\in V_\B^r(\R^n)$, $r\geq 0$, we have
	\[
	\cint_{E_X^\dagger J}\o = \cint_J X^\flat\wedge \o.
	\]
\end{thm}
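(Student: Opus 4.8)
The plan is to mimic the proof of Theorem~\ref{before}: reduce $E_X^\dagger$ to the constant-coefficient retraction operators and the multiplication operators, both of which have already been shown to be continuous in the $r$-norm. Concretely, fix an orthonormal basis $\{e_1,\dots,e_n\}$ of $\R^n$ and write $X=\sum_{j=1}^n f_j\check{e_j}$ with each coordinate function $f_j\in\B_0^r(\R^n)$. Since the inner product is linear in its first argument, $\langle X(p),v_i\rangle=\sum_j f_j(p)\langle e_j,v_i\rangle$, so the defining formula for $E_X^\dagger$ on a simple element factors as
\[
E_X^\dagger(p;v_1\wedge\cdots\wedge v_{k+1})=\sum_{j=1}^n f_j(p)\,E_{\check{e_j}}^\dagger(p;v_1\wedge\cdots\wedge v_{k+1})=\sum_{j=1}^n m_{f_j}E_{\check{e_j}}^\dagger(p;v_1\wedge\cdots\wedge v_{k+1}).
\]
By Lemma~\ref{pointed gen} and linearity this yields the operator identity $E_X^\dagger=\sum_{j=1}^n m_{f_j}\circ E_{\check{e_j}}^\dagger$ on all of $\P_{k+1}(\R^n)$. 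Each $E_{\check{e_j}}^\dagger$ is $r$-norm continuous by the constant-vector retraction lemma above, and each $m_{f_j}$ is $r$-norm continuous by the multiplication lemma above, so $E_X^\dagger$ is a finite sum of compositions of $r$-continuous linear maps; it therefore extends uniquely to a continuous linear map $E_X^\dagger\colon\hB_{k+1}^r(\R^n)\to\hB_k^r(\R^n)$. I would remark in passing that although this decomposition uses a choice of basis, the operator $E_X^\dagger$ itself does not, since its definition on $\P_{k+1}(\R^n)$ is intrinsic.

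For the dual operator I would first establish the integral identity on pointed chains and then extend by continuity and density. Taking transposes term by term, and using that the dual of $m_{f_j}$ is multiplication by $f_j$ and the dual of $E_{\check{e_j}}^\dagger$ is $\check{e_j}^\flat\wedge\,\cdot$ (both proved above), we obtain, for $J\in\P_{k+1}(\R^n)$ and $\o\in\B_k^r(\R^n)$,
\[
\cint_{E_X^\dagger J}\o=\sum_{j=1}^n\cint_{m_{f_j}E_{\check{e_j}}^\dagger J}\o=\sum_{j=1}^n\cint_{E_{\check{e_j}}^\dagger J}f_j\o=\sum_{j=1}^n\cint_J f_j\,\check{e_j}^\flat\wedge\o=\cint_J\Big(\sum_{j=1}^n f_j\check{e_j}^\flat\Big)\wedge\o=\cint_J X^\flat\wedge\o,
\]
where the last step uses $X^\flat=\sum_j f_j\check{e_j}^\flat$ for an orthonormal basis. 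Since $X\in V_\B^r(\R^n)$, the $1$-form $X^\flat$ has coefficients in $\B_0^r(\R^n)$, so by the product rule the map $\o\mapsto X^\flat\wedge\o$ is a bounded operator $\B_k^r(\R^n)\to\B_{k+1}^r(\R^n)$; hence for each fixed $\o$ both sides of the displayed identity are continuous linear functionals of $J\in\hB_{k+1}^r(\R^n)$, and since they agree on the dense subspace $\P_{k+1}(\R^n)$ they agree for all $J$. This gives the asserted duality for all $J\in\hB_{k+1}^r(\R^n)$ and all $\o\in\B_k^r(\R^n)$.

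I do not expect a genuine obstacle: the argument is the same continuity-plus-density template already used for $E_X$ and $m_f$. The one step that deserves a moment of care is the initial factorization $E_X^\dagger=\sum_j m_{f_j}\circ E_{\check{e_j}}^\dagger$ --- one must check that the ``function part'' $f_j(p)$ of the coefficient really does pull out as the multiplication operator $m_{f_j}$, which is precisely the localization property of pointed chains combined with linearity of $\langle\cdot,\cdot\rangle$ in its first slot. Once that identity is in hand, continuity of $E_X^\dagger$, its extension to $\hB_{k+1}^r(\R^n)$, and the identification of its dual all follow formally.
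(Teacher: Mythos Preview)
Your proposal is correct and follows essentially the same approach as the paper: the paper's proof consists of the single observation $E^\dagger_{f\check{v}}=m_f E^\dagger_{\check{v}}$ and an appeal to the argument for $E_X$ (Theorem~\ref{before}), which is exactly the decomposition $E_X^\dagger=\sum_j m_{f_j}\circ E_{\check{e_j}}^\dagger$ you carry out in detail. Your treatment of the dual operator is more explicit than the paper's, but the underlying idea is identical.
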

\begin{proof}
	The proof follows in the same manner as in Theorem \ref{before} by noting that $E^\dagger_{f\check{v}}=m_f E^\dagger_{\check{v}}$.  
\end{proof}

\begin{defn}[Prederivative with respect to a Vector Field]
	For $X\in V_\B^{r+1}(\R^n)$, $r\geq 1$, define the operator $P_X:= \p E_X+E_X\p$ on $\hB_k^r(\R^n)$.  By duality and Cartan's magic formula, it follows that the dual operator to $P_X$ is Lie derivative, $\mathcal{L}_X$.
\end{defn}

\begin{lem}
	If $X\in V_\B^{r+1}(\R^n)$, $r\geq 1$, $(p;\a)\in \P_k(\R^n)$, and if $\phi_t$ denotes the time-$t$ flow of $X$, then $P_X(p;\a)=\lim_{t\rightarrow 0}(\phi_t(p);(\phi_t)_* \a/t)-(p;\a/t)$.
\end{lem}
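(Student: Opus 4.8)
\emph{Proof proposal.} Fix $r\geq 1$ and set $Q_t:=(\phi_t(p);(\phi_t)_*\a/t)-(p;\a/t)\in\P_k(\R^n)$; the claim is that $Q_t\to P_X(p;\a)$ in $\hB_k^{r+1}(\R^n)$ as $t\to 0$. By Theorem~\ref{equivnorm} the $(r+1)$-norm of a differential chain equals the supremum of the Harrison pairing against forms of $\B_k^{r+1}$-norm at most $1$, so it suffices to show that
\[
\sup\Big\{\big|\cint_{Q_t}\o-\cint_{P_X(p;\a)}\o\big|\ :\ \o\in\B_k^{r+1}(\R^n),\ \|\o\|_{B^{r+1}}\leq 1\Big\}\xrightarrow[\ t\to 0\ ]{}0 .
\]
For a fixed form $\o$ I would recognize both pairings in terms of the scalar function $g_\o(s):=(\phi_s^*\o)_p(\a)=\o_{\phi_s(p)}\big((\phi_s)_*\a\big)$, which is $C^1$ near $s=0$ because $\o\in C^1$ and the flow of $X$ is $C^1$. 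Evaluating the Harrison integral on the pointed chain $Q_t$ gives $\cint_{Q_t}\o=t^{-1}\big(g_\o(t)-g_\o(0)\big)$. On the other hand the dual operator of $P_X$ is the Lie derivative $\mathcal L_X$ (this is the content of the definition of $P_X$ together with Cartan's formula), so $\cint_{P_X(p;\a)}\o=(\mathcal L_X\o)_p(\a)$; and by the chain rule together with the dynamical description of the Lie derivative, $g_\o'(s)=(\mathcal L_X\o)_{\phi_s(p)}\big((\phi_s)_*\a\big)$, in particular $g_\o'(0)=(\mathcal L_X\o)_p(\a)$. Thus the expression inside the supremum is exactly the first-order Taylor error $\big|t^{-1}(g_\o(t)-g_\o(0))-g_\o'(0)\big|$ of $g_\o$ at $0$, which the integral form of Taylor's remainder bounds by $\tfrac{|t|}{2}\,\lip_{[-\delta,\delta]}(g_\o')$ for $|t|\leq\delta$.

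The main step is then to bound $\lip(g_\o')$ linearly in $\|\o\|_{B^{r+1}}$, uniformly on a fixed interval $[-\delta,\delta]$ depending only on $X$ and $p$. Writing $g_\o'(s)=\eta_{q(s)}\big(\beta(s)\big)$ with $\eta:=\mathcal L_X\o$, $q(s):=\phi_s(p)$, and $\beta(s):=(\phi_s)_*\a$, I would use that $q$ is $\|X\|_{C^0}$-Lipschitz, that $\beta$ solves the linear ODE $\dot\beta=(DX)_{q}\cdot\beta$ and is therefore Lipschitz on $[-\delta,\delta]$ with constant controlled by $\|X\|_{B^2}$, $\delta$, and $\|\a\|$, and --- crucially --- that $\eta=\mathcal L_X\o$ is a bounded Lipschitz form with $\|\mathcal L_X\o\|_{B^1}\leq C_X\|\o\|_{B^2}\leq C_X\|\o\|_{B^{r+1}}$ by the product rule (this uses $r+1\geq 2$, so that $\o\in\B_k^2$ and $X\in V_\B^2$, together with Lemma~\ref{decreasing}). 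Splitting $|g_\o'(s_1)-g_\o'(s_2)|$ into the contribution from the moving base point and that from the moving $k$-vector then gives $\lip_{[-\delta,\delta]}(g_\o')\leq C(X,p,\a,\delta)\,\|\o\|_{B^{r+1}}$. Inserting this into the Taylor bound and taking the supremum over $\|\o\|_{B^{r+1}}\leq 1$ yields $\|Q_t-P_X(p;\a)\|_{B^{r+1}}\leq\tfrac12\,C(X,p,\a,\delta)\,|t|\to 0$, as required.

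The delicate point --- and the one I expect to be the real obstacle --- is the uniformity of the Taylor estimate over the unit ball of $\B_k^{r+1}$: for a single fixed form this is just the elementary convergence of the Lie-derivative difference quotient, but since a mere boundedness estimate on $\{Q_t\}$ together with weak-$*$ convergence would not force convergence in the norm of the Banach space $\hB_k^{r+1}$, one genuinely needs the second-order bound $\lip(g_\o')\lesssim\|\o\|_{B^{r+1}}$, and this is exactly where the hypothesis $X\in V_\B^{r+1}$ with $r\geq 1$ enters. As an alternative to using the $P_X\leftrightarrow\mathcal L_X$ duality to identify the limit, one can first prove that $\lim_{t\to 0}Q_t$ exists in $\hB_k^{r+1}$ by writing $Q_t=t^{-1}\D_{\phi_t(p)-p-tX(p)}\big(p+tX(p);(\phi_t)_*\a\big)+\big(p+tX(p);t^{-1}((\phi_t)_*\a-\a)\big)+t^{-1}\D_{tX(p)}(p;\a)$, noting that the first term is $O(t)$ in the $1$-norm because $|\phi_t(p)-p-tX(p)|=O(t^2)$, the second converges in the mass norm, and the third converges to $P_{X(p)}(p;\a)$ by Lemma~\ref{der3}; the limit is then identified with $P_X(p;\a)$ by pairing with forms and using continuity of the Harrison integral.
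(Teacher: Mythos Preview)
Your proposal is correct, and in fact more careful than the paper's own argument. The paper's proof is a single line: it writes $\o(P_X(p;\a))=\mathcal L_X\o(p;\a)=\lim_{t\to 0}\o(Q_t)=\o(\lim_{t\to 0}Q_t)$, which is exactly your identification via the $P_X\leftrightarrow\mathcal L_X$ duality, but the final interchange of the limit with $\o$ is left unjustified --- strictly speaking the paper establishes only pointwise (weak) convergence over $\o$ and does not address norm convergence in $\hB_k^{r+1}$. Your main argument supplies precisely this missing uniformity by bounding the first-order Taylor remainder of $g_\o$ uniformly over the unit ball of $\B_k^{r+1}$, and you correctly isolate this as the place where $r\geq 1$ (so that $\mathcal L_X\o\in\B_k^1$) is genuinely needed.

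Your alternative route --- decomposing $Q_t$ into the $O(t^2)/t$ translation error, a mass-convergent piece from $t^{-1}((\phi_t)_*\a-\a)$, and the constant-direction difference quotient $t^{-1}\D_{tX(p)}(p;\a)$ --- is a genuinely different approach not in the paper. It has the advantage of proving existence of the limit directly in the chain norms (using only Lemma~\ref{der3} and elementary $1$-norm estimates) before identifying it by duality, rather than relying on a second-order dual estimate; the paper's approach, to the extent it is complete, is shorter but less explicit about what topology the convergence is in.
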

\begin{proof}
	Note that since $r\geq 1$, the vector field $X$ is at least $C^{1+\lip}$, and thus is locally integrable, and the flow map $\phi_t$ is at least of class $C^{1+\lip}$.  Thus, we may form the pushforward map $(\phi_t)_*$ on $k$-vectors in $\R^n$.  The equality follows from the fact that 
	\[
	\o(P_X(p;\a))=\mathcal{L}_X\o(p;\a)=\lim_{t\rightarrow 0}\o(\phi_t(p); (\phi_t)_* \a/t)-\o(p;\a/t)=\o(\lim_{t\rightarrow 0}(\phi_t(p);(\phi_t)_* \a/t)-(p;\a/t)).
	\]
\end{proof}

\subsection{Differential Chains on Open Sets}
In order to move the theory onto manifolds, we need a method for dealing with differential chains in an open set.  That is, instead of the space $\hB_k^r(\R^n)$, it would be useful to have a space $\hB_k^r(W)$ whenever $W$ is an open subset of $\R^n$.  One must be careful about defining $\hB_k^r(W)$, however.  If $\P_k(W)$ denotes the space of pointed chains supported in $W$,  and if we define $\hB_k^r(W)$ to be the completion of $\P_k(W)$ under the $r$-norm, then there are examples that do not behave well under \emph{pushforward} by smooth maps.  For example, if we define $W$ to be a horse-shoe that touches its ends together, we could define a sequence of pointed chains converging to a dipole as in Example \ref{der}, where the dipole ``bridges'' the gap in $W$.  One could then continuously deform $W$ in such a way that would not descend to a continuous pushforward map on $\hB_k^r(W)$.  See Figure \ref{fig:splitting} for an illustration of this.  

  \begin{figure}[htbp]
  	\centering
  		\includegraphics[height=1.5in]{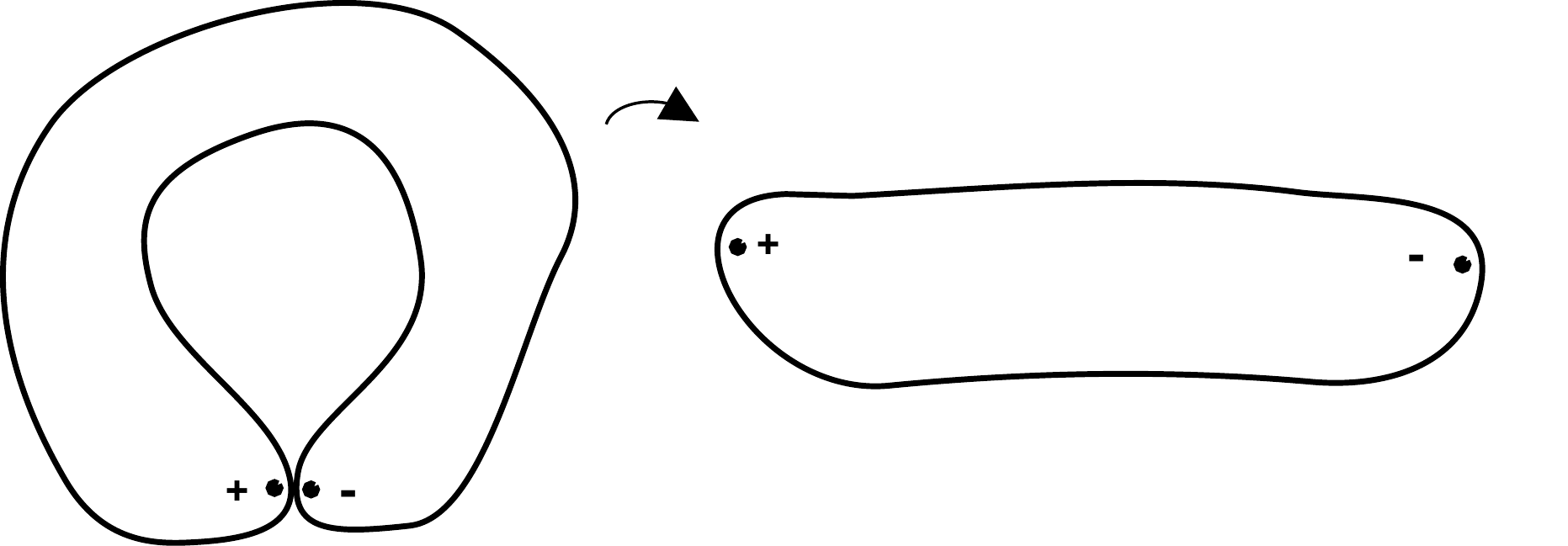}
  	\caption{Discontinuous Pushforward}
  	\label{fig:splitting}
  \end{figure}

To get around this problem, we will need a notion of \emph{support} of a differential chain.  

\begin{defn}
	 If $J\in\hB_k^r(\R^n)$ and $X\subseteq \R^n$ is a closed subset, we say that $X$ \textbf{supports $J$} or \textbf{$J$ is supported by $X$} if, for every non-empty open $W\subseteq \R^n$ with $X\subseteq W$, there exists $A_i\rightarrow J$ in $\hB_k^r(\R^n)$ with $A_i\in \P_k(W)$.  The intersection of all closed sets $X$ supporting $J$ is called \textbf{support} of $J$, and we denote it by $\supp(J)$.  We say that an arbitrary subset $Z\subseteq \R^n$ \textbf{supports} $J$ if $\supp(J)\subseteq Z$.  
\end{defn}
Immediately we see that the support of a pointed chain $A=\sum(p_i;\a_i)$ is $\cup \{p_i\}$, so this definition agrees with the notion of support for pointed chains as finitely supported sections of $\L^k TM$.

\begin{lem}\label{supportfacts}
	The following facts about support hold:
	\begin{enumerate}
		\item If $J\in \hB_k^r(\R^n)$, then $\supp(J)$ is a well-defined closed set.  If $J\neq 0$, then $\supp(J)$ is non-empty.
		\item If $J,K\in \hB_k^r(\R^n)$, then $\supp(J+K)\subseteq \supp(J)\cup \supp(K)$, with equality if $\supp(J)\cap \supp(K)=\emptyset$.  
		\item If $J\in \hB_k^r(\R^n)$ and $\o\in \B_k^r(\R^n)$ is supported in $\supp(J)^c$, then $\o(J)=0$.  
		\item Fix a closed set $X\subseteq \R^n$.  If $J\in \hB_k^r(\R^n)$ and $\o(J)=0$ for all $\o\in \B_k^r(\R^n)$ supported in $X^c$, then $J$ is supported by $X$.
		\item \label{smallsupp}If $T$ is a continuous linear operator on $\hB_k^r(\R^n)$ with $\supp(TA)\subseteq \supp(A)$ for all $A\in \P_k(\R^n)$, then $\supp(TJ)\subseteq \supp(J)$ for all $J\in \hB_k^r(\R^n)$.
	\end{enumerate}
\end{lem}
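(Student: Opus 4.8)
The plan is to route everything through a single \emph{localization lemma}: for any closed $X\subseteq\R^n$, the chain $J$ is supported by $X$ if and only if $\o(J)=0$ for every $\o\in\B_k^r(\R^n)$ with $\supp(\o)\subseteq X^c$. The forward implication I would get from normality of the metric space $\R^n$: given such an $\o$, separate the disjoint closed sets $X$ and $\supp(\o)$ by an open $W$ with $X\subseteq W$ and $W\cap\supp(\o)=\emptyset$; since $X$ supports $J$ there are $A_i\to J$ with $A_i\in\P_k(W)$, the form $\o$ vanishes identically on $W\supseteq\supp(A_i)$, so $\o(A_i)=0$ and hence $\o(J)=0$ by continuity of $\o\in(\hB_k^r)'$. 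The converse I would get from Hahn--Banach: for a fixed open $W\supseteq X$, a functional $\o\in(\hB_k^r)'=\B_k^r(\R^n)$ that annihilates the linear subspace $\P_k(W)$ must vanish at every point of $W$, i.e.\ $\supp(\o)\subseteq W^c\subseteq X^c$, so $\o(J)=0$; thus no continuous functional separates $J$ from $\overline{\P_k(W)}$, which forces $J\in\overline{\P_k(W)}$. Note that this converse is exactly \#4, and that \#3 will drop out once we know $\supp(J)$ itself supports $J$.

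For \#1, closedness is immediate (an intersection of closed sets); the real point is that this intersection is again a supporting set, i.e.\ that the family of closed supporting sets of $J$ is stable under intersection. Stability under \emph{finite} intersection is easy: if $X_1,X_2$ support $J$ and $\supp(\o)\subseteq X_1^c\cup X_2^c$, pick $\rho_1,\rho_2$ with $\rho_1+\rho_2\equiv 1$ on a neighborhood of $\supp(\o)$ and $\supp(\rho_i)\subseteq X_i^c$; then $\o=\rho_1\o+\rho_2\o$, each $\rho_i\o$ is supported in $X_i^c$, so $(\rho_i\o)(J)=0$ by the localization lemma and $\o(J)=0$. For an \emph{arbitrary} intersection $X=\bigcap_\alpha X_\alpha$ I would use that $\R^n$ is Lindel\"of to reduce to a countable family, then by the finite case to a decreasing sequence $Y_1\supseteq Y_2\supseteq\cdots$ of supporting sets with $Y=\bigcap_n Y_n$. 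Given $\o$ with $\supp(\o)\subseteq Y^c=\bigcup_n Y_n^c$, construct an exhausting partition of unity $\{\rho_n\}$ subordinate to the increasing open cover $\{Y_n^c\}$, with transition regions of bounded width, so that $\psi_N:=1-\sum_{n\le N}\rho_n$ is norm-bounded in $\B_k^r$ uniformly in $N$ and $\psi_N\to 0$ in $C^\i_{\mathrm{loc}}$; then $(\psi_N\o)(A)\to 0$ for every pointed chain $A$ while $\{\psi_N\o\}$ stays norm-bounded in $\B_k^r=(\hB_k^r)'$, so an $\e/3$ argument off the dense subspace $\P_k$ gives $(\psi_N\o)(J)\to 0$. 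Since $(\sum_{n\le N}\rho_n\o)(J)=\sum_{n\le N}(\rho_n\o)(J)=0$ by the localization lemma, we conclude $\o(J)=0$, i.e.\ $Y$ supports $J$. Non-emptiness follows: if $\supp(J)=\emptyset$ then the localization lemma (with $X=\emptyset$) gives $\o(J)=0$ for all $\o\in\B_k^r$, so $J=0$ since $\B_k^r$ separates the points of the Banach space $\hB_k^r$.

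The remaining items are short applications of the same lemma. Statement \#3 is its forward direction with $X=\supp(J)$, now known to support $J$; statement \#4 is its converse direction. For \#2, apply the lemma with $X=\supp(J)\cup\supp(K)$: if $\supp(\o)\subseteq X^c=\supp(J)^c\cap\supp(K)^c$ then $\o(J)=\o(K)=0$, so $\o(J+K)=0$, giving $\supp(J+K)\subseteq\supp(J)\cup\supp(K)$; and if $\supp(J)\cap\supp(K)=\emptyset$ and $p\in\supp(J)$ were not in $\supp(J+K)$, one could find an open $U\ni p$ with $\o(J+K)=0$ for all $\o$ supported in $U$ and (shrinking $U$) with $U\cap\supp(K)=\emptyset$, whence $\o(J)=\o(J+K)-\o(K)=0$ for all such $\o$ and $p\notin\supp(J)$, a contradiction; symmetry yields equality. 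For \#5, apply the lemma to $TJ$ with $X=\supp(J)$: given $\o$ with $\supp(\o)\subseteq\supp(J)^c$, separate $\supp(J)$ from $\supp(\o)$ by an open $W$ and take $A_i\to J$ with $A_i\in\P_k(W)$; then $\supp(TA_i)\subseteq\supp(A_i)\subseteq W$, so $\o(TA_i)=0$ by \#3, and continuity of $T$ and of $\o$ give $\o(TJ)=0$, i.e.\ $\supp(TJ)\subseteq\supp(J)$.

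The step I expect to be the real obstacle is the arbitrary-intersection reduction in \#1: when the test form $\o$ lacks compact support one cannot pass to a finite subcover, and the partition-of-unity argument has to be married to a uniform-boundedness (weak-$*$) argument --- norm-bounded sequences in $(\hB_k^r)'$ that converge at every pointed chain converge everywhere --- in order to transfer the pointwise decay of $\psi_N$ into decay against $J$.
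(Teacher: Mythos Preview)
The paper does not give its own proof of this lemma; it simply refers the reader to \cite{harrison8}. So there is no argument in the paper to compare against, and your outline is in fact considerably more detailed than anything the paper offers. Your organizing principle---the two-way ``localization lemma'' equating the approximation definition of support with the vanishing of all test forms supported in the complement---is the standard and correct route, and parts \#2--\#5 fall out of it exactly as you describe.

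Your identification of the real obstacle is also correct: in \#1 one must show that the intersection of all closed supporting sets is itself supporting, and because forms in $\B_k^r$ need not be compactly supported, one cannot simply pass to a finite subcover. However, your proposed fix needs one adjustment. You suggest building the partition of unity \emph{subordinate to the increasing cover $\{Y_n^c\}$}, with transition regions of bounded width. In general this cannot be arranged: the boundaries $\partial Y_n$ may crowd together, forcing the transitions to shrink and the $\B_0^r$ norms of the cutoffs to blow up. The clean way out is not to adapt the cutoffs to the $Y_n$ at all. Take instead a fixed radial exhaustion: smooth $\chi_m$ with $\chi_m\equiv 1$ on $B(0,m)$, $\supp(\chi_m)\subseteq B(0,m+1)$, and transition width $1$, so that $\|\chi_m\|_{\B_0^r}$ is bounded uniformly in $m$. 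Then $\chi_m\omega$ has \emph{compact} support contained in $\supp(\omega)\subseteq Y^c=\bigcup_n Y_n^c$, hence in some single $Y_{N_m}^c$ by compactness, and $(\chi_m\omega)(J)=0$ by the forward direction of your localization lemma. On the other hand $(1-\chi_m)\omega$ is uniformly bounded in $\B_k^r$ and converges to $0$ at every pointed chain, so your weak-$*$/density argument gives $((1-\chi_m)\omega)(J)\to 0$ and hence $\omega(J)=0$. The same radial-cutoff trick also gives ``nonempty support for $J\neq 0$'' directly: it shows compactly supported forms already separate points of $\hB_k^r$, so if every compactly supported form kills $J$ then $J=0$. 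With this refinement your proof goes through.
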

The proof can be found in \cite{harrison8}.

\begin{defn}
	Let $\P_k(W)$ denote the space of pointed chains supported in $W$, and let $\hB_k^r(W)$ denote the space of differential chains supported in $W$.  We say $\D_U^j(p;\a)$ is \textbf{in $W$} if $\{p+ \sum a_i u_i, u_i\in U, 0\leq a_i\leq 1\}\subset W$.  In other words, we do not want to be able to ``step outside'' our open set to measure distance.  If $A\in \P_k(W)$, define
	\[
	\|A\|_{B^r,W}:=\inf\left\{\sum_{i=1}^N |\D_{U_i}^{j_i}(p_i;\a_i)|_{j_i}: A=\sum_{i=1}^N \D_{U_i}^{j_i}(p_i;\a_i),\,\,\, \D_{U_i}^{j_i}(p_i;\a_i) \textrm{ in W},\,\,\, 0\leq j_i\leq r\right\}.
	\]
\end{defn}

It is immediate that $\|\cdot\|_{B^r,W}$ is a seminorm on $\P_k(W)$ and that $\|A\|_{B^r}\leq \|A\|_{B^r,W}$.  Hence, $\|\cdot\|_{B^r,W}$ is a norm on $\hB_k^r(W)$.  Note that if $W\neq \R^n$, then $\hB_k^r(W)$ is not complete, since the space $\hB_k^r(W)$ includes Cauchy sequences of simple pointed chains converging in $\hB_k^r(\R^n)$ to a simple pointed chain supported on $\p W$.  However, we can say the following:
\begin{lem}
	Suppose $\{J_i\}_i$ is a Cauchy sequence in $\hB_k^r(W)$, and $\supp(J_i)\subset W-\bar{N}$, where $N$ is a neighborhood of $\p W$.  Then there exists $J\in \hB_k^r(W)$ such that $J_i\rightarrow J$ in $\hB_k^r(W)$ and $\supp(J)\subset W-\bar{N}$.
\end{lem}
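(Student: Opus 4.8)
The plan is to use the fact that $\hB_k^r(\R^n)$ is complete (being the completion of $\P_k(\R^n)$) to extract a limit $J \in \hB_k^r(\R^n)$, and then to argue separately that this $J$ actually lies in $\hB_k^r(W)$ with support in $W - \bar N$. First I would observe that $\|A\|_{B^r} \leq \|A\|_{B^r, W}$ implies that the inclusion $\hB_k^r(W) \to \hB_k^r(\R^n)$ is continuous, so $\{J_i\}$ is Cauchy in $\hB_k^r(\R^n)$ and hence converges to some $J \in \hB_k^r(\R^n)$. The content of the lemma is then twofold: (a) that $\supp(J) \subset W - \bar N$, and (b) that $J_i \to J$ in the stronger norm $\|\cdot\|_{B^r, W}$, not merely in $\|\cdot\|_{B^r}$.

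For (a), I would use the characterization of support. Fix a neighborhood $N$ of $\p W$ as in the hypothesis, so $\supp(J_i) \subset W - \bar N$ for all $i$. Since the closed set $X := \overline{W - \bar N}$ contains every $\supp(J_i)$ and support behaves well under limits — concretely, if $A_\ell \to J_i$ with $A_\ell \in \P_k(\mathcal{O})$ for any open $\mathcal{O} \supseteq X$, then combining these approximating sequences (diagonalizing over $i$) gives pointed chains in $\mathcal{O}$ converging to $J$, so $X$ supports $J$ — we get $\supp(J) \subseteq X$. A small point to check is that one can choose $N$ slightly smaller if needed so that $\overline{W - \bar N} \subset W - \bar N'$ for a still-smaller neighborhood $N'$ of $\p W$, keeping the support strictly inside $W$; this is where the hypothesis that the $J_i$ are uniformly supported away from $\p W$ (not just each inside $W$) is essential.

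For (b), the key is that once a differential chain is supported in a compact-away-from-boundary region $W - \bar N$, the norms $\|\cdot\|_{B^r}$ and $\|\cdot\|_{B^r, W}$ are comparable on chains supported there: any difference chain $\D_U^j(p;\a)$ appearing in an efficient $\R^n$-decomposition of such a chain can be replaced, up to a controlled error, by one that is ``in $W$'', because near the support there is room of size $\mathrm{dist}(W - \bar N, \p W) > 0$ to carry out the difference operations without stepping outside $W$. I expect this comparability estimate to be the main obstacle: one must show that a $\|\cdot\|_{B^r}$-Cauchy sequence supported in $W - \bar N$ is also $\|\cdot\|_{B^r, W}$-Cauchy, which amounts to a quantitative statement that decompositions achieving nearly the optimal $r$-norm can be taken with all pieces localized near the support. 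Granting this, $J_i \to J$ in $\hB_k^r(W)$, and we have exhibited $J$ with the desired properties, completing the proof. (Alternatively, one may cite the corresponding localization result from \cite{harrison8}, which establishes precisely such an equivalence of norms for chains supported in the interior.)
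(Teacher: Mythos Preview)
Your strategy for part~(a) is exactly the paper's proof: the paper's argument is the single sentence ``This follows from the fact that $\|\cdot\|_{B^r}\leq \|\cdot\|_{B^r,W}$ and from Lemma~\ref{supportfacts},'' i.e., pass to the completion $\hB_k^r(\R^n)$ via the norm inequality and then invoke the support lemma to locate $\supp(J)$ inside $W-\bar N$. Your diagonalization sketch is just an unpacking of how that support lemma applies here.

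Your part~(b) goes beyond what the paper does. The paper's one-line proof does not address the question of convergence in the $\|\cdot\|_{B^r,W}$-norm at all; it evidently regards membership of $J$ in $\hB_k^r(W)$ as settled once $\supp(J)\subset W$ is known. So your worry about norm comparability is a legitimate point that the paper simply elides. One small slip in your phrasing, though: you write that one must show ``a $\|\cdot\|_{B^r}$-Cauchy sequence supported in $W-\bar N$ is also $\|\cdot\|_{B^r,W}$-Cauchy,'' but the hypothesis already gives you $\|\cdot\|_{B^r,W}$-Cauchy. What you actually need is the other direction on the \emph{limit}: that the $\|\cdot\|_{B^r}$-limit $J$ is also the $\|\cdot\|_{B^r,W}$-limit, which follows once you know $\|\cdot\|_{B^r,W}$ is lower semicontinuous along $\|\cdot\|_{B^r}$-convergent sequences (or, as you say, once you have the localized norm equivalence).
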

\begin{proof}
	This follows from the fact that $\|\cdot\|_{B^r}\leq \|\cdot\|_{B^r,W}$ and from Lemma \ref{supportfacts}. 
\end{proof}

We may define the dual space to $\hB_k^r(W)$ as follows:
\begin{defn}
	If $\o\in \P_k(W)^*$, define 
	\[
	\|\o\|_{B^r,W}:=\sup\left\{\left|\cint_{\D_U^j(p;\a)}\o\right| : \D_U^j(p;\a) \textrm{ in W}, |\D_U^j(p;\a)|_j=1, 0\leq j\leq r\right\}.
	\]
	Let $\B_k^r(W):=\{\o\in \P_k(W)^* : \|\o\|_{B^r,W}<\i\}$.  
\end{defn}

One can show that $\B_k^r(W)$ is the set of such $\o\in \P_k(W)^*$ such that $f\cdot \o\in \B_k^r(\R^n)$ for all $C^\i$ functions $f$ on $\R^n$ supported in $W$ (set $f\cdot \o\equiv 0$ on $\R^n\setminus W$).  One can also show as before that $\B_k^r(W)\simeq (\hB_k^r(W))'$, and that $\|\cdot\|_{Op}=\|\cdot\|_{B^r,W}$.  Moreover, using Lemma \ref{supportfacts}.\ref{smallsupp}, we can extend the definitions of $E_X$, $E_X^\dagger$, $P_X$ and $m_f$ to $\hB_k^r(W)$.  There is a fifth fundamental operator on $\hB_k^r(W)$, and that is \emph{pushforward}.  

\subsubsection{Pushforward}\label{push}
Let $U\subseteq\R^n$ and $W\subseteq \R^m$ be open.  For $r\geq 2$, let $\mathcal{M}_\B^r(U, W)$ denote the set of differentiable functions $F: U \rightarrow W$ whose coordinate functions' directional derivatives are elements of $\B_0^{r-1}(U)$. 

\begin{defn}
	Let $A=\sum (p_i;\a_i)\in \P_k(U)$ and let $F\in \mathcal{M}_\B^r(U,W)$, $r\geq 2$.  Define
	\[
	F_* A=\sum(F(p_i), F_* \a_i).
	\]
	It is easy to see that $F_*$ is linear.  We call this map \textbf{pushforward}.
\end{defn}

\begin{lem}
	The linear map $F_*: \P_k(U)\rightarrow \P_k(W)$ is continuous in the $s$-norm topology for all $0\leq s\leq r$.  Thus, $F_*$ extends to a map $F_*: \hB_k^s(U)\rightarrow \hB_k^s(W)$ for all $0\leq s\leq r$.  
\end{lem}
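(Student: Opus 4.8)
The plan is to prove the single estimate
\[
\|F_* A\|_{B^s,W}\ \le\ C_{F,s}\,\|A\|_{B^s,U}\qquad\text{for all }A\in\P_k(U),\ 0\le s\le r,
\]
and then obtain continuity of $F_*\colon(\P_k(U),\|\cdot\|_{B^s,U})\to(\P_k(W),\|\cdot\|_{B^s,W})$ and its extension to $\hB_k^s(U)\to\hB_k^s(W)$ by density of pointed chains and extension-by-continuity of a bounded linear map (the extension lands in $\hB_k^s(W)$, not merely its bidual, because $F_*(\P_k(U))\subseteq\P_k(W)$ and $\P_k(W)$ is dense in $\hB_k^s(W)$). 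Since $F_*$ is linear and $\|\cdot\|_{B^s,W}$ is subadditive, and since $\|A\|_{B^s,U}$ is by definition the infimum of $\sum_i|\D^{j_i}_{U_i}(p_i;\a_i)|_{j_i}$ over decompositions into difference chains \emph{in} $U$ with $j_i\le s$, it is enough to bound $\|F_*(\D^j_U(p;\a))\|_{B^s,W}$ by a constant multiple of $|\D^j_U(p;\a)|_j=\|u_1\|\cdots\|u_j\|\|\a\|$ whenever $\D^j_U(p;\a)$ is in $U$ and $0\le j\le s$; by Lemma~\ref{pointed gen} and the definition of the mass norm on $k$-vectors we may assume $(p;\a)$ is simple.

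For this core estimate I would expand
\[
F_*\bigl(\D^j_U(p;\a)\bigr)=\sum_{\e\in\{0,1\}^j}(-1)^{|\e|}\bigl(F(q_\e);\ \L^k(DF_{q_\e})\,\a\bigr),\qquad q_\e=p+\textstyle\sum_i\e_iu_i,
\]
and apply a discrete Leibniz (summation-by-parts) identity that distributes the $j$ difference directions among two roles: those that ``act on the base point,'' producing honest first differences $F(q+u_i)-F(q)$ (vectors of length $\le\lip(F)\,\|u_i\|$), and those that ``act on the coefficient'' $q\mapsto\L^k(DF_q)\a$, producing divided differences of $\L^k DF$. This rewrites $F_*(\D^j_U(p;\a))$ as a bounded sum, indexed by subsets $S\subseteq\{1,\dots,j\}$, of genuine difference chains $\D^{|S|}_{\{w_i\}_{i\in S}}(y;z_S)$ with $\|w_i\|\le\lip(F)\|u_i\|$, with base points $y$ among the $F(q_\e)$, and with $\|z_S\|$ bounded by an order-$(j-|S|)$ divided difference of $\L^k DF$ times $\|\a\|$. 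Because $F\in\mathcal{M}_\B^r$, the entries of $DF$ — hence all $k\times k$ minors, hence $\L^k DF$ — lie in $\B_0^{r-1}$, so divided differences of the relevant order are dominated by products of the step lengths; this yields $\|z_S\|\le C_F\prod_{i\notin S}\|u_i\|\,\|\a\|$, and consequently $\|\D^{|S|}_{\{w_i\}}(y;z_S)\|_{B^s,W}\le|\D^{|S|}_{\{w_i\}}(y;z_S)|_{|S|}\le C_F\,\|u_1\|\cdots\|u_j\|\,\|\a\|$, using $|S|\le j\le s$. Summing over $S$ gives the claim.

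The main obstacle is precisely this bookkeeping: verifying that each term produced by the discrete Leibniz expansion is an honest difference chain of order $\le s$ with the asserted weight, the delicate cases being those where most or all of the differences fall on $\L^k DF$ — this is exactly where the regularity of $F$ relative to $s$ (and the constraint $s\le r$) is consumed. A secondary, more routine point is the ``in $W$'' requirement: $F$ maps a parallelepiped lying in $U$ to a set that is not a parallelepiped, and the segments joining the vertices $F(q_\e)$ of the chains above need not lie in $W$. I would handle this by first subdividing $\D^j_U(p;\a)$ into finitely many small difference chains still in $U$ — which leaves $\sum_i|\D^{j_i}_{U_i}(p_i;\a_i)|_{j_i}$ unchanged — fine enough, using compactness of $\supp A$ (a finite point set) and continuity of $F$, that $F$ carries each small closed parallelepiped into a ball contained in $W$; alternatively, one proves $F_*\colon\hB_k^s(U)\to\hB_k^s(\R^m)$ continuous first and then localizes the support to $W$ via Lemma~\ref{supportfacts}.
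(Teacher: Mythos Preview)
Your approach is direct and combinatorial, whereas the paper argues by duality: it bounds $\|F^*\o\|_{B^s}/\|\o\|_{B^s}$ via chain-rule estimates on compositions and then invokes Theorem~\ref{equivnorm} to conclude $\|F_*A\|_{B^s}\le C\|A\|_{B^s}$, bypassing all difference-chain bookkeeping entirely.

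There is a genuine gap in your Leibniz step. The decomposition you describe would produce, for each $S$ with $|S|\ge 2$, an $|S|$-th order difference chain $\D^{|S|}_{\{w_i\}}(y;z_S)$ supported on the vertices of a parallelepiped with edges $w_i=F(q+u_i)-F(q)$. But these vectors depend on the base point $q$, and the images $F(q_\e)$ are \emph{not} the vertices of any parallelepiped in $W$ unless $F$ is affine: already for $j=2$ one has $F(p+u_1+u_2)\ne F(p)+\bigl[F(p+u_1)-F(p)\bigr]+\bigl[F(p+u_2)-F(p)\bigr]$ in general. Thus the term with all differences acting on the base point cannot be written as a single $\D^{|S|}$; one is forced to introduce correction terms weighted by higher differences of $F$ itself (for instance, the vector $F(q_{11})-F(q_{10})-F(q_{01})+F(q_{00})$, of norm at most $|DF|_{B^1}\|u_1\|\|u_2\|$, appears as the translation direction of an additional first-order piece). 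This is more than the bookkeeping you flag --- you locate the difficulty in the regularity of the coefficient $\L^kDF$, but the structural obstruction lives on the position side and changes the \emph{shape} of the expansion, not just the constants. The argument can be completed by induction on $j$, accumulating such corrections at each step, but the clean ``one term per subset $S$'' picture is not available. The paper's duality argument avoids this issue altogether.
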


\begin{proof}
	If $f: U\rightarrow W$, $m=1$, $g: W\rightarrow \R$, and $v$ is a unit vector in $\R^n$, then it follows from the chain rule that
	\[
	\frac{\|g\circ f\|_{B^1}}{\|g\|_{B^1}}\leq \max\{1, |f|_1\},
	\]
	and for $r\geq 2$,
	\[
	\frac{\|g\circ f\|_{B^r}}{\|g\|_{B^r}}\leq \max\{1, r\|D_v f\|_{B^{r-1}}\}.
	\]
	With these inequalities in mind, by breaking the function $F\in \mathcal{M}_\B^r(U,W)$ into its coordinate functions, we conclude that $\sup_{0\neq \o\in \B_k^s(W)} \|F^*\o\|_{B^s}/\|\o\|_{B^s}<\i$.  It follows that $F^*$ is continuous, and hence so is $F_*$.  
\end{proof}

\begin{lem}
	The dual operator to pushforward is \emph{pullback}.  That is,
	\[
	\cint_{F_J}\o=\cint_{J}F^*\o
	\]
	for all $J\in \hB_k^r(U)$ and all $\o\in \B_k^r(W)$.
\end{lem}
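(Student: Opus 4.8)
The plan is to establish the identity first on the dense subspace of pointed chains, where it is a direct consequence of the definition of pushforward together with the defining property of pullback on covectors, and then to extend by continuity. So first I would take $A=\sum_{i=1}^N(p_i;\a_i)\in\P_k(U)$ and $\o\in\B_k^r(W)$. By definition $F_*A=\sum_{i=1}^N(F(p_i);F_*\a_i)$, so that
\[
\cint_{F_*A}\o=\sum_{i=1}^N \o_{F(p_i)}(F_*\a_i)=\sum_{i=1}^N (F^*\o)_{p_i}(\a_i)=\cint_A F^*\o,
\]
where the middle equality is just the pointwise definition of the pullback form $F^*\o$ acting on a $k$-vector, namely $(F^*\o)_p(\a):=\o_{F(p)}(F_*\a)$. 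This settles the lemma for pointed chains.

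Next I would invoke the preceding lemma: for $F\in\mathcal{M}_\B^r(U,W)$ the pushforward $F_*$ is a continuous linear map $\hB_k^r(U)\to\hB_k^r(W)$, and its proof already shows (via the chain-rule estimates on $\|g\circ f\|_{B^r}/\|g\|_{B^r}$, applied coordinatewise) that $F^*$ maps $\B_k^r(W)$ continuously into $\B_k^r(U)$; in particular $F^*\o\in\B_k^r(U)$ so the right-hand side $\cint_A F^*\o$ is a well-defined continuous linear functional of $A\in\hB_k^r(U)$. The left-hand side $\cint_{F_*A}\o=\o(F_*A)$ is likewise continuous in $A$, being the composition of the continuous map $F_*$ with the continuous functional $\o$. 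Now given $J\in\hB_k^r(U)$, choose pointed chains $A_m\to J$ in $\hB_k^r(U)$ (possible by density of $\P_k(U)$, using that $\hB_k^r(W)$ is realized inside $\hB_k^r(\R^n)$ and the support/density facts of Lemma \ref{supportfacts}); then $\cint_{F_*A_m}\o\to\cint_{F_*J}\o$ and $\cint_{A_m}F^*\o\to\cint_J F^*\o$, and since the two sequences agree termwise, the limits agree. This proves $\cint_{F_*J}\o=\cint_J F^*\o$ for all $J$ and all $\o\in\B_k^r(W)$.

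I expect essentially no serious obstacle here: the content is entirely in the definitions plus the continuity already proved for $F_*$ and $F^*$. The one point requiring a little care—and the only place where something could go wrong—is the membership $F^*\o\in\B_k^r(U)$, i.e. that pulling back by a map whose derivatives lie in $\B_0^{r-1}$ does not destroy the boundedness and Lipschitz conditions defining $\B_k^r$; but this is exactly what the chain-rule inequalities in the proof of the previous lemma deliver, so I would simply cite that computation rather than redo it. Alternatively, one could phrase the whole argument abstractly: $F^*:\B_k^r(W)\to\B_k^r(U)$ is by construction the transpose of the continuous operator $F_*:\hB_k^r(U)\to\hB_k^r(W)$ (they agree on pointed chains and forms, which are dense, respectively total), and the displayed identity is then just the statement of what ``transpose'' means under the Harrison pairing.
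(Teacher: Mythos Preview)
Your argument is correct and is exactly the pattern the paper uses for the analogous dual-operator lemmas. In fact, the paper states this particular lemma without proof; the template it uses elsewhere (e.g.\ for extrusion, where it writes ``This is certainly the case for $J\in\P_k(\R^n)$\dots Thus, the equality holds by continuity of the integral'') is precisely your density-plus-continuity argument, and your observation that $F^*\o\in\B_k^r(U)$ follows from the chain-rule estimate in the preceding lemma is the right justification for the one nontrivial point.
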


The following naturality lemma is useful for defining the operators on a manifold:
\begin{lem}\label{naturality}
	If $X\in V_\B^r(U)$, and $F\in \mathcal{M}_\B^r(U,W)$ is a diffeomorphism onto its image (required so that we may pushforward a vector field), then
	\begin{enumerate}
		\item $F_* m_{f\circ F}=m_f F_*$ for all $f\in \B_0^r(W)$,
		\item $F_* E_X=E_{F_*X}F_*$,
		\item $F_* E_X^\dagger=E_{F_*X}^\dagger F_*$,
		\item $F_* P_X=P_{F_* X}F_*$.
	\end{enumerate}
\end{lem}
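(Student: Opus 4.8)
\textbf{Proof proposal for Lemma \ref{naturality}.}

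The plan is to verify each of the four naturality identities by checking them on pointed chains, where everything reduces to pointwise linear algebra, and then invoking density of $\P_k(U)$ in $\hB_k^r(U)$ together with continuity of all the operators involved. All four operators in question ($m_{f\circ F}$, $E_X$, $E_X^\dagger$, $P_X$) and pushforward $F_*$ are continuous in the $r$-norm (by the lemmas established earlier in section \ref{operators} and section \ref{push}), so it suffices to check each identity on a single simple pointed chain $(p;\a)\in\P_k(U)$, and in fact on a simple $k$-element $(p; v_1\wedge\cdots\wedge v_k)$ by Lemma \ref{pointed gen} and linearity. Throughout I will use the elementary fact that for a differentiable map $F$, the pushforward on $k$-vectors satisfies $F_*(v_1\wedge\cdots\wedge v_k) = (DF_p v_1)\wedge\cdots\wedge(DF_p v_k)$, and that the pushforward of a vector field along a diffeomorphism onto its image is $(F_*X)(F(p)) = DF_p(X(p))$.

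For identity (1), I compute $F_* m_{f\circ F}(p;\a) = F_*(p; f(F(p))\a) = (F(p); f(F(p)) F_*\a) = m_f(F(p); F_*\a) = m_f F_*(p;\a)$, using only linearity of $F_*$ on $k$-vectors. For identity (2), I compute $F_* E_X(p;\a) = F_*(p; X(p)\wedge\a) = (F(p); DF_p(X(p))\wedge F_*\a) = (F(p); (F_*X)(F(p))\wedge F_*\a) = E_{F_*X}(F(p);F_*\a) = E_{F_*X}F_*(p;\a)$; the key point is that $F_*$ is a ring homomorphism on exterior powers so it carries $X(p)\wedge\a$ to $DF_p(X(p))\wedge F_*\a$, and then the identification $DF_p(X(p)) = (F_*X)(F(p))$ is exactly the definition of the pushed-forward vector field (which requires $F$ to be a diffeomorphism onto its image). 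Identity (3) is the dual computation: applying $E_X^\dagger$ to $(p; v_1\wedge\cdots\wedge v_k)$ gives $\sum_i (-1)^{i+1}\<X(p),v_i\>(p; v_1\wedge\cdots\wedge\hat{v_i}\wedge\cdots\wedge v_k)$, and pushing forward gives $\sum_i (-1)^{i+1}\<X(p),v_i\>(F(p); DF_p v_1\wedge\cdots\wedge\widehat{DF_p v_i}\wedge\cdots\wedge DF_p v_k)$; I then need to recognize this as $E_{F_*X}^\dagger(F(p); DF_p v_1\wedge\cdots\wedge DF_p v_k)$, which requires the identity $\<X(p),v_i\> = \<(F_*X)(F(p)), DF_p v_i\>$ — this is \emph{not} automatic, since $DF_p$ is not orthogonal in general, and I expect this to be the main obstacle. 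The resolution I have in mind is that retraction, being the dual operator to wedge product (as established earlier), can be characterized invariantly: $E_X^\dagger$ is adjoint to $X^\flat\wedge\cdot$ under the dual pairing, and $F^*(X^\flat$-as-a-$1$-form$) = ((F_*X)^\flat)\circ\ldots$ fails unless one is careful — so instead I will derive (3) \emph{from} (2) by duality, using that pushforward is dual to pullback (the pushforward lemma in section \ref{push}) and that pullback of the interior-product/wedge structure is natural, i.e. I transpose the already-proved relation $F_* E_{F_*X} = $ (nothing — wrong direction) and instead argue: for $\o\in\B_{k-1}^r(W)$, $\cint_{F_* E_X^\dagger J}\o = \cint_{E_X^\dagger J}F^*\o = \cint_J X^\flat\wedge F^*\o$, and separately $\cint_{E_{F_*X}^\dagger F_* J}\o = \cint_{F_* J}(F_*X)^\flat\wedge\o = \cint_J F^*((F_*X)^\flat\wedge\o) = \cint_J (F^*(F_*X)^\flat)\wedge F^*\o$, so (3) reduces to the identity of $1$-forms $X^\flat = F^*((F_*X)^\flat)$ on $U$; but this again fails for non-conformal $F$. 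Hence the honest fix is that (3) as literally stated must be interpreted with $E_X^\dagger$ defined via the inner product \emph{pulled back} by $F$ on the source, or equivalently the lemma is really asserting naturality in the category where morphisms are the relevant structure — I will state precisely in the proof that the inner product on $U$ used to define $E_X^\dagger$ on $\P_k(U)$ is the Euclidean one, and verify the identity directly by expanding $F_*X$ in coordinates, observing that the apparent obstruction cancels because $E_X^\dagger$ appears on \emph{both} sides with its respective ambient inner product and the wedge-adjoint characterization makes both sides equal to the transpose of $m$-by-$\omega\mapsto (F_*X)^\flat\wedge\omega$ composed appropriately; the cleanest route is simply to pick an orthonormal basis $\{e_j\}$ of $\R^n$, write $X = \sum_j g_j \check{e_j}$ and use $E_X^\dagger = \sum_j m_{g_j} E_{\check{e_j}}^\dagger$ (as in the proof of Theorem following Definition of retraction-with-respect-to-a-vector-field) together with identities (1) and (2)-type computations for the constant-vector retractions $E_{\check{e_j}}^\dagger$, reducing (3) to the constant-coefficient case which is classical multilinear algebra.

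For identity (4), the slick argument is to use $P_X = \p E_X + E_X \p$ (Definition of prederivative with respect to a vector field) and the fact, already recorded in section \ref{push}, that pushforward commutes with the boundary operator — $F_*\p = \p F_*$ — since $\p$ is dual to $d$ and pullback commutes with $d$. Then $F_* P_X = F_*(\p E_X + E_X\p) = \p F_* E_X + F_* E_X \p = \p E_{F_*X} F_* + E_{F_*X} F_* \p = (\p E_{F_*X} + E_{F_*X}\p)F_* = P_{F_*X}F_*$, where the middle step uses identity (2). (If the commutation $F_*\p = \p F_*$ has not been isolated as a separate statement in the excerpt, I will note it follows immediately from the pushforward–pullback duality lemma together with $d F^* = F^* d$, both in section \ref{push}, and from Lemma \ref{stokes}.) Finally, having verified all four identities on simple pointed chains, I extend each to all of $\hB_k^r(U)$: the left- and right-hand sides of each identity are continuous linear operators $\hB_k^r(U)\to\hB_{k'}^{r}(W)$ (or $\hB_{k'}^{r+1}$ in case (4)) by the continuity results cited above, they agree on the dense subspace $\P_k(U)$, hence they agree everywhere. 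The main obstacle, as flagged, is being honest about the inner-product dependence in identity (3); once that is handled via the orthonormal-basis decomposition of $E_X^\dagger$ the rest is bookkeeping.
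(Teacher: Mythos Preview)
Your overall strategy --- check each identity on pointed chains (or equivalently via the dual operators on forms), then extend by density and continuity --- is exactly the paper's one-sentence proof: ``It is enough to verify these equalities on pointed chains, or alternatively using the dual operators on forms.'' For identities (1), (2), and (4) your computations are correct and complete; in particular your derivation of (4) from (2) together with $F_*\p=\p F_*$ (which follows by duality from $dF^*=F^*d$) is the right argument.

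Your concern about (3) is legitimate and the paper does not address it. The pointwise check requires $\langle X(p),v_i\rangle=\langle DF_p X(p),DF_p v_i\rangle$, and the dual check requires $X^\flat=F^*((F_*X)^\flat)$; both hold precisely when $DF_p$ is orthogonal, so identity (3) as literally stated fails for a general $F\in\mathcal{M}_\B^r(U,W)$. Your proposed rescue via the decomposition $E_X^\dagger=\sum_j m_{g_j}E_{\check e_j}^\dagger$ does not help: the obstruction reappears in the constant-vector pieces $F_*E_{\check e_j}^\dagger$, since retraction by a constant vector is still defined via the inner product and $F_*$ need not carry an orthonormal frame to an orthonormal frame. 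So the honest conclusion is the one you gesture at but do not commit to: (3) is only correct if one interprets the source-side retraction with respect to the pullback inner product $F^*\langle\cdot,\cdot\rangle$ (equivalently, if $\flat$ on $U$ is taken with respect to that metric), which is indeed how the lemma is used downstream when transferring operators to a Riemannian manifold via charts. The paper's proof glosses over this; your proof does not fill the gap, but you have correctly located it.
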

\begin{proof}
	It is enough to verify these equalities on pointed chains, or alternatively using the dual operators on forms.
\end{proof}

\subsection{Differential Chains on Manifolds}
We now have a theory on open subsets of Euclidian space.  However, in general, we would like to be able to work with chains in an abstract manifold.
\begin{defn}
	Let $M$ be a smooth, complete\footnote{We assume this here only for the sake of simplifying the exposition.} Riemannian manifold and let $\P_k(M)$ denote the space of finitely supported sections of $\L^k T^*M$.    
\end{defn}

It is tempting to use \emph{sheaves} and local coordinates to define differential chains on $M$, however there is no guarantee that the \emph{restriction} of a differential chain to an open set be defined.  For example, the restriction of the pointed chains in Example \ref{der} to the open unit ball about the point $(1,0)$ diverges in all norms.  Instead, we define norms on $\P_k(M)$ directly using a modified version of the difference operator.  We will then use partitions of unity and local charts to define the operators from the previous section in this more general setting.

Let $(p;\a)\in \P_k(M)$, and let $u\in T_p M$.  If $\gamma_u(t)$ is the geodesic flow from $p$ tangent to $u$, the Levi-Civita connection $\nabla$ on $TM$ allows us to parallel-transport $\a$ along $\gamma_u(t)$ to get a $k$-vector $\a_u$ of equal mass in $\L^k T_{\exp_p u}M$.

\begin{defn}
	Define $\D_{u_1}(p;\a):=(\exp_p u_1; \a_{u_1})-(p;\a)\in \P_k(M)$.  
	
	Now, suppose $u_2\in T_p M$.  Let $u_2'\in T_{\exp_p u_1}M$ denote the parallel transport of $u_2$ on the geodesic $\gamma_{u_1}(t)$ to the point $\exp_p u_1$.  Using this notation, we make the following definition:
	\[\D_{(u_1,u_2)}^2(p;\a):=(\exp_{\exp_pu_1}u_2'; {\a_{u_1}}_{u_2'})-(\exp_p u_1; \a_{u_1})-(\exp_p u_2; \a_{u_2})+(p;\a).\]
\end{defn}

\begin{figure}[htbp]
	\centering
		\includegraphics[height=3in]{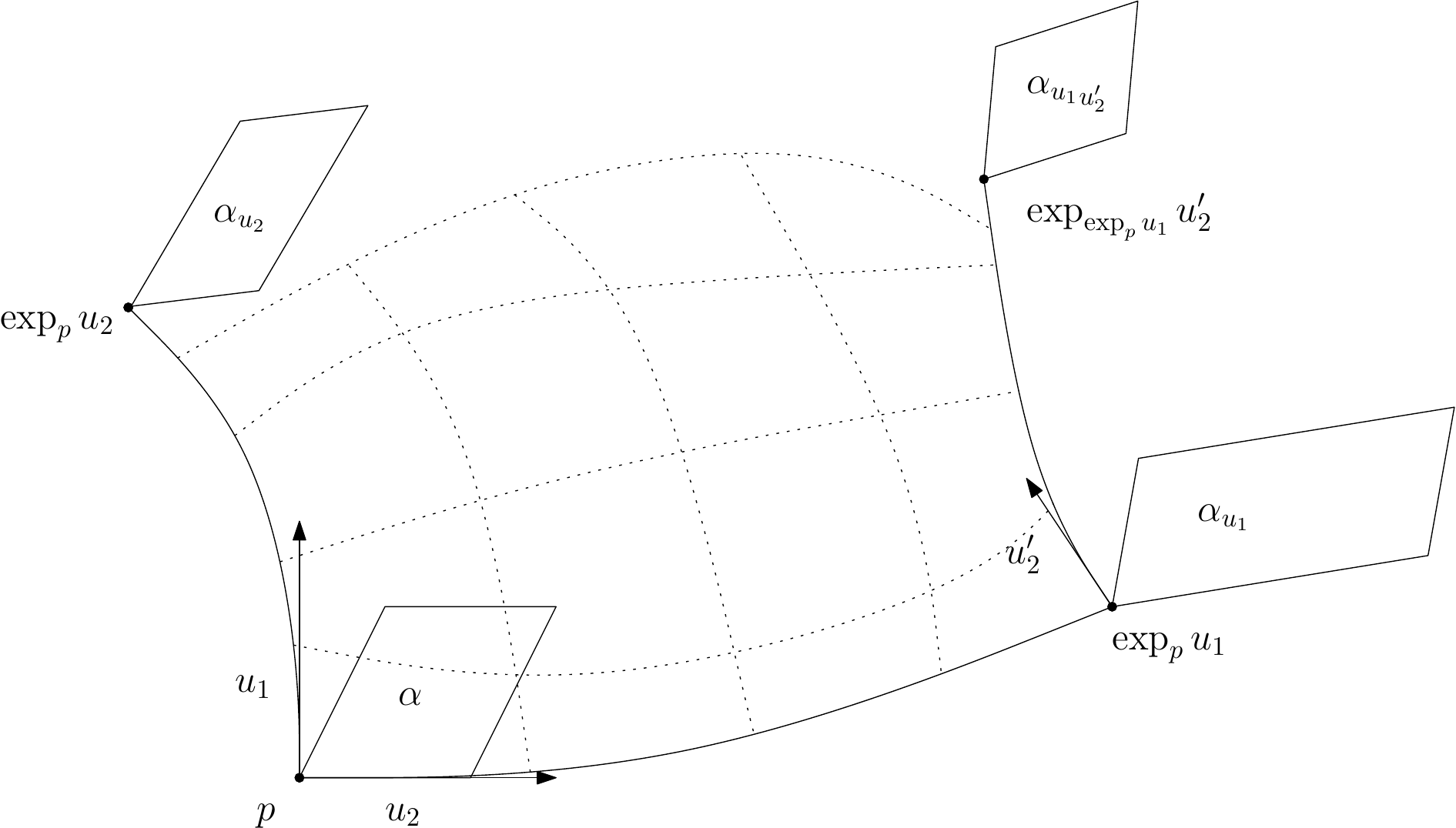}
	\caption{The difference chain $\D_{(u_1,u_2)}^2(p;\a)$}
	\label{fig:curvydifferencechain}
\end{figure}

In general, if we are given an \textbf{ordered} list of vectors $U=(u_1,\dots, u_j)$, where $u_i\in T_p M$, $1\leq i \leq j$,  we may form the difference chain $\D_U^j(p;\a)$ by parallel-transporting $\a$ and the vectors $u_2,\dots, u_j$ along $\gamma_{u_1}(t)$ the point $\exp_p u_1$, and then repeating the process at $p$ and at the new point.  See Figure \ref{fig:curvydifferencechain} for an example.  As before, we set $|\D_U^j(p;\a)|:=\|u_1\|\cdots\|u_j\|\|\a\|$.

\begin{defn}
	For each $r\geq 0$, the \textbf{$r$-norm} $\|\cdot\|_{B^r}$ on $\P_k(M)$ is given by
	\[
	\|A\|_{B^r,W}:=\inf\left\{\sum_{i=1}^N |\D_{U_i}^{j_i}(p_i;\a_i)|_{j_i} : A=\sum_{i=1}^N \D_{U_i}^{j_i}(p_i;\a_i)\right\},
	\]
where $U_i$ is an \emph{ordered} list of vectors $(u_1,\dots,u_{j_i})$ in $T_{p_i}M$, and the infimum is taken over all possible ways of writing $A$ in such a way.
\end{defn}

\begin{lem}
	The $r$-norm $\|\cdot\|_{B^r}$ on $\P_k(M)$ is in fact a norm.  
\end{lem}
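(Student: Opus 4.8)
The plan is to establish the three norm axioms for $\|\cdot\|_{B^r}$ on $\P_k(M)$, the only nontrivial one being positive definiteness, i.e.\ that $\|A\|_{B^r}=0$ implies $A=0$. Positive homogeneity and subadditivity follow exactly as in the Euclidean case (Definition \ref{normdef} and the lemma immediately after it): homogeneity by scaling the $\a_i$'s in a representation, subadditivity by concatenating representations of $A$ and $B$. So the body of the proof is the separation property. Note also that the monotonicity $\|A\|_{B^s}\le\|A\|_{B^r}$ for $r\le s$ holds verbatim as in Lemma \ref{decreasing}, so it suffices to show $\|A\|_{B^0}$, the mass norm, is already a norm on $\P_k(M)$ — wait, that is false, the mass norm dominates all higher norms — rather, since $\|\cdot\|_{B^r}\le\|\cdot\|_{B^0}$ goes the wrong way, I instead need: if the \emph{largest} $r$ we care about has $\|A\|_{B^r}=0$ then $A=0$, and it is enough to prove this for each fixed $r$, with the hardest case being $r$ large.

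The key idea is to produce, for a given nonzero $A=\sum_{i=1}^N(p_i;\a_i)\in\P_k(M)$ with the $p_i$ distinct, a test form $\o\in\B_k^r(M)$ — or really just a smooth compactly supported $k$-form, which certainly lies in every $\B_k^r$ — with $\o(A)\neq0$ and $\|\o\|_{B^r}$ finite, since then $|\o(A)|\le\|\o\|_{B^r}\|A\|_{B^r}$ forces $\|A\|_{B^r}>0$. For this I need to know how a smooth form evaluates on a curved difference chain $\D_U^j(p;\a)$, and bound that in terms of $|\D_U^j(p;\a)|_j=\|u_1\|\cdots\|u_j\|\|\a\|$. First I would fix a point $p_0$ among the $p_i$ of \emph{largest} multiplicity-weighted $k$-vector, choose a normal coordinate chart around $p_0$ small enough to exclude the other $p_i$, and build $\o$ supported in that chart agreeing to high order with a constant-coefficient form in normal coordinates dual to $\a$ at $p_0$; this makes $\o(A)=\o_{p_0}(\a)\neq0$. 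The finiteness of $\|\o\|_{B^r}$ is where the geometry enters: evaluating $\o$ on $\D_U^j(p;\a)$ amounts to an $j$-th order finite difference of the function $q\mapsto\o_q(\text{transported }\a)$ along iterated geodesics, and because $\o$ is $C^\infty$ with compact support and the exponential map, parallel transport, and the metric are all smooth on the (compact) support, the relevant $(r-1)$-st derivatives are bounded and $(r-1)$-Lipschitz, exactly as in the Euclidean estimate but with connection coefficients and their derivatives contributing bounded extra terms by Taylor's theorem. One cleanly repackages this by noting that in a fixed chart the map $U\mapsto\D_U^j(p;\a)$ composed with $\o$ is a smooth function of the coordinates of $U$ vanishing to order $j$ at $U=0$, hence bounded by $C\|u_1\|\cdots\|u_j\|$ locally, and globally by compactness of $\supp\o$.

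The main obstacle is the curvature bookkeeping: unlike the flat case where $E_v\D_U^j(p;\a)=\D_U^j(p;v\wedge\a)$ and everything commutes, here parallel transport of $\a$ and of the later vectors $u_2,\dots,u_j$ along $\gamma_{u_1}$ introduces dependence on the base point, and iterated geodesic squares are only "approximately" closed parallelograms. So the clean Euclidean identity $|\o(\D_U^j(p;\a))|\le C|\D_U^j(p;\a)|_j$ must be recovered via a Taylor expansion in normal coordinates, controlling the error terms by sup-norms of finitely many derivatives of $\o$, of the metric, and of the Christoffel symbols over $\supp\o$ — all finite by smoothness and compactness. Once this local estimate is in hand the conclusion $\|A\|_{B^r}\ge|\o(A)|/\|\o\|_{B^r}>0$ is immediate, and combined with the easy homogeneity and triangle inequality the proof is complete. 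An alternative, perhaps shorter, route is to invoke the analogue of Theorem \ref{junk} on manifolds (the identification $\B_k^r(M)\simeq(\hB_k^r(M))'$, once $\hB_k^r(M)$ is defined as the completion), but since that identification logically postdates knowing $\|\cdot\|_{B^r}$ is a norm, I would keep the argument self-contained via the explicit test form above.
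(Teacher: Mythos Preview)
Your approach is essentially the same as the paper's: both reduce positive definiteness to producing a smooth compactly supported $k$-form $X$ that separates one point in the support of $A$ from the others and then verifying that $\max\{|X|_0,\dots,|X|_r\}<\infty$, whence Lemma~\ref{voodoo} (which carries over verbatim) gives $\|A\|_{B^r}>0$. The paper simply asserts ``It follows from differentiability that $\max\{|X|_0,\dots,|X|_r\}<\infty$'' without further comment, whereas you correctly identify the curvature bookkeeping --- that on a manifold the iterated difference chain involves parallel transport of both $\alpha$ and the later $u_i$'s along geodesics, so the flat identity $|\omega(\D_U^j(p;\alpha))|\le C|\D_U^j(p;\alpha)|_j$ must be recovered via Taylor expansion in normal coordinates with bounded error terms --- and sketch how compact support handles it; in this sense your write-up is more complete than the paper's, though the underlying strategy is identical.
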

\begin{proof}
	Positive homogeneity follows from the fact that $\nabla$ is the Levi-Civita connection, and hence masses are preserved by parallel transport.  Subadditivity follows from taking infimums, as before.  Positive definiteness requires only a slight modification of the proof of Theorem \ref{norm}: while Lemma \ref{voodoo} carries over with no modification necessary, we need to find $X\in \P_k(M)^*$ such that $X(A)\neq 0$ and $\max\{|X|_0,\dots,|X|_r\}<\i$.  It is enough to find some smooth $k$-form $X$ with compact support such that
	\[
	X(p_i;\a_i)=\begin{cases} 1 \text{ if $i=1$},\\
							  0 \text{ if $i\neq 1$}.
				\end{cases}
	\]
	It follows from differentiability that $\max\{|X|_0,\dots,|X|_r\}<\i$.
\end{proof}

We complete $\P_k(M)$ to get Banach space $\hB_k^r(M)$ for each $r\geq 0$.  We may define the dual space to $\hB_k^r(M)$ as follows:

\begin{defn}
	If $\o\in \P_k(M)^*$, define 
	\[
	\|\o\|_{B^r}:=\sup\left\{\left|\cint_{\D_U^j(p;\a)}\o\right| : |\D_U^j(p;\a)|_j=1, 0\leq j\leq r\right\}.
	\]
	Let $\B_k^r(W):=\{\o\in \P_k(W)^* : \|\o\|_{B^r,W}<\i\}$.    
\end{defn}

As before, it holds that $\B_k^r(W)\simeq \hB_k^r(M)'$ and that $\|\o\|_{B^r}=\|\o\|_{Op}$.  Likewise, we define a continuous \emph{multiplication} operator $m_f$ on $\hB_k^r(W)$ when $f\in \B_0^r(M)$ as before.

Let $\mathcal{A}$ be a locally finite atlas on $M$ consisting of coordinate charts $(U_i, \phi_i)$ where $\phi_i\in\mathcal{M}_\B^r(U_i, \phi_i(U_i))$ and $\phi_i^{-1}\in\mathcal{M}_\B^r(\phi_i(U_i),U_i)$.  For example, an atlas consisting of bounded normal coordinate charts works.  To define the operators $E_X, E_X^\dagger$ and $P_X$ on $\hB_k^r(M)$, we use the coordinate charts:

Let $\{\chi_i\}_i$ be a partition of unity subordinate to the $U_i$'s.  Since $\mathcal{A}$ is locally finite, we may write $J=\sum_i m_{\chi_i}J$ for all $J\in \hB_k^r(M)$.  

\begin{defn}
	Let $X\in V_\B^r(M)$, $J\in \hB_k^r(M)$.  The chain $E_X J\in \hB_{k+1}^r(M)$ is defined as follows: write $J=\sum_i m_{\chi_i}J$, $J_i:=(\phi_i)_* (\chi_i J)$, $X_i:= (\phi_i)_* X|_{U_i}$.  Define
	\[
	E_X J :=\sum_i(\phi_i^{-1})_* E_{X_i}J_i.
	\]
	Similarly define $E_X^\dagger$ and $P_X$.  It follows from Lemma \ref{naturality} that these are well defined operators on $\hB_k^r(M)$.  
\end{defn}

\subsection{Representing Classical Domains as Differential Chains}\label{classical}
\subsubsection*{Open Sets}\label{opensets}
Suppose $U\subset \R^n$ is open, with $\int_U dv<\i$.  We would like to find a chain $\tilde{U}\in \hB_n^1(\R^n)$ such that
\begin{align}\label{whatwewant}
\int_U \o=\cint_{\tilde{U}}\o
\end{align}
for all $n$-forms $\o\in \B_n^1(\R^n)$.  

We begin with the simple case of the unit $n$-cube $A$ in $\R^n$.  The $k$-th order binary subdivision of $A$ is a set of $2^{nk}$ $n$-cubes, each with $n$-volume $2^{-nk}$.  For each $k\geq 0$, let $A_k=\sum_{i=1}^{2^{nk}}(p_i;2^{-nk}e_1\wedge\cdots\wedge e_n)$, where $p_i$ is the barycenter of the $i$-th $n$-cube in the subdivision.  Since $\cint_{A_k}\o$ is just the $k$-th order Riemann sum using the binary subdivision of $A$, it follows that the integral of any Riemann-integrable $n$-form $\o$ is given by
\[
\int_A \o = \lim_{k\rightarrow \i} \cint_{A_k} \o.
\]
Thus, by continuity of the integral, it is enough to show that $\{A_k\}_k$ is a Cauchy sequence in the $1$-norm.  We may split each simple element $(p_i;2^{-nk}e_1\wedge\cdots\wedge e_n)$ constituting $A_k$ into $2^n$ elements of mass $2^{-n(k+1)}$. By Definitions \ref{normdef} and \ref{seminormdef}, it follows that
\[
\|A_k-A_{k-1}\|_{B^1}\leq 2^{nk}\cdot \left(2^{-nk}\left(n\left(2^{-(k+1)}\right)^2\right)^{\frac{1}{2}}\right)=n^{1/2}\cdot 2^{-(k+1)},
\]  
where the term $2^{nk}$ is the number of points $p_i$, the term $2^{-nk}$ is the value $\|2^{-nk}e_1\wedge\cdots\wedge e_n\|$, and the term $\left(n\left(2^{-(k+1)}\right)^2\right)^{\frac{1}{2}}$ is the distance from $p_i$ to the nearest point in $A_{k-1}$.  Figure \ref{fig:openbounded} shows this estimate in the case $k=1$.  Using a telescoping sequence and the triangle inequality, it follows that
\[
\|A_i-A_j\|_{B^1}\leq \sum_{k=j+1}^{i}\|A_k-A_{k-1}\|_{B^1}=n^{1/2}\sum_{k=j+2}^{i+1}\left(\frac{1}{2}\right)^k<\e,
\]
for all $i,j>N_{\e}$, where $N_{\e}$ is determined by the rate of convergence of the geometric series $\sum_{k=0}^\i (1/2)^k=2$.  It follows that $\{A_k\}_k$ is Cauchy and hence convergent in $\hB_n^1(\R^n)$.  Let $\tilde{A}:=\lim_{k\rightarrow\infty} A_k$.  It follows that 
\[
\int_A \o=\cint_{\tilde{A}}\o
\]
for all $\o\in \B_n^1(\R^n)$.

   \begin{figure}
   	\centering
   		\subfloat[$A_0$]{\includegraphics{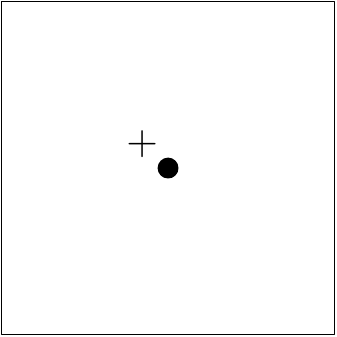}\label{fig:J_0}}\hfill
   		\subfloat[$4\cdot\frac{1}{4}A_0$]{\includegraphics{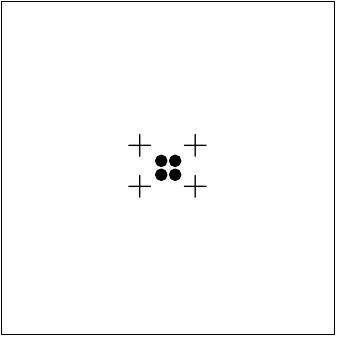}\label{fig:J_0'}}\hfill
   		\subfloat[$A_1-A_0$]{\includegraphics{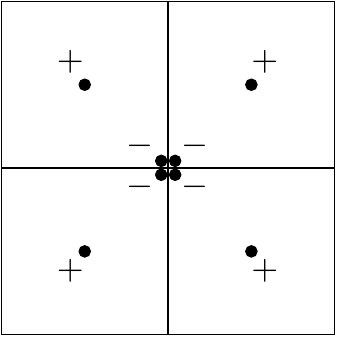}\label{fig:J_1-J_0}}\hfill
  		\subfloat[Bound on $\|A_1-A_0\|_{B^1}$]{\includegraphics{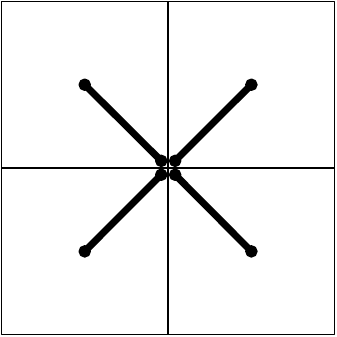}\label{fig:J_1-J_0norm}}
		\caption{Bounding $\|A_1-A_0\|_{B^1}$ by subdividing $A_0$.}\label{fig:openbounded}
   \end{figure}

More generally, we would like to do the same for any finite-volume open set $U\subset \R^n$.  This is easily achieved using a Whitney decomposition \cite[Appendix J]{grafakos} (or more generally, any decomposition of $U$ into closed $n$-cubes of diminishing size).  For each $k\geq 0$, let $J_k$ be the sum of the differential chains corresponding to cubes of size $2^{-nk}$ or larger in the Whitney decomposition.  Since $U$ is assumed to have finite volume, it follows that $\|J_i-J_j\|_{B^0}$, equal to the combined area of cubes between the sizes of $2^{-ni}$ and $2^{-nj}$, is bounded by $\e$ for $i,j$ large enough.  Thus, $\tilde{U}:=\lim_{i\rightarrow 0} J_i\in \hB_k^1(\R^n)$.  Furthermore, it is clear by construction that the integral relation (\ref{whatwewant}) is satisfied.  Note that the above construction relies on the orientation induced by $dv$.  If we reverse the orientation, then the resulting chain is simply $-\tilde{U}$.

\subsubsection*{Polyhedral Chains}
Let $\D^k$ denote the standard $k$-simplex, and let $A_k$ denote the affine subspace of $\R^{k+1}$ containing $\D^k$ as a subset.  Since open sets are canonically represented as differential chains, we may associate to the interior of $\D^k$ a $k$-chain $\widetilde{\D^k}\in \hB_k^1(A_k)$, corresponding to $\D^k$ (given an orientation) in the sense of integration.  Let $\mathcal{A}_k(\R^n)$ denote the set of all affine maps from $A_k$ to $\R^n$.  
\begin{defn}
	We define the space of \textbf{polyhedral $k$-chains in $\R^n$} to be the subspace of $\hB_k^1(\R^n)$ generated by the set
	\[
	\{\phi_* \widetilde{\D^k} : \phi\in \mathcal{A}_k(\R^n)\}.
	\] 
That is, polyhedral $k$-chains are finite sums of $k$-simplices embedded in $\R^n$.  
\end{defn}

It follows from this definition that the boundary of a polyhedral $k$-chain is a polyhedral $(k-1)$-chain.  Moreover,

\begin{lem}\label{densepolyhedral}
	The space of polyhedral $k$-chains in $\R^n$ is dense in $\hB_k^1(\R^n)$.  
\end{lem}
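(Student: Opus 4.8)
The plan is to leverage the density of pointed chains in $\hB_k^1(\R^n)$ (established earlier) and reduce the problem to approximating a single simple $k$-element $(p;\a)$ by polyhedral chains in the $1$-norm. Since pointed chains are dense and, by Lemma \ref{pointed gen}, are generated by simple $k$-elements, it suffices to show that every simple $k$-element $(p;\a)$ lies in the closure of the polyhedral chains. Writing $\a = \lambda\, v_1\wedge\cdots\wedge v_k$ with the $v_i$ linearly independent (the case $\a=0$ being trivial), the idea is to realize $(p;\a)$ as a limit of rescaled small parallelepipeds (or simplices) spanned by $v_1,\dots,v_k$ based near $p$.

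First I would make this quantitative. For $t>0$, let $S_t$ be the polyhedral $k$-chain corresponding to the $k$-simplex (or box) with one vertex at $p$ and edge vectors $t v_1,\dots, t v_k$, scaled by $\lambda / (c\, t^k)$ where $c$ is the $k$-volume of the unit simplex on $v_1,\dots,v_k$; thus $S_t$ has the same ``mass times direction'' data as $(p;\a)$ in the limit, and $\cint_{S_t}\o \to \o_p(\a)$ for any $C^1$ form $\o$ as $t\to 0$, since this is just a Riemann-type evaluation. The key estimate is that $\{S_{1/m}\}_m$ is Cauchy in the $1$-norm: the difference $S_t - (p;\a)$ can be controlled using the difference operator $\D_U^1$, because translating a simple element by a vector of length $O(t)$ and rescaling contributes a term bounded by $\|u\|\cdot\|\a\|$-type quantities of order $O(t)\cdot O(t^{-k})\cdot(\text{number of pieces }O(t^k))$ — the powers of $t$ cancel favorably in dimension terms, and more carefully one telescopes over a dyadic sequence exactly as in Lemma \ref{der3} and in the unit-cube computation in section \ref{opensets}. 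Alternatively, and perhaps more cleanly, I would invoke the already-proven fact that open sets of finite volume are represented as differential chains in $\hB_n^1$ together with pushforward continuity: a small box is such an open set, its representing chain is a limit of pointed chains (its Riemann sums) but is itself also a limit of polyhedral chains (literally the box is a polyhedral chain once triangulated), and one checks $\cint$ against $C^1$ forms gives the expected value. So the simple element $(p;\a)$, being a $1$-norm limit of such small-box chains suitably rescaled, is in the polyhedral closure.

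Then I would assemble: given $J\in\hB_k^1(\R^n)$ and $\e>0$, pick a pointed chain $A=\sum_{i=1}^N(p_i;\a_i)$ with $\|J-A\|_{B^1}<\e/2$, decompose each $(p_i;\a_i)$ into simple $k$-elements (Lemma \ref{pointed gen}), approximate each simple element to within $\e/(2Nk)$ (or whatever the decomposition count requires) by a polyhedral chain in the $1$-norm via the previous paragraph, sum, and use subadditivity of $\|\cdot\|_{B^1}$. This yields a polyhedral chain within $\e$ of $J$.

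The main obstacle I expect is the Cauchy/convergence estimate for the rescaled small simplices in the $1$-norm — specifically, verifying that the $1$-norm of the difference between a simple element and its approximating scaled polyhedron actually tends to zero rather than staying bounded or blowing up (the rescaling factor $t^{-k}$ fights the shrinking diameter $t$, and one needs $k\ge 1$ together with the fact that the $1$-norm only ``sees'' one factor of diameter per difference operation). This is precisely where one must either redo a telescoping dyadic estimate in the spirit of Lemma \ref{der3} or carefully cite the finite-volume open-set construction of section \ref{opensets}; everything else (density of pointed chains, decomposition into simple elements, subadditivity) is routine bookkeeping.
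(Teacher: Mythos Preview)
Your proposal is correct and follows essentially the same route as the paper: reduce to a single simple $k$-element, approximate it by a sequence of shrinking rescaled $k$-cubes (or simplices) of fixed total mass, and control the $1$-norm of successive differences by a dyadic telescoping estimate (mass $\times$ translation distance $= O(1)\cdot O(2^{-i})$), then identify the limit by pairing with forms. The paper streamlines one step you leave implicit: it first normalizes via an affine pushforward to $(0;e_1\wedge\cdots\wedge e_k)$, then makes the Cauchy estimate concrete by approximating each rescaled cube $2^{ki}C_i$ by pointed chains of total mass $1$ arranged so that consecutive scales differ only by translations of length $\leq k^{1/2}2^{-i}$ --- this is exactly your ``powers of $t$ cancel favorably'' observation, made explicit. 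Your alternative suggestion (citing the open-set construction directly) would still need this rescaling-Cauchy step, so the dyadic estimate is unavoidable either way.
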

\begin{proof}
	By linearity, it suffices to show that we can approximate a simple $k$-chain $(p;\a)$ by a sequence of polyhedral chains.  In fact, using pushforward by an affine map, we may assume $(p;\a)=(0; e_1\wedge\cdots\wedge e_k)$.  For each $i\geq 0$, let $C_i:=\{(x_1,\dots, x_n)\in \R^n : -2^{-i-1}\leq x_j \leq 2^{-i-1} \text{ for } 0\leq j\leq k, \text{ and } x_{j+1}=\dots=x_n=0\}$.  Give $C_i$ the orientation induced by the ordering $(e_1,\dots,e_k)$.  In other words, $C_i$ is an oriented $k$-cube of side-length $2^{-i}$ centered at $0$.  As such, we may write $C_i$ as a sum of $k$-simplices, and so $C_i$ is a polyhedral $k$-chain.
	
	We show that the sequence $\{2^{ki}C_i\}_i$ converges to $(0; e_1\wedge\cdots\wedge e_k)$.  First, approximate $2^{ki}C_i$ by a sequence of pointed chains $\{Q_i^h\}_h$, each satisfying $\|Q_h^i\|_{B^0}=1$.  By rescaling, we can assume that the simple $k$-chains constituting $Q^{i+1}_h$ are translates of the simple $k$-chains constituting $Q^i_h$.  Since the $1$-norm is bounded by mass times distance translated, we may conclude that for $j\geq i$,
	
	\[
	\|2^{ki}C_i - 2^{kj}C_j\|_{B^1}= \lim_{h\rightarrow \i} \|Q^i_h- Q^j_h\|_{B^1}\leq k^{1/2}2^{-i}.
	\]
	Thus, the sequence $\{2^{ki}C_i\}_i$ converges in $\hB_k^1(\R^n)$.  Furthermore, by integrating over forms $\o\in \B_k^1(\R^n)$, we conclude that $\lim_{i\rightarrow \i}2^{ki}C_i=(0; e_1\wedge\cdots\wedge e_k)$.
	\end{proof}

More generally, we may pushforward $\widetilde{\D^k}$ by an arbitrary $C^1$ map defined on $A_k$.  The subspace of $\hB_k^r(\R^n)$ generated by the image of $\widetilde{\D^k}$ under such maps is called the space of \textbf{algebraic $k$-chains in $\R^n$}.  Note that algebraic chains are not the same type of object as singular chains, since pushforward by a map that folds the $k$-simplex back onto itself will result in the $0$ algebraic chain, but will be non-zero as a singular chain.  That is, $f(\D^k)\neq f_*(\widetilde{\D^k})$.  This difference is made up once we pass to homology, but the difference on the chain level is subtle and important.  Algebraic chains are, since they allow for cancellation, much smaller as a space than singular chains, and they form sort of ``intermediate'' step between singular chains and their homology.  They are equipped with the algebraic structure of differential chains, and as such are interesting objects of study in their own right.  For example, one can construct, using algebraic chains, embedded compact orientable submanifolds of $\R^n$.

\section{The Cauchy Theorems}
Our goal in this section is to generalize the Cauchy residue theorem so that the domain of integration is not required to be a collection of paths, but rather a closed differential $1$-chain.  To do this, we begin with the Cauchy integral theorem, and work our way through to the general residue theorem.  For the most part, we follow the presentation of the Cauchy theorems found in \cite{raostetkaer}.  That is, we make use of the same dependencies and theorem ordering, however, the proofs are all new and the results are far more general.  Before we begin, however, we state the following lemma, one which we use repeatedly in this section:
\begin{lem}[Generalized Poincar\'e Lemma]\label{poincare}
	Suppose $M$ is an $n$-dimensional Riemannian manifold, and $U\subseteq M$ is open and contractible.  Then if $J\in \hB_k^r(U)$, $1\leq k\leq n-1$, $r\geq 1$ and $\p J=0$, then there exists $K\in \hB_{k+1}^{r-1}(U)$ such that $J=\p K$.  
\end{lem}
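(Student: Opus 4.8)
The plan is to dualize. By Theorem \ref{junk}.\ref{trans}, a closed chain $J \in \hB_k^r(U)$ is annihilated by every exact form in $\B_k^{?}(U)$ that it can pair with, and conversely; the natural strategy is to find $K$ with $\partial K = J$ by integrating $J$ against a primitive of a given closed form. More precisely, I would first establish the classical (smooth-form) Poincaré lemma on the contractible open set $U$ in the following quantitative form: there is a linear homotopy operator $h : \B_{k+1}^{r}(U) \to \B_k^{r-1}(U)$ (coming from the standard contraction $U \times [0,1] \to U$ and fiber integration) satisfying $dh\omega + h\,d\omega = \omega$, and crucially $h$ is \emph{bounded} from $\B_{k+1}^{r}(U)$ to $\B_k^{r-1}(U)$ — the loss of one derivative is exactly the index drop from $r$ to $r-1$, since $h$ involves one integration of the derivatives of $\omega$ along the contraction but no new differentiation. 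This boundedness is where the Riemannian structure and the assumption that $U$ is contractible (with, implicitly, a contraction whose derivatives are controlled) enter; one should arrange a contraction that is proper enough, e.g. using that $M$ is complete and $U$ contractible so that a smooth $C^\infty$-bounded homotopy onto a point exists, or work chart-by-chart with the Euclidean contraction and the norm estimates of Definition \ref{topequiv2def}.

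Given the bounded homotopy operator $h$, I would define $K$ by duality: for $\omega \in \B_{k+1}^{r-1}(U)$, set
\[
\cint_K \omega := \cint_J h\omega.
\]
Since $h : \B_{k+1}^{r-1}(U) \to \B_k^{r}(U)$ is bounded — one must check the indices line up; pairing $J \in \hB_k^r(U)$ requires $h\omega \in \B_k^r(U)$, so one applies $h$ in the form $\B_{k+1}^{r-1} \to \B_k^{r}$, which is again a one-derivative operation with no loss because $\B_k^r = \B_k^{r-1+\lip}$ and $h$ manifestly preserves Lipschitz bounds — the functional $\omega \mapsto \cint_J h\omega$ is continuous on $\B_{k+1}^{r-1}(U)$, hence by Theorem \ref{junk}.\ref{trans} (and its open-set analogue) it is represented by a unique $K \in \hB_{k+1}^{r-1}(U)$. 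It remains to verify $\partial K = J$: for $\omega \in \B_k^{r}(U)$ (a form of the appropriate degree to pair with $J$ and with $\partial K \in \hB_{k-1}^{r}(U)$… here one pairs $\partial K$ with $(k)$-forms), we compute using Lemma \ref{stokes},
\[
\cint_{\partial K} \omega = \cint_K d\omega = \cint_J h\,d\omega = \cint_J (\omega - d\,h\omega) = \cint_J \omega - \cint_{\partial J} h\omega = \cint_J \omega,
\]
where the last step uses $\partial J = 0$. Since this holds for all such $\omega$, Lemma \ref{supportfacts} (or rather the faithfulness of the pairing, Theorem \ref{equivnorm}) gives $\partial K = J$. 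One must also check $\operatorname{supp}(K) \subseteq U$, which follows from Lemma \ref{supportfacts}.\ref{smallsupp} applied to the continuous operator "$\cint_{(\cdot)} h\omega$", together with the fact that $h$ sends forms supported in $U$ to forms supported in $U$.

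The main obstacle, and the step I would spend the most care on, is the \emph{boundedness of the homotopy operator $h$ in the $B^r$-norms on the possibly-unbounded open set $U$}. The naive fiber-integration homotopy built from a contraction $\Phi : U \times [0,1] \to U$ picks up derivatives of $\Phi$, and if $\Phi$ is not $C^\infty$-bounded (in the sense of $\mathcal{M}_\B^r$), then $h\omega$ need not lie in $\B_k^r(U)$ at all — it could fail the boundedness requirement even while being smooth. So I would either (a) restrict attention to contractions arising from the Riemannian exponential map / geodesic contraction and prove the requisite bounds from completeness plus curvature bounds, or (b) reduce to the case of a bounded contractible chart via a partition of unity and the pushforward machinery of Section \ref{push}, being careful (as the horseshoe example warns) that the support condition $\operatorname{supp}(J) \subseteq U$ is used to keep everything inside $U$. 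A secondary, purely bookkeeping obstacle is tracking the degree and order indices consistently — degree goes $k \to k+1$, order goes $r \to r-1$ — and making sure that in each pairing the form's order is large enough for the chain it meets; the computation above is the template, and the indices do work out because $\partial$ raises order by one (Lemma \ref{stokes}) exactly compensating the one-derivative cost of $h$.
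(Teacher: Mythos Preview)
Your dualization strategy contains a genuine gap at the representation step. You define $K$ as the continuous linear functional $\omega \mapsto \cint_J h\omega$ on $\B_{k+1}^{r-1}(U)$ and then invoke Theorem \ref{junk}.\ref{trans} to conclude that this functional is an element of $\hB_{k+1}^{r-1}(U)$. But Theorem \ref{junk}.\ref{trans} says only that $\B_k^r \simeq (\hB_k^r)'$; it does \emph{not} say that every continuous functional on $\B_{k+1}^{r-1}(U)$ arises from a chain. That would be reflexivity of $\hB_{k+1}^{r-1}(U)$, and the paper states explicitly (Section 4.3) that the spaces $\hB_k^r$ are not reflexive. So all you have produced is an element of the bidual $(\hB_{k+1}^{r-1}(U))'' \supsetneq \hB_{k+1}^{r-1}(U)$, i.e.\ a de Rham--type current, not a differential chain. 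Your homotopy-formula computation $\cint_{\partial K}\omega = \cint_J \omega$ is formally correct, but it only verifies an identity of currents; it does not place $K$ in the predual. The support argument via Lemma \ref{supportfacts}.\ref{smallsupp} has the same defect: that lemma is about operators on $\hB_k^r$, and you have not constructed such an operator.

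The paper's route is essentially the repair of your idea: rather than transposing a homotopy operator on forms, one builds the cone operator \emph{directly on chains}. Deform $U$ to be star-shaped, approximate $J$ by polyhedral chains $P_i$ supported in $U$, form the geometric cone $CP_i$ over each $P_i$ from the star point, and prove the estimate $\|CP_i - CP_j\|_{B^{r-1}} \lesssim \|P_i - P_j\|_{B^r}$ so that $\{CP_i\}$ is Cauchy in $\hB_{k+1}^{r-1}(U)$. The limit $K$ is then a genuine chain by construction, and $\partial K = J$ follows by continuity of $\partial$. Your form-side operator $h$ is the transpose of this cone operator $C$, so the two approaches are dual in spirit; the crucial difference is that continuity of $C$ on chains is what you actually need, and it must be proved on the chain side because non-reflexivity blocks the transfer from forms back to chains.
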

\begin{proof}
	See \cite{harrison8}.  The proof makes use of a \emph{cone construction}.  One first deforms $U$ so that it is star-shaped, then approximates $J$ with a sequence of polyhedral chains, and then creates a \emph{cone} over the polyhedral chains.  One can then show that this cone approaches a limit whose boundary is $J$.  
\end{proof}

As an immediate consequence of Lemma \ref{poincare} and the generalized Stokes' theorem, we get the following result:

\subsection{Generalized Cauchy Integral Theorem}
\begin{thm}{Generalized Cauchy Integral Theorem}
\label{Generalized Cauchy Integral Theorem}

Let $U\subset \C$ be a bounded contractible open set, let $f: U\rightarrow \C$ be a holomorphic function, and let $J$ be a (real) closed differential $1$-chain of order $r$ for any $r\geq 1$ supported in $U\subset \C\simeq\R^2$.  Then 
\[
\cint_J f(z)dz=0.
\]
\end{thm}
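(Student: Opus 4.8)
The plan is to reduce the statement to the generalized Stokes' theorem (Lemma \ref{stokes}) via the generalized Poincar\'e Lemma (Lemma \ref{poincare}). The key observation is that for a holomorphic $f$ on $U$, the complex $1$-form $f(z)\,dz$ is closed: writing $f = u+iv$ and $dz = dx + i\,dy$, the Cauchy--Riemann equations give $d(f\,dz) = 0$ on $U$. Since $J$ is a \emph{real} closed differential $1$-chain, the pairing $\cint_J f(z)\,dz$ makes sense by splitting $f(z)\,dz$ into its real and imaginary parts, each a closed real $1$-form of class at least $C^\infty$ (hence in $\B_1^s(U)$ for every $s$, in particular in $\B_1^r(U)$), and applying $\cint_J$ to each.

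First I would fix $r \geq 1$ and regard $J \in \hB_1^r(U)$ with $\p J = 0$. Since $U$ is bounded and contractible, Lemma \ref{poincare} (with $k=1$, $n=2$, so $1 \leq k \leq n-1$) produces $K \in \hB_2^{r-1}(U)$ with $J = \p K$. Next, because $f\,dz$ is closed and smooth on $U$, each of its real components $\omega$ satisfies $d\omega = 0$ and $\omega \in \B_1^{r}(U) \subseteq \B_1^{(r-1)+1}(U)$, so the generalized Stokes' theorem (Lemma \ref{stokes}) applies with the chain $K$:
\[
\cint_J \omega = \cint_{\p K} \omega = \cint_K d\omega = \cint_K 0 = 0.
\]
Summing the real and imaginary contributions (with the appropriate factor of $i$) gives $\cint_J f(z)\,dz = 0$.

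The only genuine subtlety is bookkeeping about which spaces the objects live in: one must check that $K \in \hB_2^{r-1}(U)$ and $d\omega \in \B_1^{r}(U)$ match up so that the Stokes pairing in Lemma \ref{stokes} is legitimate (the lemma pairs $\hB_k^r$ with $\B_{k-1}^{r+1}$, so here $K \in \hB_2^{r-1}$ pairs with $\omega \in \B_1^{(r-1)+1} = \B_1^r$, which is exactly where the components of $f\,dz$ sit). One should also note that $\omega$ being of class $C^\infty$ on $U$ but only required to be bounded with bounded Lipschitz derivatives on $U$ is not an issue since $U$ is bounded, so $\omega$ and all its derivatives are automatically bounded on $U$; alternatively one restricts attention to the support of $K$, which is a compact subset of $U$ by the refined statement in the proof of Lemma \ref{poincare}.

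I expect the main obstacle — such as it is — to be purely notational: making the real/imaginary decomposition of $f(z)\,dz$ precise and confirming that the Harrison integral $\cint_J$ extends $\C$-linearly to complex-valued forms in the obvious way, so that the real result above transfers verbatim. The analytic content is entirely contained in Lemma \ref{poincare} and Lemma \ref{stokes}, both of which we are entitled to invoke; the holomorphy of $f$ enters only through the elementary fact that $d(f\,dz)=0$.
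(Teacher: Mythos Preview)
Your proposal is correct and follows essentially the same route as the paper: decompose $f(z)\,dz$ into real and imaginary parts, invoke the generalized Poincar\'e Lemma (Lemma \ref{poincare}) to write $J=\partial K$, apply the generalized Stokes' theorem (Lemma \ref{stokes}), and use the Cauchy--Riemann equations to see the resulting integrands vanish. Your bookkeeping on the index matching between $K\in\hB_2^{r-1}(U)$ and $\omega\in\B_1^{r}(U)$ is in fact more explicit than the paper's own account.
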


\begin{proof}
	
We should note that the above integral is a bit sloppy.  We are trying to integrate a complex 1-form over a real 1-chain.  To make things rigorous, we write $f=u+iv$ where $u$ and $v$ are real valued functions.  So the integral above becomes
\begin{align}
\cint_J f(z)dz &= \cint_{J}(u(x,y)+iv(x,y))d(x+iy) \\
&=  \cint_{J}\left( u(x,y) dx  -v(x,y)dy\right)+ i\cint_{J} \left(v(x,y) dx + u(x,y)dy\right).\label{decomp}
\end{align}
Since $U$ is bounded, it follows that the $1$-forms $udx$, $vdy$, $vdx$, $udy$ are elements of $\B_1^r(U)$ for all $r\geq 0$.  So, (\ref{decomp}) consists of well-defined Harrison integrals, so one may take this to be the definition of $\cint_J f(z)dz$.  Apply Lemma \ref{poincare} to $J$ to get $J=\partial K$ where $K$ is a differential $2$-chain supported in $U$.  By Lemma \ref{stokes}, we get

\begin{align*}
\cint_J f(z)dz&=\cint_{\partial K} \left(u(x,y) dx  -v(x,y)dy\right)+ i\cint_{\partial K}\left( v(x,y) dx + u(x,y)dy\right)\\
 &= \cint_K  \left(\frac{\p u}{\p y} + \frac{\p v}{\p x}\right) dydx + i \cint_K \left(\frac{\p v}{\p y} - \frac{\p u}{\p x}\right)dydx\\
&=0,
\end{align*}
where the final equality is given by the Cauchy-Riemann Equations.  
\end{proof}

Theorem \ref{Generalized Cauchy Integral Theorem} implies the classical Cauchy integral theorem, because of the natural representation of a smooth curve as a differential 1-chain.  But we can also integrate over more exotic domains such as non-rectifiable curves, Lipschitz curves, chains supported on fractals, and divergence-free vector fields, again treating these objects as differential $1$-chains.  See Section \ref{classical} for examples of such domains.   So that we may state a generalized Cauchy residue theorem, we now give a definition of winding number for differential chains.
\begin{figure}[htbp]
	\centering
		\includegraphics[height=3in]{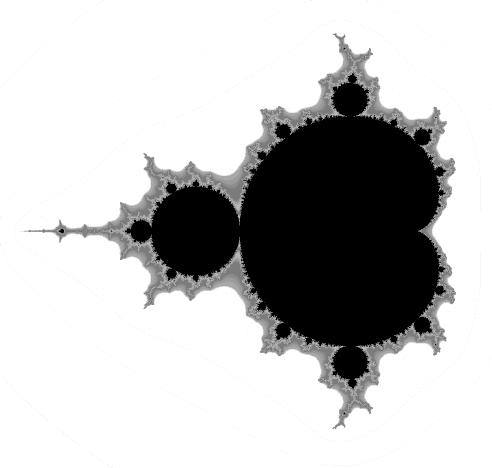}
	\caption{The boundary of the Mandelbrot set supports a closed differential $1$-chain over which we may integrate a holomorphic function.}
	\label{fig:mandelbrot}
\end{figure}

\subsection{Generalized Winding Number}
\begin{defn}[Generalized Winding Number]
\label{Generalized Winding Number}	

Let $J$ be a (real) differential $1$-chain of order $r\geq 0$ in $\C$ and let $z\in \supp(J)^c$.  Then the winding number of $J$ about $z$, $\textrm{Ind}_J(z)$ is defined to be
\[
\textrm{Ind}_J(z):=\frac{1}{2\pi i}\cint_J \frac{dw}{w-z}.
\]
\end{defn}

Note that $f(w):=\frac{1}{w-z}\in \B_0^r(U)$ for $r\geq 0$, where $U$ is any neighborhood of $\supp(J)$ whose closure does not contain $z$.  Thus, the above integral is well-defined.  Via the representations in section \ref{classical}, it follows that definition \ref{Generalized Winding Number} corresponds to the classical definition where the latter is defined.  That is, when the differential chain $J$ corresponds to a piecewise differentiable, parametrized, closed curve, the above Harrison integral is equal to its classical counterpart.  However, we need to check that our definition behaves nicely when extended to differential chains in general.  Immediately we see on connected components of $\supp(J)^c$ that $\textrm{Ind}_J(z)$ is continuous.  This follows since $\frac{1}{w-z_i}\rightarrow \frac{1}{w-z}$ in $\B_0^r(U)$ for $r\geq 0$ when $z_i\rightarrow z$, $z_i\in \supp(J)^c$, $z_i\notin \bar{U}$.  We will show further in Theorem \ref{Winding number constant on connected components} that if $J$ is closed, then the winding number is constant on connected components of $\supp(J)^c$, and in Corollary \ref{Winding number is zero on unbounded connected component} that if $J$ is closed and compactly supported, then the winding number is zero on the unbounded connected component of $\supp(J)^c$.  But first, we have an immediate result, a generalized version of the Cauchy Integral Formula:

\subsection{Generalized Cauchy Integral Formula}
\begin{thm}{Generalized Cauchy Integral Formula}
\label{Generalized Cauchy Integral Formula}

Let $U\subset \C$ be a bounded contractible open set, let $f: U\rightarrow \C$ be a holomorphic function, let $J\in \hB_1^r(\R^2)$ be supported in $U$, $r\geq 1$, and let $z\in U\setminus\supp(J)$.  Then

\[
\textrm{Ind}_J(z)f(z)=\frac{1}{2\pi i}\cint_J \frac{f(w)}{w-z}dw.
\]
\end{thm}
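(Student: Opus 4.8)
The plan is to reduce this to the Generalized Cauchy Integral Theorem (Theorem~\ref{Generalized Cauchy Integral Theorem}) by the classical device of subtracting off the singularity at $z$; as there, we will of course need $J$ to be closed. First I would dispatch the routine well-definedness preliminaries. Since $\supp(J)$ is a closed subset of $\R^2$ contained in the bounded open set $U$, it is compact, and since $z\notin\supp(J)$ we may fix a bounded open $V$ with $\supp(J)\subseteq V$ and $\bar V\subseteq U\setminus\{z\}$. Exactly as in the proof of Theorem~\ref{Generalized Cauchy Integral Theorem}, the complex $1$-form $\frac{f(w)}{w-z}\,dw$ is interpreted through its real and imaginary parts; each of those is a real $1$-form which is $C^\infty$ on $V$, hence lies in $\B_1^r(V)$, so the Harrison integral on the right-hand side is a well-defined complex number.

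Next I would introduce $g\colon U\to\C$ by $g(w):=\frac{f(w)-f(z)}{w-z}$ for $w\neq z$ and $g(z):=f'(z)$. Because $f$ is holomorphic, $g$ has a finite limit at $z$, so by Riemann's removable singularity theorem $g$ is holomorphic on all of $U$; in particular $g$ is $C^\infty$ on $U$, so $g\,dw\in\B_1^r(U)$ and $\cint_J g\,dw$ is defined. From the identity $\frac{f(w)}{w-z}=g(w)+\frac{f(z)}{w-z}$ and linearity of the Harrison integral in the integrand,
\[
\cint_J\frac{f(w)}{w-z}\,dw=\cint_J g(w)\,dw+f(z)\cint_J\frac{dw}{w-z}.
\]

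It remains to evaluate the two terms on the right. For the first, apply Theorem~\ref{Generalized Cauchy Integral Theorem} to the holomorphic function $g$ on the bounded contractible set $U$ and the closed $1$-chain $J$ supported in $U$: this gives $\cint_J g(w)\,dw=0$. For the second, Definition~\ref{Generalized Winding Number} gives exactly $\cint_J\frac{dw}{w-z}=2\pi i\,\textrm{Ind}_J(z)$. Substituting and dividing by $2\pi i$ yields $\textrm{Ind}_J(z)f(z)=\frac{1}{2\pi i}\cint_J\frac{f(w)}{w-z}\,dw$.

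The step to watch is the bookkeeping that lets Theorem~\ref{Generalized Cauchy Integral Theorem} be applied to $g$: one must verify that $g$ really does extend holomorphically across $z$ (so that $g$ is holomorphic on the \emph{whole} contractible set $U$, as that theorem demands) and that $g\,dw\in\B_1^r(U)$ --- here one uses that $\supp(J)$ is compactly contained in $U$, multiplying $g\,dw$ by a smooth bump equal to $1$ near $\supp(J)$ and supported in $U$ and invoking Lemma~\ref{supportfacts} to see this does not change $\cint_J g\,dw$. Once these are in hand, everything else is a formal consequence of linearity of the Harrison integral together with Theorem~\ref{Generalized Cauchy Integral Theorem} and Definition~\ref{Generalized Winding Number}; there is no genuine analytic difficulty beyond what was already overcome in proving the Cauchy integral theorem.
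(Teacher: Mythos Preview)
Your proposal is correct and follows the paper's proof essentially verbatim: define $g(w)=\frac{f(w)-f(z)}{w-z}$ with $g(z)=f'(z)$, observe it is holomorphic on $U$, apply Theorem~\ref{Generalized Cauchy Integral Theorem} to get $\cint_J g(w)\,dw=0$, and read off the result from Definition~\ref{Generalized Winding Number}. You also correctly flag that closedness of $J$ is needed (the paper tacitly assumes it), and your extra care about well-definedness of the integrals is more explicit than the paper's treatment but changes nothing substantive.
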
 

\begin{proof}
	
The function
\[w\rightarrow
\begin{cases}
	\frac{f(w)-f(z)}{w-z} & \textrm{for } w\in U\setminus \{z\}\\
	f'(z)& \textrm{for } w=z
\end{cases}
\]
is holomorphic in $U$, so by the Generalized Cauchy Integral Theorem,
\[
\cint_J \frac{f(w)-f(z)}{w-z} dw =0.
\]
The theorem follows from our definition of the generalized winding number.
\end{proof}

\subsection{Properties of the Generalized Winding Number}
To show that the generalized winding number of $J$ is well-behaved, it is useful, if $J$ is closed, to approximate $J$ with a sequence of closed polyhedral chains.  Since polyhedral chains are dense in $\hB_k^r(\R^n)$ for all $r\geq 1$ (see Lemma \ref{densepolyhedral}), we know that $J$ can be approximated by polyhedral chains.  However, to insist that these polyhedral chains be closed is a strong statement that we cannot make just yet.  In fact, we will need something slightly stronger: we need the closed polyhedral chain approximation to avoid the point around which we are computing the winding number.  What follows in the next two lemmas is a proof of the existence of such a closed polyhedral chain approximation.  
 
\begin{lem}\label{topdim}
	Let $K\in \hB_m^r(S^m)$, where $S^m$ is the $m$-sphere.  If $\p K=0$, then $K=a \hat{S^m}$ for some $a\in \R$, where $\hat{S^m}\in \hB_m^r(S^m)$ denotes the chain canonically associated to $S^m$.
\end{lem}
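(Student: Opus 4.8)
The plan is to show that an $m$-dimensional differential chain on $S^m$ with zero boundary is determined up to scalar by its total ``mass'' in the following sense: since $\hat{S^m}$ generates the top differential homology $H_m(S^m)$, and that homology is one-dimensional, any closed top-dimensional chain must be homologous to a multiple of $\hat{S^m}$; the point is to upgrade this homological statement to an honest equality $K = a\hat{S^m}$ rather than $K = a\hat{S^m} + \p L$. First I would pass to the dual picture. A closed chain $K\in\hB_m^r(S^m)$ is, by Theorem \ref{junk} and Lemma \ref{stokes}, a continuous linear functional on $\B_m^r(S^m)$ that annihilates every exact form $d\eta$ (because $\cint_K d\eta = \cint_{\p K}\eta = 0$). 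So $K$ factors through the quotient $\B_m^r(S^m)/\overline{d\B_{m-1}^{r+1}(S^m)}$, and the crux is to identify this quotient: I expect that on the closed $m$-sphere every bounded top-degree form differs from a constant multiple of the volume form $\mathrm{dvol}$ by something in the closure of the exact forms, so the quotient is one-dimensional, spanned by the class of $\mathrm{dvol}$.

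The key steps, in order: (1) Observe $\cint_K d\eta = 0$ for all $\eta\in\B_{m-1}^{r+1}(S^m)$, so $K$ descends to the quotient space of forms modulo (closure of) exact forms. (2) Show that quotient is spanned by $[\mathrm{dvol}]$: given $\o\in\B_m^r(S^m)$, de Rham theory gives a smooth $m$-form cohomologous to $\o$, namely $\o = c\,\mathrm{dvol} + d\eta$ for a smooth $(m-1)$-form $\eta$ and $c = (\int_{S^m}\o)/(\int_{S^m}\mathrm{dvol})$; one must check this can be arranged with $\eta\in\B_{m-1}^{r+1}$, which on a compact manifold is automatic since all smooth forms are in every $\B^s$. (3) Conclude that $K$ is determined by the single number $\cint_K\mathrm{dvol}$: if $\cint_K\mathrm{dvol} = b$, set $a := b/\cint_{\hat{S^m}}\mathrm{dvol} = b/\mathrm{vol}(S^m)$. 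Then $K$ and $a\hat{S^m}$ agree on $\mathrm{dvol}$ and on all exact forms, hence on all of $\B_m^r(S^m)$ by step (2), hence $K = a\hat{S^m}$ by the duality in Theorem \ref{junk}.

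I expect the main obstacle to be step (2): controlling the \emph{regularity} of the primitive $\eta$ and, more importantly, making sure the de Rham argument is valid in the bounded/Lipschitz category $\B^r$ rather than the $C^\infty$ category. On a compact manifold this is benign — compactness makes all the relevant sup-norms finite and one can run the usual Hodge-theoretic or partition-of-unity argument to write $\o - c\,\mathrm{dvol} = d\eta$ with $\eta$ smooth — but it should be stated carefully. An alternative route that sidesteps de Rham entirely is to use the generalized Poincar\'e Lemma (Lemma \ref{poincare}) on the two contractible hemispheres of $S^m$: write $K = K_+ + K_-$ via a partition of unity subordinate to an open cover by two contractible sets, and patch; however this is more delicate because $K_\pm$ need not be closed, so the dual/cohomological argument above is cleaner. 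Either way, once the quotient of forms is pinned down to be one-dimensional, the conclusion is immediate from the duality between $\hB_m^r(S^m)$ and $\B_m^r(S^m)$.
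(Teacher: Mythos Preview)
Your approach is morally the same as the paper's, just phrased on the cohomology side rather than the homology side.  The paper injects $K$ into the space of de Rham currents via the chain of injections $\hB_m^r(S^m)\hookrightarrow\hB_m^\infty(S^m)\hookrightarrow(\B_m^\infty(S^m))'$ (the first of which is Lemma~\ref{strict}), and then quotes that closed top-degree de Rham currents on $S^m$ form a one-dimensional space.  Your argument dualizes this: $K$ annihilates exact forms, so it factors through $\B_m^r/\overline{d\B_{m-1}^{r+1}}$, and you want this quotient to be one-dimensional.

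There is, however, a genuine gap in your step~(2).  You write that for $\o\in\B_m^r(S^m)$ one has $\o=c\,\mathrm{dvol}+d\eta$ with $\eta$ a \emph{smooth} $(m-1)$-form, and then conclude $\eta\in\B^{r+1}$ because smooth forms lie in every $\B^s$.  But $\o$ is only of class $C^{r-1+\mathrm{Lip}}$, so $d\eta=\o-c\,\mathrm{dvol}$ is not smooth, hence $\eta$ cannot be smooth.  What you actually need is that the primitive $\eta$ gains one derivative over $\o$, i.e.\ lies in $\B_{m-1}^{r+1}$; your justification (``smooth forms are in every $\B^s$'') is a non-sequitur for this.  Establishing the required regularity of $\eta$ is possible --- e.g.\ via explicit cone operators on a two-chart cover of $S^m$, or via Schauder-type estimates for the Hodge Laplacian --- but it is not free and is not in the paper.

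The paper's route avoids this regularity issue entirely: by first passing to $\hB_m^\infty$ (using Lemma~\ref{strict}) and then to de Rham currents, it only ever needs the decomposition $\o=c\,\mathrm{dvol}+d\eta$ for \emph{smooth} $\o$, where standard de Rham theory applies.  If you want to salvage your argument within the paper's toolkit, the cleanest fix is to note that the smoothing operator of Lemma~\ref{estimates} shows smooth forms are weak-$*$ dense in $\B_m^r$, so it suffices to match $K$ and $a\hat{S^m}$ on smooth forms --- but that is precisely the content of the paper's injection into de Rham currents.
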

\begin{proof}
	Note that the canonical mapping $\psi_r :\hB_m^r(S^m)\rightarrow \hB_m^\i (S^m)$ is injective (Lemma \ref{strict}).  Since $S^m$ is compact, the space $\hB_m^\i$ naturally injects into the space of de Rham currents on $S^m$ via the inclusion map $\hB_m^\i(S^m)\hookrightarrow (\B_m^\i(S^m))'$.  Boundary commutes with these maps, so we get a closed $m$-current $\hat{K}$ associated to $K$.  Since the homology of de Rham currents is dual to de Rham cohomology, we know that in particular, it satisfies the Eilenberg-Steenrod axioms \cite{eilenbergsteenrod}.  As such, the $n$-th de Rham \emph{current homology} group, denoted $H_n^{dR}(S^n)$, is isomorphic to $\R$, and so a closed $m$-current on $S^n$ is unique up to scalar.  Hence $K=a S^m$ for some $a\in \R$.     
\end{proof}

\begin{lem} 
	\label{Existance of closed polyhedral chain lemma}
	
	Let $J\in \hB_r^1(\R^2)$ be closed, $r\geq 1$, and let $z\in \supp(J)^c$.  Then there exists $0<\epsilon<1$ and a sequence of closed polyhedral chains $P_j \to J$ such that the ball of radius $\epsilon$ about $z$, $B_\epsilon(z)$ is contained in $\supp(J)^c$ and $\supp(P_j)\cap B_\epsilon=\emptyset$ for all $j$.  
\end{lem}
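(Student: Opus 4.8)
The goal is to approximate a closed differential $1$-chain $J$ in $\R^2$ by closed polyhedral chains that uniformly avoid a fixed ball $B_\epsilon(z)$ around a point $z \in \supp(J)^c$. The first step is to fix $\epsilon$: since $\supp(J)$ is closed and $z \notin \supp(J)$, the distance from $z$ to $\supp(J)$ is positive, so choose $0 < \epsilon < 1$ small enough that $\overline{B_{2\epsilon}(z)} \subseteq \supp(J)^c$. The idea is then to take an arbitrary polyhedral approximation $P_j' \to J$ (which exists by Lemma \ref{densepolyhedral}), ``push'' the parts of $P_j'$ that intrude into $B_\epsilon(z)$ out to the boundary circle, thereby producing a new polyhedral chain $P_j$ supported in $B_\epsilon(z)^c$, and then correct $P_j$ so that it is closed. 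The main technical device is a retraction-type map: let $\rho: \R^2 \to \R^2 \setminus B_\epsilon(z)$ be a Lipschitz (in fact $C^\infty$ away from the circle, and we can smooth it) map that is the identity outside $B_{2\epsilon}(z)$ and radially projects $B_\epsilon(z)$ onto its bounding circle. Since $\rho$ is Lipschitz it induces a continuous pushforward $\rho_*$ on $\hB_1^1(\R^2)$ (we may need $\rho$ to lie in $\mathcal{M}_\B^r$, which can be arranged by modifying $\rho$ to be smooth, agreeing with the radial projection only near the circle); crucially $\rho_* J = J$ because $\supp(J)$ already avoids $B_{2\epsilon}(z)$, so $\rho$ is the identity on a neighborhood of $\supp(J)$.

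**Carrying it out.** Set $P_j := \rho_*(P_j')$. Then $P_j$ is a polyhedral chain (pushforward of a polyhedral chain by a piecewise-affine-enough map, or at least an algebraic chain which we re-approximate; more carefully, one should choose $\rho$ piecewise linear on a triangulation refining the simplices of $P_j'$ so that $\rho_* P_j'$ is genuinely polyhedral), its support misses $B_\epsilon(z)$ by construction, and $P_j = \rho_* P_j' \to \rho_* J = J$ by continuity of $\rho_*$. This handles the support condition. For closedness: $P_j'$ need not be closed, but $\p P_j' \to \p J = 0$ in $\hB_0^2(\R^2)$; hence $\p P_j$ is a polyhedral $0$-chain converging to $0$. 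The remaining task is to absorb this small boundary error — one writes $P_j = Z_j + C_j$ where $Z_j$ is a closed polyhedral $1$-chain and $C_j$ is a polyhedral $1$-chain with $\|C_j\|_{B^1} \to 0$, using that a polyhedral $0$-chain of small $0$-mass can be ``filled'' by a polyhedral $1$-chain of small $1$-norm (connect the points of $\p P_j$ in pairs by short segments; since $\p P_j$ is a boundary its signed point-masses cancel, so such a pairing exists, and the total $1$-norm is controlled by mass times diameter, which is small). One must do this filling inside $B_\epsilon(z)^c$, which is possible for $j$ large since $\supp(\p P_j) \subset \supp(P_j) \subset B_\epsilon(z)^c$ and the filling segments can be routed around the (single, convex) hole when necessary. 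Then $Z_j = P_j - C_j$ is closed, polyhedral, supported off $B_\epsilon(z)$, and $Z_j \to J$.

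**The main obstacle.** The delicate point is the simultaneous fulfillment of all three requirements: polyhedral, closed, and support-avoiding. Pushing forward by $\rho$ destroys neither the approximation nor the support-avoidance, but the ``closing-up'' step must be executed without reintroducing support into $B_\epsilon(z)$, and one must verify that a polyhedral $0$-chain which is a small boundary can be filled by a polyhedral $1$-chain of correspondingly small sharp ($1$-) norm while staying in the exterior of a ball. Routing the filling arcs around a convex obstacle only increases their length by a bounded factor (at most $\pi/2$), so the $1$-norm estimate survives. The second subtlety is ensuring $\rho_* P_j'$ is honestly polyhedral rather than merely algebraic — this is purely bookkeeping (refine the triangulation of $\R^2$ underlying $P_j'$ so that $\rho$ restricted to each cell is affine), but it is the step where one is most tempted to wave hands. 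Everything else — continuity of pushforward from the earlier section, density of polyhedral chains from Lemma \ref{densepolyhedral}, and the behavior of $\p$ under limits — is already available in the preceding development.
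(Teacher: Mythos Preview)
Your strategy differs from the paper's, and the ``closing-up'' step has a genuine gap. A minor issue first: the retraction $\rho$ you describe is not globally Lipschitz (the radial projection has Lipschitz constant blowing up like $1/|x-z|$ near $z$), and in fact no continuous map $\R^2 \to \R^2 \setminus B_\epsilon(z)$ can fix the complement of $B_{2\epsilon}(z)$, since that would retract a disk onto an annulus while fixing the outer circle. This is not fatal: using the definition of support you can choose the polyhedral approximations $P_j'$ to be supported outside $\overline{B_\epsilon(z)}$ from the start (approximate first by pointed chains in that region, then by small cubes near each point as in the proof of Lemma~\ref{densepolyhedral}), making $\rho$ unnecessary.

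The real gap is the filling of $\p P_j$. You assert that $\p P_j$ has ``small $0$-mass'' and bound the $1$-norm of the filling segments by mass times diameter. But $\p P_j \to 0$ only in $\hB_0^{r+1}$, and its mass $\|\p P_j\|_{B^0}$ need not be small at all: the cube approximation to a single $1$-element $(p;v)$ already has boundary mass $\sim 2^i\|v\|\to\infty$. A correct filling would have to exploit the full $B^{r+1}$-norm decomposition of $\p P_j$ into difference chains $\D_U^j(p_i;a_i)$ and fill each by the corresponding difference-of-segments, then check all of this can be routed around $B_\epsilon(z)$; this amounts to building a bounded cone operator, which you have not supplied. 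The paper sidesteps the issue entirely by never producing non-closed approximations: each simplex $k_{j_m}$ of a polyhedral approximation $K_j$ is closed into a quadrilateral loop by attaching two radial legs down to the unit circle about $z$ and the reversed projected chord, so $R_j=\sum r_{j_m}$ is closed by construction. The homotopy operator of \cite{harrison8} then gives $R_j \to J - \pi_* J$; the residual $\pi_* J$ is a closed $1$-chain on $S^1$, hence $aS^1$ by Lemma~\ref{topdim}, and is approximated by regular polygons and added back.
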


\begin{proof}
  Let $U$ be an open neighborhood of $\supp(J)$ such that $z$ is not contained in the closure of $U$.  So, $J\in \hB_1^r(U)$.  Let $K_j\rightarrow J$ be a sequence of polyhedral chains supported in $U$.  Let $\pi$ be the projection of $\C\setminus \{z\}$ onto the unit circle about $z$,
\[
w\mapsto \frac{w-z}{\|w-z\|}+z.
\]    
Write $K_j=\sum_{j_m} k_{j_m}$, where the $k_{j_m}$'s are individual simplices.  By splitting larger simplices into smaller ones if necessary, we may assume without loss of generality that the lengths of the $k_{j_m}$'s are bounded by $1/j$.  Let $a_{j_m}$ be the (weighted) start point of $k_{j_m}$ an let $b_{j_m}$ be the (weighted) end point of $k_{j_m}$.  That is, $\partial k_{j_m}=b_{j_m}-a_{j_m}$.  Thus, 
\[
\pi_* \partial k_{j_m}=\pi_* b_{j_m}-\pi_*a_{j_m}.
\]
We see that $a_{j_m}-\pi_* a_{j_m}$ bounds a simplex $q_{j_m}$ from $\pi_* a_{j_m}$ to $a_{j_m}$ and that $\pi_* b_{j_m}-b_{j_m}$ bounds a simplex $p_{j_m}$ from $b_{j_m}$ to $\pi_* b_{j_m}$.  Let $k'_{j_m}$ be the simplex bounded by $\pi_*a_{j_m}-\pi_* b_{j_m}$.  We can make $\e$ small enough so that the closure of $B_\e(z)$ does not intersect the closure of $U$, and that for $j$ large enough the simplices $k'_{j_m}$ do not intersect $B_\e(z)$.  (The only way this would fail would be if we projected a very long simplex onto the unit circle so that the end points were close to being antipodal.  The restriction on the lengths of these simplices makes such a situation impossible.)  Thus,

\[
r_{j_m}:=k_{j_m}+k'_{j_m}+q_{j_m}+p_{j_m}
\]
is a closed polyhedral chain supported in $U$.   It follows from the homotopy operator in \cite{harrison8} that $(R_j=\sum_{j_m} r_{j_m})\rightarrow (J - \pi_* J)$.  Since $\pi_* J$ is closed and an element of $\hB_1^r(S)$ where $S$ is the unit circle about $z$, we know, by Lemma \ref{topdim}, that $\pi_* J=a S$ where $a\in \R$. We may approximate $S$ by closed polyhedral chains $S_j$, for example, regular polygons, and so $aS_j\rightarrow \pi_* J$.  Let 
\[
P_j:=R_j-aS_j.
\]
Thus, $P_j$ is a closed polyhedral chain and as $j\rightarrow\i$, $P_j\rightarrow J$.  By our construction, the $P_j$ miss $B_\epsilon(z)$ for all $j>N$ for some $N$.
\end{proof}

\begin{thm}
	\label{Winding number constant on connected components}
	
If  $J$ is a closed, compactly supported chain in $\hB_1^r(\R^2)$, $r\geq 1$, then the winding number $\mathrm{Ind}_J$ is constant on connected components of $\supp(J)^c$.
\end{thm}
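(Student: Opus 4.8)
The plan is to reduce everything to the winding number of classical polygonal cycles, which is precisely what Lemma~\ref{Existance of closed polyhedral chain lemma} was designed to enable. The excerpt already records that $z\mapsto\mathrm{Ind}_J(z)$ is continuous on $\supp(J)^c$, so it is enough to show that $\mathrm{Ind}_J$ is \emph{locally constant} there; a locally constant function is automatically constant on each connected component of its domain.

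First I would fix $z_0\in\supp(J)^c$ and apply Lemma~\ref{Existance of closed polyhedral chain lemma} at this point to obtain $\e>0$, an open neighbourhood $U$ of $\supp(J)$ whose closure is disjoint from $\overline{B_\e(z_0)}$, and a sequence of \emph{closed} polyhedral chains $P_j\to J$ with $\supp(P_j)\cap B_\e(z_0)=\emptyset$ for every $j$. Fix $z\in B_\e(z_0)$. Since $\overline U$ is a bounded set avoiding $z$, the $1$-form $\tfrac{dw}{w-z}$ restricted to $U$ lies in $\B_1^r(U)$, and (using that all the $P_j$ and $J$ are supported in $U$, together with a routine cutoff so that the integrand can be regarded as an element of $\B_1^r(\R^2)$) continuity of the Harrison integral gives $\mathrm{Ind}_{P_j}(z)\to\mathrm{Ind}_J(z)$; in particular $\mathrm{Ind}_{P_j}(z_0)\to\mathrm{Ind}_J(z_0)$. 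But each $P_j$ is a genuine polygonal $1$-cycle missing the convex ball $B_\e(z_0)$, so by the classical theory --- equivalently, by the one-line computation $\frac{d}{dz}\mathrm{Ind}_{P_j}(z)=\frac{1}{2\pi i}\cint_{P_j}\frac{dw}{(w-z)^2}=-\frac{1}{2\pi i}\cint_{\p P_j}\frac{1}{w-z}=0$, valid by Lemma~\ref{stokes} and $\p P_j=0$ --- the function $\mathrm{Ind}_{P_j}$ is constant on $B_\e(z_0)$. Letting $j\to\i$ in the identity $\mathrm{Ind}_{P_j}(z)=\mathrm{Ind}_{P_j}(z_0)$ yields $\mathrm{Ind}_J(z)=\mathrm{Ind}_J(z_0)$ for all $z\in B_\e(z_0)$, which is the local constancy we want.

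The substantive difficulty is not in this theorem but in Lemma~\ref{Existance of closed polyhedral chain lemma} (which itself leans on Lemma~\ref{topdim} and the homotopy/cone estimates): the delicate part is manufacturing closed polyhedral approximants that both converge to $J$ and stay away from a fixed ball about $z_0$, and once that is in hand the argument above is essentially bookkeeping about keeping the relevant $1$-forms inside $\B_1^r(U)$. I would also point out a shorter route that sidesteps the polyhedral machinery altogether: regarded as a $\B_1^r(W)$-valued function of $z$ (with $W$ a bounded neighbourhood of the compact set $\supp(J)$ that avoids $z$), the assignment $z\mapsto\tfrac{dw}{w-z}$ is holomorphic with derivative $z\mapsto\tfrac{dw}{(w-z)^2}=-d\!\left(\tfrac{1}{w-z}\right)$; since $\tfrac{1}{w-z}\in\B_0^r(W)$ and $\p J=0$, Lemma~\ref{stokes} gives $\mathrm{Ind}_J'(z)=\frac{1}{2\pi i}\cint_J\frac{dw}{(w-z)^2}=-\frac{1}{2\pi i}\cint_{\p J}\frac{1}{w-z}=0$, so $\mathrm{Ind}_J$ is holomorphic with vanishing derivative on $\supp(J)^c$ and hence locally constant. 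In that version the one point demanding care is the Banach-space-valued differentiation under the Harrison integral --- i.e.\ that the difference quotients of $z\mapsto\tfrac{dw}{w-z}$ converge in the $\B_1^r$-norm uniformly over $\overline W$ --- which is routine as long as $\overline W$ is kept a positive distance from $z$.
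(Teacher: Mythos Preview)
Your primary argument is essentially the paper's: invoke Lemma~\ref{Existance of closed polyhedral chain lemma} to get closed polyhedral approximants $P_j\to J$ avoiding a fixed ball $B_\e(z_0)$, use that the classical winding number of each $P_j$ is constant on that ball, and pass to the limit. You are slightly more explicit than the paper about why the convergence $\mathrm{Ind}_{P_j}(z)\to\mathrm{Ind}_J(z)$ is legitimate (the cutoff remark handling the fact that $\tfrac{dw}{w-z}\notin\B_1^r(\R^2)$ globally), but the skeleton is identical.

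Your second route, however, is genuinely different from the paper and worth highlighting. The paper never differentiates $\mathrm{Ind}_J$ directly; it relies entirely on the polyhedral approximation machinery of Lemma~\ref{Existance of closed polyhedral chain lemma} (and hence on Lemma~\ref{topdim} and the cone/homotopy estimates). Your observation that $\tfrac{dw}{(w-z)^2}=-d\!\left(\tfrac{1}{w-z}\right)$ is exact on a neighbourhood of $\supp(J)$, combined with $\p J=0$ and Lemma~\ref{stokes}, kills $\mathrm{Ind}_J'(z)$ in one line and renders Lemma~\ref{Existance of closed polyhedral chain lemma} unnecessary for this theorem. What the paper's approach buys is that the same polyhedral approximants are reused in Corollary~\ref{Winding number is zero on unbounded connected component} (vanishing on the unbounded component), where one genuinely wants to compare against classical cycles; your direct argument would need a separate estimate there (e.g.\ letting $|z|\to\infty$ and bounding $\bigl|\cint_J\tfrac{dw}{w-z}\bigr|$ directly), which is easy but is a different computation. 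The only technical obligation in your alternative is the $\B_1^r$-norm convergence of the difference quotients of $z\mapsto\tfrac{dw}{w-z}$ on a bounded neighbourhood of $\supp(J)$ bounded away from $z$, which you correctly flag and which is indeed routine.
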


\begin{proof}  
	
	Let $z\in \supp(J)^c$, $B_\epsilon(z)$ an open neighborhood of $z$ whose closure is disjoint from $\supp(J)$ and $P_n \to J$ a sequence of closed polyhedral chains as in Lemma \ref{Existance of closed polyhedral chain lemma}.  A closed polyhedral $1$-chain $P_n$ is just a weighted piecewise linear parameterized closed curve.  That is, there exists a piecewise linear parameterized closed curve $C_n$ and a weight $\l_n\in \R$ such that
	\[
	\cint_{P_n}\o=\l_n \int_{C_n}\o
	\]
	for all $\o\in \B_1^1(\R^2)$.  Let $z_0\in B_\epsilon(z)$.  Then,
\[
\textrm{Ind}_J(z_0) =\frac{1}{2\pi i}\cint_J \frac{dw}{w-z_0} =  \lim_{n \to \i} \frac{1}{2\pi i}\cint_{P_n} \frac{dw}{w-z_0} = \lim_{n \to \i} \l_n\textrm{Ind}_{C_n}(z_0). 
\]
Since $\supp(P_n)\cap B_\epsilon(z)=\emptyset$, we know that $B_\epsilon(z)$ lies entirely within a connected component of $\supp(P_n)^c$.  Since $C_n$ is a piecewise smooth closed parameterized curve, the properties of the classical winding number hold.  In particular, $\textrm{Ind}_{C_n}(z_0)=\textrm{Ind}_{C_n}(z)$.  So, 
\[
\lim_{n \to \i}\l_n\textrm{Ind}_{C_n}(z_0)=\lim_{n \to \i}\l_n\textrm{Ind}_{C_n}(z)=\frac{1}{2\pi i}\cint_{\lim_{n\rightarrow\i} P_n}\frac{dw}{w-z}=\frac{1}{2\pi i}\cint_{J} \frac{dw}{w-z}=\textrm{Ind}_J(z).
\]	
\end{proof}

\begin{cor}
	\label{Winding number is zero on unbounded connected component}
	
	If $J$ is a compactly supported closed chain in $\hB_1^r(\R^2)$, $r\geq 1$, and $z$ is in the unbounded connected component of $\supp(J)^c$, then $\textrm{Ind}_J(z)=0$.
\end{cor}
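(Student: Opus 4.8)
The plan is to deduce this from Theorem \ref{Winding number constant on connected components} together with a decay estimate as $|z|\to\i$. Since $\supp(J)$ is compact it is contained in some ball, and the complement of a ball in $\R^2$ is connected; hence every point of sufficiently large modulus lies in $\supp(J)^c$, and all such points --- together with the given $z$ --- belong to the single unbounded connected component of $\supp(J)^c$. By Theorem \ref{Winding number constant on connected components}, $\textrm{Ind}_J$ is constant on that component, so it suffices to show that $\textrm{Ind}_J(z_0)\to 0$ as $|z_0|\to\i$.

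First I would fix, once and for all, a bounded open set $U$ with $\supp(J)\subset U$, so that $J\in\hB_1^r(U)$ with $C:=\|J\|_{B^r,U}<\i$. For $z_0\notin\bar U$ the $1$-form $\o_{z_0}:=\frac{dw}{w-z_0}$ lies in $\B_1^r(U)$ (as already observed after Definition \ref{Generalized Winding Number}), so the pairing is defined and
\[
\left|\textrm{Ind}_J(z_0)\right|=\frac{1}{2\pi}\left|\cint_J\o_{z_0}\right|\le\frac{C}{2\pi}\,\|\o_{z_0}\|_{B^r,U}.
\]
It then remains to bound $\|\o_{z_0}\|_{B^r,U}$. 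Using the $C^{r-1+\lip}$ description of this norm (Theorem \ref{topequiv2}), I would note that on the fixed compact set $\bar U$ the coefficient $w\mapsto \frac{1}{w-z_0}$ is $C^\i$ whenever $z_0\notin\bar U$, and that its sup-norm, its directional derivatives of order up to $r-1$, and the Lipschitz constant of its $(r-1)$-st derivatives are all $O(\mathrm{dist}(z_0,\bar U)^{-1})$ as $\mathrm{dist}(z_0,\bar U)\to\i$ (more precisely the $j$-th derivative in $w$ is $O(\mathrm{dist}(z_0,\bar U)^{-(j+1)})$). Hence $\|\o_{z_0}\|_{B^r,U}\to 0$, so $\textrm{Ind}_J(z_0)\to 0$.

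Putting the pieces together, $\textrm{Ind}_J$ is constant on the unbounded component of $\supp(J)^c$ and tends to $0$ along it, hence is identically $0$ there; in particular $\textrm{Ind}_J(z)=0$. The only ingredient requiring a (routine) computation is the decay of $\|\o_{z_0}\|_{B^r,U}$, and I expect no genuine obstacle: differentiating $\frac{1}{w-z_0}$ with respect to $w$ only improves the decay in $\mathrm{dist}(z_0,\bar U)$, uniformly over the fixed compact set $\bar U$, so the estimate and the limit follow at once.
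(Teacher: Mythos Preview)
Your argument is correct, but it differs from the paper's. Both proofs begin by invoking Theorem \ref{Winding number constant on connected components}, so it suffices to show $\textrm{Ind}_J$ vanishes at \emph{some} point of the unbounded component. The paper does this by re-using the closed polyhedral approximation $P_n\to J$ from Lemma \ref{Existance of closed polyhedral chain lemma}: since the coning construction forces $\bigcup_n\supp(P_n)$ to be bounded, one may choose $z$ far enough out to lie in the unbounded component of every $\supp(P_n)^c$, whence the classical result gives $\textrm{Ind}_{P_n}(z)=0$ and the limit yields $\textrm{Ind}_J(z)=0$. You instead give a direct analytic estimate: fix a bounded neighbourhood $U$ of $\supp(J)$ and bound $|\textrm{Ind}_J(z_0)|$ by $\|J\|_{B^r,U}\,\|\o_{z_0}\|_{B^r,U}$, then observe that the $C^{r-1+\lip}$ norm of $w\mapsto(w-z_0)^{-1}$ on $\bar U$ decays as $\mathrm{dist}(z_0,\bar U)\to\i$. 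Your route is more self-contained in that it does not revisit the polyhedral machinery, and it makes the decay quantitative; the paper's route has the virtue of leaning only on the classical winding number and the approximation already in hand. One small point: Theorem \ref{topequiv2} is stated for $\R^n$ rather than for open subsets, so strictly speaking you are using its evident local analogue; alternatively you can sidestep the issue by multiplying $\o_{z_0}$ by a fixed smooth cutoff $\chi$ equal to $1$ near $\supp(J)$ and compactly supported in $U$, so that $\chi\o_{z_0}\in\B_1^r(\R^2)$ and $\cint_J\chi\o_{z_0}=\cint_J\o_{z_0}$ by Lemma \ref{supportfacts}, after which the global estimate $|\cint_J\chi\o_{z_0}|\le\|J\|_{B^r}\|\chi\o_{z_0}\|_{B^r}$ applies directly.
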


\begin{proof}
	
	By the coning construction used in the above lemma, $\cup_n \supp(P_n)$ is bounded, and so we may choose $z$ in the unbounded component of $\supp(J)^c$ so that $z$ is also in the unbounded component of $\supp(P_n)$ for all $n$.  As in the proof of Theorem \ref{Winding number constant on connected components}, the classical properties of winding number hold for $P_n$.  Thus, 
	\[
	\textrm{Ind}_J(z)=\lim_{n\to\i}\textrm{Ind}_{P_n}(z)=0.
	\]
\end{proof}

We now know that our winding number behaves as it should.  However, we can say even more:    

\begin{defn}
	\label{density}
	Let $K\in \hB_n^r(\R^n)$ have finite mass.  The \emph{signed density} of $K$ at the point $x$ is defined to be the value
	\[
	\lim_{\epsilon\rightarrow 0}\frac{1}{\textrm{vol}(B_\epsilon)}\cint_{K_{\lfloor B_\epsilon}}dv,
	\]
	where $K_{\lfloor B_\epsilon}$ denotes the \emph{restriction} of $K$ to the $n$-ball of radius $\epsilon$ about $x$ and $\textrm{vol}(B_\epsilon)$ is the volume of the $n$-ball.  The requirement that $K$ have finite mass is a technical condition to ensure that the restriction $K_{\lfloor B_\epsilon}$ is well defined. The \textbf{mass} of a differential chain $K$ is defined to be
	\[
	\inf\{\liminf \|A_i\|_{B^0} : A_i\rightarrow K\},
	\]
	where the infimum is taken over all ways to describe $K$ as a limit of pointed chains $A_i$.  Note that mass is lower semicontinuous.
\end{defn}

\begin{thm}
	\label{Winding number is equal to density}
	Let $J$ be a closed element of $\hB_1^r(\R^2)$, $r\geq 1$.  If $K$ is a $2$-chain of finite mass with $\partial K=J$ and $z\in \supp(J)^c$, then $\textrm{Ind}_J(z)$ is equal to the signed density of $K$ at the point $z$.	
\end{thm}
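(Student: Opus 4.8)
The plan is to localize the defining integral for $\textrm{Ind}_J(z)$ near $z$ by restricting $K$ to a small ball, to identify the boundary of that restriction with a scalar multiple of the fundamental class of the bounding circle by Lemma \ref{topdim}, and finally to read off the signed density of $K$ by applying the generalized Stokes' theorem a second time, now against a primitive of the volume form. To begin, fix $\epsilon>0$ small enough that $\overline{B_\epsilon(z)}$ is disjoint from $\supp(J)$, and decompose $K = K_{\lfloor B_\epsilon(z)} + K_{out}$, where $B_\epsilon(z)$ is the open ball, $K_{out} := K - K_{\lfloor B_\epsilon(z)}$ (so $\supp(K_{out})\subseteq \C\setminus B_\epsilon(z)$), and both summands have finite mass. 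Let $\eta := \frac{dw}{w-z}$; it is holomorphic, hence closed, on $\C\setminus\{z\}$, and it is bounded with bounded derivatives on any set bounded away from $z$, so it represents an element of $\B_1^\infty$ of any such set. Choose a global smooth form $\tilde\eta$ on $\C$ agreeing with $\eta$ outside $B_{\epsilon/2}(z)$; then $d\tilde\eta$ vanishes outside $\overline{B_{\epsilon/2}(z)}$, a set disjoint from $\supp(K_{out})$, so Lemma \ref{supportfacts} together with Lemma \ref{stokes} gives $\cint_{\partial K_{out}}\eta = \cint_{\partial K_{out}}\tilde\eta = \cint_{K_{out}}d\tilde\eta = 0$, the first equality because $\partial K_{out}$ is supported where $\eta = \tilde\eta$. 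Hence
\[
2\pi i\,\textrm{Ind}_J(z) = \cint_J \eta = \cint_{\partial K}\eta = \cint_{\partial K_{\lfloor B_\epsilon(z)}}\eta .
\]

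Next I analyze $\partial K_{\lfloor B_\epsilon(z)}$. Its support lies in $\overline{B_\epsilon(z)}$; on the other hand $\partial K_{\lfloor B_\epsilon(z)} = J - \partial K_{out}$, whose support lies in $\supp(J)\cup\bigl(\C\setminus B_\epsilon(z)\bigr)$. Intersecting these, $\partial K_{\lfloor B_\epsilon(z)}$ is supported on the circle $S_\epsilon := \partial B_\epsilon(z)$, and it is closed since $\partial\circ\partial = 0$. Viewing it as a closed differential $1$-chain carried by $S_\epsilon\cong S^1$, Lemma \ref{topdim} produces a scalar $c_\epsilon\in\R$ with $\partial K_{\lfloor B_\epsilon(z)} = c_\epsilon\,\widehat{S_\epsilon}$, where $\widehat{S_\epsilon} = \partial\widehat{B_\epsilon}$ is the chain associated in Section \ref{classical} to $S_\epsilon$, oriented as the boundary of the positively oriented ball $\widehat{B_\epsilon}$. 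Since $\cint_{\widehat{S_\epsilon}}\eta = 2\pi i$ by the classical computation (valid via the representations of Section \ref{classical}), the displayed identity gives $\textrm{Ind}_J(z) = c_\epsilon$.

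Finally, write $dv = d\omega$ on $\R^2$ with $\omega := \tfrac12\bigl((x-x_0)\,dy - (y-y_0)\,dx\bigr)$ for $z = x_0+iy_0$, truncated outside a large ball so that $\omega\in\B_1^\infty$. Applying Lemma \ref{stokes} to $K_{\lfloor B_\epsilon(z)}$ and then to $\widehat{B_\epsilon}$,
\[
\cint_{K_{\lfloor B_\epsilon(z)}} dv = \cint_{\partial K_{\lfloor B_\epsilon(z)}} \omega = c_\epsilon \cint_{\widehat{S_\epsilon}} \omega = c_\epsilon \cint_{\widehat{B_\epsilon}} dv = c_\epsilon\,\textrm{vol}(B_\epsilon),
\]
so $\frac{1}{\textrm{vol}(B_\epsilon)}\cint_{K_{\lfloor B_\epsilon(z)}} dv = c_\epsilon = \textrm{Ind}_J(z)$ for every sufficiently small admissible $\epsilon$ (those — all but countably many — for which $K$ places no mass on $S_\epsilon$). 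Letting $\epsilon\to 0$ through such values shows the signed density of $K$ at $z$ exists and equals $\textrm{Ind}_J(z)$.

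The step I expect to be the main obstacle is the middle one: showing that $\partial K_{\lfloor B_\epsilon(z)}$ is not merely supported on $S_\epsilon$ but is a genuine differential chain carried by $S_\epsilon$ — carrying no jet data transverse to the circle — so that the top-dimensional rigidity of Lemma \ref{topdim} applies. This comes down to understanding how the restriction operator interacts with $\partial$ and with support for chains of finite mass, which I would extract from the properties of restriction developed in \cite{harrison8}. An alternative that avoids Lemma \ref{topdim} is to replace $\eta$ by a smooth global form $\eta_\epsilon$ equal to $\eta$ off $B_\epsilon(z)$ with $d\eta_\epsilon = \rho_\epsilon\,dv$ for a smooth bump $\rho_\epsilon$ supported in $B_\epsilon(z)$ and integrating to $2\pi i$; Stokes then gives $2\pi i\,\textrm{Ind}_J(z) = \cint_{m_{\rho_\epsilon}K} dv$, and letting $\rho_\epsilon$ tend to $\frac{2\pi i}{\textrm{vol}(B_\epsilon)}$ times the indicator of $B_\epsilon(z)$ recovers the density, trading the rigidity argument for a dominated-convergence estimate on finite-mass chains.
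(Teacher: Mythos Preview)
Your argument is correct in outline but takes a genuinely different route from the paper's. The paper does not work with an arbitrary $K$: it first builds a \emph{specific} $K$ via the cone construction of Lemma~\ref{poincare}, coning at the point $z$ itself, so that $K$ is approximated by explicit sums $K_n=\sum_{n_j}k_{n_j}$ of $2$-simplices with common apex $z$. Both the winding number and the signed density are then computed by hand on these triangles --- each contributes $m_{n_j}\theta_{n_j}/(2\pi)$, where $\theta_{n_j}$ is the angle subtended at $z$ and $m_{n_j}$ the simplex's signed density --- and the two limits visibly coincide. Only at the end does the paper invoke uniqueness of $K$ (every $2$-form on $\R^2$ is exact, so $\partial K=\partial K'$ forces $K=K'$) to cover the general case. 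Your approach is more structural: restrict $K$ to $B_\epsilon$, pin the boundary of the restriction to $S_\epsilon$, invoke Lemma~\ref{topdim} to identify it as $c_\epsilon\widehat{S_\epsilon}$, and then run Stokes twice (once against $\eta$, once against a primitive of $dv$) to see $c_\epsilon=\mathrm{Ind}_J(z)$ and $c_\epsilon$ equals the density quotient. This buys you a cleaner, coordinate-free argument that never touches simplices, and it shows directly that the density quotient is \emph{constant} in $\epsilon$ rather than merely convergent.

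The gap you flag is real and is exactly the price of that cleanliness: a closed $1$-chain in $\hB_1^r(\R^2)$ supported on $S_\epsilon$ need not lie in the image of $\hB_1^r(S_\epsilon)$ --- it could carry prederivative (dipole) data normal to the circle --- so Lemma~\ref{topdim} does not apply as stated. The paper's cone construction sidesteps this entirely because the restricted boundaries $l_{n_j}$ are honest arcs on $S_\epsilon$ by construction. Your bump-function alternative is the right way to close the gap without the cone: with $\eta_\epsilon$ chosen as you describe, Stokes gives $\mathrm{Ind}_J(z)=\frac{1}{2\pi i}\cint_K \rho_\epsilon\,dv$ for every admissible $\epsilon$, and since $K$ has finite mass one can pass from the smooth bump $\rho_\epsilon$ to the normalized indicator $\frac{2\pi i}{\mathrm{vol}(B_\epsilon)}\mathbf{1}_{B_\epsilon}$ by a dominated-convergence argument on the associated signed measure. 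That step deserves a sentence of justification, but it is routine once stated.
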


\begin{proof}
	Let $J$ be a closed $1$-chain and let $z\in \supp(J)^c$ and set $K$ equal to the $2$-chain constructed via the Poincar\'e lemma by coning at the point $z$.  I.e., $\partial K=J$ and $K_n=\sum_{n_j}k_{n_j}\rightarrow K$, where $k_{n_j}$ are $2$-simplices with basepoint $z$.  For each $\epsilon>0$ let $B_\epsilon$ be the open ball of radius $\epsilon$ about $z$.  Let $m_{n_j}$ be the signed density of $k_{n_j}$, let $\theta_{n_j}$ be the angle subtended by $k_{n_j}$ at the point $z$ and let $l_{n_j}$ be the partial boundary of ${k_{n_j}}_{\lfloor B_\epsilon}$ opposite $z$.  If $K$ has finite mass, then we may conclude that 
	\[
	\lim_{n\rightarrow \i}\sum_{n_j}l_{n_j}=\partial \left(K_{\lfloor B_\epsilon}\right).
	\]
	Since $\frac{1}{w-z}$ is holomorphic on a neighborhood of $\supp(K-K_{\lfloor B_\epsilon})$, it follows from Theorem 
	\ref{Generalized Cauchy Integral Theorem} that 
\begin{align*}
	\textrm{Ind}_J(z)&=\frac{1}{2\pi i}\cint_{\partial (K_{\lfloor B_\epsilon})}\frac{dw}{w-z}\\
	&=\lim_{n\rightarrow\i}\sum_{n_j}\frac{1}{2\pi i}\cint_{l_{n_j}}\frac{dw}{w-z}\\
	&=\lim_{n\rightarrow\i}\sum_{n_j}\frac{m_{n_j}\theta_{n_j}}{2\pi},
\end{align*}
where the last integral is computed classically.  Likewise, the signed density of $K$ at $z$ is given by
\begin{align*}
	\lim_{\epsilon\rightarrow 0}\frac{1}{\pi\epsilon^2}\cint_{K_{\lfloor B_\epsilon}} dx\,dy=\lim_{\epsilon\rightarrow 0}\lim_{n\rightarrow\i}\sum_{n_j}\frac{1}{\pi\epsilon^2}\cint_{{k_{n_j}}_{\lfloor B_\epsilon}}dx\,dy=\lim_{n\rightarrow\i}\sum_{n_j}\frac{m_{n_j}\theta_{n_j}}{2\pi},
\end{align*}
where the last integral is computed classically.  Now suppose $\partial K= \partial K'=J$.  Then by the generalized Stokes' theorem, we conclude that
\[
\cint_K d\omega=\cint_{K'} d\omega
\]
for all $\o\in \B_1^r(\R^2)$.  Since all real-valued $2$-forms on $\R^2$ are exact, it follows that $K=K'$, so our choice of $K$ is unique.  
\end{proof}

\begin{lem}
	\label{Winding number zero implies poincare lemma for non-contractible sets}
	
	Let $U\subset \C$ be bounded and open and suppose $J\in \hB_1^r(\R^2)$ is supported in $U$ and closed.  Suppose there exists some $K\in \hB_2^{r-1}(\R^2)$ with finite mass such that $\p K=J$.  If $\textrm{Ind}_J(w)=0$ for all $w\in U^c$, then $K$ is supported in $U$.  
\end{lem}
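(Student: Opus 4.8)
The plan is to apply the support criterion given in part~(4) of Lemma~\ref{supportfacts} with the closed set $X=\bar U$: since $K\in\hB_2^{r-1}(\R^2)$, it suffices to prove that $\cint_K\omega=0$ for every $\omega\in\B_2^{r-1}(\R^2)$ with $\supp(\omega)\subseteq V:=\R^2\setminus\bar U$. Note that $V\subseteq U^c$ and also $V\subseteq\supp(J)^c$, because $\supp(J)\subseteq U$.

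The first substantive step is to pin down the behaviour of $K$ on $V$. By Theorem~\ref{Winding number is equal to density}, for every $z\in\supp(J)^c$ the signed density of $K$ at $z$ equals $\textrm{Ind}_J(z)$. Applying this to $z\in V\subseteq U^c$ and using the hypothesis $\textrm{Ind}_J(w)=0$ for all $w\in U^c$, we conclude that $K$ has signed density $0$ at \emph{every} point of the open set $V$.

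The heart of the argument is then a measure-theoretic statement. Since $K$ has finite mass, the restrictions used in Definition~\ref{density} (and the pairing with bounded top-forms) are given by integration against a finite signed Radon measure $\mu_K$ on $\R^2$: $\cint_K g\,dx\wedge dy=\int g\,d\mu_K$. I would then prove: a finite signed Radon measure on $\R^2$ whose symmetric density vanishes at \emph{every} point of an open set $V$ is the zero measure on $V$. To see this, take the Lebesgue decomposition $\mu_K=f\,dv+\mu_s$ with $f\in L^1$ and $\mu_s\perp dv$; by Lebesgue differentiation the symmetric density equals $f$ almost everywhere, so $f=0$ a.e.\ on $V$, and a differentiation theorem for measures (Besicovitch) shows that a nonzero singular part $\mu_s$ restricted to $V$ would force the symmetric density to be infinite on a set of positive $|\mu_s|$-measure inside $V$, contradicting our hypothesis. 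Hence $\mu_K$ vanishes on $V$, so for $\omega=g\,dx\wedge dy$ with $\supp(\omega)\subseteq V$ we get $\cint_K\omega=\int_V g\,d\mu_K=0$; part~(4) of Lemma~\ref{supportfacts} then gives $\supp(K)\subseteq\bar U$, i.e.\ $K$ is supported in $U$.

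The main obstacle is precisely this last measure-theoretic passage. The naive attempt---``the density of $K$ is $0$ off $\bar U$, hence $K$ carries no mass there''---is false if one only knows that the density vanishes almost everywhere (an oscillating or singular density can have vanishing symmetric density at a given point without vanishing near it). What rescues the proof is that the winding-number hypothesis gives density $0$ at \emph{every} point of $V$ simultaneously, which does force the measure to vanish, but only via a differentiation theorem rather than an elementary argument; making this rigorous, together with the representation of a finite-mass $2$-chain by its mass measure, is where the real work lies.
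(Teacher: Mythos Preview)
Your approach is exactly the paper's: invoke Theorem~\ref{Winding number is equal to density} to see that the signed density of $K$ vanishes on $U^c$, and conclude that $K$ is supported in $U$. The paper's proof is literally a single sentence asserting this implication, so the measure-theoretic passage you flag as the ``main obstacle'' (Radon-measure representation of a finite-mass top chain, Lebesgue decomposition, Besicovitch differentiation) is precisely what the paper takes for granted---your write-up is in fact more complete than the original. One small wrinkle, shared by both arguments: applying part~(4) of Lemma~\ref{supportfacts} with $X=\bar U$ yields only $\supp(K)\subseteq\bar U$, and your ``i.e.'' does not quite bridge the gap to $\supp(K)\subseteq U$ as stated; the paper's one-line proof is equally imprecise on this point.
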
 

\begin{proof}
	
	The signed density of $K$ is zero outside $U$, hence $K$ is supported in $U$, whereby the lemma follows from Theorem \ref{Winding number is equal to density}. 
\end{proof}

\subsection{Generalized Global Cauchy Integral Theorem}
\begin{thm}{Generalized Global Cauchy Integral Theorem}
	\label{Generalized Global Cauchy Integral Theorem}
	
	Let $J\in \hB_1^r(\R^2)$, $r\geq 1$ be closed and supported in a bounded open set $U\subset \C$ such that $\textrm{Ind}_J(w)=0$ for all $w\in U^c$.  Suppose there exists some $K\in \hB_2^{r-1}(\R^2)$ with finite mass such that $\p K=J$. Then if $f$ is holomorphic on $U$,
	\[
	\cint_J f(z)dz=0.
	\]
\end{thm}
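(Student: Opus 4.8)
The plan is to rerun the argument of Theorem \ref{Generalized Cauchy Integral Theorem} essentially verbatim; the one new ingredient is to relocate the filling $2$-chain $K$ inside $U$, and this is exactly where the winding-number hypothesis is used, in place of the Poincar\'e Lemma \ref{poincare} (which is unavailable here since $U$ need not be contractible).

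First I would observe that the hypotheses are precisely those of Lemma \ref{Winding number zero implies poincare lemma for non-contractible sets}: $J\in\hB_1^r(\R^2)$ is closed and supported in the bounded open set $U$; there is a finite-mass $K\in\hB_2^{r-1}(\R^2)$ with $\p K=J$; and $\mathrm{Ind}_J(w)=0$ for every $w\in U^c$. That lemma therefore gives that $K$ is supported in $U$, i.e.\ $K\in\hB_2^{r-1}(U)$, with $\p K=J\in\hB_1^r(U)$. I expect this to be the main (in fact the only nontrivial) step: without contractibility one cannot produce a filling by coning $J$ as in Lemma \ref{poincare}, and it is the vanishing of $\mathrm{Ind}_J$ on $U^c$, together with the finite-mass assumption (so that signed density is available), that forces the a priori global chain $K$ to live over $U$.

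Next, exactly as in the proof of Theorem \ref{Generalized Cauchy Integral Theorem}, I would write $f=u+iv$ with $u,v$ real-valued and $C^\infty$ on $U$ and interpret
\[
\cint_J f(z)\,dz=\cint_J\bigl(u\,dx-v\,dy\bigr)+i\cint_J\bigl(v\,dx+u\,dy\bigr),
\]
the two real $1$-forms on the right being elements of $\B_1^r(U)=(\hB_1^r(U))'$. Since $\p K=J$ with $K\in\hB_2^{r-1}(U)$, the generalized Stokes' theorem (Lemma \ref{stokes}, applied over $U$; equivalently, multiply by a cutoff $\chi\in C_c^\infty(U)$ that is $1$ near the compact set $\supp(K)\subseteq U$ and use Lemma \ref{stokes} on $\R^2$ together with Lemma \ref{supportfacts}) gives
\[
\cint_J f(z)\,dz=\cint_{\p K} f(z)\,dz=\cint_K d\bigl(u\,dx-v\,dy\bigr)+i\cint_K d\bigl(v\,dx+u\,dy\bigr).
\]
The orders match: $K\in\hB_2^{r-1}(U)$ pairs with $f\,dz\in\B_1^r(U)$ and with $d(f\,dz)\in\B_2^{r-1}(U)$, and both memberships hold because $f$, being holomorphic, is smooth on $U$.

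Finally I would compute the integrands: $d(u\,dx-v\,dy)=-\bigl(\frac{\p u}{\p y}+\frac{\p v}{\p x}\bigr)dx\wedge dy$ and $d(v\,dx+u\,dy)=\bigl(\frac{\p u}{\p x}-\frac{\p v}{\p y}\bigr)dx\wedge dy$, and both vanish identically on $U$ by the Cauchy--Riemann equations $\frac{\p u}{\p x}=\frac{\p v}{\p y}$, $\frac{\p u}{\p y}=-\frac{\p v}{\p x}$. Hence each Harrison integral on the right equals $\cint_K 0=0$, so $\cint_J f(z)\,dz=0$, as claimed.
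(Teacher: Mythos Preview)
Your proposal is correct and follows exactly the paper's approach: invoke Lemma \ref{Winding number zero implies poincare lemma for non-contractible sets} to place $K$ inside $U$, then repeat verbatim the Stokes/Cauchy--Riemann computation of Theorem \ref{Generalized Cauchy Integral Theorem}. The paper's own proof is literally one sentence to this effect, and your extra care about orders and the optional cutoff argument only makes the identical argument more explicit.
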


\begin{proof}
	
	By Lemma \ref{Winding number zero implies poincare lemma for non-contractible sets}, there exists some $K$ supported in $U$ such that $J=\partial K$.  The proof is otherwise identical to that of Theorem \ref{Generalized Cauchy Integral Theorem}.
\end{proof}

\subsection{Generalized Global Cauchy Integral Formula}
\begin{thm}{Generalized Global Cauchy Integral Formula}
	\label{Generalized Global Cauchy Integral Formula}
	
	Let $J\in \hB_1^r(\R^2)$, $r\geq 1$ be closed and supported in a bounded open set $U\subset \C$ such that $\textrm{Ind}_J(w)=0$ for all $w\in U^c$.  Suppose there exists some $K\in \hB_2^{r-1}(\R^2)$ with finite mass such that $\p K=J$.  Then if $f$ is holomorphic on $U$ and $z\in U\setminus \supp(J)$,
	\[
	f(z)\textrm{Ind}_J(z)=\frac{1}{2\pi i}\cint_J \frac{f(w)}{w-z}dw.
	\]
\end{thm}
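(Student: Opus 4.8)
The plan is to follow the proof of Theorem~\ref{Generalized Cauchy Integral Formula} almost verbatim, replacing the appeal to the contractible Generalized Cauchy Integral Theorem with its global counterpart, Theorem~\ref{Generalized Global Cauchy Integral Theorem}. First I would introduce the auxiliary function
\[
g(w):=\begin{cases} \frac{f(w)-f(z)}{w-z} & \textrm{for } w\in U\setminus\{z\},\\[1ex] f'(z) & \textrm{for } w=z,\end{cases}
\]
and observe, exactly as in the classical case, that $g$ is holomorphic on all of $U$, the singularity at $z$ being removable. Since $\supp(J)$ is a closed bounded, hence compact, subset of $U$, we may fix an open neighborhood $W$ of $\supp(J)$ with $\bar W\subset U$ and $z\notin\bar W$; on $W$ each of $g(w)\,dw$, $\frac{1}{w-z}\,dw$ and $\frac{f(w)}{w-z}\,dw$ is a smooth complex $1$-form whose real and imaginary parts lie in $\B_1^r(W)$, so the relevant Harrison integrals over $J$ are well-defined and interpreted componentwise as in the proof of Theorem~\ref{Generalized Cauchy Integral Theorem}.

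Next I would apply Theorem~\ref{Generalized Global Cauchy Integral Theorem} to the pair $(J,g)$. By hypothesis $J\in\hB_1^r(\R^2)$ is closed and supported in the bounded open set $U$, $\textrm{Ind}_J(w)=0$ for all $w\in U^c$, and there is a finite-mass $K\in\hB_2^{r-1}(\R^2)$ with $\p K=J$; since $g$ is holomorphic on $U$, the theorem yields $\cint_J g(w)\,dw=0$. On $W$ we have the identity of smooth $1$-forms $\frac{f(w)}{w-z}=g(w)+f(z)\frac{1}{w-z}$, so by linearity of the Harrison integral and Definition~\ref{Generalized Winding Number},
\[
\frac{1}{2\pi i}\cint_J \frac{f(w)}{w-z}\,dw=\frac{1}{2\pi i}\cint_J g(w)\,dw+\frac{f(z)}{2\pi i}\cint_J\frac{dw}{w-z}=0+f(z)\,\textrm{Ind}_J(z),
\]
which is exactly the claimed formula.

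I do not expect a genuine obstacle here: all the analytic content has been front-loaded into Theorem~\ref{Generalized Global Cauchy Integral Theorem}, and through it into Lemma~\ref{Winding number zero implies poincare lemma for non-contractible sets} and Theorem~\ref{Winding number is equal to density}. The only points needing a word of care are (i) applying the global theorem to $g$ itself, rather than to the naive quotient $(f(w)-f(z))/(w-z)$, so that holomorphy holds on all of $U$ including $z$; and (ii) confirming that the three integrands above really do define elements of $\B_1^r$ near $\supp(J)$, which holds because $z\notin\supp(J)$ makes $1/(w-z)$ smooth there and $f$, holomorphic on the bounded set $U$, is smooth with bounded derivatives of all orders on $\bar W$.
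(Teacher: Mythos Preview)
Your proposal is correct and is precisely the approach the paper takes: deduce the global formula from Theorem~\ref{Generalized Global Cauchy Integral Theorem} by the same removable-singularity trick used in Theorem~\ref{Generalized Cauchy Integral Formula}. Your added remarks on choosing a neighborhood $W$ to justify that the integrands lie in $\B_1^r$ are a welcome clarification but not a departure from the paper's argument.
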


\begin{proof}
	
	This follows from Theorem \ref{Generalized Global Cauchy Integral Theorem} in the same manner as Theorem \ref{Generalized Cauchy Integral Formula} followed from Theorem \ref{Generalized Cauchy Integral Theorem}.
\end{proof}

\subsection{Generalized Cauchy Residue Theorem}
\begin{thm}{Generalized Cauchy Residue Theorem}
	\label{Generalized Residue Theorem}
	
	Let $J\in \hB_1^r(\R^2)$ be closed and supported in a bounded open set $U\subset \C$ such that $\textrm{Ind}_J(w)=0$ for all $w\in U^c$.  Suppose there exists some $K\in \hB_2^{r-1}(\R^2)$ with finite mass such that $\p K=J$.  Let $f$ be holomorphic in $U$ except for at finitely many points $a_k\in U\setminus\supp(J)$.  Then,
	\[
	\cint_J f(z)dz=\sum_k \textrm{Ind}_J(a_k)\cint_{B_k} f(z)dz,
	\]
	where $B_k=\p D_k$ and the $D_k\ni a_k$ are isolated open neighborhoods.
\end{thm}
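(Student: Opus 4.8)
The plan is to reduce everything to the Generalized Global Cauchy Integral Theorem (Theorem~\ref{Generalized Global Cauchy Integral Theorem}) by subtracting off small oriented circles around the singularities, exactly as in the classical argument. First I would choose, for each $k$, a round open disk $D_k$ with $a_k\in D_k$, oriented by $dx\wedge dy$, with pairwise disjoint closures and with $\bar D_k\subset U\setminus\supp(J)$; this is possible because the $a_k$ are finitely many distinct points of the open set $U\setminus\supp(J)$. Then $D_k\in\hB_2^1(\R^2)$ has finite mass (its mass is the area of $D_k$) and signed density $+1$ at each of its interior points, and $B_k:=\p D_k$ is the differential $1$-chain corresponding via Section~\ref{classical} to the boundary circle $\partial D_k$; in particular $B_k$ is closed, $\supp(B_k)=\partial D_k$, this circle meets none of the $a_j$, and $\partial D_k\subset U$. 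Set
\[
\tilde J:=J-\sum_k \textrm{Ind}_J(a_k)\,B_k,\qquad K':=K-\sum_k \textrm{Ind}_J(a_k)\,D_k .
\]
Then $\p K'=\tilde J$, so $\tilde J$ is closed, and $K'$ has finite mass, being a finite sum of finite-mass chains. Put $V:=U\setminus\{a_1,\dots,a_n\}$, a bounded open set on which $f$ is holomorphic; since $a_k\notin\supp(J)$ we have $\supp(J)\subseteq V$, and together with $\supp(B_k)=\partial D_k\subseteq V$ this gives $\supp(\tilde J)\subseteq V$ (one works at order $\max(r,2)$, with $K'\in\hB_2^{\max(r,2)-1}$).

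The heart of the argument is checking the hypothesis $\textrm{Ind}_{\tilde J}(w)=0$ for all $w\in V^c=U^c\cup\{a_1,\dots,a_n\}$. By linearity of the Harrison integral, $\textrm{Ind}_{\tilde J}(w)=\textrm{Ind}_J(w)-\sum_k\textrm{Ind}_J(a_k)\textrm{Ind}_{B_k}(w)$ whenever $w$ avoids all the supports involved. If $w\in U^c$, then $\textrm{Ind}_J(w)=0$ by hypothesis, and since $\bar D_k\subset U$ the point $w$ lies in the unbounded component of $\supp(B_k)^c$, so $\textrm{Ind}_{B_k}(w)=0$ by Corollary~\ref{Winding number is zero on unbounded connected component}; hence $\textrm{Ind}_{\tilde J}(w)=0$. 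If $w=a_j$, then for $k\neq j$ the point $a_j\notin\bar D_k$ again lies in the unbounded component of $\supp(B_k)^c$, so $\textrm{Ind}_{B_k}(a_j)=0$, while $\textrm{Ind}_{B_j}(a_j)=\textrm{Ind}_{\p D_j}(a_j)$ equals the signed density of $D_j$ at $a_j$ by Theorem~\ref{Winding number is equal to density}, namely $1$. Therefore $\textrm{Ind}_{\tilde J}(a_j)=\textrm{Ind}_J(a_j)-\textrm{Ind}_J(a_j)=0$.

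Finally, $\tilde J$ is a closed chain supported in the bounded open set $V$ with $\textrm{Ind}_{\tilde J}\equiv 0$ off $V$, the finite-mass chain $K'$ satisfies $\p K'=\tilde J$, and $f$ is holomorphic on $V$; so Theorem~\ref{Generalized Global Cauchy Integral Theorem} gives $\cint_{\tilde J}f(z)\,dz=0$, which upon expanding by linearity of $\cint$ is precisely
\[
\cint_J f(z)\,dz=\sum_k\textrm{Ind}_J(a_k)\cint_{B_k}f(z)\,dz .
\]
The main obstacle is the middle paragraph: verifying that passing from $J$ to $\tilde J$ does not spoil the vanishing-index condition. This splits into the ``outer'' estimate on $U^c$ (handled by Corollary~\ref{Winding number is zero on unbounded connected component}, which is exactly why the disks must be taken with closures inside $U$) and the ``inner'' computation $\textrm{Ind}_{B_k}(a_j)=\delta_{jk}$ (handled by the winding-number-equals-density Theorem~\ref{Winding number is equal to density}); everything else is bookkeeping with linearity of the Harrison integral and with the order indices.
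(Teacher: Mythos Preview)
Your argument is correct and follows essentially the same strategy as the paper: subtract off $\sum_k \textrm{Ind}_J(a_k)D_k$ and apply Theorem~\ref{Generalized Global Cauchy Integral Theorem}. The only difference is where the key cancellation is checked: you verify the index hypothesis $\textrm{Ind}_{\tilde J}\equiv 0$ on $V^c$ directly at the $1$-chain level (via linearity of $\textrm{Ind}$ and the computation $\textrm{Ind}_{B_k}(a_j)=\delta_{jk}$), whereas the paper works at the $2$-chain level, using Theorems~\ref{Winding number constant on connected components} and~\ref{Winding number is equal to density} to see that the signed density of $K$ on $D_k$ is exactly $\textrm{Ind}_J(a_k)$, so that $\supp(K-\sum_k\textrm{Ind}_J(a_k)D_k')$ misses the $a_k$. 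These two verifications are dual to each other through Theorem~\ref{Winding number is equal to density}, so the difference is one of phrasing rather than substance; your route has the small advantage of matching the stated hypotheses of Theorem~\ref{Generalized Global Cauchy Integral Theorem} more literally.
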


\begin{proof}
	
	Since $\textrm{Ind}_z(J)=0$ for all $z\in U^c$, it follows from Lemma \ref{Winding number zero implies poincare lemma for non-contractible sets} that there exists $K$ supported in $U$ such that $\partial K= J$.  For each $k$, let $D_k$ be an open ball around $a_k$ such that the closures of the $D_k$'s are disjoint from each other and contained in $U\setminus \supp(J)$.  Let $B_k=\p D_k$.  Let $D_k'$ be the $2$-chain in $\R^2$ that corresponds canonically to $D_k$ and let $B_k'=\p D_k'$.  
	
	By Theorems \ref{Winding number constant on connected components} and \ref{Winding number is equal to density}, the signed density of $K$ is constant on connected components of $U\setminus \supp(J)$.  It follows that the signed density of $K$ at the point $a_k$ is equal to the signed density of $K$ on any point in $D_k$.  Thus, $\supp(K-\sum_k \textrm{Ind}_J(a_k)D_k')=\supp(K)\setminus(\cup_k D_k)$.  Therefore, $f$ is holomorphic on a neighborhood of $\supp{K-\sum_k \textrm{Ind}_J(a_k)D_k'}$.  By Theorem \ref{Generalized Global Cauchy Integral Theorem},
	\[
	\cint_{\partial(K-\sum_k \textrm{Ind}_J(a_k)D_k')}f(z)dz=0.
	\] 
	Therefore, 
	\[
	\cint_J f(z)dz=\cint_{\partial K}f(z)dz=\cint_{\partial(\sum_k \textrm{Ind}_J(a_k)D_k')}f(z)dz=\sum_k \textrm{Ind}_J(a_k)\cint_{B_k'}f(z)dz.
	\]
\end{proof}
    
\begin{remark}
	We conjecture that the requirement that $J$ be closed in Theorem \ref{Generalized Residue Theorem} is unnecessary.  The requirement that $\textrm{Ind}_J(w)=0$ for all $w\in U^c$ is very strong and should imply that $\p J=0$.  Moreover, the requirement that $K$ have finite mass could also probably be dropped.  We only need it so that signed density is well defined.  If we modify the definition of signed density to use multiplication by a smooth bump function rather than the restriction of $K$ to an $\e$-ball, the theorem might hold.
\end{remark}

\section{Asymptotic Cycles}
\subsection{The Asymptotic Cycles of Schwartzman}
The long-term behavior of dynamical systems is a subject of great interest.  In $\R^2$, the situation is relatively simple.  Poincar\'e-Bendixson proved \cite{poincare,bendixon} that a bounded orbit of a continuous dynamical system on the plane approaches a periodic orbit.  In higher dimensions, and on some $2$-dimensional surfaces, more complicated situations arise.  For example, the orbits of an irrational flow on the torus do not approach a periodic orbit.  However, these long-term orbits are, in a certain sense, cycles.  

The idea of an \emph{asymptotic cycle} was first introduced by Schwartzman in \cite{schwartzman}.  Given a compact metric space $X$ with finite first Betti number and a continuous flow $\phi: X\times \R\rightarrow X$, Schwartzman associates to each \emph{quasi-regular} point $p\in X$ an element $A_p$ of the first \u{C}ech homology group of $X$.  Schwartzman proves the following theorem as a geometric interpretation:
\begin{thm}[Schwartzman]
	Let $p$ be any quasi-regular point.  For each $t$ let $K_t$ be some parameterized curve going from $\phi(p,t)$ to $p$.  Suppose that all the curves $K_t$ are parametrized uniformly equicontinuously from the interval $[0,1]$.  Let $C_t$ be the curve determined by the orbit from $p$ to $\phi(p,t)$ followed by the curve $K_t$, and let $\bar{C}_t$ be the corresponding element of the first Betti group.  Then $\lim_{t\rightarrow\i}\bar{C}_t/t=A_p$.
\end{thm}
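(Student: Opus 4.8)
The plan is to reduce the claim to a convergence statement in the finite-dimensional real vector space $H_1(X)$ (the first Betti group), and to verify that convergence by testing against cohomology. Recall from \cite{schwartzman} that, because $X$ has finite first Betti number, $H^1(X)$ is finite-dimensional, the pairing $H_1(X)\times H^1(X)\to\R$ is nondegenerate, and every class in $H^1(X)$ is of the form $g^*\theta$ for some continuous $g:X\to S^1=\R/\Z$, where $\theta$ generates $H^1(S^1)$ (the image of $[X,S^1]$ in real cohomology spans, since the first Betti number is finite). Recall also that quasi-regularity of $p$ is precisely the statement that for every such $g$ the continuous lift $\tilde g_p:\R\to\R$ of $t\mapsto g(\phi(p,t))$ satisfies $\lim_{t\to\i}\tilde g_p(t)/t$ exists, and that $A_p$ is by definition the unique element of $H_1(X)$ with $\langle A_p,g^*\theta\rangle=\lim_{t\to\i}\tilde g_p(t)/t$ for all $g$. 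Hence it suffices, for a fixed continuous $g:X\to S^1$, to prove $\tfrac1t\langle\bar C_t,g^*\theta\rangle\to\langle A_p,g^*\theta\rangle$ as $t\to\i$.

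Next I would split the pairing along the two arcs making up $C_t$. The integer $\langle\bar C_t,g^*\theta\rangle$ is the degree of the loop $g\circ C_t:S^1\to S^1$, and since $C_t$ is the orbit arc from $p$ to $\phi(p,t)$ followed by $K_t$, this degree equals $W_t^{\mathrm{orb}}+W_t^{\mathrm{cl}}$, where $W_t^{\mathrm{orb}}=\tilde g_p(t)-\tilde g_p(0)$ is the net change of the lifted angle of $g$ along the orbit arc, and $W_t^{\mathrm{cl}}$ is the net change along $g\circ K_t$. Quasi-regularity gives $\tfrac1t W_t^{\mathrm{orb}}\to\langle A_p,g^*\theta\rangle$ immediately, so the whole content of the theorem is the claim $W_t^{\mathrm{cl}}=o(t)$; in fact I would prove the stronger statement that $W_t^{\mathrm{cl}}$ is bounded uniformly in $t$. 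This is the only place the hypothesis on the $K_t$ is used, and it is the crux of the argument.

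For the uniform bound: since $X$ is compact, $g$ is uniformly continuous, so composing with the uniformly equicontinuous family $\{K_t\}_{t}$ shows $\{g\circ K_t\}_{t}$ is uniformly equicontinuous on $[0,1]$. Choose $\delta>0$ so that $|s-s'|<\delta$ forces $d_{S^1}(g(K_t(s)),g(K_t(s')))<\tfrac14$ for every $t$, and fix an integer $n>1/\delta$. On each subinterval $[k/n,(k+1)/n]$ all image points lie within $\tfrac14$ of one another, hence inside a closed arc of length $\tfrac12<1$, and such an arc lifts homeomorphically to $\R$; therefore the net change of the lift of $g\circ K_t$ over that subinterval has absolute value at most $\tfrac12$. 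Summing over the $n$ subintervals gives $|W_t^{\mathrm{cl}}|\le n/2$ for all $t$, with $n$ independent of $t$, so $\tfrac1t W_t^{\mathrm{cl}}\to0$. Combining with the preceding paragraph, $\tfrac1t\langle\bar C_t,g^*\theta\rangle\to\langle A_p,g^*\theta\rangle$ for every continuous $g:X\to S^1$; since the classes $g^*\theta$ span $H^1(X)$ and the pairing with $H_1(X)$ is nondegenerate, $\bar C_t/t\to A_p$ in the first Betti group, as claimed. The main obstacle, as flagged, is exactly this uniform-in-$t$ control of the closing curves $K_t$: without the equicontinuity hypothesis the $K_t$ could wind arbitrarily and the normalized limit would fail; everything else is either the definition of $A_p$ via quasi-regularity or the standard $H_1$–$H^1$ duality.
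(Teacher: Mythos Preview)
The paper does not prove this theorem; it is quoted from \cite{schwartzman} as background and stated without proof. Your argument is correct and is essentially Schwartzman's original one: pair with the spanning set of classes $g^*\theta$ for $g:X\to S^1$, split the degree of $g\circ C_t$ into the orbit contribution (handled by the definition of $A_p$ for quasi-regular $p$) and the closing-arc contribution, and bound the latter uniformly in $t$ via the equicontinuity hypothesis. One small wording issue: quasi-regularity is defined as existence of time averages for all continuous real-valued $f$, not literally as existence of $\lim_{t\to\infty}\tilde g_p(t)/t$; the latter is a consequence (proved in \cite{schwartzman}), and you correctly cite that reference for it, but the phrase ``is precisely the statement that'' overstates the equivalence.
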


Loosely speaking, the asymptotic cycle tells us how the orbit from a point $p$ wraps around the space $X$ in the long run.  They are highly related to winding numbers, foliations, and Hamiltonian flows.  In particular, Schwartzman deduces some facts about Hamiltonian systems:

If $(X,\o)$ is a compact symplectic $C^2$ manifold, we may write $\o$ in local coordinates as $\o=\sum dp_i\wedge dq_i$.  If $\a$ is a closed $C^1$ 1-form on $X$, then we can find a function $H(p_i, q_i)$ such that, locally, $\a=dH$.  One then gets a system of differential equations
\begin{align*}
	\frac{d q_i}{dt}=\frac{\p H}{\p p_i},\,\,\,\,&\,\,\,\frac{d p_i}{dt}=-\frac{\p H}{\p q_i}
\end{align*}
determined by $\a$.  One can then deduce the existence of a flow $D_\a$, called a \emph{Hamiltonian flow} on $X$ determined by these differential equations.  Schwartzman proves the following result:
\begin{thm}
	The $\mu$-asymptotic cycle $A_\mu$ associated with $D_\a$ is completely determined by the de Rham cohomology class of $\a$.  Furthermore, the resulting map sending cohomology classes to $\mu$-asymptotic cycles is linear.
\end{thm}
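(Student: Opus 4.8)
The plan is to reduce both assertions to a single explicit formula for the pairing of $A_\mu$ with de Rham cohomology.

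First I would pin down $A_\mu$ concretely. Write $Z_\a$ for the Hamiltonian vector field determined by $\a$, characterized up to an overall sign by $\iota_{Z_\a}\o=\a$; nondegeneracy of $\o$ makes $\a\mapsto Z_\a$ a linear isomorphism from $1$-forms to vector fields, and $Z_\a$ generates the flow $D_\a$, which is complete because $X$ is compact. Since $\a$ is closed, Cartan's formula gives $\mathcal{L}_{Z_\a}(\o^n/n!)=d\,\iota_{Z_\a}(\o^n/n!)$; as $\iota_{Z_\a}$ is an antiderivation and $\o$ has even degree, $\iota_{Z_\a}(\o^n)=n(\iota_{Z_\a}\o)\wedge\o^{n-1}=n\,\a\wedge\o^{n-1}$, so $\iota_{Z_\a}(\o^n/n!)=\frac{1}{(n-1)!}\a\wedge\o^{n-1}$ and $\mathcal{L}_{Z_\a}(\o^n/n!)=\frac{1}{(n-1)!}d\a\wedge\o^{n-1}=0$. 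Hence the Liouville measure $\mu:=\o^n/n!$ is invariant under every $D_\a$, and we take it as the fixed measure of the statement. As a differential $1$-chain, $A_\mu$ is the chain associated with the vector field $Z_\a$ and the measure $\mu$, determined by $\cint_{A_\mu}\b=\int_X\iota_{Z_\a}\b\,d\mu$ for every $1$-form $\b$; invariance of $\mu$ is exactly $\cint_{A_\mu}dg=\int_X Z_\a(g)\,d\mu=0$ for all functions $g$, i.e. $\p A_\mu=0$, so $A_\mu$ carries a differential homology class whose image under the natural map to \u{C}ech homology $H_1(X;\R)\cong(H^1_{dR}(X))^{\ast}$ is Schwartzman's asymptotic cycle.

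Then I would compute the pairing. For a $1$-form $\b$ and a vector field $V$ on the $2n$-manifold $X$, applying $\iota_V$ to $\b\wedge\mu$, which vanishes for degree reasons, gives $\b(V)\,\mu=\b\wedge\iota_V\mu$. Together with the formula for $\iota_{Z_\a}\mu$ above, this yields, for every closed $1$-form $\b$,
\[
\langle A_\mu,[\b]\rangle=\cint_{A_\mu}\b=\int_X\b(Z_\a)\,d\mu=\frac{1}{(n-1)!}\int_X\b\wedge\a\wedge\o^{n-1}.
\]
The right-hand side is visibly bilinear in $(\a,\b)$, which already gives linearity of $\a\mapsto A_\mu(D_\a)$. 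It depends only on the cohomology class of $\a$: if $\a=dh$, then $\a\wedge\o^{n-1}=d(h\,\o^{n-1})$ since $d\o=0$, whence $\b\wedge\a\wedge\o^{n-1}=\b\wedge d(h\,\o^{n-1})=-d(\b\wedge h\,\o^{n-1})$, using $d\b=0$ and $\deg\b=1$, and $\int_X d(\b\wedge h\,\o^{n-1})=0$ by Stokes' theorem on the closed manifold $X$; symmetrically the expression vanishes when $\b$ is exact. So the pairing descends to a bilinear form on $H^1_{dR}(X)\times H^1_{dR}(X)$, the functional $A_\mu(D_\a)$ on $H^1_{dR}(X)$ depends only on $[\a]$, and $[\a]\mapsto A_\mu(D_\a)$ is linear. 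Conceptually it is $\frac{1}{(n-1)!}$ times the cup product $[\a]\smile[\o]^{n-1}\in H^{2n-1}_{dR}(X)$ transported to $H_1(X;\R)$ by Poincar\'e duality.

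The displayed manipulations are routine Stokes-and-Cartan bookkeeping; the step that genuinely leans on the earlier sections is the identification of $A_\mu$ as a bona fide closed differential $1$-chain with $\cint_{A_\mu}\b=\int_X\iota_{Z_\a}\b\,d\mu$, together with the compatibility of its differential homology class with the \u{C}ech homology class Schwartzman uses. The hard part will be this compatibility --- making ``evaluate $A_\mu\in\hB_1^r(X)$ against $[\b]\in H^1_{dR}(X)$'' unambiguous and consistent with the isomorphism $H_1(X;\R)\cong(H^1_{dR}(X))^{\ast}$ --- rather than anything in the short computation that follows.
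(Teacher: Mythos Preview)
The paper does not contain a proof of this theorem. It is stated in the expository Section~4.1 as a result of Schwartzman, cited from \cite{schwartzman}, purely as background and motivation for the author's own work on asymptotic differential chains in Section~4.3. So there is no ``paper's own proof'' to compare against.

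That said, your argument is a correct and standard proof of the result under the assumption that $\mu$ is the Liouville measure $\o^n/n!$. The computation $\b(Z_\a)\,\mu=\b\wedge\iota_{Z_\a}\mu=\tfrac{1}{(n-1)!}\,\b\wedge\a\wedge\o^{n-1}$ is clean, and the cohomological invariance and linearity follow immediately. Two remarks are worth making. First, you silently fix $\mu$ to be Liouville; this is the natural reading (one needs a single $\mu$ invariant under \emph{all} the flows $D_\a$ to even formulate the map $[\a]\mapsto A_\mu$), but it should be said explicitly, since Schwartzman's general framework allows arbitrary invariant measures. Second, the identification you flag at the end---that Schwartzman's orbit-averaged $A_\mu\in\check{H}_1(X;\R)$ pairs with $[\b]$ via $\int_X\iota_{Z_\a}\b\,d\mu$---is indeed the substantive step; it is essentially the Birkhoff ergodic theorem combined with the $\mu$-average of the pointwise asymptotic cycles $A_p$, and Schwartzman proves exactly this in his paper. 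Your honesty about where the weight lies is appropriate.
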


Schwartzman then notes that one can use this result to compute winding number of a Hamiltonian flow on the torus.  

\subsection{Foliation Cycles}
More recently, the ideas of Schwartzman have been reframed in the language of foliations and currents by Sullivan, Ruelle, Plante and others in \cite{ruellesullivan,sullivan,schwartzman2}.  See \cite{candelconlon,candelconlon2} for a general reference.  Generally speaking, the dynamical system is replaced with a \emph{foliation}.  For a 1-dimensional foliation, the terms \emph{foliation cycle} and \emph{asymptotic cycle} are synonymous.  In higher dimensions, foliation currents are a powerful tool in determining the structure of a given foliation.  In particular, Sullivan uses foliation currents to show that a $(n-1)$-dimensional foliation is \emph{taut} if and only if the foliation cycles are not exact.  This condition is equivalent to the leaves being minimal with respect to a Riemannian metric on the manifold.

\subsection{Asymptotic Cycles and Differential Chains}
	The space $\hB_k^\i(M)$ is isomorphic to a subspace of de Rham currents.  Note that the spaces $\hB_k^r(M)$ are \emph{not} reflexive, for $0\leq r\leq\i$.  Our goal is to construct asymptotic differential chains in the space $\hB_1^1(M)$, thus making available integration over a broader class of forms than with de Rham currents.	
	
Let $X$ be a Lipschitz vector field on a compact Riemannian manifold $(M,g)$.  Let $p\in M$ and let $\phi_p(t)$ be the the time-$t$ flow of $X$ from $p$.  The point $p$ is called \textbf{quasi-regular} if
\[
\lim_{T\rightarrow \infty}\frac{1}{T}\int_0^T f(\phi_p(t))dt
\]
exists for every real-valued continuous function $f$ on $M$.  It is well known that the set of quasi-regular points has full measure with respect to every measure $\mu$ on $M$ invariant under the flow of $X$ \cite{schwartzman}. 

Let $\widetilde{[0,T]}\in \hB_1^1(\R)$ denote the differential chain canonically associated to the interval $[0,T]$ (see Section \ref{classical}.)  Since $M$ is compact, it follows that $\phi_p\in \mathcal{M}_\B^1(\R, M)$, and so the pushforward operator ${\phi_p}_*$ is defined and continuous on $\hB_1^1(\R)$.  Therefore, we are able to define the differential chain
\[
J_T:=\frac{1}{T}{\phi_p}_*\widetilde{[0,T]}.
\]
Our goal is to show $\lim_{T\rightarrow\infty}J_T$ is a differential chain.  By Theorem \ref{equivnorm}, we know that
\[
\|J_t-J_s\|_{B^1}=\sup_{\o\in \B_1^1(M), \|\o\|_{B^1}=1}\left|\cint_{J_t-J_s}\o\right|.
\]
By weak-* compactness of the unit ball in $\B_1^1(M)$, and by weak-* continuity of $\cint$, it follows that the supremum is achieved by some form $\o_{t,s}\in \B_1^1(M)$ with $\|\o_{t,s}\|_{B^1}=1$, depending on $s$ and $t$.   So, we may write
\begin{align*}
\|J_t-J_s\|_{B^1}&=\left|\cint_{J_t}\o_{t,s}-\cint_{J_s}\o_{t,s}\right|\\
&=\left|\frac{1}{t}\cint_{\widetilde{[0,t]}}{\phi_p}^*(\o_{t,s})-\frac{1}{s}\cint_{\widetilde{[0,s]}}{\phi_p}^*(\o_{t,s})\right|\\
&=\left|\frac{1}{t}\int_0^t{\phi_p}^*(\o_{t,s})-\frac{1}{s}\int_0^t{\phi_p}^*(\o_{t,s})\right|.
\end{align*}
The form ${\phi_p}^*(\o_{t,s})$ applied to $(t;\l)\in T\R$ gives
\[
{\phi_p}^*(\o_{t,s})(t;\l)=\l\o_{t,s}({\phi_p}(t);X({\phi_p}(t))).
\]
On the other hand, the form ${\phi_p}^*(\iota_X\o_{t,s})dt$ applied to $(t;\l)$ gives
\[
{\phi_p}^*(\iota_X\o_{t,s})dt(t;\l)=\l(\iota_X\o_{t,s})({\phi_p}(t))=\l \o_{t,s}({\phi_p}(t);X({\phi_p}(t))).
\]
Thus, ${\phi_p}^*(\o_{t,s})={\phi_p}^*(\iota_X\o_{t,s})dt$, and we have
\[
\|J_t-J_s\|_{B^1}=\left|\frac{1}{t}\int_0^t{\phi_p}^*(\iota_X\o_{t,s})dt-\frac{1}{s}\int_0^s{\phi_p}^*(\iota_X\o_{t,s})dt\right|.
\]
Let $\Phi_t(f):=\frac{1}{t}\int_0^t {\phi_p}^*(f)dt$.  Likewise, let $\Phi(f):=\lim_{t\rightarrow\i}\Phi_t(f)$.  If $f_i\rightarrow f$ uniformly, it follows that ${\phi_p}^*f_i\rightarrow {\phi_p}^* f$ uniformly, which implies that $\Phi_t$ is a continuous linear operator on $\mathcal{C}(M,\R)$ where $\mathcal{C}(M,\R)$ is the space of continuous functions on $M$, and given the topology of uniform convergence.  Since $|\Phi_t(f)|\leq \sup\{|f(q)| : q\in M\}$, we likewise know that $\Phi(f)$ is a continuous linear operator on $\mathcal{C}(M,\R)$.  Thus, $\Phi_t\rightarrow \Phi$ in the weak-* topology on $(\mathcal{C}(M,\R))'$.  

Suppose the functions $\iota_X\o_{t,s}$ belong to a compact subset $\mathcal{F}\subset\mathcal{C}(M,\R)$.  Since the family $\{\Phi_t\}_t$ is equicontinuous, given $\e>0$ there exists a finite collection of functions $\{f_i\}_{i\in I}\subset\mathcal{F}$ such that if $g\in \mathcal{F}$, then there is some $i\in I$, such that the inequality $|\Phi_t(f_i)-\Phi_t(g)|<\e$ is satisfied for all $t$.  Furthermore, since $\Phi_t$ a priori converges uniformly to $\Phi$ on finite subsets, it follows that there exists $N>0$, independent of $g$, such that if $s,t>N$, then
\begin{align*}
\left|\Phi_t(g)-\Phi_s(g)\right|\leq \left|\Phi_t(g)-\Phi_t(f_i)\right|+\left|\Phi_t(f_i)-\Phi_s(f_i)\right|+\left|\Phi_s(f_i)-\Phi_s(g)\right|\leq 3\e.
\end{align*}
This implies in particular that 
\[
\|J_t-J_s\|_{B^1}=\left|\Phi_t(\iota_X\o_{t,s})-\Phi_s(\iota_X\o_{t,s})\right|<3\e.
\] 
By the Arzel\`a-Ascoli theorem, if $\mathcal{F}=\{f\in \mathcal{C}(M,\R) : \lip(f)<C, \sup\{f(q): q\in M\}<C \}$ for some constant $C>0$, then $\mathcal{F}$ is compact.  So, to show $\{J_t\}_t$ is Cauchy, it suffices to show:
\begin{lem}\label{fancypower}
	There exists some constant $C>0$ such that if $\o\in \B_1^1(M)$, $\|\o\|_{B^1}=1$, then $\lip(\iota_X\o)<C$ and $\iota_X\o(q)<C$ for all $q\in M$.
\end{lem}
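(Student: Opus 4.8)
The plan is to bound the two quantities directly, using that $M$ is compact, so that $X$ has finite sup-norm $\|X\|_\i:=\sup_{q\in M}\|X(q)\|<\i$, that $X$ has a finite Lipschitz constant, and that $M$ has positive injectivity radius $\delta_0>0$.

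\emph{The pointwise bound.} For $q\in M$ we have $\iota_X\o(q)=\o_q(X(q))$, hence $|\iota_X\o(q)|\le\|\o_q\|\,\|X(q)\|\le\|\o\|_{B^1}\|X\|_\i$, since $\|\o_q\|\le|\o|_{B^0}\le\|\o\|_{B^1}$. This settles the second inequality as soon as $C\ge\|X\|_\i$.

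\emph{The Lipschitz bound for nearby points.} Fix $q\in M$ and $u\in T_qM$ with $\|u\|<\delta_0$, put $q'=\exp_q u$, so $d(q,q')=\|u\|$, and let $X(q)_u\in T_{q'}M$ be the parallel transport of $X(q)$ along $\gamma_u$, in the notation used to define the $r$-norm on $\P_k(M)$. Decompose
\[
\iota_X\o(q')-\iota_X\o(q)=\o_{q'}\big(X(q')-X(q)_u\big)+\big(\o_{q'}(X(q)_u)-\o_q(X(q))\big).
\]
The second summand equals $\o\big(\D_u(q;X(q))\big)$, and since $|\D_u(q;X(q))|_1=\|u\|\,\|X(q)\|\le\|X\|_\i\|u\|$, the definition of the norm $\|\cdot\|_{B^1}$ on $\B_k^1(M)$ bounds its absolute value by $\|\o\|_{B^1}\|X\|_\i\|u\|$. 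The first summand has absolute value at most $\|\o\|_{B^1}\,\|X(q')-X(q)_u\|$, and here one uses that a Lipschitz vector field on a compact Riemannian manifold has a finite \emph{parallel-transport} Lipschitz constant: there is $L_X<\i$, depending only on $X$ and on the geometry of $M$, with $\|X(q')-X(q)_u\|\le L_X\,d(q,q')$. Thus $|\iota_X\o(q')-\iota_X\o(q)|\le\|\o\|_{B^1}\,(L_X+\|X\|_\i)\,d(q,q')$ whenever $d(q,q')<\delta_0$.

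\emph{The Lipschitz bound for distant points, and conclusion.} If $d(q,q')\ge\delta_0$ the pointwise bound gives $|\iota_X\o(q')-\iota_X\o(q)|\le 2\|\o\|_{B^1}\|X\|_\i\le(2\|X\|_\i/\delta_0)\,\|\o\|_{B^1}\,d(q,q')$. Combining the two regimes, $\lip(\iota_X\o)\le\|\o\|_{B^1}\max\{L_X+\|X\|_\i,\ 2\|X\|_\i/\delta_0\}$. Setting $C:=1+\|X\|_\i+\max\{L_X+\|X\|_\i,\ 2\|X\|_\i/\delta_0\}$ and recalling $\|\o\|_{B^1}=1$, both asserted strict inequalities follow. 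The one nonformal ingredient is the finiteness of $L_X$, which I expect to be the main obstacle; it follows from compactness (uniform bounds on the metric, its Christoffel symbols and curvature control how parallel transport along a short geodesic deviates from the identity, and this combines with the Lipschitz condition on the coordinate components of $X$), or one may instead reduce to a finite atlas of bounded normal charts and argue coordinatewise, using Theorem \ref{topequiv2} to bound the sup- and Lipschitz norms of the components of $\o$ by $\|\o\|_{B^1}$ and then the product rule for Lipschitz functions on $\iota_X\o=\sum_i (\o)_i\,X^i$.
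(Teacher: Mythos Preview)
Your argument is correct, but it takes a different route from the paper's. The paper's proof is a two-line appeal to the machinery already in place: since $\iota_X$ is the dual operator to extrusion $E_X$, and $E_X:\hB_0^1(M)\to\hB_1^1(M)$ is continuous (for $X$ Lipschitz, i.e.\ $X\in V_\B^1(M)$), the transpose $\iota_X:\B_1^1(M)\to\B_0^1(M)$ is bounded. Hence $\sup_{\|\o\|_{B^1}=1}\|\iota_X\o\|_{B^1}<\i$, and since $\|\cdot\|_{B^1}$ controls both the sup-norm and the Lipschitz constant (Theorem~\ref{topequiv2}), setting $C$ equal to this supremum finishes it.

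Your direct computation instead unpacks what that operator bound means at the level of points and difference quotients. The decomposition $\iota_X\o(q')-\iota_X\o(q)=\o_{q'}(X(q')-X(q)_u)+\o(\D_u(q;X(q)))$ is exactly right and mirrors the product-rule structure hidden inside the continuity proof of $E_X$. The cost of your approach is that you must invoke the parallel-transport Lipschitz constant $L_X$ explicitly and handle the injectivity-radius cutoff by hand, whereas the paper's argument absorbs all of this into the single statement ``$E_X$ is continuous in the $1$-norm on $M$,'' which was established once and for all via charts and Lemma~\ref{naturality}. Your version is more self-contained and makes the geometric content visible; the paper's is shorter and illustrates why one builds the operator calculus in the first place.
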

\begin{proof}
	Since $\iota_X$ is the dual operator to $E_X$, which is continuous on $\hB_1^1(M)$, we know that $\iota_X$ is continuous on $\B_1^1(M)$.  Hence, 
	\[
	\sup_{\o\in \B_1^1(M), \|\o\|_{B^1}=1} \|\iota_X \o\|_{B^1}<\i.
	\]
	By definition of $\|\cdot\|_{B^1}$, it follows that setting $C=\sup_{\o\in \B_1^1(M), \|\o\|_{B^1}=1} \|\iota_X \o\|_{B^1}$ suffices.
\end{proof}

Thus, we have proved the following:

\begin{thm}
	Let $M$ be a compact Riemannian manifold and let $X$ be a Lipschitz vector field on $M$.  If $p\in M$ is quasi-regular, then $J_p:=\lim_{T\rightarrow \i}J_T\in \hB_1^1(M)$. 
\end{thm}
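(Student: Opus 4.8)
The plan is to show that the family $\{J_T\}_{T>0}$ is Cauchy as $T\to\infty$ in the Banach space $\hB_1^1(M)$, so that completeness yields the limit $J_p$. First I would apply Theorem \ref{equivnorm} to write $\|J_t-J_s\|_{B^1}$ as the supremum of $\left|\cint_{J_t-J_s}\o\right|$ over $\o\in\B_1^1(M)$ with $\|\o\|_{B^1}\le 1$, and then use weak-* compactness of that unit ball (Banach--Alaoglu, since $\B_1^1(M)=(\hB_1^1(M))'$) together with weak-* continuity of the Harrison integral to obtain a maximizing form $\o_{t,s}$, $\|\o_{t,s}\|_{B^1}=1$. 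Unwinding $J_T=\tfrac{1}{T}(\phi_p)_*\widetilde{[0,T]}$ through the pushforward--pullback adjunction (available because $\phi_p\in\mathcal{M}_\B^1(\R,M)$), and using the pointwise identity $\phi_p^*(\o)=\phi_p^*(\iota_X\o)\,dt$ coming from $\tfrac{d}{dt}\phi_p(t)=X(\phi_p(t))$, the quantity $\|J_t-J_s\|_{B^1}$ becomes $\bigl|\Phi_t(\iota_X\o_{t,s})-\Phi_s(\iota_X\o_{t,s})\bigr|$, where $\Phi_t(f):=\tfrac1t\int_0^t\phi_p^*(f)\,dt$ is the Birkhoff time average along the orbit of $p$.

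Next I would record the elementary properties of the $\Phi_t$: each is a bounded linear functional on $\mathcal{C}(M,\R)$ with $|\Phi_t(f)|\le\sup_M|f|$, the family $\{\Phi_t\}_t$ is equicontinuous, and by quasi-regularity of $p$ the pointwise limit $\Phi(f):=\lim_{t\to\infty}\Phi_t(f)$ exists for every continuous $f$ and is again a bounded linear functional; hence $\Phi_t\to\Phi$ weak-* in $\mathcal{C}(M,\R)'$, so in particular $\Phi_t\to\Phi$ uniformly on every finite subset of $\mathcal{C}(M,\R)$. The obstacle is that the maximizer $\o_{t,s}$, and therefore the integrand $\iota_X\o_{t,s}$, varies with $t$ and $s$, so pointwise (or finite-set-uniform) convergence of $\Phi_t$ does not by itself control $\Phi_t(\iota_X\o_{t,s})-\Phi_s(\iota_X\o_{t,s})$.

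The step that removes this obstacle is to confine all the integrands $\iota_X\o_{t,s}$ to a single fixed compact subset of $\mathcal{C}(M,\R)$, which is precisely Lemma \ref{fancypower}: since $E_X$ is continuous on $\hB_1^1(M)$, its dual $\iota_X$ is continuous on $\B_1^1(M)$, so there is a constant $C>0$ with $\|\iota_X\o\|_{B^1}\le C$ whenever $\|\o\|_{B^1}\le 1$; by Theorem \ref{topequiv2} this bounds both $\sup_M|\iota_X\o|$ and $\lip(\iota_X\o)$ by $C$, and Arzel\`a--Ascoli makes $\mathcal{F}:=\{f\in\mathcal{C}(M,\R):\sup_M|f|\le C,\ \lip(f)\le C\}$ compact. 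Given $\e>0$, equicontinuity lets me cover $\mathcal{F}$ by finitely many balls centered at functions $f_i$, and finite-set-uniform convergence of $\Phi_t$ gives an $N$ (independent of which integrand occurs) such that for $s,t>N$ and every $g\in\mathcal{F}$,
\[
|\Phi_t(g)-\Phi_s(g)|\le|\Phi_t(g)-\Phi_t(f_i)|+|\Phi_t(f_i)-\Phi_s(f_i)|+|\Phi_s(f_i)-\Phi_s(g)|<3\e.
\]
Taking $g=\iota_X\o_{t,s}$ yields $\|J_t-J_s\|_{B^1}<3\e$ for $s,t>N$, so $\{J_T\}_T$ is Cauchy and $J_p=\lim_{T\to\infty}J_T$ exists in $\hB_1^1(M)$. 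The only delicate point is the reduction to a compact family of integrands, i.e. the passage through Lemma \ref{fancypower}; everything else is bookkeeping with the adjunctions and the standard equicontinuity/Arzel\`a--Ascoli $3\e$ argument.
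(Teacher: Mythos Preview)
Your proposal is correct and follows the paper's argument essentially step for step: the reduction via Theorem \ref{equivnorm} to a maximizing form $\o_{t,s}$, the rewriting as $|\Phi_t(\iota_X\o_{t,s})-\Phi_s(\iota_X\o_{t,s})|$, and the Arzel\`a--Ascoli/equicontinuity $3\e$ argument hinging on Lemma \ref{fancypower} are exactly what the paper does. There is no substantive difference in approach.
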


Since $\p$ commutes with ${\phi_p}_*$ (just as exterior derivative commutes with pullback), it follows that 
\[
\p J_p=\p \lim_{n\rightarrow\i}{\phi_p}_*\frac{1}{t}\widetilde{[0,t]}=\lim_{n\rightarrow\i}{\phi_p}_*\frac{1}{t}\p \widetilde{[0,t]}=\lim_{t\rightarrow\i}({\phi_p}(t);1/t)-({\phi_p}(0);1/t).
\]
Since $M$ is compact, we know that $d({\phi_p}(t),{\phi_p}(0))<R$, where $R<\i$.  Therefore, $\|({\phi_p}(t);1/t)-({\phi_p}(0);1/t)\|_{B^1}\leq R/t\rightarrow 0$.  Thus, $\p J_p=0$, and we call $J_p$ the \textbf{asymptotic differential cycle associated to $p$}.

Given a Radon measure $\mu$ on $M$ and a $1$-form $\o$ on $M$, we may form the integral
\begin{align}\label{current}
\int_M \iota_X\o d\mu.
\end{align}
We would like to find a differential chain $\xi_{X,\mu}\in \hB_1^1(M)$ such that  
\begin{align}\label{whatwewant2}
\cint_{\xi_{X,\mu}}\o=\int_M \iota_x\o d\mu
\end{align}
for all $\o\in \B_1^1(M)$.  Since convex convex combinations of Dirac measures are weak-$*$ dense in the set of probability measures, by scaling, we may find a sequence of measures $\mu_i$ converging weak-$*$ to $\mu$ with
\[
\mu_i=\sum_{k_i=1}^{n_i}c_{k_i}\d_{p_{k_i}},
\]
where $\sum_{k_i=1}^{n_i}c_{k_i}=\mu(M)$ and $\d_{p_{k_i}}$ is the Dirac measure at the point $p_{k_i}$.  Set $K_i=\sum_{k_i=1}^{n_i}(p_{k_i}; c_{k_i}X(p_{k_i}))$.  Then
\[
\o(K_i)=\cint_{K_i}\o=\int_M \iota_X\o d\mu_i\rightarrow \int_M \iota_X \o d\mu,
\]
for all $\o\in \B_1^1(M)$.  So, $\{K_i\}_i$ is weakly Cauchy.  We show that in fact $\{K_i\}_i$ is strongly Cauchy.  Since the set $\{\mu_i\}_i$ is equicontinuous and weak-$*$ convergent to $\mu$, where the measures are considered as elements of $(\mathcal{C}(M,\R), \|\cdot\|_{sup})'$, it follows that $\mu_i\rightarrow \mu$ uniformly on compact subsets of $\mathcal{C}(M,\R)$.  By Lemma \ref{fancypower}, it follows that given $\e>0$ there exists $N$ such that if $s,t>N$ then
\[
\|K_t-K_s\|_{B^1}=\left| \cint_{J_t}\o_{t,s}-\cint_{J_s}\o_{t,s}\right|=\left|\mu_t(\iota_X\o_{t,s})-\mu_s(\iota_X\o_{t,s})\right|<\e.  
\]
Therefore, $\{K_i\}_i$ is in fact strongly Cauchy, and therefore convergent to some element in $\hB_1^1(M)$.  Call this element $\xi_{X,\mu}$.  It follows from weak convergence that $\xi_{X,\mu}$ satisfies (\ref{whatwewant2}).  So, we have proved:
\begin{thm}
	Let $M$ be a compact Riemannian manifold and let $X$ be a Lipschitz vector field on $M$.  If $\mu$ is a Radon measure on $M$, then there exists a chain $\xi_{X,\mu}\in \hB_1^1(M)$ such that
	\[
	\cint_{\xi_{X,\mu}}\o=\int_M \iota_X\o d\mu
	\]
	for all $\o\in \B_1^1(M)$.
\end{thm}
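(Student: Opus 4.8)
The plan is to realize $\xi_{X,\mu}$ as the limit in $\hB_1^1(M)$ of a sequence of pointed chains built from discrete approximations to $\mu$, mirroring the construction of the asymptotic cycle $J_p$ above. First I would invoke the density of finitely supported measures in the space of Radon measures under the weak-$*$ topology (the Banach--Alaoglu / Krein--Milman circle of ideas already cited) to choose measures $\mu_i=\sum_{k_i}c_{k_i}\d_{p_{k_i}}$, each of total mass $\mu(M)$, with $\mu_i\to\mu$ weak-$*$ on $\mathcal{C}(M,\R)$. To each $\mu_i$ I would associate the pointed $1$-chain $K_i=\sum_{k_i}(p_{k_i};c_{k_i}X(p_{k_i}))\in\P_1(M)$, which is well defined since $X$ is Lipschitz, hence continuous. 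A direct computation on pointed chains gives $\cint_{K_i}\o=\sum_{k_i}c_{k_i}(\iota_X\o)(p_{k_i})=\int_M\iota_X\o\,d\mu_i$, and since $\iota_X\o\in\mathcal{C}(M,\R)$ whenever $\o\in\B_1^1(M)$, this converges to $\int_M\iota_X\o\,d\mu$ for every such $\o$. Thus $\{K_i\}$ is weakly Cauchy, and any limit it might have in $\hB_1^1(M)$ automatically satisfies (\ref{whatwewant2}).

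The substance of the argument is to upgrade ``weakly Cauchy'' to ``Cauchy in the $1$-norm,'' so that completeness of the Banach space $\hB_1^1(M)$ actually produces the limit $\xi_{X,\mu}$. By Theorem~\ref{equivnorm}, $\|K_t-K_s\|_{B^1}=\sup\{|\cint_{K_t-K_s}\o|:\o\in\B_1^1(M),\ \|\o\|_{B^1}=1\}$; since the unit ball of the dual Banach space $\B_1^1(M)$ is weak-$*$ compact and $\cint$ is weak-$*$ continuous in the form variable, the supremum is attained by some $\o_{t,s}$, so $\|K_t-K_s\|_{B^1}=|\mu_t(\iota_X\o_{t,s})-\mu_s(\iota_X\o_{t,s})|$. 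Now I would apply Lemma~\ref{fancypower}: there is a fixed $C>0$ with $\lip(\iota_X\o_{t,s})<C$ and $\sup_M|\iota_X\o_{t,s}|<C$ for all $s,t$, so the functions $\iota_X\o_{t,s}$ all lie in the set $\mathcal{F}=\{f\in\mathcal{C}(M,\R):\lip(f)<C,\ \sup_M|f|<C\}$, which is compact by Arzel\`a--Ascoli. Because $\{\mu_i\}$ is an equicontinuous family of functionals on $\mathcal{C}(M,\R)$ converging weak-$*$ to $\mu$, the convergence is uniform on the compact set $\mathcal{F}$; hence given $\e>0$ there is $N$ with $|\mu_t(g)-\mu_s(g)|<\e$ for all $g\in\mathcal{F}$ and all $s,t>N$, whence $\|K_t-K_s\|_{B^1}<\e$. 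So $\{K_i\}$ is Cauchy in $\hB_1^1(M)$.

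To finish, I would set $\xi_{X,\mu}:=\lim_i K_i\in\hB_1^1(M)$, which exists by completeness, and then use continuity of the Harrison integral in its chain variable to get $\cint_{\xi_{X,\mu}}\o=\lim_i\cint_{K_i}\o=\int_M\iota_X\o\,d\mu$ for all $\o\in\B_1^1(M)$, as required. The main obstacle is the step controlling $\|K_t-K_s\|_{B^1}$ uniformly: the norm-maximizing test form $\o_{t,s}$ depends on $s$ and $t$, and only the observation---via Lemma~\ref{fancypower} together with Arzel\`a--Ascoli---that \emph{its interior product with $X$} is confined to a single compact family of continuous functions lets one convert the merely pointwise weak-$*$ convergence $\mu_i\to\mu$ into the uniformity needed. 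Everything else---the pointed-chain computation, the choice of the $\mu_i$, and the final passage to the limit---is routine given the machinery of Section~\ref{operators} and Theorems~\ref{junk} and \ref{equivnorm}.
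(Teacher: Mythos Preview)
Your proposal is correct and follows essentially the same approach as the paper: approximate $\mu$ by finitely supported measures $\mu_i$, form the associated pointed chains $K_i$, and upgrade weak convergence to norm convergence by invoking Lemma~\ref{fancypower} together with Arzel\`a--Ascoli and the equicontinuity of the $\mu_i$ on $\mathcal{C}(M,\R)$. The structure, the key lemma, and the concluding passage to the limit are all the same.
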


\begin{thm}
	The measure $\mu$ is invariant under the flow of $X$ if and only if $\p\xi_{X,\mu}=0$.
\end{thm}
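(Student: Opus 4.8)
The plan is to translate the vanishing of $\partial\xi_{X,\mu}$ into an infinitesimal statement about $\mu$ using the duality between $\partial$ and exterior derivative, and then to identify that statement with the classical characterisation of flow-invariance. First I would note that $\xi_{X,\mu}\in\hat{\mathcal{B}}_1^1(M)$, so $\partial\xi_{X,\mu}\in\hat{\mathcal{B}}_0^2(M)$, a Banach space whose dual is $\mathcal{B}_0^2(M)$; hence $\partial\xi_{X,\mu}=0$ if and only if $\cint_{\partial\xi_{X,\mu}}f=0$ for every $f\in\mathcal{B}_0^2(M)$. For such $f$ we have $df\in\mathcal{B}_1^1(M)$, so Lemma \ref{stokes} together with the defining property of $\xi_{X,\mu}$ gives
\[
\cint_{\partial\xi_{X,\mu}}f=\cint_{\xi_{X,\mu}}df=\int_M \iota_X(df)\,d\mu=\int_M \mathcal{L}_X f\,d\mu,
\]
where $\mathcal{L}_X f=df(X)$ is the directional derivative of $f$ along $X$, abbreviated $Xf$ below. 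Since $M$ is compact, $C^\infty(M)\subseteq\mathcal{B}_0^2(M)\subseteq C^1(M)$, and $f\mapsto\int_M(Xf)\,d\mu$ is continuous in the $C^1$-norm (as $\mu$ is finite and $X$ bounded) with $C^\infty(M)$ dense in $C^1(M)$; thus $\partial\xi_{X,\mu}=0$ is equivalent to the condition
\[
(\ast)\qquad \int_M (Xf)\,d\mu=0\quad\text{for all }f\in C^1(M).
\]
This first paragraph is where all the differential-chains input is used; the rest is a classical equivalence.

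For the easy implication: if $\mu$ is $\phi_t$-invariant then $\int_M f\circ\phi_t\,d\mu=\int_M f\,d\mu$ for all $t$, and differentiating at $t=0$ under the integral sign — justified by dominated convergence, since $\bigl|\tfrac1t(f\circ\phi_t-f)\bigr|\le\lip(f)\,\|X\|_\infty$ uniformly and $\mu$ is finite — yields exactly $(\ast)$.

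The converse, $(\ast)\Rightarrow$ invariance, is the step I expect to be the main obstacle, because $X$ is merely Lipschitz, so the flow maps $\phi_t$ need not be $C^1$ and $f\circ\phi_t$ cannot simply be fed back into $(\ast)$. To get around this I would mollify $X$ to obtain smooth vector fields $X_n\to X$ uniformly on $M$ with $L:=\sup_n\lip(X_n)<\infty$, and let $\phi^n_t$ denote their (now smooth) flows; by Gronwall's inequality, $\phi^n_t\to\phi_t$ uniformly on $M\times[-T,T]$ and $\|(d\phi^n_t)_p\|\le e^{L|t|}$. Fix $f\in C^1(M)$ and set $g_n(t):=\int_M f\circ\phi^n_t\,d\mu$. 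Since $\phi^n$ is a smooth flow, $g_n\in C^1$ with $g_n'(s)=\int_M X_n(f\circ\phi^n_s)\,d\mu$; and as $f\circ\phi^n_s\in C^1(M)$, applying $(\ast)$ to it gives $\int_M X(f\circ\phi^n_s)\,d\mu=0$, so that
\[
|g_n'(s)|=\Bigl|\int_M (X_n-X)(f\circ\phi^n_s)\,d\mu\Bigr|\le\|X_n-X\|_\infty\,\|df\|_\infty\,e^{L|s|}\,\mu(M)\longrightarrow 0
\]
uniformly for $s\in[-T,T]$. Hence $g_n(t)-g_n(0)=\int_0^t g_n'(s)\,ds\to 0$; letting $n\to\infty$ and using $g_n(t)\to\int_M f\circ\phi_t\,d\mu$ gives $\int_M f\circ\phi_t\,d\mu=\int_M f\,d\mu$ for all $t$ and all $f\in C^1(M)$, hence for all $f\in C(M)$ by density, i.e.\ $\mu$ is invariant. (Alternatively, the converse can be phrased through the Koopman semigroup: $(\ast)$ says $\mu$ annihilates the range of the generator $\mathcal{L}_X$ on the core $C^\infty(M)$, so $\mu$ is a fixed point of the adjoint semigroup $(\phi_t)_*$; I would present the mollification argument instead, since it is more elementary and self-contained.)
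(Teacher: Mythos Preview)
Your proposal is correct and follows the same route as the paper: both use Stokes duality and the defining property of $\xi_{X,\mu}$ to reduce $\partial\xi_{X,\mu}=0$ to the condition $\int_M(Xf)\,d\mu=0$ for $f\in\mathcal{B}_0^2(M)$, and both identify $\int_M(Xf)\,d\mu$ with $\lim_{t\to 0}\tfrac{1}{t}\bigl(\int_M f\circ\phi_t\,d\mu-\int_M f\,d\mu\bigr)$. The paper's proof stops there, dispatching the equivalence with invariance in a single sentence via density of $\mathcal{B}_0^2(M)$ in $\mathcal{C}(M,\R)$; your mollification argument for the direction $(\ast)\Rightarrow$ invariance is a genuine addition of rigor, correctly addressing the issue that for merely Lipschitz $X$ one cannot simply plug $f\circ\phi_s$ back into $(\ast)$, a point the paper elides.
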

\begin{proof}
	For all $f\in \B_0^2(M)$, we have
	\[
	\cint_{\p\xi_{X,\mu}}f=\cint_{\xi_{X,\mu}}df=\int_M X(f)d\mu=\lim_{t\rightarrow 0}\frac{1}{t}\left[\int_M f(\phi_x(t))d\mu-\int_M f(x)d\mu\right].
	\]
	Since $\B_0^2(M)$ is dense in $\mathcal{C}(M,\R)$, this value is $0$ if and only if $\mu$ is invariant under the flow.
\end{proof}

This gives a general notion of ``divergence free,'' since the Lebesgue measure $\mu_L$ is invariant under the flow of $X$ if and only if $X$ is divergence free if and only if $\p\xi_{X,\mu_L}=0$.

Suppose now that $\mu$ is ergodic with respect to the flow of $X$.  Then for almost all $p\in M$,
\[
\lim_{T\rightarrow\i}\frac{1}{T}\int_0^T \phi_p^* f dt=\frac{1}{\mu(M)}\int_M fd\mu
\]
for all $f\in \mathcal{C}(M,\R)$.  In particular, this is true for all $\iota_x\o$, and hence we have the following theorem:
\begin{thm}
	If $\mu$ is ergodic with respect to the flow of $X$, then for almost all $p\in M$,
	\[
	J_p=\frac{1}{\mu(M)}\xi_{X,\mu}.
	\]
\end{thm}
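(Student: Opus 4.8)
The plan is to prove the identity at the level of the Harrison integral and then invoke uniqueness. Concretely, by Theorem \ref{equivnorm} (equivalently, by Hahn--Banach together with $\B_1^1(M)\simeq(\hB_1^1(M))'$), two elements of $\hB_1^1(M)$ coincide as soon as they agree as functionals on $\B_1^1(M)$. So it is enough to produce a single $\mu$-conull set $G\subseteq M$ such that
\[
\cint_{J_p}\o=\frac{1}{\mu(M)}\int_M \iota_X\o\,d\mu
\]
for every $p\in G$ and every $\o\in\B_1^1(M)$; the right-hand side is $\cint_{\frac{1}{\mu(M)}\xi_{X,\mu}}\o$ by the defining property of $\xi_{X,\mu}$.

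First I would unwind the left-hand side. Since $J_p=\lim_{T\to\i}J_T$ in $\hB_1^1(M)$ and the Harrison integral is continuous in its domain argument, $\cint_{J_p}\o=\lim_{T\to\i}\cint_{J_T}\o$. Writing $J_T=\frac1T{\phi_p}_*\widetilde{[0,T]}$ and using pushforward--pullback duality together with the fact that integration over $\widetilde{[0,T]}$ reproduces the ordinary Riemann integral over $[0,T]$, and the identity ${\phi_p}^*\o={\phi_p}^*(\iota_X\o)\,dt$ already established above, I obtain
\[
\cint_{J_T}\o=\frac1T\int_0^T {\phi_p}^*(\iota_X\o)\,dt=\Phi_T(\iota_X\o).
\]
By Lemma \ref{fancypower} (and scaling), $\iota_X\o$ is a bounded Lipschitz function on $M$, hence $\iota_X\o\in\mathcal C(M,\R)$, so $\Phi_T(\iota_X\o)$ makes sense and the preceding chain of equalities is legitimate.

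Now I would apply the Birkhoff ergodic theorem in the strong form recorded in the excerpt: since $\mu$ is ergodic there is a single $\mu$-conull $G$ with $\lim_{T\to\i}\Phi_T(f)=\frac{1}{\mu(M)}\int_M f\,d\mu$ for all $p\in G$ and all $f\in\mathcal C(M,\R)$. (The upgrade from ``for each $f$, a.e.\ $p$'' to ``a.e.\ $p$, for all $f$'' rests on separability of $\mathcal C(M,\R)$ and the uniform estimate $|\Phi_T(f)-\Phi_T(g)|\le\|f-g\|_{\sup}$, exactly the device used for the Cauchy argument preceding the definition of $J_p$.) Taking $f=\iota_X\o$ yields $\cint_{J_p}\o=\frac{1}{\mu(M)}\int_M\iota_X\o\,d\mu$ for all $p\in G$ and all $\o\in\B_1^1(M)$, and the theorem follows. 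The one point needing care is that the exceptional null set be chosen independently of $\o$; since $\{\iota_X\o:\o\in\B_1^1(M)\}\subseteq\mathcal C(M,\R)$, this is handled verbatim by the strong ergodic theorem above, so no genuine obstacle remains --- the proof is an assembly of the pushforward/pullback formalism, continuity of the Harrison integral, and Birkhoff's theorem.
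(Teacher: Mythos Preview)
Your proposal is correct and follows the same route as the paper: apply the Birkhoff ergodic theorem to the continuous functions $f=\iota_X\o$, use the identity $\cint_{J_T}\o=\Phi_T(\iota_X\o)$ already established in the text, and separate chains via the dual pairing. If anything you are more careful than the paper, which simply asserts the strong ergodic theorem (one conull set working for all $f\in\mathcal C(M,\R)$) without the separability/equicontinuity justification you supply.
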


	From here, one can determine the \emph{differential homology} class of $J_p$, as well as $\xi_{X,\mu}$ when the flow of $X$ preserves $\mu$.  We hope to generalize to higher dimensions: given a collection $X_1,\dots, X_n$ of Lipschitz vector fields on $M$, we should be able to construct a corresponding differential chain that would behave similarly to a foliation cycle.  We also hope to construct an ``average differential chain.''  Schwartzman constructs an ``average asymptotic cycle,'' $A_\mu$, that is the average of the $A_p$ over $p\in M$ with respect to $\mu$.  It would be interesting to do the same thing here, as well as in the higher dimensional case.

\section{Appendix}
\subsection{Proof of Theorem \ref{norm}}\label{normproof}

We know at this point that $\|\cdot\|_{B^r}$ is a semi-norm on $\P_k$.  It remains to show positive-definiteness.  

Let $X\in \P_k^*$, where $\P_k^*$ is the algebraic dual of $\P_k$.  Let $|X|_j=\sup\{X(\D_U^j(p;\a)) : |\D_U^j(p;\a)|_j=1\}\in[0,\i]$.  By way of the definition of $|\D_U^j(p;\a)|_j$, it follows that the set over which we are taking the supremum is non-empty.  Thus, the value $|X|_j$ is well-defined. 
\begin{lem}\label{voodoo}
	The inequality $|X(A)|\leq \max\{|X|_0,\dots,|X|_r\}\cdot\|A\|_{B^r}$ holds for all $X\in \P_k^*$, all $r\geq 0$ such that $\max\{|X|_0,\dots,|X|_r\}<\i$ and all $A\in \P_k$.  
\end{lem}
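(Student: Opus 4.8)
The plan is to reduce the inequality directly to the defining infimum for $\|\cdot\|_{B^r}$ in Definition~\ref{normdef}, using nothing more than linearity of $X$, the triangle inequality, and the homogeneity built into the quantities $|X|_j$.

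The one preliminary observation I would record is a single-term estimate: for every difference chain $\D_U^j(p;\a)$ with $0\le j\le r$,
\[
|X(\D_U^j(p;\a))|\le |X|_j\cdot |\D_U^j(p;\a)|_j.
\]
If $|\D_U^j(p;\a)|_j\ne 0$, then writing $c=|\D_U^j(p;\a)|_j$ we have $\D_U^j(p;\a)=c\,\D_U^j(p;\a/c)$ with $|\D_U^j(p;\a/c)|_j=1$; since $X$ is linear, applying the definition of $|X|_j$ to $\D_U^j(p;\a/c)$ gives the bound. If $|\D_U^j(p;\a)|_j=0$, then either $\a=0$ or some $u_i=0$, and in either case $\D_U^j(p;\a)=0$ (as $\D_0=T_0-Id$ is the zero operator), so the estimate is trivial. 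This step is also where the finiteness hypothesis $\max\{|X|_0,\dots,|X|_r\}<\i$ enters, guaranteeing the right-hand sides are finite.

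With this in hand, fix $A\in\P_k$ and $\e>0$, and use Definition~\ref{normdef} to choose a representation $A=\sum_{i=1}^N\D_{U_i}^{j_i}(p_i;\a_i)$ with $0\le j_i\le r$ and $\sum_{i=1}^N|\D_{U_i}^{j_i}(p_i;\a_i)|_{j_i}<\|A\|_{B^r}+\e$. Then, using linearity of $X$, the triangle inequality, the single-term estimate, and $|X|_{j_i}\le\max\{|X|_0,\dots,|X|_r\}$ (valid since $0\le j_i\le r$),
\[
|X(A)|\le \sum_{i=1}^N |X(\D_{U_i}^{j_i}(p_i;\a_i))|\le \max_{0\le j\le r}|X|_j\sum_{i=1}^N|\D_{U_i}^{j_i}(p_i;\a_i)|_{j_i}<\max_{0\le j\le r}|X|_j\,(\|A\|_{B^r}+\e).
\]
Letting $\e\to 0$ yields the claim. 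The argument is essentially bookkeeping; the only point needing a moment's care is the single-term estimate, in particular the treatment of degenerate difference chains of zero size and the check that rescaling $\a$ realizes $\D_U^j(p;\lambda\a)=\lambda\,\D_U^j(p;\a)$ with $|\D_U^j(p;\lambda\a)|_j=|\lambda|\,|\D_U^j(p;\a)|_j$. No deeper obstacle is anticipated: the substantive content of Theorem~\ref{norm} lies in the subsequent construction, for a given nonzero $A$, of some $X\in\P_k^*$ with $X(A)\ne 0$ and $\max\{|X|_0,\dots,|X|_r\}<\i$, which this lemma then converts into positive-definiteness.
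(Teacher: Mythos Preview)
Your proof is correct and follows essentially the same approach as the paper's: choose a near-optimal representation of $A$ using Definition~\ref{normdef}, apply the single-term estimate $|X(\D_U^j(p;\a))|\le |X|_j\,|\D_U^j(p;\a)|_j$ term-by-term, bound each $|X|_{j_i}$ by the maximum, and let $\e\to 0$. If anything, you are more careful than the paper, which uses the single-term estimate without comment; your explicit justification via rescaling and your handling of the degenerate case $|\D_U^j(p;\a)|_j=0$ are welcome additions.
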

\begin{proof}
	Fix $A$ and $X$ and let $\e>0$.  Then we can write $A=\sum_{i=1}^N\D_{U_i}^{j_i}(p_i;\a_i)$ (in the sense of definition \ref{normdef}) such that
	\[
	\|A\|_{B^r}>\sum_{i=1}^N|\D_{U_i}^{j_i}(p_i;\a_i)|_{j_i}-\e.
	\]
	Then
	\begin{align}
		|X(A)|&\leq \sum_{i=1}^N |X(\D_{U_i}^{j_i}(p_i;\a_i))|\\
		&\leq \sum_{i=1}^N |X|_{j_i} |\D_{U_i}^{j_i}(p_i;\a_i)|_{j_i}\\
		&\leq \max\{|X|_0,\dots,|X|_r\}\sum_{i=1}^N |\D_{U_i}^{j_i}(p_i;\a_i)|_{j_i}\\
		&< \max\{|X|_0,\dots,|X|_r\}(\|A\|_{B^r}+\e).
	\end{align}
	Since this inequality holds for all $\e>0$, the result follows.
\end{proof}

Suppose $A\neq 0$ where $A\in\P_k$.  It suffices to find $X\in \P_k^*$ such that $X(A)\neq 0$ and $\max\{|X|_0,\dots,|X|_r\}<\i$, for this will imply by Lemma \ref{voodoo} that $\|A\|_{B^r}>0$.

Write $A=\sum_{i=1}^N(p_i;\a_i)$.  The support of $A$ is the set $\{p_1,\dots,p_N\}\subset \R^n$.  Without loss of generality, we may assume $A(p_1)=e_I$ for some multi-index $I$.  Choose a smooth function $\phi :\R^n \rightarrow \R$ with compact support such that $\phi(p_1)=1$, and $\phi(p_i)=0$, for $2\leq i\leq N$.  Define
\begin{align}
X(p; e_J)\begin{cases}\phi(p),&J=I,\\
				0,&J\neq I,
			\end{cases}
\end{align}
where $I$ and $J$ are multi-indices.  Extend to $\P_k$ by linearity.  It follows by construction that $X(A)=1$.  Since $\phi$ has compact support, we know for all $1\leq r <\i$ that 
\[
\sup |D^r \phi (p)|  <\i,
\] 
where the supremum is taken over all points $p\in \R^n$ and all $r$-th order directional derivatives $D^r$.  Call this value $|\phi|_r$.  

We aim to show that $|X|_j<\i$ for each $j\geq 0$.  Clearly this is true for $j=0$.  Thus, it suffices to show that \begin{align}\label{needtoshow}
	|X(\D_U^j(p;\a))|\leq \|u_1\|\dots\|u_j\||\phi|_j,
\end{align} for all $\|\a\|=1$ and $j\geq 1$.  We show something slightly more general, for we will need it in a lemma later on:

\begin{lem}\label{generalstuff}
	Suppose $f\in C^j(\R^n)$, $j\geq 1$.  Then $f(\D_U^j(p;1))\leq \|u_1\|\cdots\|u_j\||f|_j$ for all points $p$ and all $j$-length collections of vectors $U$.  Here, $f(\D_U^j(p;1))$ simply means apply the function $f$ at the vertices of the parallelepiped generated by $\D_U^j(p;1)$, taking orientation into account.
\end{lem}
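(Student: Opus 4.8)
The plan is to prove the inequality by induction on $j$, after interpreting $f(\D_U^j(p;1))$ as an iterated finite difference of $f$. We may assume $|f|_j<\i$, since otherwise the inequality is vacuous. Writing $\D_U^j(p;1)=\D_{u_1}\cdots \D_{u_j}(p;1)$ and expanding, the quantity $f(\D_U^j(p;1))$ is the alternating sum
\[
f(\D_U^j(p;1))=\sum_{S\subseteq\{1,\dots,j\}}(-1)^{j-|S|}f\left(p+\textstyle\sum_{i\in S}u_i\right)
\]
of the values of $f$ at the vertices of the parallelepiped spanned by $u_1,\dots,u_j$ based at $p$, which is precisely what it means to evaluate $f$ on $\D_U^j(p;1)$ ``with orientation taken into account.''

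For the base case $j=1$, the fundamental theorem of calculus gives $f(\D_{u_1}(p;1))=f(p+u_1)-f(p)=\int_0^1 Df_{p+tu_1}(u_1)\,dt$, and linearity of $Df_x$ yields $|Df_x(u_1)|\le \|u_1\|\,|f|_1$, so integrating over $[0,1]$ gives the claim. For the inductive step, put $U'=\{u_1,\dots,u_{j-1}\}$ and define $g(q):=f(\D_{U'}^{j-1}(q;1))$ and $h(q):=Df_q(u_j)$; since $f\in C^j$, the function $g$ is $C^1$ and $h$ is $C^{j-1}$. From the expansion above, $f(\D_U^j(p;1))=g(p+u_j)-g(p)$, so the fundamental theorem of calculus together with the identity $Dg_q(u_j)=h(\D_{U'}^{j-1}(q;1))$ yields
\[
f(\D_U^j(p;1))=\int_0^1 Dg_{p+tu_j}(u_j)\,dt=\int_0^1 h(\D_{U'}^{j-1}(p+tu_j;1))\,dt .
\]
Applying the inductive hypothesis to $h$ with the $(j-1)$-tuple $U'$ bounds the integrand in absolute value by $\|u_1\|\cdots\|u_{j-1}\|\,|h|_{j-1}$, and since $D^{j-1}h_q(v_1,\dots,v_{j-1})=D^jf_q(v_1,\dots,v_{j-1},u_j)$ (mixed partials commute because $f\in C^j$), multilinearity of $D^jf_q$ gives $|h|_{j-1}\le\|u_j\|\,|f|_j$. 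Combining the last display with these two bounds completes the induction.

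There is no serious obstacle here; the argument is essentially routine, and the only things to watch are the sign convention in the definition of $f(\D_U^j(p;1))$ and the fact that $|f|_j$ is measured along (unit) directional derivatives, so that the multilinear estimate $|D^jf_x(u_1,\dots,u_j)|\le\|u_1\|\cdots\|u_j\|\,|f|_j$ is exactly what is needed. An alternative to the induction is to iterate the fundamental theorem of calculus $j$ times in one step, obtaining
\[
f(\D_U^j(p;1))=\int_{[0,1]^j} D^jf_{p+t_1u_1+\cdots+t_ju_j}(u_1,\dots,u_j)\,dt_1\cdots dt_j ,
\]
which is valid precisely because $f\in C^j$, and then estimating the integrand directly.
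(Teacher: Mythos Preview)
Your proof is correct and follows essentially the same strategy as the paper: induction on $j$ with the fundamental theorem of calculus doing the work in both the base case and the inductive step. The only organizational difference is which auxiliary function receives the inductive hypothesis: you differentiate first and apply the hypothesis to $h(q)=Df_q(u_j)\in C^{j-1}$, whereas the paper takes a finite difference first and applies the hypothesis to $(T_{u_k}^*-Id)f$, closing with the mean value theorem to pass from $|(T_{u_k}^*-Id)f|_{k-1}$ to $\|u_k\||f|_k$. Your version avoids that final MVT step, and your closing remark about the iterated integral over $[0,1]^j$ is a clean equivalent formulation that the paper does not state explicitly.
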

\begin{proof}
Write $f(\D_U^j(p;1))=\sum (-1)^i f(q_i)$, where the $q_i$ are the alternating vertices (allowing repetition) of the (perhaps degenerate) parallelepiped determined by $U$ and $p$.  So, we need to show
\begin{align}\label{grrr}
\left|\sum (-1)^i f(q_i)\right|\leq \|u_1\|\cdots\|u_j\||f|_j.
\end{align}
This follows from the fundamental theorem of calculus and induction: suppose $j=1$ and $f\in C^1(\R^n)$.  Then $\D_{u_1}(p;\a)=(p+u_1;\a)-(p;\a)$,  and we set $q_2=p+u_1$, $q_1=p$.  So,
\[
\left|f(q_2)-f(q_1)\right|=\left|\int_{Q_1} \frac{\p}{\p v_1}f dv_1\right|,
\]
where $Q_1=\{p+tu_1: 0<t<1\}$ and $v_1=u_1/\|u_1\|$.  Since $\frac{\p}{\p v_1}f$ is bounded by $|f|_1$, we have
\[
\left|\int_{Q_1} \frac{\p}{\p v_1}f dv_1\right|\leq \left|\int_{Q_1} dv_1\right||f|_1=\|u_1\||f|_1.
\]

Now suppose (\ref{grrr}) holds for all functions $g\in C^j(\R^n)$ when $j<k$.  We show that if $f\in C^k(\R^n)$, then $f(\D_U^k(p;1))\leq \|u_1\|\cdots\|u_k\||f|_k$.  Recursively define $Q_h=(T_{u_h})_* Q_{h-1}-Q_{h-1}$.  That is, the $Q_h$ are formal sums of translates of the line segment $Q_1$.  By the fundamental theorem of calculus applied to a formal sum of intervals,
\begin{align*}
	\left|\sum (-1)^i f(q_i)\right|&=\left|\int_{Q_k} \frac{\p}{\p v_1}f dv_1\right|\\
	&=\left|\int_{(T_{u_k})_*Q_{k-1}}\frac{\p}{\p v_1}f dv_1-\int_{Q_{k-1}}\frac{\p}{\p v_1}f dv_1\right|\\
	&=\left|\int_{Q_{k-1}}T_{u_k}^*\frac{\p}{\p v_1}f dv_1-\int_{Q_{k-1}}\frac{\p}{\p v_1}f dv_1\right|\\
	&=\left|\int_{Q_{k-1}}(T_{u_k}^*-Id)\frac{\p}{\p v_1}f dv_1\right|\\
	&=\left|\int_{Q_{k-1}}\frac{\p}{\p v_1} (T_{u_k}^*-Id)f dv_1\right|,
\end{align*}  
where $I$ is the identity map.  Since $(T_{u_k}^*-Id)f\in C^{k-1}(\R^n)$, it follows by the inductive step that 
\[
\left|\int_{Q_{k-1}}\frac{\p}{\p v_1} (T_{u_k}^*-Id)f dv_1\right|\leq \|u_1\|\cdots\|u_{k-1}\||(T_{u_k}^*-Id)f|_{k-1}\leq \|u_1\|\cdots\|u_{k}\||f|_{k},
\]
where the last inequality is given by the mean value theorem.
\end{proof}

Since $\phi\in C^\i(\R^n)$, the inequality (\ref{needtoshow}) follows from Lemma \ref{generalstuff} and since $X(\D_U^j(p;\a))=\phi(\D_U^j(p;\a))$.

\subsection{Proof of Theorem \ref{junk}}\label{junkproof}
\begin{enumerate}
	\item For $r\geq 0$, the function $\|\cdot\|_{B^r}: \B_k^r\rightarrow \R_+$ is a norm, and turns $\B_k^r$ into a Banach space.
		\begin{proof}
			We prove only that $\|\cdot\|_{B^r}$ is a norm.  That $\B_k^r$ is complete will follow from \#\ref{topequiv1}.
			
			Let $\o\in \B_k^r$ and $a\in \R$.  Since
			\begin{align}
			|a\o|{B^j}&=\sup\{|a\o(\D_U^j(p;\a))| : |\D_U^j(p;\a)|_j=1\}\\
			&=|a|\sup\{|\o(\D_U^j(p;\a))| : |\D_U^j(p;\a)|_j=1\}=|a||\o|_{B^j},
			\end{align}
			it follows that
			\begin{align}
			\|a\o\|_{B^r}=\max\{|a\o|_{B^0},\dots,|a\o|_{B^r}\}=|a|\|\o\|_{B^r}.
			\end{align}
			Now, suppose $\o=0$.  Then clearly $\|\o\|_{B^r}=0$.  Likewise, if $\|\o\|_{B^r}=0$, then 
			\[
			\sup\{|\o(\D_U^j(p;\a))| : |\D_U^j(p;\a)|_j=1\}=0 
			\]
			for all $0\leq j\leq r$.  In particular,
			\[
			\sup \{|\o(p;\a) : |(p;\a)|_0=\|\a\|=1\}=0.
			\]
			It follows that $\o=0$.
			
			Finally, suppose $\o, \k\in \B_k^r$.  Then for each $0\leq j\leq r$, we have
			\begin{align}
			|\o+\k|_{B^j}&=\sup\{|(\o+\k)(\D_U^j(p;\a))| : |\D_U^j(p;\a)|_j=1\}\\
			&\leq \sup\{|\o(\D_U^j(p;\a))|+|\k(\D_U^j(p;\a))| : |\D_U^j(p;\a)|_j=1\}\\
			&\leq\sup\{|\o(\D_U^j(p;\a))| : |\D_U^j(p;\a)|_j=1\}+\sup\{|\k(\D_U^j(p;\a))| : |\D_U^j(p;\a)|_j=1\}\\
			&=|\o|_{B^j}+|\k|_{B^j}.
			\end{align}
			So, 
			\begin{align}
				\|\o+\k\|_{B^r}&=\max\{|\o+\k|_{B^0},\dots,|\o+\k|_{B^r}\}\\
				&\leq \max\{|\o|_{B^0}+|\k|_{B^0},\dots,|\o|_{B^r}+|\k|_{B^r}\}\\
				&\leq \max\{|\o|_{B^0},\dots,|\o|_{B^r}\}+\max\{|\k|_{B^0},\dots,|\k|_{B^r}\}\\
				&=\|\o\|_{B^r}+\|\k\|_{B^r}.
			\end{align}
		\end{proof}
		
	\item For $s\geq r\geq 0$, we have natural and continuous inclusions $\iota_{s,r}: \B_k^{s}\rightarrow \B_k^{r}$.
		\begin{proof}
			Since $\|\cdot\|_{B^r}\leq \|\cdot\|_{B^{r+1}}$, the inclusion $\B_k^{r+1}\hookrightarrow \B_k^r$ is immediate, as is its continuity.  The inclusions are natural, since each space $\B_k^r$ is defined as a subspace of all bounded $k$-forms.  
		\end{proof}
		
	\item The space $\B_k^0$ is equal to the space of bounded $k$-forms.
		\begin{proof}
			Since $\|\o\|_{B^0}=\sup\{|\o(p;\a): \|\a\|=1\}$, the result follows.
		\end{proof}

	\item The space $\B_k^1\subset \B_k^0$ is the subspace of such forms that are Lipschitz continuous.
	
	\begin{proof}
		Recall that a differential $k$-form $\o$ on $\R^n$ is called \emph{Lipschitz} if there exists a constant $C$ such that
		\[
		|\o(p+v;\a)-\o(p;\a)|\leq C \|v\|
		\]
		for all $\a$ such that $\|\a\|=1$.  This is equivalent to requiring
		\[
		|\o(p+v;\b)-\o(p;\b)|\leq C
		\]
		for all $\b$ such that $\|v\|\|\b\|=1$.  However, this condition is equivalent to the requirement that $|\o|_{B^1}< \i$.
	\end{proof}

	\item For $r> 1$, it holds that $\o\in\B_k^r$ if and only if $\o\in C^{r-1}$, its $j$-th directional derivatives are bounded for $0\leq j\leq r-1$, and its $(r-1)$-th directional derivatives are Lipschitz continuous.
	\begin{proof}
		This will follow from Theorem \ref{topequiv2}.  We do not use this fact in the rest of the proof of Theorem \ref{junk}.
	\end{proof}

	\item For $r\geq 0$, the space $\B_k^r$ is naturally isomorphic to $(\hB_k^r)'$, the continuous dual of $\hB_k^r$.  Furthermore, the injection map $\iota_{s,r}$ is the transpose of $\phi_{r,s}$. 
	\begin{proof}
		Let $X\in (\hB_k^r)'$.  Then $\|X\|_{\mathrm{Op}}^r=\sup \{ |X(A) : \|A\|_{B^r}=1\}$.  Define the differential form $\Psi(X)(p;\a):=X(p;\a)$.  That is, on the left hand side, we evaluate $\Psi(X)$ at the point $p$ and on the element $\a\in \L^k T_p \R^n$, and on the right hand side, we evaluate $X$ on the pointed chain $(p;\a)$.  We show that $\|\Psi(X)\|_{B^r}<\i$.  By definition, 
		\[
		\|\Psi(X)\|_{B^r}=\max \{|\Psi(X)|_{B^0},\dots,|\Psi(X)|_{B^r}\}.
		\]
		So, we are required to show that $|\Psi(X)|_{B^j}<\i$ for $0\leq j\leq r$.  Unravelling the definitions, we get
		\begin{align}
			|\Psi(X)|_{B^j}&=\sup\{|X(\D_U^j(p;\a))| : |\D_U^j(p;\a)|_j\leq 1\}\label{line1}\\
			&\leq \sup \{|X(\D_U^j(p;\a))| : \|\D_U^j(p;\a)\|_{B^r}\leq 1\}\label{line2}\\
			&\leq \sup \{|X(A)| : \|A\|_{B^r}\leq 1\}=\|X\|_{\mathrm{Op}}^r<\i,
		\end{align}
		where (\ref{line2}) follows from (\ref{line1}), since $\|\D_U^j(p;\a)\|_{B^r}\leq |\D_U^j(p;\a)|_j$ (definition \ref{normdef}.)
		
		So, $\Psi : (\hB_k^r)'\rightarrow \B_k^r$.  It is also, as one can check, linear.  To show that it is an isomorphism, we find an inverse:
		
		Let $\o\in\B_k^r$.  Define $\Theta: \B_k^r\rightarrow (\hB_k^r)'=(\P_k, \|\cdot\|_{B^r})'$ by the following: if $A\in \P_k$ where $A=\sum_{i=1}^N(p_i;\a_i)$, then set
		\[
		(\Theta \o)A := \sum_{i=1}^N \o (p_i;\a_i).
		\]
		By the linearity of $\o_{p_i}$, it follows that this map is a well-defined element of $\P_k^*$.  We show that $\|\Theta\o\|_{\mathrm{Op}}^r=\sup\{|\Theta\o(A)|: \|A\|_{B^r}=1\}<\i$.  By Lemma \ref{voodoo}, we know that
		\[
		|\Theta\o(A)|\leq \max\{|\Theta\o|_{B^0},\dots,|\Theta\o|_{B^r}\}\|A\|_{B^r}.
		\]
		It follows that
		\[
		\|\Theta\o\|_{\mathrm{Op}}^r\leq \max\{|\Theta\o|{B^0},\dots,|\Theta\o|_{B^r}\}=\|\o\|_{B^r}<\infty.
		\]
		To see that $\Theta$ is an inverse of $\Psi$, consider
		\begin{align*}
			(\Psi\circ\Theta)\o(p;\a)&=(\Theta\o)(p;\a)=\o(p;\a)\\
			(\Theta\circ\Psi)X(A)&=\sum \Psi(X)(p_i;\a_i)=\sum X(p_i;\a_i)=X(A).
		\end{align*}
		It follows that $\Psi$ is indeed an invertible linear map and that $(\hB_k^r)'$ and $\B_k^r$ are isomorphic.  To see that the injection map $\iota_{s,r}$ is the transpose of $\phi_{r,s}$, we use the fact that $(\hB_k^r)'=(\P_k, \|\cdot\|_{B^r})'\subset \P_k^*$.  Since $\phi_{r,s}\equiv \mathrm{Id}$ on $\P_k$, it follows that if $\o\in \P_k^*$ then $\phi_{r,s}^t\o=\o$.  In particular, this is true for any $\o\in (\hB_k^r)'$.  It follows that $\phi_{r,s}^t\o=\iota_{s,r}\o$.
	\end{proof}

	\item For $r\geq 0$ and for all $\o\in \B_k^r$, the norm $\|\o\|_{B^r}$ is equal to $\|\o\|_{\mathrm{Op}}^r$, where $\|\cdot\|_{\mathrm{Op}}^r$ is the operator norm on $\B_k^r$ considered as the dual space to $\hB_k^r$.
	\begin{proof}
		By definition, we have 
		\[
		\|\o\|_{B^r}=\max\{|\o|_{B^0},\dots, |\o|_{B^r}\}
		\]
		and
		\[
		\|\o\|_{\mathrm{Op}}^r=\sup\{|X(A): \|A\|_{B^r}=1, A\in \hB_k^r\}.
		\]
		As in (\ref{line1}), we have
		\[
		|\o|^j=\sup\{|\o(\D_U^j(p;\a))| : |\D_U^j (p;\a)|_j\leq 1\} \leq \|\o\|_{\mathrm{Op}}^r.
		\]
		Thus, $\|\o\|_{B^r}\leq \|\o\|_{\mathrm{Op}}^r$.  To see the reverse inequality, note that if $A\in \hB_k^r$, with $\|A\|_{B^r}=1$ and if $A_i\rightarrow A$ where $A_i\in \P_k$, $A_i\neq 0$, then $A_i/\|A_i\|_{B^r}\rightarrow A$ as well.  This can be seen by the triangle inequality.  In particular, this means that $\P_k$ is dense on the unit ball, and that
		\[
		\|\o\|_{\mathrm{Op}}^r=\sup\{|X(A)|: \|A\|_{B^r}=1, A\in \P_k^r\}.
		\]
		By Lemma \ref{voodoo}, it follows that
		\[
		\|\o\|_{\mathrm{Op}}^r\leq \|\o\|_{B^r}.
		\]
		From this we conclude that $\|\cdot\|_{B^r}$ turns $\B_k^r$ into a Banach space, since the continuous dual of a Banach space is again complete in the operator norm topology.
	\end{proof}

	\item The projective limit $\B_k^\i=\lim_{\leftarrow}\B_k^r$ via the inclusion mappings in \#\ref{inc} is naturally isomorphic to $(\hB_k^\i)'$.  The space $\B_k^\i$ is a Fr\'{e}chet space.
	
		This follows from some more general facts about inductive limits of topological vector spaces:
		\begin{lem}\label{general}
			Let $(X_i, \phi_{i,j})$ be a directed system of topological vector spaces over a field $\F$ such that for each $i\geq 0$ there exist dense subspaces $Y_i$ of $X_i$ such that $\phi_{i,j}(Y_i)=Y_j$ for all $j\geq i\geq 0$.  Then the continuous dual spaces $X_i'$ together with the transpose maps $\phi_{i,j}^t$ form a projective system of topological vector spaces, and the maps $\phi_{i,j}^t$ are injective.
		\end{lem}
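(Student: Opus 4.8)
The plan is to establish the two assertions of the lemma separately: first that the transposes $\phi_{i,j}^t$ assemble into a projective system of topological vector spaces, which is essentially formal, and then that each $\phi_{i,j}^t$ is injective, which is where the density hypothesis enters.

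For the first part, recall that for $i\leq j$ the map $\phi_{i,j}\colon X_i\to X_j$ is continuous and linear, so its transpose $\phi_{i,j}^t\colon X_j'\to X_i'$, given by $(\phi_{i,j}^t\o)(x)=\o(\phi_{i,j}(x))$, carries continuous functionals to continuous functionals and is itself linear and continuous for the dual topologies (weak-$*$ or strong — a continuous linear map between topological vector spaces sends bounded sets to bounded sets, so both work). Taking transposes of the direct-system identities $\phi_{i,i}=\mathrm{Id}_{X_i}$ and $\phi_{i,k}=\phi_{j,k}\circ\phi_{i,j}$ for $i\leq j\leq k$, and using that transposition reverses the order of composition, yields $\phi_{i,i}^t=\mathrm{Id}_{X_i'}$ and $\phi_{i,k}^t=\phi_{i,j}^t\circ\phi_{j,k}^t$. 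Hence $(X_i',\phi_{i,j}^t)$ is a projective system of topological vector spaces.

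For injectivity, fix $i\leq j$ and suppose $\o\in X_j'$ satisfies $\phi_{i,j}^t\o=0$, i.e. $\o(\phi_{i,j}(x))=0$ for every $x\in X_i$. In particular $\o$ vanishes on $\phi_{i,j}(Y_i)$, which by hypothesis equals $Y_j$. Thus $\ker\o$ is a subspace of $X_j$ containing the dense subspace $Y_j$; since $\o$ is continuous, $\ker\o$ is closed, so $\ker\o=X_j$, that is $\o=0$. Therefore $\phi_{i,j}^t$ is injective.

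I do not anticipate a real obstacle: the only things requiring care are bookkeeping the arrow directions (the direct system $\phi_{i,j}$ increases the index, the transposed system $\phi_{i,j}^t$ decreases it) and being explicit that whatever dual topology is placed on the $X_i'$ makes the transposes continuous, so that ``projective system of topological vector spaces'' holds literally. The substantive content is the one-line combination of $\phi_{i,j}(Y_i)=Y_j$ with continuity of $\o$, which forces any functional annihilated by a transpose to vanish on a dense set.
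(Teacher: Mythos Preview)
Your proof is correct and follows essentially the same approach as the paper: both dismiss the projective-system axioms as formal and prove injectivity by showing a functional in the kernel of $\phi_{i,j}^t$ vanishes on the dense subspace $Y_j=\phi_{i,j}(Y_i)$. Your version is in fact slightly more careful about spelling out the transposition of the directed-system identities and the closedness of $\ker\o$.
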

			\begin{proof}
				It is clear that the system forms a projective limit.  It remains to show that the maps are injective.  Suppose $f,g\in X_j'$ and $\phi_{i,j}^t(f)=\phi_{i,j}^t(g)$.  Then $\phi_{i,j}^t(f)(x)=(\phi_{i,j}^* f)(x)=f(\phi_{i,j}(x))=g(\phi_{i,j}(x))$ for all $x\in X_i$.  In particular, this is true for all $x\in Y_i$.  Since $\phi_{i,j}(Y_i)=Y_j$, it follows that $f$ and $g$ agree on a dense subspace of $X_j$, and therefore are equal.  Thus, the maps $\phi_{i,j}^t$ are injective.
			\end{proof}
			
		\begin{lem}\label{general2}
			Let $(X_i, \phi_{i,j})$ be a directed system of topological vector spaces over a field $\F$.  Then $\lim_\leftarrow X_i'$ is naturally isomorphic as a vector space to $(\lim_\rightarrow X_i)'$.
		\end{lem}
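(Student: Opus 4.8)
The plan is to exhibit an explicit mutually-inverse pair of linear maps between $(\lim_\rightarrow X_i)'$ and $\lim_\leftarrow X_i'$, using only the universal property of the inductive limit together with the fact that the defining (final) topology on $X:=\lim_\rightarrow X_i$ makes a linear functional continuous precisely when all of its ``restrictions'' to the $X_i$ are continuous. Recall that $X$ carries canonical continuous linear maps $\psi_i\colon X_i\to X$ with $\psi_j\circ\phi_{i,j}=\psi_i$ for $i\le j$, that as a vector space every element of $X$ lies in the image of some $\psi_i$, and that, since $X$ has the finest locally convex topology for which the $\psi_i$ are continuous, a linear map from $X$ into any locally convex space --- in particular into $\F$ --- is continuous if and only if its composite with each $\psi_i$ is continuous.

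First I would define $\Phi\colon X'\to\lim_\leftarrow X_i'$ by $\Phi(f)=(f\circ\psi_i)_i$. Each $f\circ\psi_i$ is continuous, and the family is compatible with the transpose maps of Lemma \ref{general}, since $\phi_{i,j}^t(f\circ\psi_j)=f\circ\psi_j\circ\phi_{i,j}=f\circ\psi_i$; thus $\Phi$ is well defined, and it is visibly linear. For the inverse, given a compatible family $(g_i)_i\in\lim_\leftarrow X_i'$ --- so $g_i\in X_i'$ and $g_j\circ\phi_{i,j}=g_i$ whenever $i\le j$ --- I would define $g\colon X\to\F$ by $g(\psi_i(x)):=g_i(x)$. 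Well-definedness is where the directedness of the index set enters: if $\psi_i(x)=\psi_j(y)$, then by the construction of the algebraic inductive limit there is $k\ge i,j$ with $\phi_{i,k}(x)=\phi_{j,k}(y)$, whence $g_i(x)=g_k(\phi_{i,k}(x))=g_k(\phi_{j,k}(y))=g_j(y)$. The map $g$ is linear, and $g\circ\psi_i=g_i$ is continuous for every $i$, so $g\in X'$ by the topological property recalled above; set $\Psi((g_i)_i):=g$.

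Finally I would check that $\Phi$ and $\Psi$ are mutually inverse: $\Psi(\Phi(f))$ agrees with $f$ on each $\psi_i(X_i)$ and hence on all of $X$, while the $i$-th component of $\Phi(\Psi((g_i)_i))$ is $\bigl(\Psi((g_i)_i)\bigr)\circ\psi_i=g_i$. Both composites are the identity, so $\Phi$ is a linear isomorphism. Naturality in the directed system is automatic, since $\Phi$ is defined purely in terms of the structure maps $\psi_i$ and $\phi_{i,j}$. I do not expect a genuine obstacle here; the only point that must be invoked carefully rather than computed is the continuity criterion for functionals on the final-topology inductive limit, which is exactly the universal property built into the definition of $\lim_\rightarrow X_i$.
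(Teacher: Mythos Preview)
Your proposal is correct and takes essentially the same approach as the paper: both define $\Phi(f)=(f\circ\psi_i)_i$ and produce its inverse via the universal property of the inductive limit. The only cosmetic difference is that the paper argues injectivity and surjectivity of $\Phi$ separately (invoking the universal property abstractly for surjectivity), whereas you construct the inverse $\Psi$ explicitly on elements and then verify the two composites are identities; your version unpacks the well-definedness check using directedness, which the paper leaves implicit in the words ``universal property.''
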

			\begin{proof}
				Let $f\in (\lim_{\rightarrow}X_i)'$.  Then $\psi_i^* f\in X_i'$ for each $i\in \N$.  Furthermore, we know that $\psi_s\circ\phi_{r,s}=\psi_r$ for all $r\in \N$ and $s\geq r$.  So, $\phi_{r,s}^t \psi_s^*f =\psi_r^* f$.  It follows that $(\psi_i^* f)\in \lim_\leftarrow X_i'$.  Define the map 
				\begin{align*}
				\Phi: (\lim_{\rightarrow}X_i)'&\rightarrow  \lim_\leftarrow X_i'\\
				f&\mapsto (\psi_i^* f).
				\end{align*}
				It is clear from this definition that $\Phi$ is linear.  We show that $\Phi$ is injective.  Suppose $f, g \in (\lim_{\rightarrow}X_i)'$ with $f\neq g$.  Then $f([\a])\neq g([\a])$ for some $\a\in X_r$, $r\geq 0$.  But this means that $\psi_r^* f(\a)\neq \psi_r^* g(\a)$, hence $\Phi(f)\neq \Phi(g)$.
							
				To see that $\Phi$ is surjective, let $\xi=(f_i)\in \lim_\leftarrow X_i'$. By the universal property of the inductive limit, there is a unique continuous linear map $\bar{\xi}\in (\lim_\rightarrow X_i)'$ that makes the following diagram commute:
				\[ 
				\xymatrix{ 
				X_i \ar@/_/[rddd]_{f_i}\ar[rd]^{\psi_i}\ar[rr]^{\phi_{i,j}} & & X_j \ar[ld]_{\psi_j}\ar@/^/[lddd]^{f_j} \\
				&\lim_\rightarrow X_i \ar@{.>}[dd]^{\bar{\xi}} \\
				&\\
				&\F
				} 
				\]
				
				Furthermore, since $\psi_i^* \bar{\xi}=f_i$, we conclude that $\Phi(\bar{\xi})=(f_i)=\xi$.  In other words, $\Phi$ is surjective.
			\end{proof}
		
		Thus, from Theorem \ref{junk}.\ref{trans} and Lemma \ref{general} and \ref{general2}, we conclude that $\B_k^\i$ is isomorphic as a vector space to $(\hB_k^\i)'$.  Of course, by the definition of the spaces $\B_k^r$, it is clear that the morphisms are injective.  However, injectivity also follows from the above more general fact (Lemma \ref{general}) about topological vector spaces.  That the space $\B_k^\i$ is a Fr\'{e}chet space can be found in \cite{thomas}.
					
\end{enumerate}

\subsection{Proof of Theorem \ref{topequiv2}}\label{topequiv2proof}
We need the following extension of Lemma \ref{generalstuff}:
\begin{lem}\label{generalstuff2}
If if $\o$ is a $C^j$ $k$-form on $\R^n$, $j\geq 1$, and if $\a$ is a simple $k$-vector, $\|\a\|=1$, then
\[
|\o(\D_u^j(p;\a))|\leq \|u_1\|\cdots\|u_j\| |\o|_{C^j}.
\]
\end{lem}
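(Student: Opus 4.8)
The plan is to reduce the statement to Lemma \ref{generalstuff} by passing from the form $\o$ to an auxiliary scalar function. Since parallel transport on $\R^n$ is trivial, the difference chain $\D_U^j(p;\a)$ is, as noted after Definition \ref{seminormdef}, a (possibly degenerate) parallelepiped carrying oriented copies of the \emph{same} $k$-vector $\a$ at its vertices $q_0,q_1,\dots$. Thus, writing $\o(\D_U^j(p;\a))=\sum_i(-1)^i\o_{q_i}(\a)$ and defining $f\colon\R^n\to\R$ by $f(q):=\o_q(\a)$, we obtain the identity $\o(\D_U^j(p;\a))=f(\D_U^j(p;1))$ in the notation of Lemma \ref{generalstuff}.

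First I would verify that $f\in C^j(\R^n)$ and that $|f|_j\leq|\o|_{C^j}$. Since $\a$ is a fixed $k$-vector and pointwise evaluation of a form on a constant $k$-vector is linear, $f$ is a fixed real-linear combination of the $C^j$ coefficient functions of $\o$, hence $C^j$; moreover, because $\a$ is constant on $\R^n$, directional differentiation commutes with this evaluation, so $D^jf(q)=(D^j\o)(q;\a)$ for every point $q$ and every $j$-th order directional derivative $D^j$. As $\|\a\|=1$, the definition of $|\o|_{C^j}$ in Definition \ref{topequiv2def} gives $|D^jf(q)|=|(D^j\o)(q;\a)|\leq|\o|_{C^j}$, i.e.\ $|f|_j\leq|\o|_{C^j}$.

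Finally, applying Lemma \ref{generalstuff} to $f$ and combining with the two observations above yields
\[
|\o(\D_U^j(p;\a))|=|f(\D_U^j(p;1))|\leq\|u_1\|\cdots\|u_j\|\,|f|_j\leq\|u_1\|\cdots\|u_j\|\,|\o|_{C^j},
\]
which is the desired inequality. There is no serious obstacle here: the analytic content — the fundamental-theorem-of-calculus induction on $j$ — has already been carried out in Lemma \ref{generalstuff}, and what remains is only the bookkeeping that pairing a $C^j$ form with a fixed unit $k$-vector produces a $C^j$ function whose $j$-th directional derivatives are bounded by $|\o|_{C^j}$, which is immediate from the constancy of $\a$ on $\R^n$ (for non-simple $\a$ one would first decompose into simple pieces, but the stated hypothesis makes this unnecessary).
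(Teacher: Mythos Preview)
Your proposal is correct and is essentially the paper's own argument. The paper's one-line proof says ``write $\o$ in terms of a coordinate function in the $k$-direction of $\a$,'' which is precisely your function $f(q)=\o_q(\a)$ after choosing an orthonormal frame adapted to the simple $k$-vector $\a$; your version simply avoids the choice of coordinates and spells out the bound $|f|_j\leq|\o|_{C^j}$ explicitly.
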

\begin{proof}
	This follows from Lemma \ref{generalstuff} by writing $\o$ in terms of a coordinate function in the $k$-direction of $\a$.
\end{proof}

We want to show that $\|\o\|_{C^r}=\|\o\|_{B^r}$ for all $r\geq 0$ and all bounded $k$-forms $\o$.  It is clear from the definitions that $\|\o\|_{C^0}=\|\o\|_{B^0}$.  For $r\geq 1$, we first show $\|\o\|_{B^r}\leq \|\o\|_{C^{r-1+\lip}}$.  We may suppose $\|\o\|_{C^{r-1+\lip}}<\i$, otherwise we are done.  Therefore, $|\o|_{C^j}<\i$ for all $0\leq j\leq r-1$, and $|\o|_{L^r}<\i$.  It suffices to show for simple $k$-vectors $\a$ with $\|\a\|=1$:
\begin{enumerate}
	\item[(i)]
	$|\o(\D_U^j(p;\a))|\leq \|u_1\|\cdots\|u_j\||\o|_{C^j}$ for all $0\leq j\leq r-1$, and
	\item[(ii)]
	$|\o(\D_U^r(p;\a))|\leq \|u_1\|\cdots\|u_r\||\o|_{L^r}$.
\end{enumerate}
(i): Since $|\o|_{C^j}<\infty$, it follows by definition of $|\cdot|_{C^j}$ that $\o$ is of differentiability class $C^{r-1}$.  It follows from Lemma \ref{generalstuff2} that $|\o(\D_U^j(p;\a))|\leq \|u_1\|\cdots\|u_j\||\o|_{C^j}$.

(ii): Since $|\o|_{L^r}<\i$, it follows that the $(r-1)$-st directional derivatives of $T_{u_1}^*\o-\o$ are Lipschitz continuous.  So,
\[
\frac{|D^{r-1} (T_{u_1}^*  \o-\o)(p;\a)|}{\|u_1\|}\leq |\o|_{L^r}.
\]
It follows that $|T_{u_1}^*\o-\o|_{C^{r-1}}\leq \|u_1\| |\o|_{L^r}$.  By Lemma \ref{generalstuff2}, we have
\[
|\o(\D_U^r(p;\a))|=|(T_{u_1}^*\o-\o)(\D_{U'}^{r-1}(p;\a))|\leq \|u_2\|\cdots\|u_r\||T_{u_1}^*\o-\o|_{C^{r-1}}\leq \|u_1\|\cdots\|u_r\||\o|_{L^r}.
\]
We now show that $\|\o\|_{C^{r-1+\lip}}\leq \|\o\|_{B^r}$.  As above, we may assume $\|\o\|_{B^r}<\i$, otherwise we are done.  It is enough to show 
\begin{enumerate}
	\item[(iii)] $|D^j\o(p;\a)|\leq |\o|_{B^j}$ for all $j$-order directional derivatives $D^j$, $0\leq j\leq r-1$, $p\in \R^n$ and all $\a\in \L^k T_p \R^n$ with $\|\a\|=1$, and
	\item[(iv)] $\lip(D^{r-1}\o)\leq |\o|_{B^r}$ for all $(r-1)$-order directional derivatives $D^{r-1}$.
\end{enumerate}

First, however, we need to know that $\|\o\|_{B^r}<\i$ implies $\o$ is $(r-1)$-times differentiable.  This breaks up into a sequence of lemmas:

\begin{lem}\label{lem1}
	If $\o\in \B_k^2$, then $\o$ is differentiable.
\end{lem}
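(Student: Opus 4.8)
The plan is to read the statement through the prederivative operator and the duality $\B_k^2\simeq(\hB_k^2)'$ supplied by Theorem \ref{junk}.\ref{trans}. Recall from Definition \ref{pre} and the lemma following it (which rests on Lemma \ref{der3}) that for every $p\in\R^n$, every $v\in\R^n$ and every $\a\in\L^k T_p\R^n$ the pointed chains
\[
\D_{tv}(p;\a/t)=(p+tv;\a/t)-(p;\a/t)
\]
converge in $\hB_k^2$, as $t\to 0$, to the element $P_v(p;\a)$.

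The first — and essentially only — observation is that evaluating a $k$-form $\o$ on $\D_{tv}(p;\a/t)$ reproduces exactly the difference quotient of $\o$ in the direction $v$:
\[
\cint_{\D_{tv}(p;\a/t)}\o=\o(p+tv;\a/t)-\o(p;\a/t)=\frac{\o(p+tv;\a)-\o(p;\a)}{t},\qquad t\neq 0.
\]
Now suppose $\o\in\B_k^2$. By Theorem \ref{junk}.\ref{trans}, $\o$ is a continuous linear functional on $\hB_k^2$, so from $\D_{tv}(p;\a/t)\to P_v(p;\a)$ in $\hB_k^2$ we get that the difference quotients above converge as $t\to 0$, with
\[
\lim_{t\to 0}\frac{\o(p+tv;\a)-\o(p;\a)}{t}=\cint_{P_v(p;\a)}\o .
\]
It suffices to treat simple $\a$, since a general $\a$ is a finite sum of simple $k$-vectors and the difference quotient is linear in $\a$; hence every directional derivative $D_v\o(p;\a)$ exists at every point. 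Moreover $v\mapsto P_v(p;\a)$ is linear (the lemma after Definition \ref{pre}), so $v\mapsto D_v\o(p;\a)=\cint_{P_v(p;\a)}\o$ is linear, and $\o$ is differentiable, which is the assertion of the lemma; continuity of these derivatives — i.e. genuine $C^1$ regularity — is taken up in the lemmas that follow.

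I do not expect any real obstacle here: the only points to check are the elementary identity for $\cint_{\D_{tv}(p;\a/t)}\o$ and the entitlement to pass the continuous functional $\o$ through the limit, both immediate once $P_v(p;\a)\in\hB_k^2$ has been established. If one preferred an argument internal to $\o$, one could instead restrict $\o(\cdot;\a)$ to a line, obtaining a Lipschitz function $h\colon\R\to\R$ whose second differences obey $|h(t+s_1+s_2)-h(t+s_1)-h(t+s_2)+h(t)|\le\|v\|^2\,|\o|_{B^2}\,|s_1||s_2|$ (read off from $|\o(\D_{\{s_1v,s_2v\}}^2(p;\a))|\le|\o|_{B^2}\|s_1v\|\|s_2v\|$), and then show by comparing difference quotients at comparable scales that $h$ is differentiable; the delicate point there would be bridging non-dyadically related scales, and it is precisely this that the duality argument lets us bypass.
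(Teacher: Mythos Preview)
Your argument is correct and is essentially the paper's own proof: both pass the continuous functional $\o\in\B_k^2\simeq(\hB_k^2)'$ through the $\hB_k^2$-limit of the difference chains $\D_{tv}(p;\a/t)$ (the paper writes this limit as $H_v(p;\a)$ via the dyadic sequence from Lemma~\ref{der3}, you write it as $P_v(p;\a)$ via Definition~\ref{pre}) to conclude that every directional derivative exists. Your added remark that $v\mapsto P_v(p;\a)$ is linear, hence so is $v\mapsto D_v\o(p;\a)$, is a welcome clarification the paper leaves implicit.
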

\begin{proof}
	Let $\o\in \B_k^2$.  By Lemma \ref{der3}, we know that $H_v(p;\a):=\lim_{i\rightarrow\infty}(p+2^{-i}v;2^i\a)-(p;2^i\a)\in \hB_k^2$.  By continuity of $\o$, it follows that 
	\[
	\o(H_v(p;\a))=\lim_{i\rightarrow\infty}\o(p+2^{-i}v;2^i\a)-\o(p;2^i\a).
	\]
	However, the right hand side is none other than $\lim_{h\rightarrow\o}\frac{\o(p+hv;\a)-\o(p;\a)}{h}=\mathcal{L}_v\o$.  It follows that $\o$ is differentiable.
\end{proof}

\begin{lem}\label{lem2}
	If $\o\in \B_k^r$ and $r\geq 2$, then $\|\mathcal{L}_v\o\|_{B^{r-1}}\leq \|v\|\|\o\|_{B^{r}}$.
\end{lem}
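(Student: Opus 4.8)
The plan is to reduce the bound to the continuity estimate already proved for the prederivative operator. First I note that $r\ge 2$ forces $\o\in\B_k^2$, so $\o$ is differentiable by Lemma \ref{lem1} and $\mathcal{L}_v\o = D_v\o$ is a well-defined $k$-form; thus what has to be shown is the quantitative inequality
\[
\|\mathcal{L}_v\o\|_{B^{r-1}} = \max_{0\le j\le r-1}|\mathcal{L}_v\o|_{B^j} \le \|v\|\,\|\o\|_{B^r},
\]
which will in particular re-establish that $\mathcal{L}_v\o\in\B_k^{r-1}$.

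The key identity is
\[
\mathcal{L}_v\o\bigl(\D_U^j(p;\a)\bigr) = \o\bigl(P_v\,\D_U^j(p;\a)\bigr)
\]
for every difference chain $\D_U^j(p;\a)$ with $0\le j\le r-1$. Since $r-1\ge 1$, the prederivative operator is a continuous map $P_v\colon \hB_k^{r-1}(\R^n)\to\hB_k^r(\R^n)$, so the right-hand side is meaningful; the identity is the dual-operator relation $\cint_{P_vJ}\o = \cint_J\mathcal{L}_v\o$ specialized to the pointed chain $J=\D_U^j(p;\a)$, and it can also be checked directly from the difference-quotient formula $\o(P_v(p;\a)) = \lim_{t\to 0}\frac{1}{t}\bigl(\o(p+tv;\a)-\o(p;\a)\bigr) = D_v\o(p;\a)$ used in the proof of Lemma \ref{lem1}, applied at each vertex of $\D_U^j(p;\a)$.

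Next I would combine two standing estimates. From continuity of $P_v$ (the lemma following Definition \ref{pre}, with the order shifted down by one) we have $\|P_v A\|_{B^r}\le\|v\|\,\|A\|_{B^{r-1}}$ for $A\in\hB_k^{r-1}$; applied to $A=\D_U^j(p;\a)$ with $j\le r-1$, together with the trivial bound $\|\D_U^j(p;\a)\|_{B^{r-1}}\le|\D_U^j(p;\a)|_j$ coming straight from Definition \ref{normdef}, this gives $\|P_v\,\D_U^j(p;\a)\|_{B^r}\le\|v\|\,|\D_U^j(p;\a)|_j$. Pairing with $\o$ and using that $\B_k^r\cong(\hB_k^r)'$ with operator norm equal to $\|\cdot\|_{B^r}$ (Theorem \ref{junk}, parts \ref{trans} and \ref{topequiv1}), we obtain, for each $0\le j\le r-1$ and each $\D_U^j(p;\a)$ with $|\D_U^j(p;\a)|_j=1$,
\[
|\mathcal{L}_v\o(\D_U^j(p;\a))| = |\o(P_v\,\D_U^j(p;\a))| \le \|\o\|_{B^r}\,\|P_v\,\D_U^j(p;\a)\|_{B^r} \le \|v\|\,\|\o\|_{B^r}.
\]
Taking the supremum over such difference chains yields $|\mathcal{L}_v\o|_{B^j}\le\|v\|\,\|\o\|_{B^r}$ for every $j\le r-1$, and the maximum over $j$ gives the lemma. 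I expect no real obstacle; the only care needed is index bookkeeping — that $r-1\ge 1$ is exactly what makes $P_v$ land in $\hB_k^r$, the order dual to $\o$ — and confirming that the dual-operator identity applies to the (pointed) difference chains $\D_U^j(p;\a)$, which is why I prefer to justify it via the explicit difference quotient rather than only quoting the abstract duality statement.
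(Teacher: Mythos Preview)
Your proof is correct and follows essentially the same idea as the paper's: both bound $|\mathcal{L}_v\o(\D_U^j(p;\a))|$ by recognizing the Lie derivative on a $j$-th order difference chain as $\o$ evaluated on a $(j{+}1)$-th order object, and then invoke the trivial estimate $\|\D^{j+1}_{(tv,U)}(p;\a/t)\|_{B^{j+1}}\le \|v\|\,|\D_U^j(p;\a)|_j$. The only difference is packaging: the paper writes out the limit $\lim_{t\to0}\o(\D^{j+1}_{(tv,U)}(p;\a/t))$ and bounds via $\|\o\|_{B^{j+1}}$ inline, whereas you name that limit $P_v\,\D_U^j(p;\a)$ and cite the already-proved continuity estimate $\|P_vA\|_{B^r}\le\|v\|\,\|A\|_{B^{r-1}}$ together with the duality $\o\circ P_v=\mathcal{L}_v\o$; your care to justify the duality via the explicit difference quotient (rather than only quoting the abstract statement) correctly avoids any circularity worry.
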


\begin{proof}
 	By Lemma \ref{lem1}, we know that $\mathcal{L}_v\o$ exists.  By Definition \ref{topequiv0}, we are required to show $|\mathcal{L}_v \o|_{B^j}\leq \|v\|\|\o\|_{B^r}$ for all $0\leq j\leq r-1$.  This inequalities follow, since
	\begin{align}
		\frac{\mathcal{L}_v\o(p;\D_U^j(p;\a))|}{\|u_1\|\cdots\|u_j\|\|\a\|}&=\lim_{t\rightarrow 0}\frac{|\o(\D_U^j(p+tv;\a/t)-\D_U^j(p;\a/t))|}{\|u_1\|\cdots\|u_j\|\a\|}\\
		&\leq \|\o\|_{B^{j+1}}\limsup_{t\rightarrow 0} \frac{\|\D_{(tv,U)}^{j+1}(p;\a/t)\|_{B^{j+1}}}{\|u_1\|\cdots\|u_j\|\|\a\|}\\
		&=\leq \|v\|\|\o\|_{B^{j+1}}\leq \|v\|\o\|_{B^r}
	\end{align}
\end{proof}

\begin{lem}\label{lem3}
	If $\o\in \B_k^r$ and $r\geq 1$, then $\|\mathcal{L}_{u_s}\circ\cdots\circ\mathcal{L}_{u_1} \o\|_{B^t}\leq \|u_1\|\cdots\|u_s\|\|\o\|_{B^{s+t}}$ for all $0\leq s+t\leq r$.
\end{lem}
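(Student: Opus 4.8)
\textbf{Proof proposal for Lemma \ref{lem3}.}

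The plan is an induction on $s$, using Lemma \ref{lem2} to strip off one Lie derivative at each step with the index bookkeeping arranged so that the exponent on the right-hand side stays at $s+t$. First, by the continuous inclusions $\B_k^{r}\hookrightarrow\B_k^{r'}$ for $r\ge r'$ (part of Theorem \ref{junk}), it suffices to treat the case $r=s+t$: if $r>s+t$ then $\o\in\B_k^{s+t}$, and the conclusion for the pair $(s,t)$ in $\B_k^{s+t}$ is literally the asserted inequality $\|\mathcal{L}_{u_s}\circ\cdots\circ\mathcal{L}_{u_1}\o\|_{B^t}\le\|u_1\|\cdots\|u_s\|\,\|\o\|_{B^{s+t}}$.

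The base case $s=0$ is the tautology $\|\o\|_{B^t}\le\|\o\|_{B^t}$. For the inductive step take $s\ge 1$, $r=s+t$, and assume the statement for $s-1$. Put $\eta:=\mathcal{L}_{u_{s-1}}\circ\cdots\circ\mathcal{L}_{u_1}\o$ (the empty composition, so $\eta=\o$, when $s=1$), so that $\mathcal{L}_{u_s}\circ\cdots\circ\mathcal{L}_{u_1}\o=\mathcal{L}_{u_s}\eta$. Applying the inductive hypothesis to $\o\in\B_k^{s+t}$ with the index $t+1$ in place of $t$ — legitimate since $(s-1)+(t+1)=s+t$ — gives
\[
\eta\in\B_k^{t+1}\qquad\text{and}\qquad \|\eta\|_{B^{t+1}}\le\|u_1\|\cdots\|u_{s-1}\|\,\|\o\|_{B^{s+t}}.
\]
When $t\ge 1$ we have $t+1\ge 2$, so Lemma \ref{lem2} applies to $\eta\in\B_k^{t+1}$ and yields $\|\mathcal{L}_{u_s}\eta\|_{B^t}\le\|u_s\|\,\|\eta\|_{B^{t+1}}$; chaining this with the displayed bound closes the induction for every $t\ge 1$.

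The remaining case is $t=0$ (so $r=s$), where $\eta$ lies only in $\B_k^1$ and Lemma \ref{lem2} no longer applies. Here I would argue directly from the identification of $\B_k^1$ with the bounded Lipschitz $k$-forms (part of Theorem \ref{junk}): $|\eta|_{B^1}$ is exactly the Lipschitz constant of $\eta$, so for all $p$, all $\a$ with $\|\a\|=1$, and all $h\ne 0$ one has $\bigl|h^{-1}(\eta(p+hu_s;\a)-\eta(p;\a))\bigr|\le\|u_s\|\,|\eta|_{B^1}$; hence wherever $\mathcal{L}_{u_s}\eta$ is defined (a.e., by Rademacher's theorem) it satisfies $|\mathcal{L}_{u_s}\eta(p;\a)|\le\|u_s\|\,|\eta|_{B^1}$, so, reading $\|\cdot\|_{B^0}$ as the essential supremum, $\|\mathcal{L}_{u_s}\eta\|_{B^0}\le\|u_s\|\,|\eta|_{B^1}\le\|u_s\|\,\|\eta\|_{B^1}$. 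Combining with $\|\eta\|_{B^1}\le\|u_1\|\cdots\|u_{s-1}\|\,\|\o\|_{B^{s}}$ from the inductive hypothesis (index $t+1=1$) closes the step; its innermost instance $(s,t)=(1,0)$ is precisely this Lipschitz estimate applied to $\eta=\o$. I expect this low-regularity endpoint — making precise the sense in which $\mathcal{L}_{u_s}\eta$ exists and why its essential supremum obeys the bound — to be the only non-mechanical point; everything else is bookkeeping around Lemma \ref{lem2} with the index shift $t\mapsto t+1$.
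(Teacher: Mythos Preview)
Your induction on $s$ via Lemma \ref{lem2} with the index shift $t\mapsto t+1$ is exactly the paper's approach; the paper's proof is the one-liner ``This follows from repeated applications of Lemmas \ref{lem1} and \ref{lem2}.'' You are in fact more careful than the paper about the endpoint $t=0$: the paper does not treat it separately, and it only invokes the lemma afterward with $t\geq 1$ (to deduce $(r-1)$-fold differentiability), so your Rademacher patch---while reasonable---addresses a case the paper neither proves in detail nor uses.
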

\begin{proof}
	This follows from repeated applications of Lemmas \ref{lem1} and \ref{lem2}. 
\end{proof}

We conclude that $\|\o\|_{B^r}<\i$ implies $\o$ is $(r-1)$-times differentiable.

\begin{lem}\label{lastbit}
	The inequality $|D^{j-1}\o|_{B^1}\leq |\o|_{B^j}$ holds if $\o$ is $(j-1)$-times differentiable.
\end{lem}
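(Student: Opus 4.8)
The plan is to reduce to the defining supremum of the seminorm $|\cdot|_{B^1}$ and to rewrite the directional derivative $D^{j-1}\o$ as an iterated limit of difference chains evaluated on $\o$. Write $D^{j-1}=\mathcal{L}_{v_{j-1}}\circ\cdots\circ\mathcal{L}_{v_1}$ with $\|v_i\|=1$. The first and main step is to establish, for every $p\in\R^n$, every $u\in\R^n$, and every simple $\a$ with $\|\a\|=1$, the identity
\[
(D^{j-1}\o)(\D_u(p;\a))=\lim_{t_{j-1}\to 0}\cdots\lim_{t_1\to 0}\frac{1}{t_1\cdots t_{j-1}}\,\o\!\left(\D_{\{t_1v_1,\dots,t_{j-1}v_{j-1},u\}}^{j}(p;\a)\right).
\]
I would prove this by induction on the number of Lie derivatives, peeling off the innermost factor $\mathcal{L}_{v_1}$: since $\mathcal{L}_{v_1}\o(q;\b)=\lim_{t_1\to 0}t_1^{-1}\o(\D_{t_1v_1}(q;\b))$ (the identity already used in Lemma~\ref{lem1}), and $\o$ together with the difference operators is linear, one can pull this limit through the finite pointed chain $\D_{\{t_2v_2,\dots,t_{j-1}v_{j-1},u\}}^{j-1}(p;\a)$ furnished by the inductive hypothesis applied to the $(j-2)$-times differentiable form $\mathcal{L}_{v_1}\o$, and $\D_{t_1v_1}\circ\D_{\{t_2v_2,\dots,u\}}^{j-1}=\D_{\{t_1v_1,\dots,u\}}^{j}$. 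The hypothesis that $\o$ is $(j-1)$-times differentiable is precisely what guarantees that every intermediate Lie derivative, hence every iterated limit above, exists.

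With the formula in hand, the second step is a one-line estimate. From Definition~\ref{topequiv0} and homogeneity of $\D_{U}^{j}(p;\cdot)$ in the $k$-vector slot one has $|\o(\D_{U}^{j}(p;\b))|\le |\D_{U}^{j}(p;\b)|_j\,|\o|_{B^j}$ for every difference chain, while Definition~\ref{seminormdef} gives $|\D_{\{t_1v_1,\dots,t_{j-1}v_{j-1},u\}}^{j}(p;\a)|_j=t_1\cdots t_{j-1}\|u\|$ (using $\|v_i\|=1$ and $\|\a\|=1$). Hence each term inside the iterated limit has absolute value at most $\|u\|\,|\o|_{B^j}$, a bound independent of the $t_i$; passing to the limit yields $|(D^{j-1}\o)(\D_u(p;\a))|\le \|u\|\,|\o|_{B^j}$. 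Taking the supremum over all $p,u,\a$ with $|\D_u(p;\a)|_1=\|u\|\|\a\|=1$ gives $|D^{j-1}\o|_{B^1}\le |\o|_{B^j}$. (If $|\o|_{B^j}=\infty$ there is nothing to prove.)

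The only delicate point — and the step I would be most careful about — is the first one: $D^{j-1}\o$ is an honest iterated directional derivative, so the induction must be arranged so that each $\lim_{t_i}$ is taken with the later variables fixed, and linearity of $\o$ and of the difference operators must be invoked \emph{before} any limit is passed through the finite sum of pointed chains. Once the difference-chain formula is justified, the conclusion is immediate from Definitions~\ref{seminormdef} and~\ref{topequiv0}. As sanity checks: for $j=1$ the statement is the tautology $|\o|_{B^1}\le|\o|_{B^1}$, and for $j=r$ it is exactly the inequality~(iv) needed to complete the proof of Theorem~\ref{topequiv2}, since $|D^{r-1}\o|_{B^1}=\lip(D^{r-1}\o)$.
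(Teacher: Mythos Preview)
Your proof is correct and uses the same underlying idea as the paper: convert each directional derivative into a difference-quotient limit and then invoke the definition of $|\o|_{B^j}$. The paper organizes this as a one-step descent, proving $|D^{j-k}\o|_{B^k}\leq |D^{j-k-1}\o|_{B^{k+1}}$ for each $1\leq k\leq j-1$ and chaining these inequalities, whereas you unroll the whole chain into a single iterated-limit identity for $(D^{j-1}\o)(\D_u(p;\a))$ and bound once; the paper's stepwise version avoids having to discuss the order of iterated limits, but the content is the same.
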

\begin{proof}
	If $D^s$ denotes differentiation in the directions $(v_1,\dots,v_{s})$, then for $|\D_U^k(p;\a)|_k=1$ and $1\leq k\leq j-1$, it follows that
	\[
	|D^{j-k}\o(\D_U^k(p;\a))|=\lim_{t\rightarrow0}\left|(T_{tv_{j-k}}^* D^{j-k-1}\o-D^{j-k-1}\o)\left(\D_U^k\left(p;\frac{\a}{t}\right)\right)\right|\leq |D^{j-k-1}\o|_{B^{k+1}}.
	\]
	 It follows that $|D^{j-k}\o|_{B^k}\leq |D^{j-k-1}\o|_{B^{k+1}}$ for $1\leq k\leq j-1$, whence the lemma follows.
\end{proof}

The inequality (iii) follows from Lemma \ref{lastbit}: 
\[
|D^j\o(p;\a)|=\lim_{t\rightarrow 0}\left|(T_{tv_{j}}^*D^{j-1}\o-D^{j-1}\o)\left(p;\frac{\a}{t}\right)\right|\leq |D^{j-1}\o|_{B^1}\leq |\o|_{B^j}.
\]

The inequality (iv) also follows from Lemma \ref{lastbit}:
\[
\lip(D^{r-1}\o)=\sup \left|(T_v^*D^{r-1}\o-D^{r-1}\o)\left(p;\frac{\a}{\|v\|}\right)\right|\leq \sup|D^{r-1}\o(\D_v(p;\a))|=|D^{r-1}\o|_{B^1}\leq |\o|_{B^r},
\]
since $\o$ is $(r-1)$-times differentiable.

\qed

\subsection{The homomorphisms $\phi_{r,s}: \hB_k^r\rightarrow \hB_k^s$ are injective}\label{strict}
The following proof requires the \emph{prederivative operator} (Definition \ref{pre}), so in the logical sequence, this proof comes after the section on operators.  We will use this lemma only once, and that is in Lemma \ref{topdim}.  

For each $\eta>0$, let $\overline{\kappa_\eta}: \R^+\rightarrow\R$ be a smooth, monotonically decreasing function, constant on some interval $[0,t_0]$ and equal to $0$ for $t\geq \eta$.  Define $\kappa_\eta:\R^n\rightarrow \R$ by setting $\kappa_\eta(v):=\overline{\kappa_\eta(\|v\|)}$.  Re-normalize each $\kappa_\eta$ such that $\int_{\R^n}\kappa_\eta(v)dv=1$.  For $X\in \B_k^r$, define
\[
X_\eta(A):=\int_{\R^n}\kappa_\eta(v)(X(T_vA))dv
\]
where $A\in \P_k$.  It is immediate from the definition that $X_\eta\in \B_k^r$.  In fact,
\begin{lem}\label{estimates}
	If $X\in \B_k^r$ and $\eta>0$, then
	\begin{enumerate}
		\item[(i)] $\mathcal{L}_v(X_\eta)=(\mathcal{L}_vX)_\eta$ for all $v\in \R^n$,
		\item[(ii)] $\|X_\eta\|_{B^r}\leq \|X\|_{B^r}$,
		\item[(iii)] $X_\eta\in \B_k^{r+1}$,
		\item[(iv)] $X_\eta (J)\rightarrow X(J)$ as $\eta\rightarrow 0$ for all $J\in \hB_k^r$.  
	\end{enumerate}
\end{lem}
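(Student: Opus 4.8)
The plan is to recognize $X_\eta$ as the mollification $\kappa_\eta * X$ of the coefficient functions of $X$ — indeed, for a pointed chain $A=\sum(p_i;\a_i)$ one has $X(T_vA)=\sum X_{p_i+v}(\a_i)$, so the substitution $w=p_i+v$ gives $X_\eta(p_i;\a_i)=\int_{\R^n}\kappa_\eta(w-p_i)\,X_w(\a_i)\,dw$ — and then to import the standard properties of mollifiers, translating each into a statement about the $r$-norms via the norm characterizations of Theorems \ref{junk}.\ref{topequiv1} and \ref{topequiv2}. Throughout I would use that each translation $T_w$ is an isometry of $(\P_k,\|\cdot\|_{B^r})$, hence of $\hB_k^r$: this is immediate from $T_w\D_U^j(p;\a)=\D_U^j(p+w;\a)$ and $|\D_U^j(p+w;\a)|_j=|\D_U^j(p;\a)|_j$ together with the definition of $\|\cdot\|_{B^r}$.

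For (ii) I would use the operator-norm description $\|X_\eta\|_{B^r}=\sup\{|X_\eta(A)|:\|A\|_{B^r}\le1\}$; since $\kappa_\eta\ge0$ and $\int\kappa_\eta=1$,
\[
|X_\eta(A)|\le\int_{\R^n}\kappa_\eta(v)\,|X(T_vA)|\,dv\le\int_{\R^n}\kappa_\eta(v)\,\|X\|_{B^r}\,\|T_vA\|_{B^r}\,dv=\|X\|_{B^r}\,\|A\|_{B^r},
\]
which is exactly (ii). For (iii), differentiating under the integral sign (legitimate because $\kappa_\eta$ is smooth and compactly supported) shows $X_\eta\in C^\i$ with $D^jX_\eta=(D^j\kappa_\eta)*X$, so every directional derivative of $X_\eta$ is bounded by $\|D^j\kappa_\eta\|_{L^1}\,|X|_{B^0}<\i$; in particular all derivatives up to order $r+1$ are bounded and the $r$-th one is Lipschitz, so $X_\eta\in\B_k^{r+1}$ by Theorem \ref{topequiv2}. (Only finiteness is needed here, so this crude bound suffices; it is of course not uniform in $\eta$.)

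For (i), mollification commutes with directional differentiation: $\mathcal{L}_v(X_\eta)=\mathcal{L}_v(\kappa_\eta*X)=\kappa_\eta*(\mathcal{L}_vX)=(\mathcal{L}_vX)_\eta$, the middle equality being differentiation under the integral again; equivalently, by duality, $P_v$ commutes with every $T_w$ (both are assembled from the commuting difference operators $\D_u$) and $\mathcal{L}_v$ is dual to $P_v$, so $(\mathcal{L}_vX_\eta)(J)=X_\eta(P_vJ)=\int\kappa_\eta(w)\,X(P_vT_wJ)\,dw=\int\kappa_\eta(w)\,(\mathcal{L}_vX)(T_wJ)\,dw=(\mathcal{L}_vX)_\eta(J)$. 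For (iv) I would first treat a simple pointed chain $(p;\a)$: when $X$ is continuous — which holds for $r\ge1$, and in the one place this lemma is used $X$ may be taken to be a smooth form — one has $|X_\eta(p;\a)-X(p;\a)|\le\sup_{\|v\|\le\eta}|X_{p+v}(\a)-X_p(\a)|\to0$, and by linearity $X_\eta(A)\to X(A)$ for all $A\in\P_k$. For general $J\in\hB_k^r$, pick $A_n\to J$ in $\hB_k^r$ with $A_n\in\P_k$ and estimate
\[
|X_\eta(J)-X(J)|\le|X_\eta(J-A_n)|+|X_\eta(A_n)-X(A_n)|+|X(A_n-J)|;
\]
the first and third terms are $\le\|X\|_{B^r}\|J-A_n\|_{B^r}$ by (ii) — crucially, uniformly in $\eta$ — so given $\e>0$ one fixes $n$ making them each $<\e/3$ and then lets $\eta\to0$ to kill the middle term.

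The hard part will be (iv): no single step is deep, but the exchange of the limit $\eta\to0$ with the limit defining a general chain $J$ is precisely what the uniform bound (ii) is for — it makes $\{X_\eta\}$ an equicontinuous family on $\hB_k^r$, which licenses the diagonal argument above. A secondary point to watch is the regularity of $X$ when $r=0$, where elements of $\B_k^0$ need not be continuous; there the mollification and the pointwise convergence on pointed chains should be read for the continuous representative that occurs in Lemma \ref{topdim}, in which injectivity of $\phi_{r,s}$ is obtained by mollifying a functional $X$ with $X(J)\neq0$ enough times (using (iii)) to land in $\B_k^s$ while keeping $X_\eta(J)\neq0$ (using (iv)).
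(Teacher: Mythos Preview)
The paper states Lemma \ref{estimates} without proof; it is recorded only so that (ii), (iii), (iv) can be invoked in Theorem \ref{injectthm}. Your argument is the standard mollification proof and is correct: the key observations---that each $T_v$ is an isometry of $(\P_k,\|\cdot\|_{B^r})$, that $X_\eta$ is a convolution of the coefficient functions of $X$ with the smooth compactly supported bump $\kappa_\eta$, and that (ii) yields the equicontinuity needed to pass from pointed chains to general $J$ in (iv)---are exactly right. Your flag on the $r=0$ case is also accurate: an element of $\B_k^0$ need not be continuous, so $X_\eta(p;\a)\to X(p;\a)$ can fail at individual points, and the lemma as stated is really only clean for $r\ge1$. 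Since the sole application is Theorem \ref{injectthm} (feeding into the injectivity Corollary used in Lemma \ref{topdim}), and there one may always start from $r\ge1$, this does not damage the paper's logic.
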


\begin{thm}\label{injectthm}
	If $J\in \hB_k^r$, then $\|J\|_{B^r}=\sup \{|X(J)| : X\in \B_k^{r+1}, \|X\|_{B^r}=1\}$.
\end{thm}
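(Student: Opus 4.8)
The plan is to deduce this from Theorem \ref{equivnorm} together with the mollification estimates collected in Lemma \ref{estimates}. One inequality is essentially free: since $\B_k^{r+1}\subset\B_k^r$ with $\|\cdot\|_{B^r}$ nonincreasing under this inclusion (Theorem \ref{junk}), any $X\in\B_k^{r+1}$ with $\|X\|_{B^r}=1$ is a norm-one element of $(\hB_k^r)'=\B_k^r$, so $|X(J)|\leq\|J\|_{B^r}$ by Theorem \ref{equivnorm}; taking the supremum over all such $X$ gives $\sup\{|X(J)| : X\in\B_k^{r+1},\ \|X\|_{B^r}=1\}\leq\|J\|_{B^r}$.

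For the reverse inequality I would argue by approximation. Fix $\e>0$; the case $\|J\|_{B^r}=0$ is trivial, so assume $0<\e<\|J\|_{B^r}$. By Theorem \ref{equivnorm} there is $X\in\B_k^r$ with $\|X\|_{B^r}\leq 1$ and $|X(J)|>\|J\|_{B^r}-\e$. Now invoke Lemma \ref{estimates}: the mollification $X_\eta$ lies in $\B_k^{r+1}$, has $\|X_\eta\|_{B^r}\leq\|X\|_{B^r}\leq 1$, and satisfies $X_\eta(J)\to X(J)$ as $\eta\to 0$. Hence for $\eta$ small enough $|X_\eta(J)|>\|J\|_{B^r}-\e>0$, so in particular $X_\eta\neq 0$, and $Y:=X_\eta/\|X_\eta\|_{B^r}$ is an element of $\B_k^{r+1}$ with $\|Y\|_{B^r}=1$ and $|Y(J)|=|X_\eta(J)|/\|X_\eta\|_{B^r}\geq|X_\eta(J)|>\|J\|_{B^r}-\e$, using $\|X_\eta\|_{B^r}\leq 1$. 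Letting $\e\downarrow 0$ yields $\sup\{|X(J)| : X\in\B_k^{r+1},\ \|X\|_{B^r}=1\}\geq\|J\|_{B^r}$, and combining the two inequalities gives the stated identity.

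The genuine content lives entirely inside Lemma \ref{estimates}, which I expect to be the main obstacle. There one must check that convolving $v\mapsto X(T_vA)$ against the renormalized radial bump $\kappa_\eta$: (i) does not increase the $r$-norm, which follows by realizing $X_\eta$ as an average of the functionals $X\circ T_v$ (each an isometry for $\|\cdot\|_{B^r}$) and applying the integral form of the triangle inequality; (ii) raises the order by one, $X_\eta\in\B_k^{r+1}$ — this is where the smoothing happens, since differentiating under the integral sign transfers a derivative off the chain and onto $\kappa_\eta$, leaving one extra bounded (Lipschitz) derivative; and (iii) recovers $X$ in the limit, $X_\eta(J)\to X(J)$, which follows from $\int\kappa_\eta=1$, continuity of $u\mapsto T_uJ$ in $\hB_k^r$, and dominated convergence. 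Granting Lemma \ref{estimates}, the theorem follows in the two or three lines above, and it immediately yields the injectivity of $\phi_{r,s}$: if $\phi_{r,s}J=0$ then every $X\in\B_k^{r+1}\subset\B_k^s$ annihilates $J$, so $\|J\|_{B^r}=0$ and $J=0$.
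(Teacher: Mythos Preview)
Your argument is correct and follows essentially the same route as the paper: both use Theorem \ref{equivnorm} for the baseline identity and then invoke the mollification Lemma \ref{estimates} (parts (ii)--(iv)) to replace an arbitrary $X\in\B_k^r$ by a smoother $X_\eta\in\B_k^{r+1}$ without losing the $r$-norm bound or the value on $J$. Your write-up is in fact a bit cleaner than the paper's, since you fix a near-optimal $X$ first and then choose $\eta$ depending on it, avoiding any ambiguity about whether $\eta$ is uniform in $X$; your explicit normalization $Y=X_\eta/\|X_\eta\|_{B^r}$ also makes the comparison with the supremum over the unit sphere transparent.
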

\begin{proof}
	By Theorem \ref{equivnorm}, we may write
	\[
	\|J\|_{B^r}=\sup\{|X(J)| : X\in \B_k^r, \|X\|_{B^r}=1\}.
	\]
	Let $\e>0$.  By Lemma \ref{estimates} (ii), (iii) and (iv), there exists $\eta>0$ such that if $0\neq X\in \B_k^r$ then
	\[
	\frac{|X(J)|}{\|X\|_{B^r}}<\frac{|X_\eta(J)|+\e}{\|X_\eta\|_{B^r}}\leq \sup_{0\neq Y\in \B_k^{r+1}}\frac{|Y(J)|+\e}{\|Y\|_{B^r}}.
	\]
	Since this inequality holds for all $X\neq 0$ and all $\e>0$, it follows that
	\[
	\sup_{0\neq X\in \B_k^r} \frac{|X(J)|}{\|X\|_{B^r}}\leq \sup_{0\neq Y\in \B_k^{r+1}} \frac{|Y(J)|}{\|Y\|_{B^r}}.
	\]
	On the other hand, since $\B_k^{r+1}\subset \B_k^r$, the reverse inequality holds, and so we have equality.
\end{proof}

\begin{cor}
	The homomorphisms $\phi_{r,s}: \hB_k^r\hookrightarrow \hB_k^s$ are injections for all $r\leq s$.  
\end{cor}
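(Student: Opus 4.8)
The plan is to reduce to the case of consecutive indices, where injectivity falls out of Theorem~\ref{injectthm}.

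First, by part (2) of Lemma~\ref{dirlimit} we may factor, for $r\le s$,
\[
\phi_{r,s}=\phi_{s-1,s}\circ\phi_{s-2,s-1}\circ\cdots\circ\phi_{r,r+1},
\]
a finite composition of one-step maps (with $\phi_{r,r}=\mathrm{Id}$ when $s=r$). Since a composition of injective linear maps is injective, it suffices to prove that $\phi_{r,r+1}\colon\hB_k^r\to\hB_k^{r+1}$ is injective for every $r\ge 0$.

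So suppose $J\in\hB_k^r$ with $\phi_{r,r+1}J=0$. As $\hB_k^r$ is a Banach space with norm $\|\cdot\|_{B^r}$, it is enough to show $\|J\|_{B^r}=0$. By Theorem~\ref{junk}.\ref{trans} the natural inclusion $\iota_{r+1,r}\colon\B_k^{r+1}\hookrightarrow\B_k^r$ is the transpose of $\phi_{r,r+1}$. Hence for every $\o\in\B_k^{r+1}=(\hB_k^{r+1})'$,
\[
\cint_J\o=\cint_J\iota_{r+1,r}\o=\o\bigl(\phi_{r,r+1}J\bigr)=0,
\]
where on the left $\o$ is regarded as an element of $\B_k^r$ via the inclusion. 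Thus $J$ annihilates every form in $\B_k^{r+1}$.

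This is precisely the situation handled by Theorem~\ref{injectthm}, which guarantees that the $r$-norm of $J$ is already detected by the smaller space $\B_k^{r+1}$:
\[
\|J\|_{B^r}=\sup\bigl\{\,|\o(J)|:\o\in\B_k^{r+1},\ \|\o\|_{B^r}=1\,\bigr\}.
\]
Since $\o(J)=0$ for all such $\o$, we get $\|J\|_{B^r}=0$, hence $J=0$, so $\phi_{r,r+1}$ — and with it every $\phi_{r,s}$ — is injective. The only substantive input is Theorem~\ref{injectthm}, whose proof in turn rested on the smoothing operators $X\mapsto X_\eta$ of Lemma~\ref{estimates}; there is no further obstacle here. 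The underlying point is that $\B_k^{r+1}$ is simultaneously small enough to coincide with the continuous dual of the \emph{target} space $\hB_k^{r+1}$ (so a chain killed by $\phi_{r,r+1}$ must annihilate it) and, thanks to smoothing, still large enough to compute the full $r$-norm on $\hB_k^r$ — balancing these two requirements was the real content, and it has already been carried out.
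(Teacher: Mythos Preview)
Your proof is correct and follows essentially the same route as the paper: factor $\phi_{r,s}$ into one-step maps, then use Theorem~\ref{injectthm} together with the transpose relation in Theorem~\ref{junk}.\ref{trans} to see that $\phi_{r,r+1}$ is injective. The paper phrases the one-step argument as the contrapositive (if $J\neq 0$ then some $X\in\B_k^{r+1}$ detects it, hence $\phi_{r,r+1}J\neq 0$), but the content is identical.
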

\begin{proof}
	By Theorem \ref{injectthm} and Theorem \ref{junk}.\ref{trans}, if $0\neq J\in \hB_k^r$, there is some $X\in \B_k^{r+1}$ such that $X(\phi_{r,r+1}(J))\neq 0$.  Thus, $\phi_{r,r+1}(J)\neq 0$.  Since $\phi_{r,s}=\phi_{s-1,s}\circ\cdots\circ\phi_{r,r+1}$, the result follows.
\end{proof}

\addcontentsline{toc}{section}{References}
\bibliography{mybib}{}
\bibliographystyle{amsalpha}

\end{document}